\documentclass[10pt,oneside,english]{amsart}
\usepackage[T1]{fontenc}
\usepackage{color}
\usepackage{amsthm}
\usepackage{bm}
\usepackage{amstext}
\usepackage{amssymb}

\makeatletter
\numberwithin{equation}{section} 
\numberwithin{figure}{section} 


\usepackage{amsmath}
\usepackage{amsfonts}
\usepackage{amsthm}
\usepackage{amssymb}
\usepackage{amscd}
\usepackage[all]{xy}
\usepackage{enumerate}
\usepackage{mathrsfs}
\usepackage{graphicx, bm, color, ulem}
\usepackage{accents}

\makeatletter
\def\widebar{\accentset{{\cc@style\underline{\mskip15mu}}}}
\makeatother

\def\ft#1{{\mathsf #1}}
\def\chow{{\mathscr X}}
\def\hcoY{{\mathscr Y}}
\def\Hes{{\mathscr H}}
\def\UU{{\mathscr U}}
\def\Zpq{{\mathscr Z}}

\def\Cd{{\mathscr C}}
\def\Vs{{\mathscr V}}
\def\hchow{{\mathaccent20\chow}}
\def\Prt{{\mathscr P}}
\def\Lrho{\text{\large{\mbox{$\rho\hskip-0pt$}}}}
\def\tLrho{\text{\large{\mbox{$\tilde\rho\hskip-0pt$}}}}

\def\Lpi{\text{\large{\mbox{$\pi\hskip-0pt$}}}}
\def\cLpi{\text{\large{\mbox{$\check\pi\hskip-0pt$}}}}

\def\Lwedge{\text{\tiny{\mbox{$\bigwedge\hskip-1pt$}}}}

\font\eu=eusm10 at 10pt

\def\eF{\text{\eu F}}
\def\eG{\text{\eu G}}
\def\eQ{\text{\eu W}}
\def\eS{\text{\eu U}}


\newcommand{\vcorr}[3][1]{%
  \begingroup
    \tabcolsep=.5\tabcolsep
    \sbox0{%
      \begin{tabular}[b]{@{}l}%
        #3%
         \tabularnewline
      \end{tabular}%
    }%
    \settoheight{\dimen0 }{%
      \rotatebox{#2}{%
        \copy0 %
        \kern-\tabcolsep
      }%
    }%
    \rule{0pt}{#1\dimen0}%
    \setlength{\wd0 }{1em}%
    \setlength{\ht0 }{1em}%
    \rotatebox{#2}{\usebox{0}}%
  \endgroup
}

\newenvironment{fcaption}{\begin{list}{}{
\setlength{\leftmargin}{35pt}
\setlength{\rightmargin}{35pt}
\setlength{\labelsep}{5pt}
}}{\end{list}}

\newenvironment{myitem}{\begin{list}{}{
\setlength{\leftmargin}{25pt}
\setlength{\labelsep}{5pt}
}}{\end{list}}
\newenvironment{myitem2}{\begin{list}{}{
\setlength{\leftmargin}{0.6cm}
\setlength{\itemindent}{-0.3cm}
\setlength{\itemsep}{0cm}
}}{\end{list}}

\newtheorem{thm}{Theorem}[subsection]
\newtheorem{prop}[thm]{Proposition}

\newtheorem{lem}[thm]{Lemma}
\newtheorem{cla}[thm]{Claim}
\theoremstyle{definition}
\newtheorem{defn}[thm]{Definition}

\theoremstyle{remark}
\newtheorem*{rem}{Remark}

\makeatother

\usepackage{babel}

\begin{document}
\global\long\def\sA{\mathcal{A}}
 \global\long\def\sB{\mathcal{B}}
 \global\long\def\sC{\mathcal{C}}
 \global\long\def\sD{\mathcal{D}}
 \global\long\def\sE{\mathcal{E}}
 \global\long\def\sF{\eF}
 \global\long\def\sG{\eG}
 \global\long\def\sH{\mathcal{H}}
 \global\long\def\sI{\mathcal{I}}
 \global\long\def\sJ{\mathcal{J}}
 \global\long\def\sK{\mathcal{K}}
 \global\long\def\sL{\mathcal{L}}
 \global\long\def\sN{\mathcal{N}}
 \global\long\def\sM{\mathcal{M}}
 \global\long\def\sO{\mathcal{O}}
 \global\long\def\sP{\mathcal{P}}
 \global\long\def\sS{\mathcal{S}}
 \global\long\def\sR{\mathcal{R}}
 \global\long\def\sQ{\mathcal{Q}}
 \global\long\def\sT{\mathcal{T}}
 \global\long\def\sU{\mathcal{U}}
 \global\long\def\sV{\mathcal{V}}
 \global\long\def\sW{\mathcal{W}}
 \global\long\def\sX{\mathcal{X}}
 \global\long\def\sY{\mathcal{Y}}
 \global\long\def\sZ{\mathcal{Z}}
 \global\long\def\tA{{\widetilde{A}}}
 \global\long\def\mA{\mathbb{A}}
 \global\long\def\mC{\mathbb{C}}
 \global\long\def\mF{\mathbb{F}}
 \global\long\def\mG{\mathbb{G}}
 \global\long\def\G{{\bf G}}
 \global\long\def\mN{\mathbb{N}}
 \global\long\def\mP{\mathbb{P}}
 \global\long\def\mQ{\mathbb{Q}}
 \global\long\def\mZ{\mathbb{Z}}
 \global\long\def\mW{\mathbb{W}}
 \global\long\def\Ima{\mathrm{Im}\,}
 \global\long\def\Ker{\mathrm{Ker}\,}
 \global\long\def\Alb{\mathrm{Alb}\,}
 \global\long\def\ap{\mathrm{ap}}
 \global\long\def\Bs{\mathrm{Bs}\,}
 \global\long\def\Chow{\mathrm{Chow}}
 \global\long\def\CP{\mathrm{CP}}
 \global\long\def\Div{\mathrm{Div}\,}
 \global\long\def\divi{\mathrm{div}\,}
 \global\long\def\expdim{\mathrm{expdim}\,}
 \global\long\def\ord{\mathrm{ord}\,}
 \global\long\def\Aut{\mathrm{Aut}\,}
 \global\long\def\Hilb{\mathrm{Hilb}}
 \global\long\def\Hom{\mathrm{Hom}}
 \global\long\def\id{\mathrm{id}}
 \global\long\def\Ext{\mathrm{Ext}}
 \global\long\def\sHom{\mathcal{H}{\!}om\,}
 \global\long\def\Lie{\mathrm{Lie}\,}
 \global\long\def\mult{\mathrm{mult}}
 \global\long\def\opp{\mathrm{opp}}
 \global\long\def\Pic{\mathrm{Pic}\,}
 \global\long\def\Pf{{\bf Pf}}
 \global\long\def\Sec{\mathrm{Sec}}
 \global\long\def\Spec{\mathrm{Spec}\,}
 \global\long\def\Sym{\mathrm{Sym}}
 \global\long\def\sQpec{\mathcal{S}{\!}pec\,}
 \global\long\def\Proj{\mathrm{Proj}\,}
 \global\long\def\Rhom{{\mathbb{R}\mathcal{H}{\!}om}\,}
 \global\long\def\aw{\mathrm{aw}}
 \global\long\def\exc{\mathrm{exc}\,}
 \global\long\def\emb{\mathrm{emb\text{-}dim}}
 \global\long\def\codim{\mathrm{codim}\,}
 \global\long\def\OG{\mathrm{OG}}
 \global\long\def\pr{\mathrm{pr}}
 \global\long\def\Sing{\mathrm{Sing}\,}
 \global\long\def\Supp{\mathrm{Supp}\,}
 \global\long\def\SL{\mathrm{SL}\,}
 \global\long\def\Reg{\mathrm{Reg}\,}
 \global\long\def\rank{\mathrm{rank}\,}
 \global\long\def\VSP{\mathrm{VSP}\,}
 \global\long\def\B{B}
 \global\long\def\Q{Q}
 \global\long\def\rG{\mathrm{G}}

\def\FigI{\resizebox{10cm}{!}{\includegraphics{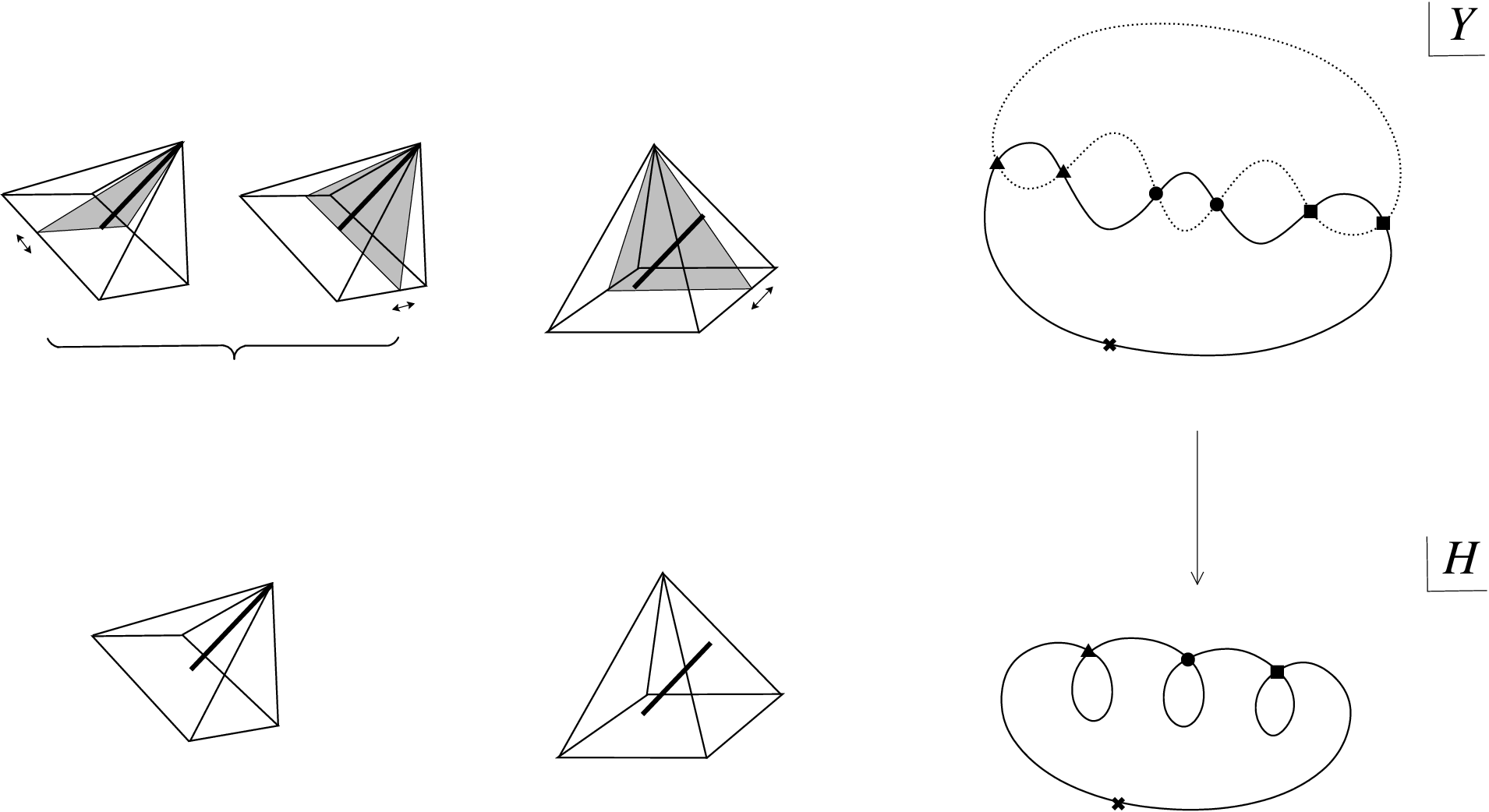}}}
\def\xyFigI{
\begin{xy}
(0,0)*{\FigI},
(-5,22)*{([Q],q)},
(-40,22)*{([Q],q_1)},
(-23,22)*{([Q],q_2)},
(-3,-20)*{l_x},
(-5,-8)*{[Q]},
(-33,-8)*{[Q]},
(-34,-18)*{l_x},
(14,22)*{C_x'},
(14,8)*{C_x},
(14,-17)*{\overline{\gamma}_x},
(35,-6)*{2:1},
\end{xy}
}

\def\xyResolY{
\begin{matrix}
\begin{xy}
(40,0)*+{\Zpq}="Z",  (55,0)*+{\mathrm{G}(3,V)}="G",
(40,-12)*+{\hcoY}="Y",
(40,-24)*+{\Hes}="H",
(14,0)*+{\hcoY_2}="Yii",
(1,-12)*+{\hcoY_3}="Yiii", (5,-12)*+{}="yiii", (22,-12)*+{}="ty",
(27,-12)*+{\widetilde{\hcoY}}="tY",
(14,-24)*+{\overline{\hcoY}}="bY", (19,-24)*+{}="by", (35,-24)*+{}="h",
(34,-10)*+{\,_{\Lrho_{\widetilde{\hcoY}}}},
\ar^{} "Z";"Y"
\ar^{\Lrho_\hcoY\;\;} "Y";"H"
\ar^{} "Z";"G"
\ar "Yii";"tY"
\ar "Yii";"Yiii"
\ar "Yiii";"bY"
\ar  "tY";"Y"
\ar "tY";"bY"
\ar @{-->}^{\,_\text{(anti-)flip}} "yiii";"ty"
\end{xy}
\end{matrix}
}

\def\FigYsRed{\resizebox{10cm}{!}{\includegraphics{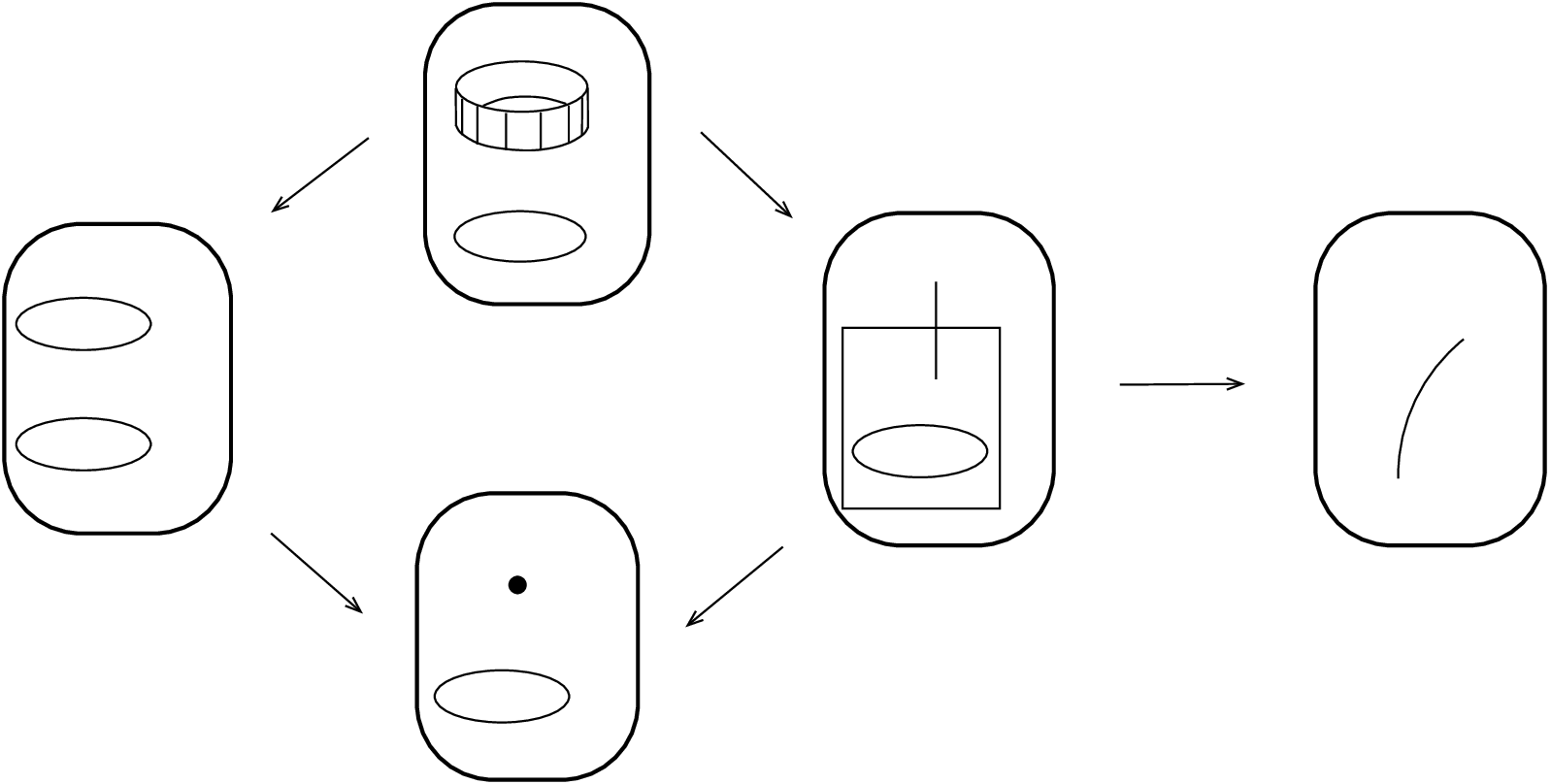}}}
\def\xyFigYs{
\begin{xy}
(0,0)*{\FigYsRed},
(-54,5.5)*{\hcoY_3},
(-38.5,3.5)*{\Prt_\rho},
(-38.5,-5)*{\Prt_\sigma},
(-28,12)*{\,_{\Lrho_{2}}},
{(-41.5,-12) \ar (-41.5,-20)^{\Lpi_{3}}_{\overset{\rG(3,6)\text{-}}{\text{bundle}}}},
(-41.5,-24)*{\mP(V)},
{(-31,0) \ar @{-->} (-2,0)^{\text{(anti-)flip}}},
(-25,24)*{\hcoY_2},
(-10, 16.5)*{F_\rho},
(-11,9)*{\Prt_\sigma},
(-2,11)*{\,_{\tLrho_{2}}},
(10,15)*{\widetilde\hcoY},
(7,1.7)*{F_{\widetilde\hcoY}},
(13,7.5)*{G_\rho},
(15,-5)*{\Prt_\sigma},
(26,-3)*{\,_{\Lrho_{\widetilde\hcoY}}},
(-16,-3.8)*{\overline{\hcoY}},
(-28,-8.7)*{\,_{\Lrho_{3}}},
(-16,-15)*{\rG(2,V)},
(-11.5,-20.5)*{\overline\Prt_\sigma},
(41.5,14)*{\hcoY},
(44.5,-3)*{G_\hcoY},
\end{xy}
}

\def\xyZYmain{
\begin{xy}
(0,40)*+{\Zpq_3}="Ziii",
(20,40)*+{\Zpq_2}="Zii",
(40,40)*+{\Zpq_2^o}="Ziio",
(-15,25)*+{\rG(2,T(-1))}="G",
(0,25)*+{\hcoY_3}="Yiii",
(20,25)*+{\hcoY_2}="Yii",
(40,25)*+{\hcoY_2^o}="Yiio",
(-30,10)*+{G(3,V)}="Gv",
( 0,10)*+{\mP(V)}="Pv",
(20,10)*+{\mP(V)}="Pvii",
(35,10)*+{\widetilde\hcoY}="tY",
(55,10)*+{\widetilde\hcoY^o}="tYo",
\ar^{\iota_{2'}}  @{_{(}->} "Ziio";"Zii",
\ar^{\Lrho_{2'}} "Zii";"Ziii",
\ar^{\Lpi_{G'}} "Ziii";"G",
\ar^{\Lrho_G} "G";"Gv",
\ar_{\Lpi_G} "G";"Pv",
\ar_{\iota_2} @{_{(}->} "Yiio";"Yii",
\ar^{\Lrho_2} "Yii";"Yiii",
\ar_{\Lpi_2} "Yii";"Pvii",
\ar^{\Lpi_3} "Yiii";"Pv",
\ar^{\tLrho_2} "Yii";"tY",
\ar_{\tilde\iota} @{_{(}->} "tYo";"tY",
\ar^{\Lpi_{3'}} "Ziii";"Yiii",
\ar_{\Lpi_{2'}} "Zii";"Yii",
\ar^{\Lpi_{2'}^o} "Ziio";"Yiio",
\ar^{\tLrho_2^o} "Yiio";"tYo",
\ar @{=} (5,10);(15,10)
\end{xy}
}

\def\xysarkisov{
\begin{xy} 
(0,0)*+{\Prt_\rho\;\subset}="A",  
(10,0)*+{\hcoY_3}="B",  
(40,0)*+{\widetilde\hcoY}="C",  
(50,0)*+{\supset \; G_\rho}="D",  
(10,-15)*+{\mP(V)}="E",  
(25,-15)*+{\overline{\hcoY}}="F",  
(40,-15)*+{\hcoY}="G",  
\ar @{-->}^{\text{(anti-)flip}} "B";"C"
\ar_{\mathrm{G}(3,6)\text{-bundle}} "B";"E"
\ar "B";"F",
\ar "C";"F",
\ar^{\text{div. cont.}} "C";"G"
\end{xy}
}

\def\xydiagII{
\begin{xy}
(45,0)*+{\;\;\;\;\;\mathrm{G}(2,T(-1))}="G2T", 
(25,-13)*+{\mathrm{G}(3,V)}="G3V",
(65,-13)*+{\mP(V).}="PV" 
\ar^{\Lrho_G} "G2T";"G3V"
\ar_{\Lpi_G} "G2T";"PV"
\end{xy}
}

\def\xyGiso{
\begin{xy}
(0,0)*+{\,_{([\overline{V}_2],[V_1]) \;\in \;}},
  (18,0)*+{\mathrm{G}(2,T(-1))}="A", 
  (18,-15)*+{\mathrm{G}(3,V)}="B", 
(3,-15)*+{\,_{[\Lpi_{V_1}^{\;-1}(\overline{V}_2)] \; \in \;}},
(67,0)*+{\,_{\ni \; ( [(\overline{V}_2)^\perp],[V_1]) }}, 
  (45,0)*+{\mathrm{G}(2,\Omega(1))}="C",
  (45,-15)*+{\mathrm{G}(2,V^*)}="D",
(62,-15)*+{\,_{\ni \; [\Lpi_{V_1}^{\;-1}(\overline{V}_2)^\perp],}} 
\ar^{\simeq} "A";"C"
\ar^{\simeq} "B";"D"
\ar^{\Lrho_G} "A";"B"
\ar^{} "C";"D"
\end{xy}
}

\def\xydiagIII{
\begin{xy}
(30,0)*+{\Zpq^u_3}="Z3p",
(0,-12)*+{\mP(T(-1)^{\wedge2})}="Pw2T",
(60,-12)*+{\qquad\qquad\qquad\hcoY_3 = \mathrm{G}(3,T(-1)^{\wedge2})}="Y3",
(30,-24)*+{\mP(V).}="PV", 
\ar_{\Lrho_{\Zpq^u_3}} "Z3p";"Pw2T"
\ar^{\Lpi_{\Zpq^u_3}} "Z3p";"Y3"
\ar_{\Lpi_{\;\mP}} "Pw2T";"PV"
\ar^{\Lpi_{\hcoY_3}} "Y3";"PV"
\end{xy}
}

\def\xydiagIV{
\begin{xy}
(30,0)*+{\Zpq_3}="Z3",
(0,-12)*+{\mathrm{G}(2,T(-1))}="G2T",
(60,-12)*+{\qquad\qquad\qquad\hcoY_3 = \mathrm{G}(3,T(-1)^{\wedge2})}="Y3",
(30,-24)*+{\mP(V).}="PV"
\ar_{\Lpi_{G'}} "Z3";"G2T"
\ar^{\Lpi_{3'}} "Z3";"Y3"
\ar_{\Lpi_{G}} "G2T";"PV"
\ar^{\Lpi_{3}} "Y3";"PV"
\end{xy}
}

\def\xydiagV{
\begin{xy}
(30,0)*+{\Zpq_\rho}="Zrho",
(0,-12)*+{\mathrm{G}(2,T(-1))}="G2T",
(60,-12)*+{\Prt_\rho}="Prho",
(30,-24)*+{\mP(V),}="PV", 
\ar_{\Lrho_{\Zpq_\rho}} "Zrho";"G2T"
\ar^{\Lpi_{\Zpq_\rho}} "Zrho";"Prho"
\ar_{\Lpi_{G}} "G2T";"PV"
\ar^{\Lpi_{\Prt_\rho}} "Prho";"PV"
\end{xy}
}

\def\xyZYZY{
\begin{xy}
(0,0)*++={\Zpq_2}="A",
(15,0)*++={\widetilde{\Zpq}}="B", 
(0,-12)*++={\hcoY_2}="C", 
(15,-12)*++={\widetilde{\hcoY}.}="D",
\ar "A";"B"
\ar "A";"C"
\ar "B";"D"
\ar "C";"D"
\end{xy} 
}

\def\xyquiverI{
\begin{xy}
(5,-12)*+{\circ}="c3",
(20,-12)*+{\circ}="c2",
(35,0)*+{\circ}="c1a",
(35,-24)*+{\circ}="c1b",
(0,-12)*{\mathcal{E}_3},
(20,-15)*{\mathcal{E}_2},
(38,3)*{\mathcal{E}_{1a}},
(38,-27)*{\mathcal{E}_{1b}},
(18,1)*{\Lwedge^2V},
(18,-25)*{{\ft S}^2V},
(30,-8)*{V},
(15,-9)*{V},
(30,-16)*{V},
\ar "c3";"c2"
\ar "c2";"c1a"
\ar "c2";"c1b"
\ar @/^1pc/@{->} "c3";"c1a"
\ar @/_1pc/@{->} "c3";"c1b"
\end{xy}
}

\def\xyquiverII{
\begin{xy}
(5,-12)*+{\circ}="c3",
(20,-12)*+{\circ}="c2",
(35,0)*+{\circ}="c1a",
(35,-24)*+{\circ}="c1b",
(0,-12)*{\mathcal{F}_3},
(20,-15)*{\mathcal{F}_2},
(38,3)*{\mathcal{F}_{1a}},
(38,-27)*{\mathcal{F}_{1b}},
(18,1)*{{\ft S}^2V^*},
(18,-25)*{\Lwedge^2V^*},
(30,-8)*{V^*},
(15,-10)*{V^*},
(30,-16)*{V^*},
\ar "c3";"c2"
\ar "c2";"c1a"
\ar "c2";"c1b"
\ar @/^1pc/@{->} "c3";"c1a"
\ar @/_1pc/@{->} "c3";"c1b"
\end{xy}
}

\def\xydiagIIIp{
\begin{xy}
(-17,8)*+{\Zpq_3},
(-8.5,8)*+{\subset},
(  0,8)*+{\Zpq_3^u}="Ti",
( 30,8)*+{\Zpq_2^u}="Tii",
( 39,8)*+{\supset},
( 48,8)*+{\Zpq_2},
(-17,4)*+{\cup},
(  0,4)*+{\cup},
( 30,4)*+{\cup},
( 48,4)*+{\cup},
(-17,0)*+{\Zpq_{\rho}}="Ai",
(-9,0)*+{=},
(0,0)*+{\Zpq^u_{\rho}}="A",
(30,0)*+{E_\rho^u=\mP(\Lrho_{2}^{*}\sS|_{F_{\rho}}) 
\;\supset}="B",
(0,-15)*+{\Prt_{\rho}}="C", 
(30,-15)*+{F_{\rho}}="D",
(48,0)*+{E_{\rho}}="E",
\ar (25,8);(5,8)
\ar "B";"A"
\ar^{\Lrho_{F_{\rho}}} "D";"C"
\ar^{\Lpi_{3^u\vert_\rho}} "A";"C"
\ar_{\Lpi_{2^u\vert_{\rho}}} "B";"D"
\ar^{\Lpi_{2'\vert_{\rho}}} "E";"D"
\ar_{\Lpi_\rho=\Lpi_{3'\vert_{\rho}}} "Ai";"C"
\end{xy}
}


\title{Double Quintic Symmetroids, Reye Congruences, and Their Derived Equivalence}

\author{Shinobu Hosono and Hiromichi Takagi}

\dedicatory{Dedicated to Professor Yujiro Kawamata on the occasion of his 60th
birthday}
\begin{abstract}
Let $\hcoY$ be the double cover of the quintic symmetric determinantal
hypersurface in $\mP^{14}$. We consider Calabi-Yau threefolds $Y$
defined as smooth linear sections of $\hcoY$. In our previous works,
we have shown that these Calabi-Yau threefolds $Y$ are naturally
paired with Reye congruence Calabi-Yau threefolds $X$ by the projective
duality of $\hcoY$, and $X$ and $Y$ have several interesting properties
from the view point of mirror symmetry and projective geometry. In
this paper, we prove the derived equivalence between a linear section
$Y$ of $\hcoY$ and the corresponding Reye congruence Calabi-Yau
threefold $X$. 
\end{abstract}
\maketitle
\markboth{}{Double symmetroids, Reye congruences,  and derived equivalence}

\section{Introduction}

Non-birational smooth Calabi-Yau threefolds which have an equivalent
derived category are of considerable interest from the view point
of the homological mirror symmetry due to Kontsevich\,\cite{Ko}.
As such an example, it has been proved in \cite{BC} that smooth Calabi-Yau
threefolds which are given respective smooth linear sections of $\mathrm{G}(2,7)$
and $\mathrm{Pfaff}(7)$ (see \cite{Ro}) are derived equivalent.
To our best knowledge, this pair is the first example of derived equivalent,
but non-birational smooth Calabi-Yau threefolds with Picard number
one. In \cite{Ku2}, the derived equivalence has been understood as
a corollary of a more general statement that a non-commutative resolution
of $\mathrm{Pfaff}(7)$ is homologically projective dual to $\mathrm{G}(2,7)$.
The homological projective duality is a framework which has been proposed
in \cite{Ku1} to generalize the classical projective duality in the
theory of derived category. In the case of $\mathrm{G}(2,7)$ and
$\mathrm{Pfaff}(7)$, the classical projective geometries involved
are that of the projective space of skew symmetric matrices $\mP(\Lwedge^{2}\mC^{7})$
and its dual projective space $\mP(\Lwedge^{2}(\mC^{*})^{7})$.

In the previous work \cite{HoTa1}, by studying mirror symmetry of
the Calabi-Yau threefold $X$ of a Reye congruence, we encountered
a similar phenomenon as above within the projective space of symmetric
matrices $\mP({\ft S}^{2}\mC^{5})$ and its dual projective space
$\mP({\ft S}^{2}(\mC^{*})^{5})$. We have observed that $X$ should
be paired with another smooth Calabi-Yau threefold $Y$, which is
given as a linear section of the double quintic symmetroid in the
projective duality. We are also led to the prediction {[}ibid.~Conj.2{]}
of their derived equivalence. The main result of this paper is to
show this prediction affirmatively.

\vspace{0.2cm}
 Let $V=\mC^{5}$ and define $X$ as a smooth linear section of the
second symmetric product $\chow=\ft{S}^{2}\mP(V)$ in $\mP(\ft{S}^{2}V)$.
Then $Y$ is defined as the orthogonal linear section of the double
symmetroid $\hcoY$, which is the double cover of the determinantal
symmetroid $\Hes$ in $\mP(\ft{S}^{2}V^{*})$. $\chow$ has a natural
resolution $\hchow$ given by the Hilbert scheme of two points. As
for $\hcoY$, a nice desingularization $\widetilde{\hcoY}$ has been
obtained in our recent paper \cite{HoTa3}. Furthermore, it has been
found that a finite collection of sheaves $(\mathcal{F}_{i})_{i\in I}$
on $\hchow$ introduces a dual Lefschetz collection in the derived
categories $\sD^{b}(\hchow)$ {[}ibid.~Thm.3.4.4{]}, and correspondingly
a collection of sheaves $(\sE_{i})_{i\in I}$ on $\widetilde{\hcoY}$
defines a Lefschetz collection in $\sD^{b}(\widetilde{\hcoY})$ {[}ibid.~Thm.8.1.1{]}.
In this paper, we focus on a certain closed subscheme $\Delta$ in
$\widetilde{\hcoY}\times\hchow$ and construct a locally free resolution
of its ideal sheaf $\sI$ in terms of these sheaves (Theorem \ref{thm:resolY}).
Considering a Fourier-Mukai functor with its kernel $I$ being the
restriction of the sheaf $\sI$ to $Y\times X$ in $\widetilde{\hcoY}\times\hchow$,
we prove the derived equivalence $\sD^{b}(X)\simeq\sD^{b}(Y)$ (\textbf{Theorem
\ref{thm:main}}).

\vspace{0.2cm}
 We introduce the subscheme $\Delta$ from the flag variety $\Delta_{0}:=\mathrm{F}(2,3,V)$
in $\mathrm{G}(3,V)\times\mathrm{G}(2,V)$. As we summarize in Subsection
\ref{HilbChow}, $\hchow$ is a $\mP^{2}$-bundle over $\rG(2,V)$
and hence there is a morphism to the Grassmannian $\mathrm{G}(2,V)$.
In contrast to this, the geometry of the double quintic symmetroid
$\hcoY$ is more involved. It turns out that $\hcoY$ describes the
connected families of planes contained in singular quadrics, which,
in case of quadrics of rank $4$, are represented by conics on $\mathrm{G}(3,V)$
(Subsection \ref{section:Quintic} and Section \ref{section:birationalY}).
In particular, we see that there is a generically conic bundle $\Zpq\to\hcoY$,
where $\Zpq$ parameterizes pairs of singular quadrics and planes
therein, and hence there exists a natural morphism $\Zpq\to\mathrm{G}(3,V)$.
Roughly speaking, though not quite precise, the subscheme $\Delta$
is constructed by pulling back $\Delta_{0}$ by the morphism $\Zpq\times\hchow\to\mathrm{G}(3,V)\times\mathrm{G}(2,V)$,
pushing forward by the morphism $\Zpq\times\hchow\to\hcoY\times\hchow$
and taking the transform by the birational morphism $\widetilde{\hcoY}\times\hchow\to\hcoY\times\hchow$.
The observation that the families of planes in rank 4 quadrics are
represented by conics on $\mathrm{G}(3,V)$ implies that $\hcoY$
is birational to the Hilbert scheme of conics on $\mathrm{G}(3,V)$
studied by \cite{IM}. 

The choice of the subscheme $\Delta$ comes from our observation on
the so-called BPS numbers of $Y$ listed in \cite[Table 3]{HoTa1}.
Let $I_{x}$ be the restriction of the ideal sheaf $I$ to $Y\times\{x\}$.
We will show that $I_{x}$ defines a curve $C_{x}$ on $Y$ of arithmetic
genus 3 and degree 5 parameterized by $X$, and $C_{x}$ is smooth
if $X$ and $x$ are general (Propositions \ref{prop:genus3degree5}
and \ref{prop:Cflat}). Our observation/discovery about this family
of curves is that this can be identified in the table of BPS numbers
$n_{g}^{Y}(d)$ at genus~3 and degree~5 as \[
n_{3}^{Y}(5)=100=(-1)^{\dim X}e(X)\times2,\]
 where $e(X)=-50$ is the Euler number of $X$. We note that the signed
Euler number $(-1)^{\dim X}e(X)$ is in accord with the counting rule
of the BPS numbers for a family of curves\,\cite{GV}. The factor
$2$ indicates that there should be another family of curves parameterized
by $X$. Interestingly, we find that there is a {}``shadow'' curve,
of the same genus and degree for each $C_{x}$ (Fig.1 in Subsection
\ref{subsection:curves}), which explains this factor. The BPS numbers
are integers which we calculate by using mirror symmetry, and its
mathematical ground is still missing in general. We believe, however,
that our observation above may be justified by the Donaldson-Thomas\,(DT)
invariants or the Pandharipande-Thomas\,(PT) invariants associated
with a suitable moduli problem of ideal sheaves or stable pairs (see
\cite{PT} and reference therein). Namely we expect that the Calabi-Yau
threefold $X$ arises as a suitable moduli space of the ideal sheaf
of curves on $Y$ and the derived equivalence between the two is a
consequence of this.

\vspace{0.1cm}
 Here we should remark some similarity of our construction to that
of the Grassmann-Pfaffian case. The proof of derived equivalence due
to \cite{BC} (and also \cite{Ku2}) is based on a certain incidence
relation on $\mathrm{G}(2,7)\times\mathrm{Pfaff}(7)$ which gives
rise the kernel of a Fourier-Mukai functor. Our proof starting with
$\Delta$ is basically parallel to this, although the formulation
of our incidence relation $\Delta$ and the corresponding ideal sheaf
are much more involved and requires the desingularization $\widetilde{\hcoY}\to\hcoY$
\cite{HoTa3}. In the Grassmann-Pfaffian case, the restriction $I_{y}$
to $\{y\}\times X$ of the corresponding ideal sheaf $I$ defines
a generically smooth family of curves of genus 6 and degree 14 on
$X$. In this case, however, we read the corresponding BPS number
as $n_{6}^{X}(14)=123676$ (see \cite[(4.2)]{HoTa1}) and there seems
to be some complications in the possible moduli interpretation in
terms of DT/PT invariants.\vspace{0.1cm}

\textcolor{black}{Recently, the derived equvalence of the Grassmann-Pfaffian
Calabi-Yau threefolds has been formulated in the framework of categorical
geometric invariant theory \cite{BFK,Ha,Segal}. Our case has been
argued in \cite{Hor,Hori-Knapp} in the language of physics called
gauged linear sigma model, which is closely related to geometric invariant
theory. It seems interesting to see how the derived equivalence in
our case will be formulated in categorical geometric invariant theory. }

\vspace{0.3cm}
 Here we outline the present paper. In Section 2, we summarize some
basic results on which our arguments are based. In Section \ref{section:family},
we construct a family of curves on $Y$ parameterized by $X$ (Proposition
\ref{prop:genus3degree5}) and show that it comes with another family
of {}``shadow'' curves. Also we calculate the Brauer group of $Y$
as a corollary. In Section 4, based on the results \cite{HoTa3},
we summarize the birational geometry of $\hcoY$ and introduce the
desingularization $\widetilde{\hcoY}$. We will also introduce generically
conic bundles over the birationally related varieties to $\hcoY$.
In Section 5, we introduce the subscheme $\Delta$ representing the
incidence relation on $\widetilde{\hcoY}\times\hchow$ and its ideal
sheaf $\sI$. Dividing the process into four major steps, we obtain
a locally free resolution of $\sI$ in terms of the collections of
locally free sheaves $(\sE_{i})_{i\in I}$ and $(\mathcal{F})_{i\in I}$
(Theorem \ref{thm:resolY}). In section 6, we show that the subscheme
$\Delta$ is contained in (the pullback of) the universal family of
hyperplane sections $\sV$. In Section 7, restricting the ideal sheaf
to $Y\times X$, we show that this defines a family of curve on $Y$
which is flat over $X$, and coincides with the family obtained in
Section 3 (Proposition \ref{prop:Cflat}). In Section 8, using the
properties of the (dual) Lefschetz collections $(\sE_{i})_{i\in I}$
and $(\mathcal{F}_{i})_{i\in I}$ \cite[Thm.8.1.1]{HoTa3}, we show
the derived equivalence $\sD^{b}(X)\simeq\sD^{b}(Y)$. Some further
discussions are presented in Section 9.

\vspace{1cm}
 \textbf{Acknowledgements:} This paper is supported in part by Grant-in
Aid Scientific Research (C 18540014, S.H.) and Grant-in Aid for Young
Scientists (B 20740005, H.T.).

\noindent 

\newpage{} \global\long\def\Homega{H_{\mP(\Omega(1))}}
 \global\long\def\Hwomega{H_{\mP(\Omega(1)^{\wedge2})}}
 \global\long\def\Lwomega{L_{\mP(\Omega(1)^{\wedge2})}}
 \global\long\def\HGo{H_{\mathrm{G}(\Omega(1)^{\wedge2})}}
 \global\long\def\HGt{H_{\mathrm{G}(\wedge^{2} T(-1))}}
\global\long\def\Lt{L_{\mP(T(-1))}}
 \global\long\def\Lwt{L_{\mP(T(-1)^{\wedge2})}}
 \global\long\def\Ht{H_{\mP(T(-1))}}
 \global\long\def\Hwt{H_{\mP(T(-1)^{\wedge2})}}

\def\Notation{******\textbf{Glossary of notation.}

\begin{list}{}{\setlength{\leftmargin}{10pt} \setlength{\labelsep}{5pt}} 

\item We often abuse Cartier divisors and invertible sheaves. 

\item $\mP(W):$ the projectivization of a vector space $W$. 

\item $\sO_{X}(1):=\sO_{\mP(W)}(1)|_{X}$ if a variety $X$ is naturally
embedded in $\mP(W)$. 

\item $\mP(\sE):$ the projectivization of a locally free sheaf $\sE$
on a variety $X$. 

\item $H_{\mP(\sE)}:$ the tautological divisor of $\mP(\sE)$. 

\item $\sO_{\mP(\sE)}(1):$ the tautological invertible sheaf of
$\mP(\sE)$. 

\item $\mathrm{G}(r,\sE):$ the Grassmann bundle which parameterizes
$r-1$-dimensional linear subspaces in fibers of $\mP(\sE)$. 

\item $V$: a (fixed) $5$ dimensional complex vector space. $\mP^{4}:=\mP(V)$. 

\item $V_{i}$: an $i$-dimensional vector subspace of $V$. 

\item $\mathrm{F}(a,b,V):=\{(V_{a},V_{b})\mid V_{a}\subset V_{b}\subset V\}$
(the flag variety). \end{list}******}

$\;$

\def\NotationII{******

\begin{list}{}{\setlength{\leftmargin}{10pt} \setlength{\labelsep}{5pt}} 

\item We also use the following notation which simplifies lengthy
formulas: 

\item $\Omega(1):=\Omega_{\mP(V)}(1)$. 

\item $\Omega(1)^{\wedge i}:=\Lwedge^{i}(\Omega_{\mP(V)}(1))$ for
$i\geq2$. 

\item $T(-1):=T_{\mP(V)}(-1)$. 

\item $T(-1)^{\wedge i}:=\Lwedge^{i}(T(-1))$ for $i\geq2$. 

\item $\sO(i):=\sO_{\mP(V)}(i)$ for $i\in\mZ$. \end{list} *******}

\noindent \textbf{Notation:} Throughout the paper, we work over $\mC$,
the complex number field. We will use the following notation which
simplifies lengthy formulas: 

\begin{list}{}{\setlength{\leftmargin}{10pt} \setlength{\labelsep}{5pt}} 

\item $V$: a (fixed) $5$ dimensional complex vector space. $\mP^{4}:=\mP(V)$. 

\item $V_{i}$: an $i$-dimensional vector subspace of $V$. 

\item $\Omega(1):=\Omega_{\mP(V)}(1)$. 

\item $\Omega(1)^{\wedge i}:=\Lwedge^{i}(\Omega_{\mP(V)}(1))$ for
$i\geq2$. 

\item $T(-1):=T_{\mP(V)}(-1)$. 

\item $T(-1)^{\wedge i}:=\Lwedge^{i}(T(-1))$ for $i\geq2$. 

\item $\sO(i):=\sO_{\mP(V)}(i)$ for $i\in\mZ$. \end{list} 

\vspace{1cm}

\section{Preliminaries}

Here we summarize the basic results on which our proof of the derived
equivalence $\sD^{b}(X)\simeq\sD^{b}(Y)$ rely. We also summarize
the construction of the Calabi-Yau threefolds $X$ and $Y$ which
has been described in \cite{HoTa1,HoTa3}. 

\vspace{0.5cm}

\subsection{Basic general results}

For the computations of cohomology groups which appear in this paper,
we use the Bott theorem about the cohomology groups of Grassmann bundles
below extensively.

\vspace{0.3cm}

For a locally free sheaf $\mathcal{E}$ of rank $r$ on a variety
and a nonincreasing sequence $\beta=(\beta_{1},\beta_{2},\dots,\beta_{r})$
of integers, we denote by $\ft{\Sigma}^{\beta}\mathcal{E}$ the associated
locally free sheaf with the Schur functor $\ft{\Sigma}^{\beta}$.

\begin{thm}{\rm {\bf (Bott Theorem)}}\label{thm:Bott} Let $\pi\colon\mathrm{G}(r,\sA)\to X$
be a Grassmann bundle for a locally free sheaf $\sA$ on a variety
$X$ of rank $n$ and $0\to\sS\to\sA\to\sQ\to0$ the universal exact
sequence. For $\beta:=(\alpha_{1},\dots,\alpha_{r})\in\mZ^{r}$ $(\alpha_{1}\geq\dots\geq\alpha_{r})$
and $\gamma:=(\alpha_{r+1},\dots,\alpha_{n})\in\mZ^{n-r}$ $(\alpha_{r+1}\geq\dots\geq\alpha_{n})$,
we set $\alpha:=(\beta,\gamma)$ and $\sV(\alpha):=\ft{\Sigma}^{\beta}\sS^{*}\otimes\ft{\Sigma}^{\gamma}\sQ^{*}$.
Finally, let $\rho:=(n,\dots,1)$, and, for an element $\sigma$ of
the $n$-th symmetric group $\mathfrak{S}_{n}$, we set $\sigma^{\bullet}(\alpha):=\sigma(\alpha+\rho)-\rho$. 

\begin{myitem2}

\item[\rm{(1)}] If $\sigma(\alpha+\rho)$ contains two equal integers,
then $R^{i}\pi_{*}\sV(\alpha)=0$ for any $i\geq0$. 

\item[\rm{(2)}]If there exists an element $\sigma\in\mathfrak{S}_{n}$
such that $\sigma(\alpha+\rho)$ is strictly decreasing, then $R^{i}\pi_{*}\sV(\alpha)=0$
for any $i\geq0$ except $R^{l(\sigma)}\pi_{*}\sV(\alpha)=\ft{\Sigma}^{\sigma^{\bullet}(\alpha)}\sA^{*}$,
where $l(\sigma)$ represents the length of $\sigma\in\mathfrak{S}_{n}$.

\end{myitem2}

\end{thm}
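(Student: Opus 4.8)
The plan is to reduce the relative statement to the classical theorem of Bott on a single Grassmannian, applied fiberwise, and then to globalize by means of cohomology and base change together with the $\mathrm{GL}$-equivariance of the entire construction. To set this up, I would work over a point $x\in X$: there the bundle $\mathrm{G}(r,\sA)$ restricts to an ordinary Grassmannian $G$, the universal sequence $0\to \sS^*\to \sA\to \sQ\to 0$ restricts to the tautological sequence on $G$, and consequently $\sV(\alpha)=\ft{\Sigma}^{\beta}\sS\otimes\ft{\Sigma}^{\gamma}\sQ^*$ restricts to the homogeneous (that is, $\mathrm{GL}$-equivariant) vector bundle on $G$ associated with the weight $\alpha=(\beta,\gamma)$. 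The fiber cohomology $H^i(G,\sV(\alpha)|_G)$ is then exactly what the classical theorem computes.

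Next I would record the two cases as they appear in the statement. Forming $\alpha+\rho$ with $\rho=(n,\dots,1)$, the Weyl group $\mathfrak{S}_n$ acts by permuting entries. If some $\sigma(\alpha+\rho)$ has a repeated entry --- equivalently $\alpha+\rho$ lies on a wall --- then the weight is singular and all cohomology vanishes, giving case $(1)$. Otherwise $\alpha+\rho$ is regular, there is a unique $\sigma\in\mathfrak{S}_n$ making $\sigma(\alpha+\rho)$ strictly decreasing, and Bott's theorem concentrates the cohomology in the single degree $l(\sigma)$, where it is the irreducible $\mathrm{GL}$-representation of highest weight $\sigma^{\bullet}(\alpha)=\sigma(\alpha+\rho)-\rho$, namely the fiber of $\ft{\Sigma}^{\sigma^{\bullet}(\alpha)}\sA^*$ at $x$. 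This fiberwise input is classical and I would simply quote it.

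The last step is to pass from fibers to sheaves. In both cases the dimension of $H^i(G,\sV(\alpha)|_G)$ is independent of $x$ (it is $0$ in every degree in case $(1)$, and equal to $\dim\ft{\Sigma}^{\sigma^{\bullet}(\alpha)}$ in the single degree $l(\sigma)$ in case $(2)$). Hence, by Grauert's theorem and cohomology and base change, the sheaves $R^i\pi_*\sV(\alpha)$ are locally free of the expected rank and their formation commutes with base change; in particular they vanish for $i\neq l(\sigma)$. To identify $R^{l(\sigma)}\pi_*\sV(\alpha)$ with $\ft{\Sigma}^{\sigma^{\bullet}(\alpha)}\sA^*$ as sheaves, not merely fiberwise, I would invoke the naturality of the construction in $\sA$: the operations $\mathrm{G}(r,-)$, the universal sequence, the Schur functors and $\pi_*$ are all functorial and $\mathrm{GL}$-equivariant, so the answer is forced to be a Schur functor of $\sA^*$, and the fiberwise computation pins down which one. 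Alternatively, one may factor $\pi$ through the full relative flag bundle $\mathrm{Fl}(\sA)$, resolve $\ft{\Sigma}^{\alpha}$ in terms of the line bundles of the tautological filtration, compute the direct images of those line bundles by the Borel--Weil--Bott theorem for the full flag variety, and reassemble via the Leray spectral sequence.

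The main obstacle is exactly this global identification of $R^{l(\sigma)}\pi_*\sV(\alpha)$ as the sheaf $\ft{\Sigma}^{\sigma^{\bullet}(\alpha)}\sA^*$: base change alone yields only local freeness and the correct rank, so the real work is the bookkeeping needed either to track the $\mathrm{GL}$-equivariant structure or to run the flag-bundle resolution. Everything else --- the fiberwise vanishing and the determination of the degree $l(\sigma)$ --- is furnished directly by the classical theorem.
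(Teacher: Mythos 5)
Your proposal is correct in substance, but it takes a genuinely different route from the paper, whose entire ``proof'' is the citation ``See \cite{Bo}, \cite{D}, or \cite[(4.19) Corollary]{W}'' --- that is, the relative Bott theorem is simply quoted from the literature, where (in Demazure's and Weyman's treatments) it is proved directly in the relative setting by induction on simple reflections along the flag bundle. Your primary route instead reduces the relative statement to the classical absolute theorem: fiberwise Borel--Weil--Bott plus Grauert and cohomology-and-base-change disposes of case $(1)$ and concentrates the cohomology in degree $l(\sigma)$ in case $(2)$, and the remaining global identification of $R^{l(\sigma)}\pi_*\sV(\alpha)$ with $\ft{\Sigma}^{\sigma^{\bullet}(\alpha)}\sA^*$ is handled by the equivariance you invoke. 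One refinement: the sentence ``the answer is forced to be a Schur functor of $\sA^*$'' is not by itself an argument; the clean way to finish is to work locally on $X$ where $\sA$ is trivialized, so that $\mathrm{G}(r,\sA)\simeq U\times G$ and $R^{l(\sigma)}\pi_*\sV(\alpha)\simeq H^{l(\sigma)}(G,\sV(\alpha)\vert_G)\otimes\sO_U$ by base change, and then to note that the transition functions of $\sA$ act on this cohomology precisely through the irreducible $\mathrm{GL}_n$-representation $\ft{\Sigma}^{\sigma^{\bullet}(\alpha)}(\mC^n)^*$ furnished by the classical theorem, so the glued sheaf is the associated bundle $\ft{\Sigma}^{\sigma^{\bullet}(\alpha)}\sA^*$. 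With that trivialize-and-glue step made explicit your first route is complete, and your alternative route through the relative full flag bundle is essentially the Demazure/Weyman proof that the paper cites. What your approach buys is a self-contained derivation from the absolute Borel--Weil--Bott theorem; what the paper's citation buys is brevity and the avoidance of exactly this equivariance bookkeeping.
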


\begin{proof} See \cite{Bo}, \cite{D}, or \cite[(4.19) Corollary]{W}.
\end{proof}

\begin{thm}{\rm {\bf (Grothendieck-Verdier duality)}}\label{cla:duality}
Let $f\colon X\to Y$ be a proper morphism of smooth varieties $X$~and~$Y$.
Set $n:=\dim X-\dim Y$. We have the following functorial isomorphism:

For $\mathcal{F}^{\bullet}\in\sD^{b}(X)$ and $\mathcal{E}^{\bullet}\in\sD^{b}(Y)$,
\[
Rf_{*}R\sH om(\mathcal{F}^{\bullet},Lf^{*}\sE^{\bullet}\otimes\omega_{X/Y}[n])\simeq R\sH om(Rf_{*}\mathcal{F}^{\bullet},\mathcal{E}).\]
 In particular, if $\mathcal{E}^{\bullet}$ and $\mathcal{F}^{\bullet}$
are locally free $($and then we write them simply $\mathcal{E}$
and~$\mathcal{F})$ and if $R^{\bullet}f_{*}\mathcal{F}=0$, then
\[
R^{\bullet+n}f_{*}(\mathcal{F}^{*}\otimes\sE\otimes\omega_{X/Y})\simeq\sE xt^{\bullet}(f_{*}\mathcal{F},\mathcal{E}).\]

\end{thm}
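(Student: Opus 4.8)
The plan is to derive both displayed isomorphisms from the abstract Grothendieck duality adjunction together with an explicit description of the twisted inverse image functor in the smooth proper setting; throughout, $f^!$ denotes the right adjoint of $Rf_*$. \textbf{First} I would establish that $Rf_*\colon \sD^b(X)\to \sD^b(Y)$ admits such a right adjoint. Since $f$ is proper, hence quasi-compact and separated, $Rf_*$ preserves small coproducts on the unbounded derived category of quasi-coherent sheaves, so Brown representability produces a right adjoint $f^!$; properness further ensures that $Rf_*$ preserves coherence and boundedness, so that $f^!$ restricts to $\sD^b$. This is the one step that genuinely uses properness.

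\textbf{Next}, the adjunction gives the global isomorphism $R\Hom_Y(Rf_*\mathcal{F}^{\bullet},\mathcal{E}^{\bullet})\simeq R\Hom_X(\mathcal{F}^{\bullet},f^!\mathcal{E}^{\bullet})$, functorial in both variables. To upgrade this to the asserted $R\sH om$/$Rf_*$ form I would localize on $Y$: applying the global statement over every open $U\subseteq Y$ and its preimage, and using the compatibility of $Rf_*$ and $R\sH om$ with restriction to opens, one glues the local isomorphisms to the sheaf-level statement
\[
Rf_* R\sH om_X(\mathcal{F}^{\bullet},f^!\mathcal{E}^{\bullet})\simeq R\sH om_Y(Rf_*\mathcal{F}^{\bullet},\mathcal{E}^{\bullet}).
\]

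\textbf{It then remains} to identify $f^!\mathcal{E}^{\bullet}\simeq Lf^*\mathcal{E}^{\bullet}\otimes\omega_{X/Y}[n]$, which combined with the previous step yields the first display. This identification is local on source and target, so I would factor $f$ as a regular closed immersion $i\colon X\hookrightarrow P$ into a projective bundle $P\to Y$ followed by the smooth projection $p\colon P\to Y$, with $f=p\circ i$. For the smooth morphism $p$ of relative dimension $m$ one has $p^!(-)=Lp^*(-)\otimes\omega_{P/Y}[m]$, and for the regular immersion $i$ of codimension $c$ one has $i^!(-)=Li^*(-)\otimes\det N_{X/P}[-c]$. Composing $f^!=i^!p^!$ and using the adjunction formula $\omega_X\simeq i^*\omega_P\otimes\det N_{X/P}$ together with $\omega_{X/Y}=\omega_X\otimes f^*\omega_Y^{-1}$ yields the claimed formula, with $n=m-c=\dim X-\dim Y$.

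\textbf{Finally}, for the particular case I would assume $\mathcal{E}$ and $\mathcal{F}$ locally free and $R^if_*\mathcal{F}=0$ for $i>0$, so that $Rf_*\mathcal{F}=f_*\mathcal{F}$ is concentrated in degree $0$. On the left of the first display, local freeness of $\mathcal{F}$ gives $R\sH om(\mathcal{F},Lf^*\mathcal{E}\otimes\omega_{X/Y}[n])=\mathcal{F}^*\otimes f^*\mathcal{E}\otimes\omega_{X/Y}[n]$; the projection formula for the locally free $\mathcal{E}$ and the $\bullet$-th cohomology sheaf of $Rf_*$ then produce $R^{\bullet+n}f_*(\mathcal{F}^*\otimes\mathcal{E}\otimes\omega_{X/Y})$, while on the right the cohomology sheaves of $R\sH om(f_*\mathcal{F},\mathcal{E})$ are exactly $\sE xt^{\bullet}(f_*\mathcal{F},\mathcal{E})$. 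The main obstacle is the interplay of the first and third steps: constructing an $f^!$ that is at once a genuine adjoint and compatible with the closed-immersion/projection factorization (so that $f^!=i^!p^!$ and the attendant composition and base-change isomorphisms hold) is precisely the technical heart of duality theory; once these clean functorialities are available, everything else is formal manipulation of adjunctions, the projection formula, and local freeness.
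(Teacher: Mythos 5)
Your proposal is correct in outline, but it necessarily takes a different route from the paper, because the paper does not prove this theorem at all: its entire proof is the citation ``See \cite[Theorem 3.34]{Huy}'', i.e.\ the statement is imported as a known textbook result. What you have written is in effect a sketch of the proof of that cited theorem: construct the right adjoint $f^!$ of $Rf_*$ by Brown representability, sheafify the adjunction by localizing over opens of $Y$, identify $f^!\mathcal{E}^{\bullet}\simeq Lf^*\mathcal{E}^{\bullet}\otimes\omega_{X/Y}[n]$ by factoring $f$ through a projective bundle and combining the fundamental local isomorphism for the regular immersion with duality for the smooth projection, and then deduce the second display formally from local freeness and the projection formula. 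Your reading of the hypothesis $R^{\bullet}f_*\mathcal{F}=0$ as vanishing of the \emph{higher} direct images, so that $Rf_*\mathcal{F}=f_*\mathcal{F}$ sits in degree zero, is exactly how the paper uses the statement (e.g.\ in (\ref{duality}) and in Step 1 of Section \ref{section:BC}). Two caveats, neither fatal in the present context: the factorization $f=p\circ i$ exists because a proper morphism of quasi-projective varieties is projective, so for abstract smooth varieties you would need a compactification argument; and the claims that the Brown-representability adjoint restricts to bounded coherent complexes and is compatible with restriction to opens do not follow from properness alone --- they are most cleanly obtained a posteriori from the explicit formula for $f^!$, and they are precisely the ``technical heart'' you acknowledge deferring. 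The trade-off between the two approaches is clear: the paper's citation is the appropriate choice for a research article that needs the duality only as a computational tool, while your argument has the merit of making visible exactly where properness, smoothness, and the dimension shift $n=\dim X-\dim Y$ enter.
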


\begin{proof} See \cite[Theorem 3.34]{Huy}. \end{proof}

For our proof of the derived equivalence, we use the following theorem:

\begin{thm} \label{thm:D} Let $X$ and $Y$ be smooth projective
varieties and $\Prt$ a coherent sheaf on $X\times Y$ flat over $X$.
Then the Fourier-Mukai transform $\Phi_{\sP}\colon\sD^{b}(X)\to\sD^{b}(Y)$
$(\sP$ is called the \textit{kernel} for $\Phi_{\sP})$ is fully
faithful if and only if the following two conditions are satisfied:
\global\long\def\labelenumi{\textup{(\roman{enumi})}}
 
\begin{enumerate}
\item For any point $x\in X$, it holds $\Hom(\sP_{x},\sP_{x})\simeq\mC$,
and 
\item if $x_{1}\not=x_{2}$, then $\Ext^{i}(\sP_{x_{1}},\sP_{x_{2}})=0$
for any $i$. 
\end{enumerate}
Moreover, under these conditions, $\Phi_{\sP}$ is an equivalence
of triangulated categories if and only if $\dim X=\dim Y$ and $\sP\otimes\pr_{1}^{*}\omega_{X}\simeq\sP\otimes\pr_{2}^{*}\omega_{Y}$.

In particular, if $\dim X=\dim Y$, $\omega_{X}\simeq\sO_{X}$ and
$\omega_{Y}\simeq\sO_{Y}$, then $\Phi_{\sP}$ is fully faithful if
and only if it is an equivalence. \end{thm}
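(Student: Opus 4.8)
The plan is to reduce the full-faithfulness statement to the Bondal--Orlov/Bridgeland criterion on a spanning class, and then to derive the equivalence statement from the characterization of equivalences by compatibility with Serre functors. First I would record the standard structural facts about the Fourier--Mukai functor $\Phi_{\sP}$: it admits a left and a right adjoint, each again of Fourier--Mukai type, with kernels $\Prt^\vee\otimes\pr_2^*\omega_Y[\dim Y]$ and $\Prt^\vee\otimes\pr_1^*\omega_X[\dim X]$ respectively; the appearance of the canonical bundles and shifts here is exactly an application of Grothendieck--Verdier duality (Theorem \ref{cla:duality}) along the two projections from $X\times Y$. The geometric input is that the skyscraper sheaves $k(x)$, $x\in X$, form a spanning class of $\sD^b(X)$, which holds because $X$ is smooth projective. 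Finally, flatness of $\Prt$ over $X$ gives by base change $\Phi_{\sP}(k(x))\simeq\Prt_x$, the restriction of $\Prt$ to $\{x\}\times Y$, which is a genuine sheaf on $Y$.

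By Bridgeland's lemma an exact functor with adjoints is fully faithful if and only if it induces isomorphisms $\Hom(a,b[i])\xrightarrow{\sim}\Hom(\Phi_{\sP}a,\Phi_{\sP}b[i])$ for all $i$ and all members $a,b$ of a spanning class. Taking $a=k(x)$, $b=k(y)$, the left-hand side is computed from the Koszul resolution of $k(x)$: it vanishes for $x\neq y$, and for $x=y$ equals $\wedge^i T_xX$, so in particular it is $\mC$ in degree $0$ and $0$ in negative degrees. The right-hand side is $\Ext^i(\Prt_x,\Prt_y)$. The easy (``only if'') direction is then immediate: full faithfulness forces these isomorphisms, and reading them off in degree $0$ on the diagonal and in all degrees off the diagonal returns conditions (i) and (ii). The substance is the converse: assuming (i) and (ii), one knows the required isomorphism in degree $0$ (where the natural map $\mC\to\Hom(\Prt_x,\Prt_x)$ sends $\id$ to $\id$, hence is bijective once both sides are one-dimensional), in negative degrees (automatic, since $\Prt_x$ is a sheaf), and off the diagonal in every degree; what is not yet checked is the matching of the intermediate groups $\Ext^i(\Prt_x,\Prt_x)$ with $\wedge^iT_xX$ for $0<i\le\dim X$, and the vanishing for $i>\dim X$.

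The main obstacle is precisely this passage from the pointwise, degree-restricted data to full faithfulness in all degrees. Here I would run the Bondal--Orlov argument on the composite $\Phi^R_{\sP}\circ\Phi_{\sP}$, which is the Fourier--Mukai functor attached to some kernel $\sQ$ on $X\times X$; full faithfulness is equivalent to $\sQ\simeq\sO_\Delta$. Conditions (i) and (ii), transported through the adjunction into statements about $\Hom(k(x),\Phi^R_{\sP}\Phi_{\sP}(k(y))[i])\simeq\Ext^i(\Prt_x,\Prt_y)$, force $\Supp\sQ$ to lie on the diagonal, force $\sQ$ (after the appropriate shift) to be concentrated in a single cohomological degree, and pin its generic rank along $\Delta$ to $1$; a support-and-rank analysis then identifies $\sQ$ with $\sO_\Delta$. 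This is the genuinely nontrivial step, and it is what lets the two-line criterion (i)--(ii) stand in for the full list of Bondal--Orlov conditions, in particular absorbing the unchecked intermediate and high-degree $\Ext$ groups.

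Finally, for the equivalence statement I would invoke the standard fact that a fully faithful Fourier--Mukai functor between derived categories of smooth projective varieties is an equivalence exactly when it commutes with the Serre functors, $S_Y\circ\Phi_{\sP}\cong\Phi_{\sP}\circ S_X$: the image is an admissible subcategory and hence part of a semiorthogonal decomposition of $\sD^b(Y)$, and Serre-compatibility is incompatible with a nonzero orthogonal complement. Since $S_X=(-)\otimes\omega_X[\dim X]$ and $S_Y=(-)\otimes\omega_Y[\dim Y]$, unwinding this compatibility at the level of kernels yields exactly $\dim X=\dim Y$ together with $\Prt\otimes\pr_1^*\omega_X\simeq\Prt\otimes\pr_2^*\omega_Y$. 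The concluding ``in particular'' then follows without further work: when $\dim X=\dim Y$ and $\omega_X\simeq\sO_X$, $\omega_Y\simeq\sO_Y$, both Serre functors are the same shift and the kernel condition holds automatically, so for such $X$ and $Y$ a fully faithful $\Phi_{\sP}$ is necessarily an equivalence.
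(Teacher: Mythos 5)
The paper offers no proof of Theorem \ref{thm:D}: it is a standard result quoted from the literature, the ``proof'' being only the citations \cite[Theorem 1.1]{BO}, \cite[Theorem 1.1]{B}, and \cite[Corollary 7.5 and Proposition 7.6]{Huy}. Your sketch correctly reconstructs exactly the argument of those cited references --- adjoint kernels via Grothendieck--Verdier duality, the spanning class of skyscrapers, Bridgeland's criterion for the easy direction, the Bondal--Orlov composite-kernel analysis (whose delicacy you rightly isolate as the main step, though the final identification of the kernel with $\sO_\Delta$ really runs through the unit morphism of the adjunction rather than support-and-rank data alone), and the Serre-functor argument for the equivalence statement --- so it follows the same, merely delegated, approach as the paper.
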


\begin{proof} See \cite[Theorem 1.1]{BO}, \cite[Theorem 1.1]{B},
\cite[Corollary 7.5 and Proposition 7.6]{Huy}. \end{proof}

\vspace{0.3cm}
 In this paper, we adopt the following definition of Calabi-Yau variety
and also Calabi-Yau manifold.

\begin{defn} We say a normal projective variety $X$ \textit{a Calabi-Yau
variety} if $X$ has only Gorenstein canonical singularities, the
canonical bundle of $X$ is trivial, and $h^{i}(\sO_{X})=0$ for $0<i<\dim X$.
If $X$ is smooth, then $X$ is called \textit{a Calabi-Yau manifold}.
A smooth Calabi-Yau threefold is abbreviated as a Calabi-Yau threefold.~{\hfill{}{[}{]}}
\end{defn}

\vspace{0.5cm}

\subsection{The Hilbert scheme $\hchow$ of two points on $\mP(V)$}

\label{HilbChow}

Let $\chow$ be the Chow variety of two points on $\mP(V)$ embedded
by the Chow form into $\mP({\ft S}^{2}V)$. Denote by $\chow_{0}$
the second Veronese variety $v_{2}(\mP(V))$. It is a well-known fact
that $\chow_{0}=\Sing\chow$ and $\chow$ is the secant variety of
$\chow_{0}$. If we take a coordinate of ${\ft S}^{2}V$ so that it
represents a generic $5\times5$ symmetric matrix, then $\chow_{0}$
(resp.~$\chow$) is characterized as the locus of rank $1$ (resp.~rank
$\leq2$) symmetric matrices.

Let $\hchow$ be the Hilbert scheme of 0-dimensional subschemes of
length two in $\mP(V)$, which will be called the Hilbert scheme of
two points in $\mP(V)$ hereafter. A $0$-dimensional subscheme of
length two may be determined from the corresponding $0$-cycle $\eta$
of length two on $\mP(V)$ and a line $l\subset\mP(V)$ containing
$\eta$, and vice versa. Hence, we have an isomorphism ${\hchow}\simeq\mP(\ft{S}^{2}\sF)$,
where $\sF$ is the universal subbundle of rank two on $\mathrm{G}(2,V)$.
Let \begin{equation}
0\rightarrow\sF\rightarrow V\otimes\sO_{\mathrm{G}(2,V)}\rightarrow\sG\rightarrow0\label{eq:univG25}\end{equation}
 be the universal exact sequence on $\mathrm{G}(2,V)$. By the induced
injection ${\ft S}^{2}\sF\hookrightarrow{\ft S}^{2}V\otimes\sO_{\mathrm{G}(2,V)}$,
we obtain a morphism $\hchow\to\mP({\ft S}^{2}V)\times\mathrm{G}(2,V)\to\mP({\ft S}^{2}V)$.
Then the tautological divisor of $\mP({\ft S}^{2}\sF)$ is the pull-back
of $\sO_{\mP({\ft S}^{2}V)}(1)$. The image of this morphism is nothing
but $\chow$ and the induced morphism $f\colon\hchow\to\chow$ coincides
with the Hilbert-Chow morphism. Moreover, $f$ is the blow-up along
$\chow_{0}$. \[
\xymatrix{ & {\hchow}\ar[dl]_{f}\ar[dr]^{g}\\
\chow &  & \mathrm{G}(2,V).}
\]
 We define the following divisors on $\hchow$: \[
\text{\ensuremath{H_{{\hchow}}=f^{*}\sO_{\chow}(1)}and \ensuremath{L_{{\hchow}}=g^{*}\sO_{\mathrm{G}(2,V)}(1)}.}\]

We also denote by $E_{f}$ the $f$-exceptional divisor.

\vspace{0.5cm}

\subsection{The double quintic symmetroid $\hcoY$}

\label{section:Quintic}

Hereafter we assume $V\simeq\mC^{5}$ and write by $V^{*}$ the dual
vector space of $V$. Let $\Hes\subset\mP({\ft S}^{2}V^{*})$ be the
locus of singular quadrics in $\mP(V)$, which will be called the
(generic) \textit{quintic symmetroid} since it is the hypersurface
defined by the determinant of the (generic) $5\times5$ symmetric
matrix. It is the locus of $5\times5$ symmetric matrices of rank
$\leq4$.

Let $\UU:=\{(t,[Q])\mid t\in\Sing Q\}\subset\mP(V)\times\mP({\ft S}^{2}V^{*})$.
Then the natural morphism $p\colon\UU\to\Hes$ is a desingularization
of $\Hes$ (see \cite[Subsect.4.1]{HoTa3}).

To construct the double cover $\hcoY$ of $\Hes$ branched along the
locus of symmetric matrices of rank $\leq3$, we introduce the variety
$\Zpq$ which parameterizes the pair of quadrics $Q$ in $\mP(V)$
and planes $\mP(\Pi)$ such that $\mP(\Pi)\subset Q$. To describe
$\Zpq$ more explicitly, consider the universal exact sequence on
$\mathrm{G}(3,V)$; \begin{equation}
0\rightarrow\eS\rightarrow V\otimes\sO_{\mathrm{G}(3,V)}\rightarrow\eQ\rightarrow0.\label{eq:univG35}\end{equation}
 The surjection $\ft{S}^{2}V^{*}\otimes\sO_{\mathrm{G}(3,V)}\to\ft{S}^{2}\eS^{*}$
follows from the dual sequence. Then we define a locally free sheaf
$\sE^{*}$ on $\mathrm{G}(3,V)$ by \begin{equation}
0\to\sE^{*}\to\ft{S}^{2}V^{*}\otimes\sO_{\mathrm{G}(3,V)}\to\ft{S}^{2}\eS^{*}\to0.\label{eq:sE0}\end{equation}
 Then, since the fiber of $\mP(\ft{S}^{2}\eS^{*})$ over a point $[\Pi]\in\mathrm{G}(3,V)$
may be identified with the linear system of quadrics on $\mP(\Pi)$,
the fiber of $\mP(\sE^{*})$ represents the quadrics which contain
the plane $\mP(\Pi)$. Namely we have \[
\Zpq=\mP(\sE^{*})\subset\mathrm{G}(3,V)\times\mP({\ft S}^{2}V^{*}).\]
 Note that the image of the naturally induced morphism $\Zpq\to\mP({\ft S}^{2}V^{*})$
coincides with the singular quadrics $\Hes$, since a smooth quadric
does not contain a plane.

Let $\Zpq\overset{\Lpi_{\Zpq}}{\to}\hcoY\overset{\Lrho_{\hcoY}}{\to}\Hes$
be the Stein factorization of the natural morphism $\Zpq\to\Hes$.
Then $\Lrho_{\hcoY}\colon\hcoY\to\Hes$ is the finite double covering
branched along the locus of quadrics of rank less than or equal to
three. $\hcoY$ is called the (generic) \textit{double quintic symmetroid}.
We say that $y\in\hcoY$ is a \textit{rank $i$ point} if $\Lrho_{\hcoY}(y)\in\Hes$
corresponds to a quadric of rank $i$. $G_{\hcoY}:=\Sing\hcoY$ is
the subset consisting of rank $1$ and $2$ points (\cite[Prop.~5.7.2]{HoTa3}).
We introduce divisors on $\hcoY$ and $\Zpq$, respectively, by \[
M_{\hcoY}=\Lrho_{\hcoY}^{\;*}\sO_{\Hes}(1)\text{ and }M_{\Zpq}=\Lpi_{\Zpq}^{\;*}\circ\Lrho_{\hcoY}^{\;*}\sO_{\Hes}(1),\]
 where $\sO_{\Hes}(1):=\sO_{\mP({\ft S}^{2}V^{*})}(1)\vert_{\Hes}$.

Consider the fiber $\Zpq_{[Q]}$ of the morphism $\Zpq\to\Hes$ over
a quadric $[Q]\in\Hes$. If ${\rm rank}\, Q=4$, then $Q$ is a cone
over the smooth quadric ($\simeq\mP^{1}\times\mP^{1}$) in $\mP(V/V_{1})$
with the vertex $[V_{1}]$, and the planes in $Q$ consist of two
different $\mP^{1}$-families which correspond to the two rulings
of $\mP^{1}\times\mP^{1}$. If ${\rm rank}\, Q=3$, then $Q$ is a
cone over the smooth quadric in $\mP(V/V_{2})$ with the vertex $\mP(V_{2})\simeq\mP^{1}$,
and in this case there is only one $\mP^{1}$-family of planes in
$Q$. Thus the morphism $\Lpi_{\Zpq}\colon\Zpq\to\hcoY$ of the Stein
factorization is a generically conic bundle; generic points of $\hcoY$
are represented by pairs $(Q,q)$ of quadrics $Q$ of rank $4$ (or
$3$) and connected families $q$ of planes contained in $Q$, where
$q$ represent conics in $\mathrm{G}(3,V)$ (\cite[Prop.~4.2.5]{HoTa3}).
It turns out that several birational models of $\Zpq\to\hcoY$ play
crucial roles to construct the kernel of a Fourier-Mukai functor giving
an equivalence of $\sD^{b}(X)$ and $\sD^{b}(Y)$.

\vspace{0.3cm}
 The following computations of the Chern classes of the locally free
sheaf $\sE$ on $\Zpq$ will be used in the next section.

\begin{lem} \label{cla:c1c2} $c_{1}(\sE)=c_{1}(\sO_{\mathrm{G}(3,V)}(4))$
and $c_{2}(\sE)=5c_{2}(\eQ)+6c_{1}(\sO_{\mathrm{G}(3,V)}(1))^{2}$.
\end{lem}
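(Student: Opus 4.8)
The plan is to reduce the statement to a routine Chern-class computation on $\mathrm{G}(3,V)$, using the defining sequence \eqref{eq:sE0} together with the triviality of $\ft{S}^2 V^*\otimes \sO_{\mathrm{G}(3,V)}$. From \eqref{eq:sE0} the Whitney formula gives $c(\sE^*)\,c(\ft{S}^2\eS)=1$, hence $c(\sE^*)=c(\ft{S}^2\eS)^{-1}$; passing to the dual via $c_i(\sE)=(-1)^i c_i(\sE^*)$ then expresses $c_1(\sE)$ and $c_2(\sE)$ entirely through $c_1(\ft{S}^2\eS)$ and $c_2(\ft{S}^2\eS)$. Concretely, first I would record the relations $c_1(\sE)=c_1(\ft{S}^2\eS)$ and $c_2(\sE)=c_1(\ft{S}^2\eS)^2-c_2(\ft{S}^2\eS)$.

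The second step is to compute the Chern classes of $\ft{S}^2\eS$. Since $\eS$ has rank $3$, I would apply the splitting principle: writing $a_1,a_2,a_3$ for the Chern roots of $\eS$, the roots of $\ft{S}^2\eS$ are the $2a_i$ together with the $a_i+a_j$ ($i<j$). Collecting the first two elementary symmetric functions in these six roots and re-expressing them through $c_1(\eS)$ and $c_2(\eS)$ yields $c_1(\ft{S}^2\eS)=4\,c_1(\eS)$ and $c_2(\ft{S}^2\eS)=5\,c_1(\eS)^2+5\,c_2(\eS)$. Combining this with the previous step gives $c_1(\sE)=4\,c_1(\eS)$ and $c_2(\sE)=11\,c_1(\eS)^2-5\,c_2(\eS)$.

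It remains to translate $c_1(\eS)$ and $c_2(\eS)$ into the classes appearing in the statement. The universal sequence \eqref{eq:univG35} gives $c(\eS^*)\,c(\eQ)=1$, whence $c_1(\eS)=c_1(\eQ)=c_1(\sO_{\mathrm{G}(3,V)}(1))$ (the Pl\"ucker polarization) and $c_2(\eS)=c_1(\sO_{\mathrm{G}(3,V)}(1))^2-c_2(\eQ)$. Substituting these into the formulas of the previous paragraph immediately produces $c_1(\sE)=c_1(\sO_{\mathrm{G}(3,V)}(4))$ and $c_2(\sE)=5\,c_2(\eQ)+6\,c_1(\sO_{\mathrm{G}(3,V)}(1))^2$, as claimed.

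There is no conceptual difficulty here, and I expect the whole argument to be elementary once the two defining sequences are in place. The only point demanding care — and the main source of possible error — is the consistent tracking of dualizations and signs: the distinction between $\eS$ and the tautological subbundle $\eS^*$, the identification of the Pl\"ucker class $c_1(\sO_{\mathrm{G}(3,V)}(1))$ with $c_1(\eQ)=-c_1(\eS^*)$, and the alternating signs in passing between $c(\sE)$ and $c(\sE^*)$. Keeping these conventions fixed throughout is precisely what makes the coefficients $4$, $5$, and $6$ in the final formulas come out correctly.
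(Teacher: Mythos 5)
Your proposal is correct and follows essentially the same route as the paper's proof: both use the sequence (\ref{eq:sE0}) to get $c_1(\sE)=c_1(\ft{S}^2\eS)$ and $c_2(\sE)=c_1(\ft{S}^2\eS)^2-c_2(\ft{S}^2\eS)$, the rank-3 formulas $c_1(\ft{S}^2\eS)=4c_1(\eS)$, $c_2(\ft{S}^2\eS)=5c_1(\eS)^2+5c_2(\eS)$, and the universal sequence (\ref{eq:univG35}) to rewrite everything via $c_2(\eQ)=c_1(\eS)^2-c_2(\eS)$ and the Pl\"ucker class. Your treatment merely makes explicit (splitting principle, Whitney formula, sign conventions for dualization) what the paper calls standard computations.
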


\begin{proof} This follows from standard computations of Chern classes
by using the exact sequences (\ref{eq:univG35}) and (\ref{eq:sE0}).
Note that $c_{1}(\sO_{\mathrm{G}(3,V)}(1))$ is given by the Schubert
cycle $\sigma_{1}$, which is $c_{1}(\eS^{*})$ in our notation. Since
${\rm rk}\,\eS=3$, the first relation follows from $c_{1}(\sE)=c_{1}(\ft{S}^{2}\eS^{*})=4c_{1}(\eS^{*})$.
For the second relation, we derive $c_{2}(\ft{S}^{2}\eS^{*})=5c_{1}(\eS^{*})^{2}+5c_{2}(\eS^{*})$
and use $c_{2}(\eQ)=c_{1}(\eS^{*})^{2}-c_{2}(\eS^{*})$, $c_{2}(\sE)=c_{1}(\ft{S}^{2}\eS^{*})^{2}-c_{2}(\ft{S}^{2}\eS^{*})$.
\end{proof}

\vspace{0.5cm}

\subsection{Calabi-Yau threefolds $X$ and $Y$}

\label{section:XY}

Let us identify $\mP({\ft S}^{2}V^{*})$ with the space of quadrics
in $\mP(V)$. Then a $4$-plane $P\subset\mP({\ft S}^{2}V^{*})$ may
be regarded as a $4$-dimensional linear system of quadrics $P=|Q_{1},Q_{2},\cdots,Q_{5}|$
with the quadrics $Q_{i}$ defined by the corresponding $5\times5$
symmetric matrices $A_{i}$. Let $P^{\perp}\subset\mP(\ft{S}^{2}V)$
be the orthogonal space of $P$ with respect to the dual pairing between
${\ft S}^{2}V$ and ${\ft S}^{2}V^{*}$, and define $X=\chow\cap P^{\perp}$.
Dually, we may construct also $Y$ in $\hcoY$ as the pull-back of
the quintic symmetroid $\Hes\cap P$. The linear system $P$ is called
\textit{regular} if i) it is base point free and ii) any line $l\subset{\rm Sing}\, Q$
for some $Q\in P$ is not contained in a linear subsystem of dimension
$\geq2$. $X$ is smooth if and only if $P$ is regular \cite[Prop.2.1]{HoTa1}.
It has been shown that $Y$ is also smooth when $P$ is regular {[}ibid.~Prop.3.11{]}.
We say that $X$ and $Y$ defined for such a choice of $P$ are \textit{orthogonal}
to each other.

\begin{prop} \label{prop:topXY} $X$ and $Y$ constructed as above
are Calabi-Yau threefolds with the following invariants$:$ 

\noindent $1)$ \hfil $\deg(X)=35,\;\; c_{2}.D=50,\;\; h^{2,1}(X)=26,\; h^{1,1}(X)=1,$
\hfil 

\noindent where $D$ is the restriction to $X$ of a hyperplane section
of $\chow$, $\deg(X):=D^{3}$ and $c_{2}$ is the second Chern class
of $X$. 

\noindent $2)$ \hfil $\deg(Y)=10,\;\; c_{2}.M=40,\;\; h^{2,1}(Y)=26,\; h^{1,1}(Y)=1,$
\hfil 

\noindent where $M$ is the restriction of $M_{\hcoY}$ to $Y$, $\deg(Y):=M^{3}$,
and $c_{2}$ is the second Chern class of $Y$. \end{prop}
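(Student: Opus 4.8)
The plan is to treat $X$ and $Y$ separately, in each case first verifying the three defining properties of a Calabi-Yau variety (Gorenstein canonical singularities, trivial canonical bundle, and $h^i(\sO)=0$ for $0<i<3$) and then computing the four numerical invariants by intersection theory on a smooth model. Since smoothness of $X$ and $Y$ is already available from \cite[Prop.~2.1, Prop.~3.11]{HoTa1}, the canonical (and hence Gorenstein canonical) condition is automatic, so only the triviality of the canonical bundle and the intermediate vanishing remain. The essential asymmetry between the two cases is that $X$ sits inside the \emph{smooth} Hilbert scheme $\hchow$ (away from $\chow_0$), whereas $Y$ lives on the singular double symmetroid $\hcoY$, so the computations for $Y$ must be transported either to the desingularization $\widetilde{\hcoY}$ of \cite{HoTa3} or to the smooth conic bundle $\Zpq=\mP(\sE^*)\to\mathrm{G}(3,V)$.

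For $X$: since $P$ is regular, $X$ misses $\Sing\chow=\chow_0$, so its strict transform $\widehat{X}\subset\hchow$ maps isomorphically to $X$ and is disjoint from $E_f$. The five hyperplanes cutting out $P^\perp$ pull back to members of $|H_{\hchow}|$ whose common zero locus is exactly $f^{-1}(X)=\widehat{X}$, a pure threefold, so $\widehat{X}$ is a complete intersection of five divisors in $|H_{\hchow}|$ and $[\widehat{X}]=H_{\hchow}^{\,5}$. Using the projective bundle structure $g\colon\hchow=\mP(\ft{S}^2\sF^*)\to\mathrm{G}(2,V)$, the relative Euler sequence, and $\omega_{\mathrm{G}(2,V)}=\sO_{\mathrm{G}(2,V)}(-5)$, I would obtain $K_{\hchow}=-3H_{\hchow}-2L_{\hchow}$; combined with the relation $E_f=2H_{\hchow}-2L_{\hchow}$ (checked on a $g$-fibre and on one section over a line in $\mathrm{G}(2,V)$) this gives $K_{\hchow}+5H_{\hchow}=E_f$, which restricts to zero on $\widehat{X}$, hence $\omega_X\simeq\sO_X$. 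The vanishings $h^1(\sO_X)=h^2(\sO_X)=0$ follow from the Koszul resolution of $\sO_{\widehat{X}}$ together with the cohomology of the twists $\sO_{\hchow}(-kH_{\hchow})$, and $h^{1,1}(X)=1$ from a Lefschetz-type argument on $\chow$ (where $\sO_\chow(1)$ is ample), using that $H_{\hchow}$ and $L_{\hchow}$ become proportional on $\chow^{\circ}=\hchow\setminus E_f$, so that naive Lefschetz on $\hchow$ is unavailable. Finally $\deg X=H_{\hchow}^{\,8}$, $c_2(X)\cdot D$ and $e(X)$ reduce, via $[\widehat{X}]=H_{\hchow}^{\,5}$ and $c(T_{\widehat X})=c(T_{\hchow})|_{\widehat X}/(1+H_{\hchow})^5$, to degree-$8$ monomials in $H_{\hchow},L_{\hchow}$; pushing forward along $g$ (Segre classes of $\ft{S}^2\sF^*$) and evaluating by Schubert calculus on $\mathrm{G}(2,V)$ yields $\deg X=35$ and $c_2\cdot D=50$, and then $h^{2,1}(X)=1-\tfrac12 e(X)=26$.

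For $Y$: the degree is immediate, since $M=M_{\hcoY}|_Y$ and $\Lrho_\hcoY|_Y\colon Y\to\Hes\cap P$ is finite of degree $2$ onto the quintic symmetroid threefold $\Hes\cap P\subset P\simeq\mP^4$, so by the projection formula $\deg Y=M^3=2\cdot\deg(\Hes\cap P)=2\cdot 5=10$. For the Calabi-Yau property I would realise $Y$ as the double cover of the nodal quintic threefold $\Hes\cap P$ branched along its rank-$3$ locus; this cover simultaneously resolves the nodes (accounting for smoothness) and, by the double-cover adjunction formula computed on $\widetilde{\hcoY}$, yields $\omega_Y\simeq\sO_Y$, while $h^1(\sO_Y)=h^2(\sO_Y)=0$ and $h^{1,1}(Y)=1$ follow from a Lefschetz-type argument as for $X$. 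The remaining invariants $c_2(Y)\cdot M$ and $e(Y)$ I would compute on the smooth conic bundle $\Zpq=\mP(\sE^*)\to\mathrm{G}(3,V)$: the preimage $Y_{\Zpq}=\Lpi_\Zpq^{-1}(Y)$ is the complete intersection of $10$ members of $|M_{\Zpq}|$, a fourfold fibred in conics over $Y$, and pushing intersection numbers down this fibration (with $c_1(\sE),c_2(\sE)$ supplied by Lemma \ref{cla:c1c2}) reduces everything to Schubert calculus on $\mathrm{G}(3,V)$, giving $c_2\cdot M=40$ and, via $h^{2,1}(Y)=1-\tfrac12 e(Y)$, the value $h^{2,1}(Y)=26$.

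The main obstacle is the $Y$-computation. Whereas for $X$ the singular locus $\chow_0$ is simply avoided and all intersection numbers live on the smooth $\hchow$, for $Y$ one cannot apply adjunction or the Koszul complex on a smooth ambient, because $\hcoY$ is singular along its rank $1,2$ points and $Y$ is a branched double cover resolving nodes. The delicate points are (i) controlling the conic fibration $\Zpq\to\hcoY$ where it degenerates over the rank-$3$ locus (there the two rulings collide and the branch divisor sits), so that the pushforward of intersection numbers from $Y_{\Zpq}$ to $Y$ is correct, and (ii) verifying $\omega_Y\simeq\sO_Y$ through the singularities, for which the explicit desingularization $\widetilde{\hcoY}$ of \cite{HoTa3} is the natural tool. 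I also expect the proportionality of $H_{\hchow}$ and $L_{\hchow}$ on $\chow^{\circ}$ (needed for $h^{1,1}=1$) and the absence of excess components of $f^{-1}(X)$ in $E_f$ (needed for $[\widehat X]=H_{\hchow}^{\,5}$) to rest on the local geometry of $\chow_0$ and $E_f$.
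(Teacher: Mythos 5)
The paper's own ``proof'' is purely a citation to \cite[Prop.2.1]{HoTa1} and \cite[Prop.4.3.4]{HoTa3}, so your attempt at a self-contained argument must be measured against what those references actually do. Most of your verifiable claims are sound: $K_{\hchow}=-3H_{\hchow}-2L_{\hchow}$, $E_f=2H_{\hchow}-2L_{\hchow}$ (hence $K_{\hchow}+5H_{\hchow}=E_f$ and $\omega_X\simeq\sO_X$), $[\widehat X]=H_{\hchow}^{\,5}$, the Segre/Schubert evaluation $\deg X=H_{\hchow}^{\,8}=35$, the projection-formula computation $\deg Y=2\cdot 5=10$, the Koszul computation of $h^i(\sO_X)$, and the strategy of computing $e(Y)$ and $c_2(Y)\cdot M$ on the conic bundle over $Y$ (all fibers over $Y$ are smooth conics, so $e(Z)=2e(Y)$) are all correct in outline.

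The genuine gap is the step $h^{1,1}=1$ (on which your value $h^{2,1}=h^{1,1}-\tfrac12 e=26$ also depends): you propose a ``Lefschetz-type argument on $\chow$'' for $X$ and ``as for $X$'' for $Y$, but no Lefschetz hyperplane theorem holds on these ambients in the range you need. The ambient $\chow$ is singular along the $4$-dimensional locus $\chow_0$ (and $\hcoY$ along an $8$-dimensional locus), and one cuts $5$ (resp.\ $10$) times; the Goresky--MacPherson range degrades accordingly. That this is not a technicality is witnessed by the paper itself (Subsection \ref{subsection:fund}): $\pi_1(X)\simeq\mZ_2$ while $\chow$, being a quotient of $\mP(V)\times\mP(V)$, is simply connected, and correspondingly $H^2(X,\mZ)$ contains $\mZ_2$-torsion while $H^2(\chow,\mZ)$ does not --- so Lefschetz already fails for $\pi_1$ and integral $H^2$, and the degree-two range needed for $b_2$ is strictly beyond the $\pi_1$ range. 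The rational coincidence $b_2(X)=b_2(\chow)=1$ is true but is exactly what has to be proved by other means. The route taken in \cite{HoTa1} is to pass to the \'etale double cover $\widetilde X\subset\mP(V)\times\mP(V)$, the smooth complete intersection of five $(1,1)$-divisors, where honest Lefschetz applies, and then take $\mZ_2$-invariants: $H^2(X,\mQ)=\bigl(\mQ h_1\oplus\mQ h_2\bigr)^{\mZ_2}$ is one-dimensional and $e(X)=\tfrac12 e(\widetilde X)=-50$; for $Y$ the corresponding work is done on the resolution $\widetilde\hcoY$ in \cite{HoTa3}. A secondary inaccuracy: $H=\Hes\cap P$ is not a \emph{nodal} quintic --- it is singular along the rank-$3$ curve (degree $20$, genus $26$), so the branch locus of $Y\to H$ has codimension $2$ and the usual double-cover canonical bundle formula (which presupposes a branch divisor) does not apply as stated; the correct statement is that a finite double cover branched in codimension $\geq 2$ satisfies $K_{\hcoY}=\Lrho_{\hcoY}^{\;*}K_{\Hes}=-10M_{\hcoY}$ on the locus where $\hcoY$ is smooth (which contains $Y$), after which adjunction for the ten hyperplane cuts gives $\omega_Y\simeq\sO_Y$.
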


\begin{proof} The invariants of $X$ are easy to be determined, see
\cite[Prop.2.1]{HoTa1}. The invariants of $Y$ are determined in
\cite[Prop.4.3.4]{HoTa3} (see also \cite[Prop.3.11 and 3.12]{HoTa1}).
\end{proof}

Hereafter we consider the Calabi-Yau threefolds $X$ and $Y$ which
are orthogonal to each other.

Since $X$ is smooth, $X$ is disjoint from $\Sing\chow$, and hence
we can consider $X$ to be contained in $\hchow$. Moreover, $X$
is mapped by $g\colon\hchow\to\mathrm{G}(2,V)$ onto its image isomorphically,
and hence we can also consider $X$ to be contained in $\mathrm{G}(2,V)$.
By the existence of this embedding into $\mathrm{G}(2,V)$, $X$ is
called \textit{a $($generalized\,$)$ Reye congruence}. As a subvariety
of $\mathrm{G}(2,V)$, $X$ is characterized as the subset of lines
$l$ in $\mP(V)$ such that quadrics which contain $l$ form a $2$-dimensional
linear system (net) in $P$ (\cite[Prop.3.5.2]{HoTa3}).

$Y$ will be called the \textit{$(3$-dimensional\,$)$ double quintic
symmetroid orthogonal to $X$.} Since $Y$ is smooth, $Y$ is disjoint
from $\Sing\hcoY=G_{\hcoY}$.

\vspace{1cm}

\section{A family of curves on $Y$ parameterized by $X$}

\label{section:family}

In this section, using the generically conic bundle $\Lpi_{\Zpq}\colon\Zpq\to\hcoY$,
we construct a family of curves on $Y$ of degree $5$ parameterized
by $X$, and show that its general member is a smooth curve of genus
$3$ for a general $P$.

Later in Section~\ref{section:flat} (see also Section~\ref{subsection:onX}),
we will show that this family is flat and explains the BPS number
of curves of genus 3 and degree 5 on $Y$. The ideal sheaf of this
family of curves in $Y\times X$ will be related with the birational
model $\mathrm{G}(3,T(-1)^{\wedge2})$ of $\hcoY$ and will give the
kernel of the Fourier-Mukai functor which shows that $\sD^{b}(X)\simeq\sD^{b}(Y)$.

\vspace{0.5cm}

\subsection{Constructing the family of curves}

\label{subsection:curves}

Recall our definition of basic morphisms; \[
\xymatrix{\Hes & \ar[l]_{\;\Lrho_{\hcoY}}\hcoY & \ar[l]_{\;\Lpi_{\Zpq}}\Zpq\ar[r]^{\Lrho_{\Zpq}\;\;\;\;} & \mathrm{G}(3,V),}
\]
 where $\Lrho_{\Zpq}:\Zpq\to\mathrm{G}(3,V)$ is a $\mP^{8}$-bundle
since the fiber over a point $[\Pi]$ consists of quadrics which contain
the plane $\mP(\Pi)$.

We consider $X$ in ${\rm G}(2,V)$, and denote by $l_{x}$ the line
in $\mP(V)$ which corresponds to a point $x\in X$. We set $P_{x}=\{[Q]\in P\mid l_{x}\subset Q\}$,
the linear subsystem of $P$ consisting of quadrics which contain
the line $l_{x}$. Then $\dim P_{x}=2$ holds \cite[Prop.3.5.2]{HoTa3}).

\begin{lem} \label{lem:PxH} For any $x\in X$, the plane $P_{x}$
is not contained in the quintic symmetroid $H:=\Hes\cap P$. Moreover,
for a general regular $P$ and a general $x\in X$, the curve $H\cap P_{x}$
is a plane quintic with only three nodes. \end{lem}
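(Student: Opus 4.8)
The plan is to establish the two assertions by exploiting the geometric meaning of $P_x$ as the net of quadrics containing the line $l_x$. For the first claim, I would argue by contradiction. Suppose $P_x \subset H = \Hes \cap P$; then every quadric in the plane $P_x$ would be singular. Since $P_x$ is a two-dimensional linear system, this would say that $l_x \subset \Sing Q$ for some $Q$ in a sublinear system of dimension $\geq 2$, or more precisely that the line $l_x$ is contained in the singular loci in a way that violates condition (ii) of regularity of $P$. The key point is that by \cite[Prop.3.5.2]{HoTa3} we have $\dim P_x = 2$ exactly, and $P_x$ consists of quadrics through $l_x$; if all of them were singular, one can locate a line inside $\Sing Q$ lying in a linear subsystem of dimension $\geq 2$, contradicting regularity. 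So the first statement should follow fairly directly from unwinding the definition of \emph{regular} given in Subsection \ref{section:XY}.

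For the second claim, the curve $H \cap P_x$ is the restriction of the quintic $H \subset P$ to the plane $P_x \cong \mP^2$, hence a plane quintic (of degree $5$), provided $P_x \not\subset H$, which the first part guarantees. The substantive content is the count and type of its singularities. The singular points of this plane quintic correspond to quadrics $[Q] \in P_x$ at which the determinantal hypersurface $\Hes$ is singular or tangent to $P_x$. I would compute this by identifying the nodes with quadrics in $P_x$ of corank $\geq 2$ (rank $\leq 3$), since the quintic symmetroid $\Hes$ has its singular locus exactly along the rank-$\leq 3$ locus. Thus I expect to count the rank-$3$ quadrics in the net $P_x$: a generic net of quadrics in $\mP(V) = \mP^4$ meeting the rank-$\leq 3$ locus (which has codimension $3$ in $\mP(\ft S^2 V^*)$) in the expected number of points, and a general determinantal/symmetroid calculation should yield exactly three such points, producing three nodes.

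The main obstacle will be the genericity and transversality analysis needed to guarantee that there are \emph{exactly} three singular points and that each is an honest node (rather than a worse singularity), for general regular $P$ and general $x \in X$. Concretely, I would need to verify that for general data the net $P_x$ meets the rank-$3$ locus transversally in three reduced points and avoids the deeper rank-$\leq 2$ stratum, and that the local analytic type of $\Hes$ at a rank-$3$ quadric restricts to an ordinary double point on the plane $P_x$. This is where I anticipate the real work: setting up the incidence correspondence $\{(x, [Q]) : x \in X,\ [Q] \in P_x,\ \rank Q \leq 3\}$ and using a dimension count plus a generic-smoothness argument to control both the number and the type of singularities. I would expect to lean on the explicit description of $\Hes$ and its singular strata from \cite{HoTa3}, together with a standard argument that a general plane section of a determinantal hypersurface along its codimension-$3$ singular locus acquires only nodes.
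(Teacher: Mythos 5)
Both halves of your plan have genuine gaps. For the first assertion, the step ``if all of them were singular, one can locate a line inside $\Sing Q$ lying in a linear subsystem of dimension $\geq 2$'' is exactly the missing idea, and I do not see how to supply it: a net of singular quadrics need not contain any member whose singular locus contains a \emph{line}. That would force some member to have rank $\leq 3$, whereas a priori every member of $P_x$ could have rank $4$ (for instance, all members singular at one common point, or with vertices sweeping out a curve), and condition (ii) of regularity says nothing against such configurations. The paper's argument is entirely different and purely intersection-theoretic: if $P_x\subset H$, then $\Lrho_\hcoY^{\;-1}(P_x)$ is a divisor on the smooth threefold $Y$, hence numerically equivalent to $aM$ for some integer $a$ because $Y$ has Picard number one (Proposition \ref{prop:topXY}); pulling the relation $M_H\cdot M_H\cdot P_x=1$ back through the degree-two cover $Y\to H$ gives $2=M\cdot M\cdot(aM)=10a$, which is absurd.

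For the second assertion, your identification of the nodes with the rank-$\leq 3$ quadrics in $P_x$ is incorrect, and your own dimension count already signals this: the rank-$\leq 3$ locus has codimension $3$ in $\mP(\ft{S}^2V^*)$, so its trace on a general $P\simeq\mP^4$ is only the curve $\Sing H$ (smooth of genus $26$ and degree $20$), and a plane in $\mP^4$ is expected to miss a curve. Indeed, for general regular $P$ and general $x\in X$ one has $\Sing H\cap(H\cap P_x)=\emptyset$ --- this is one of the facts the paper verifies --- so $P_x$ contains \emph{no} quadric of rank $\leq 3$, and the three nodes must arise from a different mechanism, namely tangency of $P_x$ to $\Hes$ at smooth (rank-$4$) points. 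At such a point $[Q]$ with vertex $v$, the tangent hyperplane $T_{[Q]}\Hes$ consists of the quadrics passing through $v$, so $P_x\subset T_{[Q]}\Hes$ as soon as $v\in l_x$, since every member of $P_x$ contains $l_x$. The count of three can be obtained a priori: the pairs $(p,[A])\in l_x\times P_x$ with $Ap=0$ form the zero scheme of a section of a bundle of type $\sO(1,1)^{\oplus 3}$ on $\mP^1\times\mP^2$ (two of the five linear equations $Ap=0$ hold automatically because $l_x\subset Q$), and $(h_1+h_2)^3=3h_1h_2^2$. The paper itself proceeds even more concretely: it exhibits one explicit regular $P$ and one $x$, computes the plane quintic $H\cap P_x$ by {\sl Macaulay2}, and verifies that it has exactly three singularities, all nodes, corresponding precisely to the quadrics in $P_x$ whose vertex lies on $l_x$, and that the curve avoids $\Sing H$; a single such example suffices for the statement about general $(P,x)$ by openness/semicontinuity. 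So the transversality analysis you anticipate is real work, but it has to be set up for the vertex-on-$l_x$ tangency locus, not for the rank-$\leq 3$ locus, which the plane $P_x$ never meets.
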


\begin{proof} If $P_{x}\subset H$, then it is a divisor on $H$
and $\Lrho_{\hcoY}^{\;-1}(P_{x})=aM$ with some integer $a$, where
$M$ is the generator of ${\rm Pic}(Y)$ and satisfies $M^{3}=10$.
We set $M_{H}:=\sO_{P}(1)|_{H}$. By pulling back the intersection
relation $1=M_{H}\cdot M_{H}\cdot P_{x}$ to $Y$, we have $2=M\cdot M\cdot(aM)=10a$,
which is absurd.

Note that $H$ is a quintic hypersurface while $P_{x}\simeq\mP^{2}$
is a linear subspace of $P$. Therefore $H\cap P_{x}$ is a plane
quintic curve in $P_{x}$. The final part follows from an explicit
calculation of the plane curve $H\cap P_{x}$ by \textsl{Macaulay2}.
We verify in the example below that, for a general $P$ and a general
$x$, the curve $H\cap P_{x}$ has three nodes as singularities. \end{proof}

\noindent \textbf{Example.} (Nodal quintic curve $H\cap P_{x}$) We
fix a generic (regular) linear system of quadrics $P=|Q_{1},Q_{2},...,Q_{5}|$
giving the quadratic forms $q_{i}(\bm{z})=\,^{t}\bm{z}A_{i}\bm{z}$
on $\mP(V)$ by $5\times5$ symmetric matrices. Explicitly we give
them by \[
A_{\lambda}:=\sum_{i=1}^{5}\lambda_{i}A_{i}=\left(\begin{smallmatrix}\lambda_{1} & \lambda_{4} & \lambda_{3} & \lambda_{5} & \lambda_{2}\\
\lambda_{4} & -\lambda_{3} & \lambda_{2}-\lambda_{5} & \lambda_{2} & \lambda_{4}\\
\lambda_{3} & \lambda_{2}-\lambda_{5} & \lambda_{2} & \lambda_{4} & \lambda_{1}+2\lambda_{2}\\
\lambda_{5} & \lambda_{2} & \lambda_{4} & \lambda_{1} & \lambda_{4}\\
\lambda_{2} & \lambda_{4} & \lambda_{1}+2\lambda_{2} & \lambda_{4} & \lambda_{1}+\lambda_{2}\end{smallmatrix}\right).\]
 We identify $[A_{\lambda}]$ with the corresponding point $[\lambda]=[\sum_{i}\lambda_{i}Q_{i}]$
in $P$. Then it easy to verify that $[\bm{z}]=[-1,0,0,1,2]$ and
$\bm{[}w]=[-1,2,0,-1,0]$ satisfies $\,^{t}\bm{z}A_{\lambda}\bm{w}=0$
for any $[\lambda]\in P$, hence defines a point $x$ in $X$ and
also the corresponding line $l_{x}=\langle\bm{z},\bm{w}\rangle$.
The plane $P_{x}$ is determined by the linear equations $\,^{t}\bm{z}A_{\lambda}\bm{z}=\,^{t}\bm{w}A_{\lambda}\bm{w}=0$
as $P_{x}=\{3\lambda_{1}+2\lambda_{4}-\lambda_{5}=2\lambda_{1}-\lambda_{2}-\lambda_{3}=0\}\subset P$.
Then the curve $H\cap P_{x}$ is given by the quintic equation representing
$P_{x}\cap\{\det\, A_{\lambda}=0\}$. By calculating the Jacobian,
it is straightforward to see that $H\cap P_{x}$ has three singularities
at $[\lambda]=[1,\frac{2}{9}(2\alpha^{2}+3\alpha+1),-\frac{2}{9}(2\alpha^{2}+3\alpha-8),\alpha,3+2\alpha]$
for each root $\alpha$ of the cubic $4x^{3}-x^{2}-13x-26=0$ which
is nondegenerate. By writing the local equation of the curve, we verify
that all these singularities are nodal.

The symmetroid $H=\Hes\cap P$ is written by $\{\det\, A_{\lambda}=0\}\subset P$.
By using \textsl{Macaulay2}, we verify that $\Sing~H$ is a smooth
curve of genus $26$ and degree $20$ as noted in \cite{HoTa1}. We
also verify that $\Sing H\cap(H\cap P_{x})=\emptyset$.

Finally, consider a set $\{[\lambda]\in H\cap P_{x}\mid A_{\lambda}(a\bm{z}+b\bm{w})=\bm{0},\exists[a\bm{z}+b\bm{w}]\in l_{x}\}$,
which represents quadrics which contain $l_{x}$ with a point on $l_{x}$
passing through their vertices. Note that by the regularity condition
ii), there is no quadric that contains $l_{x}$ in its vertex. We
can verify that the three nodes on $H\cap P_{x}$ exactly correspond
to this set.~\hfill{}{[}{]} \vspace{0.5cm}

By this example, we see that the normalization of $H\cap P_{x}$ is
a smooth curve of genus three for a general $P$ and a general $x\in X$.
We show that the normalization exists as curves on $Z$ and $Y$.

To state the result precisely, we begin with a preliminary construction.
We define \[
G_{x}:=\{[\Pi]\in\mathrm{G}(3,V)\mid l_{x}\subset\mP(\Pi)\}\]
 and also \[
\Zpq_{x}:=\{([\Pi],[Q])\mid l_{x}\subset\mP(\Pi)\subset Q\}=\Lrho_{\Zpq}^{-1}(G_{x})\subset\Zpq.\]
 $G_{x}$ is a plane in ${\rm G}(3,V)$ and $\Zpq_{x}$ is a $\mP^{8}$-bundle
over $G_{x}$ under the natural projection $\Zpq_{x}\to G_{x}$. Set
\[
\gamma_{x}:=\Zpq_{x}\cap\Lpi_{\Zpq}^{\;-1}(Y)=\{([\Pi],[Q])\mid l_{x}\subset\mP(\Pi)\subset Q,[Q]\in P\}\]
 and denote by $C_{x}$ its image on $Y$. We show

\begin{prop} \label{prop:genus3degree5} For smooth Calabi-Yau threefolds
$X$ and $Y$ which are orthogonal to each other, $\{\gamma_{x}\}_{x\in X}$
is a family of curves of arithmetic genus $3$ and of degree $5$
with respect to $M_{\Zpq}$, and its images $\{C_{x}\}_{x\in X}$
on $Y$ is a family of curves of degree $5$ with respect to $M$.

Moreover, if $X$ and $Y$ are general, then a general member $C_{x}$
is a smooth curve of genus $3$. \end{prop}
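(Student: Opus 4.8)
The plan is to present each $\gamma_x$ as an explicit degeneracy locus on the $\mP^8$-bundle $\Zpq_x=\Lrho_{\Zpq}^{-1}(G_x)=\mP(\sE^*|_{G_x})$ over $G_x$ and to read off its degree and arithmetic genus by Chern-class computations, isolating the genericity assertion into a final semicontinuity step anchored in the Example. I would begin with the base: writing $l_x=\mP(V_2)$, the plane $G_x=\{[\Pi]\mid V_2\subset\Pi\}\cong\mP(V/V_2)\cong\mP^2$ is a \emph{linear} $\mP^2$ in the Pl\"ucker embedding of $\mathrm{G}(3,V)$, so $\sO_{\mathrm{G}(3,V)}(1)|_{G_x}=\sO(1)=:h$. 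Restricting the universal sequence (\ref{eq:univG35}) to $G_x$ gives $c_1(\eQ)|_{G_x}=h$ and $c_2(\eQ)|_{G_x}=h^2$, so Lemma \ref{cla:c1c2} yields $c_1(\sE^*|_{G_x})=-4h$ and $c_2(\sE^*|_{G_x})=11h^2$, whence $c(\sE^*|_{G_x})=1-4h+11h^2$ and the Segre classes $s_1=4h$, $s_2=5h^2$.

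Next I would identify $\gamma_x$. Since every plane parametrized by $\Zpq_x$ contains $l_x$, every quadric appearing in $\Zpq_x$ contains $l_x$ as well; thus $\sE^*|_{G_x}$ maps into the fixed $12$-dimensional space $R\subset\ft{S}^2V^*$ of quadrics through $l_x$, and the condition $[Q]\in P$ cutting out $\Lpi_{\Zpq}^{\,-1}(Y)$ collapses to $[Q]\in P_x$, i.e.\ to membership in the fixed $3$-dimensional $W\subset R$ defining the net $P_x$. Composing the tautological $\sO_{\Zpq_x}(-1)\hookrightarrow\sE^*|_{G_x}\hookrightarrow R\otimes\sO\to(R/W)\otimes\sO$ and twisting by $\sO_{\Zpq_x}(1)$, I realize $\gamma_x$ as the zero scheme of a section of the rank-$9$ bundle $(R/W)\otimes\sO_{\Zpq_x}(\xi)\cong\sO_{\Zpq_x}(\xi)^{\oplus9}$, where $\xi:=M_{\Zpq}|_{\Zpq_x}$; equivalently $\gamma_x=\mP(\ker\phi)$ for a morphism $\phi\colon\sE^*|_{G_x}\to(R/W)\otimes\sO$ of rank-$9$ bundles on $G_x$, whose degeneracy locus $\{\det\phi=0\}$ is a plane quartic $D_x\subset\mP^2=G_x$.

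Granting that $\gamma_x$ has the expected dimension $1$, so that the section is regular and $\gamma_x$ is a locally complete intersection, the invariants come out mechanically. The degree is $M_{\Zpq}\cdot\gamma_x=\xi^{10}=\int_{G_x}s_2(\sE^*|_{G_x})=5$, and adjunction for the zero locus of $\sO(\xi)^{\oplus9}$, together with $\omega_{\Zpq_x}$ from the projective-bundle formula, gives $\omega_{\gamma_x}\simeq(\Lrho_{\Zpq}^{*}h)|_{\gamma_x}$, a divisor of degree $\int_{G_x}h\cdot s_1(\sE^*|_{G_x})=4$; hence $2p_a(\gamma_x)-2=4$ and $p_a(\gamma_x)=3$. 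This is consistent with the two plane-curve models of $\gamma_x$: the quadric map sends $\gamma_x$ birationally onto the nodal quintic $H\cap P_x$ of the Example (geometric genus $3$), while $\Lrho_{\Zpq}$ sends it onto the quartic $D_x$ (arithmetic genus $3$). For the images on $Y$ I would analyse $\Lpi_{\Zpq}|_{\gamma_x}$ through the ruling geometry of the rank-$4$ quadrics: over a general $[Q]\in H\cap P_x$ the line $l_x$ misses the vertex and lies in a single ruling, giving a unique plane through $l_x$ in $Q$, whereas over each of the three nodes---where, by the Example, $l_x$ passes through the vertex---there are two such planes, one from each ruling, mapping to the two distinct points of $\hcoY$ over $[Q]$. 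Hence $\Lpi_{\Zpq}|_{\gamma_x}$ is injective, $\gamma_x\to C_x$ is birational, and $M\cdot C_x=M_{\Zpq}\cdot\gamma_x=5$.

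Finally, for general $X$, $Y$ and general $x$, the Example presents $H\cap P_x$ as a quintic with exactly three nodes, equivalently $D_x$ as a smooth plane quartic along which $\phi$ has corank exactly one; then $\gamma_x\cong D_x\cong C_x$ is a smooth curve of genus $3$, and the conclusion for the general member propagates by openness over $X$ and over the space of regular nets $P$. The main obstacle I anticipate is exactly the dimension and corank control justifying the Chern-class computation and the final identification: ruling out excess dimension (no $[\Pi]\in G_x$ over which the whole $\mP^8$-fiber lies in $P$, which follows from $\dim(\sE^*|_{\Pi}\cap W)\le\dim W=3<9$, together with $\det\phi\not\equiv0$, expected from the regularity of $P$), and, more delicately, showing that $\phi$ has corank exactly one along $D_x$, so that $\gamma_x$ is reduced and isomorphic to the smooth quartic rather than carrying a $\mP^1$ over a corank-two point. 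This corank-one behaviour over the three nodes---the separation of the two branches by the two rulings---is the geometric heart of the argument, and is precisely what the explicit {\sl Macaulay2} computation of the Example secures for general data.
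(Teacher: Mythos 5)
Much of your proposal is the paper's own argument in different clothing: the presentation of $\gamma_x$ as a complete intersection of nine sections of $M_{\Zpq}|_{\Zpq_x}$ on the $\mP^8$-bundle $\Zpq_x\to G_x$, the Segre-class evaluation of the degree ($=5$), the adjunction computation giving $K_{\gamma_x}=N_{\Zpq}|_{\gamma_x}$ of degree $4$ and hence $p_a(\gamma_x)=3$, the ruling analysis for injectivity of $\gamma_x\to C_x$, and the appeal to the Example for the genericity clause are exactly the paper's steps; the degeneracy-locus quartic $D_x=\{\det\phi=0\}$ (which, since $K_{\gamma_x}=N_{\Zpq}|_{\gamma_x}$, is the canonical model of $\gamma_x$) is a nice supplementary picture the paper does not use. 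The genuine gap is the point you yourself flag as the main obstacle: every one of these computations presupposes $\dim\gamma_x=1$, and your proposed justification does not deliver it. The inequality $\dim(\sE^*|_{\Pi}\cap W)\le\dim W=3<9$ only excludes $8$-dimensional fibers; it does not exclude a corank-$3$ point of $\phi$, i.e.\ a plane $\mP(\Pi)\supset l_x$ lying on \emph{every} quadric of the net $P_x$, and any such point puts $\mP(W)\simeq\mP^2$ inside $\gamma_x$, so the defining section is not regular and the Chern-class computations collapse. Likewise $\det\phi\not\equiv0$ is offered only as ``expected from the regularity of $P$'', and corank $\le1$ along $D_x$ (needed for $\gamma_x\simeq D_x$ and for smoothness) is attributed to the Example, which computes the plane quintic $H\cap P_x$ and its nodes, not the corank of $\phi$; the translation between the two is precisely what would have to be proved.

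The paper settles the dimension before any intersection theory by projecting the other way, to $P_x$ rather than to $G_x$. Lemma~\ref{lem:PxH} --- a Picard-group argument on $Y$: if $P_x\subset H$, pulling back the relation $M_H\cdot M_H\cdot P_x=1$ gives $2=10a$, absurd --- shows $\overline{\gamma}_x=H\cap P_x$ is a plane quintic curve, and regularity condition ii) gives $l_x\not\subset\Sing Q$ for every $Q\in P$, so each singular quadric contains at most two planes through $l_x$. Hence $\gamma_x\to\overline{\gamma}_x$ is finite of degree $\le2$ onto a curve, forcing $\dim\gamma_x=1$. This is also the missing patch for your route: a corank-$3$ point would make every member of $P_x$ contain a plane and hence be singular, i.e.\ $P_x\subset H$, contradicting Lemma~\ref{lem:PxH}; and once $\gamma_x$ is known to be a curve dominating $D_x$, $\det\phi\not\equiv0$ follows. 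A secondary issue: the first assertion of the proposition concerns \emph{all} smooth orthogonal $X,Y$ and every $x\in X$, where the three-node, rank-$4$-only picture of the Example is unavailable (rank-$3$ quadrics may occur in $P_x$); your injectivity argument treats only rank-$4$ quadrics in the general case, whereas the paper's formulation --- at most one member of a fixed connected family $q$ of planes in $Q$, $\mathrm{rank}\,Q\in\{3,4\}$, contains $l_x$ --- is what yields $\deg C_x=5$ in the stated generality.
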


\begin{proof} Consider the projections $\overline{\gamma}_{x}:=\Lrho_{\hcoY}\circ\Lpi_{\Zpq}(\gamma_{x})$
and $\Hes_{x}:=\Lrho_{\hcoY}\circ\Lpi_{\Zpq}(\Zpq_{x})$. We define
$\mP_{x}:=\{[Q]\in\mP({\ft S}^{2}V^{*})\mid l_{x}\subset Q\}$. If
we write $x=w_{\bm{x}\bm{y}}\in X$ with $\bm{x},\bm{y}\in\mP(V)$
as a point of ${\ft S}^{2}\mP(V)$, then $\mP_{x}=\{[Q]\in\mP({\ft S}^{2}V^{*})\mid{\empty^{t}\bm{x}}A_{Q}\bm{x}={\empty^{t}\bm{y}}A_{Q}\bm{y}={\empty^{t}\bm{x}}A_{Q}\bm{y}=0\}$,
where $A_{Q}$ is a $5\times5$ symmetric matrix defining the quadric
$Q$. In particular, $\mP_{x}$ is isomorphic to $\mP^{11}$. Then
we have $\Hes_{x}=\{[Q]\mid l_{x}\subset\,^{\exists}\mP(\Pi)\subset Q\}$
and $\overline{\gamma}_{x}=\Hes_{x}\cap P=\Hes_{x}\cap P_{x}$. If
a singular quadric $[Q]\in\Hes$ contains a line $l$, then there
always exists at least one plane $\mP(\Pi)$ such that $l\subset\mP(\Pi)\subset Q$.
Hence we have $\Hes_{x}=\Hes\cap\mP_{x}$ and $\overline{\gamma}_{x}=\Hes_{x}\cap P_{x}=\Hes\cap P_{x}\subset\Hes\cap P$.
Since $H=\Hes\cap P$, we have $\overline{\gamma}_{x}=H\cap P_{x}$,
which is a plane quintic curve by Lemma \ref{lem:PxH}.

Now we note that if a line $l$ is contained in a singular quadric
$Q$ but $l\not\subset\Sing Q$, then there are at most two planes
satisfying $l\subset\mP(\Pi)\subset Q$. For the lines $l_{x}$ of
$x\in X$, we have $\dim P_{x}=2$ as we see above. By the condition
ii) of the regularity of $P$ (see the beginning of Section \ref{section:XY}),
there is no quadric $[Q]\in P$ which contains the line $l_{x}$ in
$\Sing Q$. Therefore $\gamma_{x}\to\overline{\gamma}_{x}$ is finite
of degree at most two. In particular, $\gamma_{x}$ is a curve.

By \cite[Prop.3.5.2]{HoTa3}, $P_{x}$ is a plane in $\mP_{x}$ and
hence $P_{x}$ is of codimension $9$ in $\mP_{x}$. Therefore, since
$\overline{\gamma}_{x}=\Hes_{x}\cap P_{x}\subset\Hes_{x}\cap\mP_{x}=\Hes_{x}$,
we see that $\overline{\gamma}_{x}$ is also a complete intersection
in $\Hes_{x}=\Lrho_{\hcoY}\circ\Lpi_{\Zpq}(\Zpq)$ by 9 hyperplane
sections. Corresponding to this, we also see that $\gamma_{x}$ is
a complete intersection of 9 elements of $|M_{\Zpq_{x}}|:=|M_{\Zpq}\vert_{\Zpq_{x}}|$
in $\Zpq_{x}$ since $\gamma_{x}$ is the pull-back of $\overline{\gamma}_{x}$.

By this fact, we can compute the degree and the arithmetic genus of
$\gamma_{x}$. The degree of $\gamma_{x}$ with respect to $M_{\Zpq}$
is evaluated by using $\Zpq_{x}=\Lrho_{\Zpq}^{\;-1}(G_{x})$ and the
Segre class of the projective bundle $\Zpq=\mP(\sE^{*})\to{\rm G}(3,V)$
as \[
M_{\Zpq}\cdot(\Zpq_{x}\cdot M_{\Zpq}^{9})=M_{\Zpq}^{10}\cdot\Zpq_{x}=s_{2}(\sE|_{G_{x}})=(c_{1}(\sE)^{2}-c_{2}(\sE))G_{x},\]
 which is equal to $(c_{1}(\sE)^{2}-c_{2}(\sE))G_{x}=(10c_{1}(\sO_{\mathrm{G}(3,V)}(1))^{2}-5c_{2}(\eQ))G_{x}$
by Lemma \ref{cla:c1c2}. Since $G_{x}$ is a plane, we have $c_{1}(\sO_{\mathrm{G}(3,V)}(1))^{2}G_{x}=1$.
We note that, by definition, $c_{2}(\eQ)=\sigma_{2}$ which represents
the $4$-cycle $\{[\Pi]\mid t\in\mP(\Pi)\}\subset\mathrm{G}(3,V)$,
parameterizing $2$-planes containing a fixed point $t$ of $\mP^{4}$.
Therefore choosing such a point $t\in\mP^{4}$ so that $t\not\in l_{x}$,
we see $c_{2}(\eQ)G_{x}=1$. Hence we have $M_{\Zpq}^{10}\cdot\Zpq_{x}=M_{\Zpq}\cdot\gamma_{x}=5$.
Since $\deg\gamma_{x}=\deg\overline{\gamma}_{x}=5$, we see that $\gamma_{x}\to\overline{\gamma}_{x}$
is birational. The canonical divisor of $\gamma_{x}$ is the restriction
of $K_{\Zpq_{x}}+9M_{\Zpq_{x}}$. From the relative Euler sequence
of the projective bundle $\Zpq_{x}=\mP(\sE^{*}|_{G_{x}})$ over $G_{x}\simeq\mP^{2}$
and $c_{1}(\sE)=c_{1}(\sO_{\mathrm{G}(3,V)}(4))$, we have $K_{\Zpq_{x}}=(-9M_{\Zpq}+N_{\Zpq})|_{\Zpq_{x}}$,
where $N_{\Zpq}:=\Lrho_{\Zpq}^{\;*}\sO_{{\rm G}(3,V)}(1)$. Thus $K_{\gamma_{x}}={N_{\Zpq}|_{\gamma_{x}}}$.
Using the Segre class again, we evaluate \[
{\Lrho_{\Zpq}}_{*}(M_{\Zpq}^{9}\cdot\Zpq_{x})=s_{1}(\sE|_{G_{x}})=c_{1}(\sE|_{G_{x}})=4N_{\Zpq}|_{G_{x}},\]
 and obtain $\deg K_{\gamma_{x}}=N_{\Zpq}M_{\Zpq}^{9}\cdot\Zpq_{x}=4(N_{\Zpq})^{2}|_{G_{x}}=4$.
Therefore the arithmetic genus of $\gamma_{x}$ is $3$.

Now we consider the image $C_{x}$ on $Y$ of $\gamma_{x}$. Note
that a point $([\Pi],[Q])$ of $\gamma_{x}$ satisfying $l_{x}\subset\mP(\Pi)\subset Q\;([Q]\in P)$
is mapped to a point $([Q],q)$ in $Y$, where $q$ represents a connected
family of planes contained in $Q$. Then $\gamma_{x}\to C_{x}$ is
injective since once we fix a quadric $Q$ of rank $3$ or $4$ and
a connected family $q$ therein, there exists at most one point $([\Pi],[Q])$
which satisfies $[\Pi]\in q$ and $l_{x}\subset\mP(\Pi)$. In particular,
the degree of $C_{x}$ is $5$ with respect to $M$.

Now we assume that $X$ and $Y$ are general. By Lemma \ref{lem:PxH},
the geometric genus of $\overline{\gamma}_{x}$ is three for a general
$x$. Since the arithmetic genus of $\gamma_{x}$ is three, $\gamma_{x}$
is the normalization of $\overline{\gamma}_{x}$. Since $\overline{\gamma}_{x}$
has only nodes as its singularities and $\gamma_{x}\to C_{x}$ is
injective, we conclude $C_{x}\simeq\gamma_{x}$ and $C_{x}$ is a
smooth curve of genus 3 for general $x\in X$. \end{proof}

\vspace{0.5cm}
 \[
\xyFigI\]
 \vspace{0.3cm}
 \begin{fcaption} 

\item \textbf{Fig.1. Curve $C_{x}$ and its {}``shadow'' $C_{x}'$.}
The line $l_{x}$ and quadrics which contain $l_{x}$. If $l_{x}$
passes through the vertex of $Q$, then two points $([Q],q_{1}),([Q],q_{2})$
map to $[Q]$. Otherwise, $([Q],q)\in Y$ is uniquely determined by
$[Q]$. \end{fcaption} \vspace{0.2cm}

Assume that $X$ and $Y$ are general. For a general point $x$ in
$X$, we can verify in the above example that the plane quintic curve
$\overline{\gamma}_{x}$ has only three nodes and does not intersect
with the singular locus $\Sing\, H$. Since $Y\to H$ is a double
cover branched along $\Sing H$ and also the smooth curve $C_{x}$
of degree 5 covers $\overline{\gamma}_{x}$, the inverse image $q^{-1}(\overline{\gamma}_{x})$
is the union of $C_{x}$ and another curve ${C}'_{x}$, which we called
{}``shadow'' curve of $C_{x}$ in Introduction (see Fig.1). As shown
in Fig.1, we note that the shadow curve is also a smooth curve of
genus $3$ and degree $5$ for a general $x$ and intersects at 6
points with $C_{x}$. These 6 points are inverse images of three nodal
points on $\overline{\gamma}_{x}$.

\subsection{The Brauer group of $Y$}

As an interesting corollary to the existence of the curves $\gamma_{x}$
on $Z$, we show that $Y$ has non-trivial Brauer group. Let $N_{Z}:=N_{\Zpq}|_{Z}$
for $N_{\Zpq}=\Lrho_{\Zpq}^{\;*}\sO_{\mathrm{G}(3,V)}(1)$.

\begin{prop} \label{prop:Brauer} The $\mP^{1}$-fibration $Z\to Y$
is not associated to a locally free sheaf of rank two on $Y$. In
particular, the Brauer group of $Y$ contains a non-trivial $2$-torsion
element. \end{prop}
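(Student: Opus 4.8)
The plan is to recognize $\pi\colon Z\to Y$ as a Brauer--Severi scheme of relative dimension one and to read off its Brauer class from the Picard group of $Z$. First I would check that $Z\to Y$ is a smooth $\mP^1$-fibration: since $Y$ is smooth it is disjoint from $\Sing\hcoY=G_{\hcoY}$, so every point of $Y$ is a rank $3$ or rank $4$ point, over which the family of planes is a single smooth conic $\simeq\mP^1$; hence $Z=\Lpi_\Zpq^{-1}(Y)$ is smooth and $\pi$ has all fibres $\simeq\mP^1$. (The sections $\gamma_x\subset Z$ constructed above live on this fibration and split $Z$ over each $C_x$, which is consistent because a curve carries no Brauer obstruction; the obstruction we are after is a genuinely global one.)

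Next I would invoke the low-degree exact sequence of the Leray spectral sequence for $\pi$ and $\mG_m$ (equivalently, of the $PGL_2$-torsor structure):
\[
0\to \Pic(Y)\xrightarrow{\ \pi^*\ }\Pic(Z)\xrightarrow{\ \deg_f\ }\mZ\xrightarrow{\ \partial\ }\mathrm{Br}(Y),
\]
where $\deg_f$ is the degree on a fibre $f$ and $\partial(1)=\alpha$ is the Brauer class of $Z\to Y$, which satisfies $2\alpha=0$ since the fibres are conics. By exactness the image of $\deg_f$ equals $(\mathrm{ord}\,\alpha)\mZ$, so $Z$ is of the form $\mP(\sE)$ for a rank-two sheaf $\sE$ on $Y$ if and only if $\alpha=0$, if and only if some line bundle on $Z$ has odd degree on $f$. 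Thus it suffices to prove that every line bundle on $Z$ has even fibre degree, whence $\alpha$ is a non-trivial $2$-torsion element of $\mathrm{Br}(Y)$.

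To produce the required even classes I use the two tautological divisors: $M_Z:=M_\Zpq|_Z=\pi^*M$ with $M_Z\cdot f=0$, and $N_Z=(\Lrho_\Zpq|_Z)^*\sO_{\mathrm{G}(3,V)}(1)$ with $N_Z\cdot f=2$, the latter because each fibre maps to a conic in $\mathrm{G}(3,V)$. Since $h^{1,1}(Y)=1$ and $\pi$ is a $\mP^1$-fibration, $\Pic(Z)$ has rank two, so $M_Z$ and $N_Z$ span a finite-index subgroup on which $\deg_f$ takes values in $2\mZ$. The remaining point is to show that they in fact generate $\Pic(Z)$, i.e.\ that no odd-fibre-degree class exists. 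I would obtain this by realizing $Z$ as the complete intersection of the ten general members of $|M_\Zpq|$ that cut out $Y$ inside the projective bundle $\Zpq=\mP(\sE^*)\to\mathrm{G}(3,V)$, where $\Pic(\Zpq)=\mZ\,M_\Zpq\oplus\mZ\,N_\Zpq$, and proving that the restriction $\Pic(\Zpq)\to\Pic(Z)$ is surjective; both generators then restrict to even classes and we are done.

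The hard part will be exactly this last surjectivity, equivalently ruling out an odd class in $\Pic(Z)$. The divisor $M_\Zpq$ is only big and nef --- it contracts the conic fibres of $\Zpq\to\hcoY$ --- so the classical Grothendieck--Lefschetz theorem for Picard groups does not apply directly, and I expect to need the detailed birational geometry of $\hcoY$ and its model in $\mathrm{G}(3,V)$ from \cite{HoTa3} (or an analysis of the birational morphism from $Z$ onto a fourfold in $\mathrm{G}(3,V)$) to pin down $\Pic(Z)$ integrally, torsion included. It is worth stressing why an integral argument is unavoidable: adjunction on $Z\subset\Zpq$ gives $K_Z=(M_\Zpq-N_\Zpq)|_Z$, a class fully compatible with $Z\simeq\mP(\sE)$ over $\mQ$, so the obstruction $\alpha$ is invisible to any Chern- or canonical-class computation and must be detected through the integral Picard lattice.
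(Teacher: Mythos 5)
Your Brauer-theoretic setup is fine as far as it goes: $Z\to Y$ is a smooth conic fibration, $Z\simeq\mP(\sE)$ for a rank-two sheaf $\sE$ if and only if some line bundle on $Z$ has odd fibre degree, and $M_Z$, $N_Z$ have fibre degrees $0$ and $2$. But this only reduces the proposition to the one statement you explicitly leave open --- that $\Pic(Z)$ contains no class of odd fibre degree --- and you offer no proof of it; you yourself observe that Grothendieck--Lefschetz does not apply to the big-and-nef divisor $M_{\Zpq}$ and that you would need to determine $\Pic(Z)$ integrally by some unspecified analysis. So the proposal has a genuine gap at exactly the step that carries all the content.

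Worse, the claim in your last paragraph --- that the obstruction ``is invisible to any Chern- or canonical-class computation and must be detected through the integral Picard lattice'' --- is false, and it steers you away from the actual argument. The paper proves the proposition by a short numerical contradiction: assume $Z=\mP(\sA)$ for some rank-two sheaf $\sA$ on $Y$; since $\rho(Y)=1$, one may write $N_Z\equiv 2H_{\mP(\sA)}+aM$ numerically with $a\in\mZ$; intersecting with the curve $\gamma_x$, for which $N_Z\cdot\gamma_x=4$ and $M\cdot\gamma_x=5$ by the proof of Proposition \ref{prop:genus3degree5}, gives $4=2\,H_{\mP(\sA)}\cdot\gamma_x+5a$, forcing $a$ to be even; hence $\tfrac12 N_Z\equiv H_{\mP(\sA)}+\tfrac{a}{2}M$ would be an \emph{integral} numerical class on the smooth fourfold $Z$. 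But $(N_Z)^4=N_{\Zpq}^4\,M_{\Zpq}^{10}=s_2(\sE)\,c_1(\sO_{\mathrm{G}(3,V)}(1))^4=40$ by Lemma \ref{cla:c1c2}, so $(\tfrac12 N_Z)^4=40/16$ is not an integer --- a contradiction. No integral computation of $\Pic(Z)$ is needed: the hypothetical bundle structure itself supplies the integral class, and intersection numbers rule it out. Note also that the odd intersection number $M\cdot\gamma_x=5$, coming from your own sections $\gamma_x$, is precisely what makes the parity step work; these curves are not merely ``consistent'' with the statement, as you remark in passing, but an essential input to its proof.
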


\begin{proof} Assume by contradiction that $Z=\mP(\sA)$ for some
locally free sheaf $\sA$ of rank two on $Y$. Since a fiber of $Z\to Y$
is of degree two with respect to $N_{{Z}}$, we can write $N_{Z}\equiv2H_{\mP(\sA)}+a{M}_{Y}$,
where $a$ is an integer (here we use $\rho(Y)=1$). Since $N_{{Z}}\cdot\gamma_{x}=4$
and ${M}_{Y}\cdot\gamma_{x}=5$ by the proof of Proposition \ref{prop:genus3degree5},
we have $4=2H_{\mP(\sA)}\cdot\gamma'_{x}+5a$. Thus $a$ is even and
$\frac{1}{2}N_{{Z}}$ is numerically equivalent to the Cartier divisor
$H_{\mP(\sA)}+\frac{1}{2}a{M}_{Y}$. Note that \[
(N_{{Z}})^{4}=N_{\Zpq}^{4}M_{\Zpq}^{10}=s_{2}(\sE)c_{1}(\sO_{\mathrm{G}(3,V)}(1))^{4}=(c_{1}(\sE)^{2}-c_{2}(\sE))c_{1}(\sO_{\mathrm{G}(3,V)}(1))^{4}.\]
 By Lemma \ref{cla:c1c2}, we have $(N_{{Z}})^{4}=40$. Then $(\frac{1}{2}N_{{Z}})^{4}$
is not an integer, a contradiction. \end{proof}

We present a further discussion on the Brauer groups of $X$ and $Y$
in Subsection \ref{subsection:fund}.

\vspace{1cm}
\vspace{1cm}

\section{Birational geometry of $\hcoY$ and generically conic bundles}

\label{section:birationalY}

Let us consider $\hcoY_{3}:=\mathrm{G}(3,T(-1)^{\wedge2})$, which
is a $\mathrm{G}(3,6)$-bundle over $\mP(V)$. The fiber of $\hcoY_{3}\to\mP(V)$
over a point $[V_{1}]\in\mP(V)$ parameterizes planes in $\mP(\Lwedge^{2}(V/V_{1}))$.
$\hcoY_{3}$ appeared naturally in the construction of a resolution
$\widetilde{\hcoY}\to\hcoY$ and played important roles to describe
a Lefschetz collection in $\sD^{b}(\widetilde{\hcoY})$ \cite{HoTa3}.

\subsection{Birational geometry of $\hcoY$\label{sub:Birat-Y}}

Here we briefly describe the construction of the resolution $\widetilde{\hcoY}\to\hcoY$
which can be summarized in the diagram:\begin{equation}
\xyResolY\label{eq:ResolY}\end{equation}
Note that $\hcoY_{3}$ defined above is equivalently described by\[
\hcoY_{3}=\left\{ ([U],[V_{1}])\in\rG(3,\wedge^{3}V)\times\mP(V)\mid U\wedge V_{1}=0\right\} ,\]
since $U\wedge V_{1}=0$ implies $[U]=[\bar{U}\wedge V_{1}]$ for
some $[\bar{U}]\in\rG(3,\wedge^{2}(V/V_{1}))$ and there is a bijective
correspondence between $([U],[V_{1}])$ and $([\bar{U}],[V_{1}])\in\rG(3,T(-1)^{\wedge2})$.
With this definition of $\hcoY_{3}$, $\overline{\hcoY}$ is defined
by the projection to the first factor with the reduced structure \cite[Def. 5.3.1]{HoTa3}.
$\hcoY_{2}$ and $\widetilde{\hcoY}$ are described explicitly as
blow-ups of $\hcoY_{3}$ and $\overline{\hcoY}$, and the fibers of
the resolution $\widetilde{\hcoY}\to\hcoY$ are described in detail
{[}ibid, Sect.5.6{]}. 

We can see the birational correspondence between $\hcoY_{3}$ and
$\hcoY$ as follows: As described in Subsection \ref{section:Quintic},
the fiber of $\Zpq\to\hcoY$ over $y\in\hcoY\setminus G_{\hcoY}$
is a smooth conic in $\rG(3,V)$ which parametrizes a family of planes
contained in the corresponding quadric $\Lrho_{\hcoY}(y)=[Q_{y}]\in\Hes$.
Observing this, we wrote each point $y\in\hcoY\setminus G_{\hcoY}$
by the pair $([Q_{y}],q_{y})$ in the proof of Proposition \ref{prop:genus3degree5}.
Consider the Pl\"ucker embedding $\rG(3,V)\hookrightarrow\mP(\wedge^{3}V)$.
In $\mP(\wedge^{3}V)$, a conic $q$ on $\rG(3,V)$ determines the
corresponding plane $\mP_{q}^{2}$ in $\mP(\wedge^{3}V)$. If $\rank y=4$
(i.e., $\rank Q_{y}=4$), then $\mP_{q_{y}}^{2}$ takes the form \[
[U_{y}]=[\bar{U}_{y}\wedge V_{1}]\;\text{with some }[\bar{U}_{y}]\in\rG(3,\wedge^{2}(V/V_{1})),\]
since the planes in $Q_{y}$ parametrized by $q_{y}$ contain the
vertex $[V_{1}]$ of $Q_{y}$ in common. In this case, the intersection
$\mP(U_{y})\cap\rG(3,V)$ in $\mP(\wedge^{3}V)$ recovers the conic
$q_{y}$, since \begin{equation}
\mP(U_{y})\cap\rG(3,V)\text{ in }\mP(\wedge^{3}V)\simeq\mP(\bar{U}_{y})\cap\rG(2,V/V_{1})\text{ in }\mP(\wedge^{2}(V/V_{1}))\label{eq:plane-to-conic}\end{equation}
and $\mP(\bar{U}_{y})\not\subset\rG(2,V/V_{1})$ holds. Namely the
intersection with the quadric $\rG(2,V/V_{1})$ determines a conic
$\bar{q}_{y}$ in $\rG(2,V/V_{1})$ and also $q_{y}$ in $\rG(3,V)$,
and in turn the quadric $Q_{y}$, hence $y\in\hcoY$ \cite[Sect.~5.1,~5.2]{HoTa3}.
Using the decomposition as $sl(V/V_{1})$-module $\wedge^{3}(\wedge^{2}(V/V_{1}))=\ft{S}^{2}(V/V_{1})\oplus\ft{S}^{2}(V/V_{1})^{*}$,
it has been shown that generic $([U],[V_{1}])\in\hcoY_{3}$ determines
$y\in\hcoY$ in this way {[}ibid, Sect.$\,$5.5{]}. The birational
map $\hcoY_{3}\dashrightarrow\hcoY$ is based on this correspondence.\[
\;\]

\[
\xyFigYs\]
 \begin{fcaption} 

\item \textbf{Fig.2. Birational geometries of $\hcoY$.} The singular
locus $\overline{\Prt}_{\rho}\simeq\rG(2,V)$ of $\overline{\hcoY}$
and also $\Lrho_{3}$-exceptional set $\Prt_{\rho}=\mathrm{F}(1,2,V)$
in $\hcoY_{3}$ and the exceptional set $G_{\rho}$ of $\widetilde{\hcoY}\to\overline{\hcoY}$
are depicted. $F_{\rho}$ and $F_{\widetilde{\hcoY}}$ are exceptional
divisors which are contracted to $G_{\rho}$ and $G_{\hcoY}$, respectively.
$G_{\hcoY}$ is the singular locus of $\hcoY$. \end{fcaption} \vspace{0.5cm}

The above correspondence of the planes $\mP(U)\subset\mP(\wedge^{3}V)$
to conics does not work obviously when $\mP(U)\subset\rG(3,V)$. There
are two types of planes contained in $\rG(3,V).$ The first one is
\[
\mathrm{P}_{V_{2}}=\left\{ [\Pi]\in\rG(3,V)\mid V_{2}\subset\Pi\right\} \simeq\mP^{2}\]
with some $V_{2}$, and the second is\[
\mathrm{P}_{V_{1}V_{4}}=\left\{ [\Pi]\in\rG(3,V)\mid V_{1}\subset\Pi\subset V_{4}\right\} \simeq\mP^{2}\]
with some $V_{1}\subset V_{4}$. They are called \textit{$\rho$-plane}
and \textit{$\sigma$-plane}, respectively, and conics on $\rG(3,V)$
contained in these planes are called \textit{$\rho$-conic} and \textit{$\sigma$-conic}.
Other types of conics on $\rG(3,V)$ are called \textit{$\tau$-conic},
and they are determined by the intersection $\mP(U)\cap\rG(3,V)$
with $[U]\in\overline{\hcoY}\setminus(\overline{\Prt}_{\rho}\sqcup\overline{\Prt}_{\sigma})$.

The above two types of planes in $\rG(3,V)$ determine the corresponding
planes in $\mP(\wedge^{3}V)$ under the Pl\"ucker embedding $\rG(3,V)\hookrightarrow\mP(\wedge^{3}V$)
and determine the corresponding loci $\overline{\Prt}_{\rho}$ and
$\overline{\Prt}_{\sigma}$ in $\overline{\hcoY}$, where \[
\overline{\Prt}_{\rho}\simeq\rG(2,V),\;\;\overline{\Prt}_{\sigma}\simeq F(1,4,V).\]
It has been shown that the singular locus in $\overline{\hcoY}$ is
exactly along $\overline{\Prt}_{\rho}$ with the singularity being
given by the affine cone over the Segre embedding $\mP^{1}\times\mP^{5}$
for each point. The resolution $\hcoY_{3}\to\overline{\hcoY}$ is
given by the blow-up along $\overline{\Prt}_{\rho}$ in one direction
giving the exceptional set $\Prt_{\rho}=F(1,2,V)$. The blow-up along
$\overline{\Prt}_{\rho}$ in the other direction gives the resolution
$\widetilde{\hcoY}\to\overline{\hcoY},$ where the exceptional set
$G_{\rho}$ is described by $\mP(\ft{S}^{2}\eG^{*})$ with the universal
quotient bundle $\eG$ of $\rG(2,V)$ (cf. \cite[(5.11)]{HoTa3}).
$\hcoY_{2}$ is obtained by further blow-up of $\hcoY_{3}$ (or $\widetilde{\hcoY}$)
with the exceptional divisor $F_{\rho}$ (see Fig. 2). 

Planes contained in a quadric $\rank Q=3$ determine a (smooth)$\rho$-conic,
since they contain the vertex $V_{2}$ of $Q$ in common. Hence points
$y=([Q],q)\in\hcoY$ with $\rank Q$=3 correspond to generic points
in the exceptional set $G_{\rho}=\mP(\ft{S}^{2}\eG^{*}).$ The resolution
$\widetilde{\hcoY}\to\hcoY$ contracts a prime divisor $F_{\widetilde{\hcoY}}$
to $G_{\hcoY}$ which correspond to quadrics $[Q]\in\Hes$ with $\rank Q\leq2$.
The divisor consists of $\tau$- and $\rho$-conics in $\rG(3,V)$
with rank $\leq2$ and also the $\sigma$-planes (see \cite[Sect.~5.6]{HoTa3}
for the complete description). In particular, the fiber of $\Lrho_{\widetilde{\hcoY}}$
over a point $[Q]\in G_{\hcoY}$ with $\rank Q=2$ is isomorphic to
$\mP^{2}\times\mP^{2}$, which generically parametrizes $\tau$-conics
of rank two, and $\rho$-conics of rank two appear on the diagonal. 

\vspace{0.5cm}
 Now we list some divisors which will be used later. In that follows,
we will use the following conventions without mentioning at each time:

\begin{myitem} 

\item[$K_{\Sigma}$:] canonical divisor on a normal variety $\Sigma$
.

\item[$L_{\Sigma}$:] pull back on a variety $\Sigma$ of $\sO(1)$
if there is a morphism $\Sigma\to\mP(V)$. 

\item[$M_{\Sigma}$:] pull back on a variety $\Sigma$ of $\sO_{\Hes}(1)$
if there is a morphism $\Sigma\to\Hes$. 

\item[$N_{\Sigma}$:] pull back on a variety $\Sigma$ of $\sO_{\mathrm{G}(3,V)}(1)$
if there is a morphism $\Sigma\to\mathrm{G}(3,V)$.

\item[ $H_{\mP(\sE)}:$] tautological divisor of the projective
bundle $\pi:\mP(\sE)\to X$, namely $H_{\mP(\sE)}=\sO_{\mP(\sE)}(1)$
with the property $\pi_{*}\sO_{\mP(\sE)}(1)=\sE^{*}$. 

\end{myitem} We denote the universal exact sequence on $\hcoY_{3}=\mathrm{G}(3,T(-1)^{\wedge2})$
by \begin{equation}
0\to\sS\to\Lpi_{3}^{\;*}(T(-1)^{\wedge2})\to\sQ\to0,\label{eq:univ}\end{equation}
where $\sS$ is the relative universal subbundle of rank three and
$\sQ$ is the relative universal quotient bundle of rank three. We
note that $\Prt_{\rho}=F(1,2,V)\simeq\mP(T(-1))$.

\begin{prop} \label{prop:div-relations}The following relations among
divisors hold$:$

\begin{myitem2}

\item[\;\;$(1)$] $\det\sQ=\det\sS^{*}+3L_{\hcoY_{3}}=\det\{\sS^{*}(L_{\hcoY_{3}})\},$

\item[\;\;$(2)$] $K_{\hcoY_{3}}=-6\det\sQ+4L_{\hcoY_{3}},$

\item[\;\;$(3)$] $\sQ|_{\Prt_{\rho}}\simeq\sS^{*}(L_{\hcoY_{3}})|_{\Prt_{\rho}}$, 

\item[\;\;$(4)$] $\det\sQ|_{\Prt_{\rho}}=2(H_{\Prt_{\rho}}+L_{\Prt_{\rho}})$, 

\item[\;\;$(5)$]  $M_{\hcoY_{2}}=\Lrho_{_{2}}^{*}(\det\sQ)-L_{\hcoY_{2}}-F_{\rho}$.

\end{myitem2}

\end{prop}

\begin{proof} (1) is immediate from the exact sequence (\ref{eq:univ}).
(2) follows from $T_{\hcoY_{3}/\mP(V)}=\sS^{*}\otimes\sQ$ and $K_{\hcoY_{3}}=-\det T_{\hcoY_{3}/\mP(V)}+5L_{\hcoY_{3}}$.
(3) and (4) are obtained in \cite[Props. 6.3.1, 6.3.2]{HoTa3}. See
{[}ibid, Prop. 6.4.1{]} for (5). 

\end{proof}

$\;$

\subsection{Generically conic bundle $\Zpq_{3}\to\hcoY_{3}$}

\label{Z3Y3}

\textcolor{black}{Consider the universal subbundle $\sS$ and define
the projective bundle $\Zpq_{3}^{u}:=\mP(\sS)\subset\mP(T(-1)^{\wedge2})\times_{\mP(V)}\hcoY_{3}$
over $\hcoY_{3}=\rG(3,T(-1)^{\wedge2})$, where the superscript $u$
stands for universal. Since the fiber of $\mP(\sS$) over $([\bar{U}],[V_{1}])$
is the plane $\mP(\bar{U})\subset\mP(\wedge^{2}(V/V_{1}))$, the intersection
with $\rG(2,V/V_{1})$ in (\ref{eq:plane-to-conic}) can be described
by \[
\Zpq_{3}^{u}\cap\left(\rG(2,T(-1))\times_{\mP(V)}\hcoY_{3}\right),\]
using the inclusion $\rG(2,T(-1))\times_{\mP(V)}\hcoY_{3}\hookrightarrow\mP(T(-1)^{\wedge2})\times_{\mP(V)}\hcoY_{3}$.
We denote this restriction by $\Zpq_{3}$, and note that it can be
written explicitly by} \[
\Zpq_{3}=\left\{ \left([\bar{a}\wedge\bar{b}],([\bar{U}],[V_{1}])\right)\mid[\bar{a}\wedge\bar{b}]\in\mP(\bar{U})\cap\rG(2,V/V_{1}),\;([\bar{U}],[V_{1}])\in\hcoY_{3}\right\} .\]
By definition, $\Zpq_{3}$ fits into the following diagram with the
natural morphisms:

\begin{equation}
\begin{matrix}\xydiagIV\end{matrix}\label{eq:diag4}\end{equation}
 Note that $\Lpi_{G'}$ maps the point $\left([\bar{a}\wedge\bar{b}],([\bar{U}],[V_{1}])\right)$
to $([\bar{a}\wedge\bar{b}],[V_{1}])\in\rG(2,T(-1)),$ which may be
considered as a point $([a\wedge b\wedge v_{1}],[V_{1}])$ with $V_{1}=\mC v_{1}$
in $\rG(3,V)\times\mP(V)$. Hence we naturally have \[
\Zpq_{3}\subset\rG(2,T(-1))\times_{\mP(V)}\hcoY_{3}\,\subset\,\rG(3,V)\times\hcoY_{3},\]
where we use $\rG(2,T(-1))\subset\rG(3,V)\times\mP(V)$ (actually,
$\rG(2,T(-1))\simeq F(1,3,V)$ is the universal family of planes in
$\mP(V)$ parameterized by $\mathrm{G}(3,V)$). As explained in (\ref{eq:plane-to-conic}),
the intersection $\mP(\bar{U})\cap\rG(2,V/V_{1})$ generically defines
a conic on $\rG(2,V/V_{1})$ or $\rG(3,V)$. Therefore we have;

\begin{prop} \label{prop:Z3} $\Lpi_{3}\colon\Zpq_{3}\to\hcoY_{3}$
is a generically conic bundle. More precisely, fibers over $\hcoY_{3}\setminus(\Prt_{\rho}\sqcup\Prt_{\sigma})$
are conics in $\rG(3,V).$ 

\end{prop}

\vspace{0.3cm}
Below are some properties of $\Zpq_{3}$ which will be used in later
sections.

\begin{prop} \label{cla:G25} $\Lpi_{G'}\colon\Zpq_{3}\to\mathrm{G}(2,T(-1))$
is a $\mathrm{G}(2,5)$-bundle. In particular, $\Zpq_{3}$ is smooth.
\end{prop}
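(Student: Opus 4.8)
The plan is to realize $\Zpq_3$ as a relative Grassmannian over $\mathrm{G}(2,T(-1))$ by exploiting the two standard descriptions of a partial flag bundle. First I would reinterpret $\Zpq^u_3$. Since $\Zpq^u_3=\mP(\sS^*)$ with $\sS^*$ the tautological rank-$3$ subbundle of $\Lpi_{\hcoY_3}^*(T(-1)^{\wedge2})$ from (\ref{eq:univ}), a point of $\Zpq^u_3$ is a flag $\ell\subset V_3$ with $\dim\ell=1$, $\dim V_3=3$ inside a fibre of $T(-1)^{\wedge2}$; thus $\Zpq^u_3$ is the partial flag bundle $\mathrm{F}(1,3;T(-1)^{\wedge2})$ over $\mP(V)$ (flags $\ell\subset V_3$ in the fibres), with $\Lpi_{\Zpq^u_3}$ recording $V_3$ and $\Lrho_{\Zpq^u_3}$ recording $\ell$. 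Building the same flag bundle by choosing $\ell$ first, I let $0\to\sO_{\mP(T(-1)^{\wedge2})}(-1)\to\Lpi_{\mP}^*(T(-1)^{\wedge2})\to\widetilde{\sQ}\to0$ be the tautological sequence on $\mP(T(-1)^{\wedge2})$, so that $\widetilde{\sQ}$ has rank $5$ and $V_3/\ell$ is a rank-$2$ subbundle of $\widetilde{\sQ}$. This identifies $\Zpq^u_3\cong\mathrm{G}(2,\widetilde{\sQ})$ over $\mP(T(-1)^{\wedge2})$, with $\Lrho_{\Zpq^u_3}$ becoming the Grassmann-bundle projection.

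Next I would restrict to $\mathrm{G}(2,T(-1))\subset\mP(T(-1)^{\wedge2})$. By the definition $\Zpq_3=\mathrm{G}(2,T(-1))\times_{\mP(T(-1)^{\wedge2})}\Zpq^u_3$ and the identification above, $\Zpq_3\cong\mathrm{G}(2,\widetilde{\sQ}|_{\mathrm{G}(2,T(-1))})$, so it remains only to compute the restricted quotient. The embedding $\mathrm{G}(2,T(-1))\hookrightarrow\mP(T(-1)^{\wedge2})$ read off from (\ref{eq:Giso}) is the relative Plücker embedding, sending the tautological rank-$2$ subbundle $\sR\subset\Lpi_G^*T(-1)$ to the line $\Lwedge^2\sR$; hence $\sO_{\mP(T(-1)^{\wedge2})}(-1)|_{\mathrm{G}(2,T(-1))}=\Lwedge^2\sR$. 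Therefore $\sW:=\widetilde{\sQ}|_{\mathrm{G}(2,T(-1))}$ is the rank-$5$ quotient $\Lpi_G^*(T(-1)^{\wedge2})/\Lwedge^2\sR$, and $\Lrho_{\Zpq_3}$ becomes the projection $\mathrm{G}(2,\sW)\to\mathrm{G}(2,T(-1))$, i.e.\ a $\mathrm{G}(2,5)$-bundle. As a fibrewise check, over $([V_1],\overline{V}_2)$ the fibre is $\{[V_3]\mid\Lwedge^2\overline{V}_2\in V_3\}\cong\mathrm{G}(2,\Lwedge^2(V/V_1)/\langle\Lwedge^2\overline{V}_2\rangle)\cong\mathrm{G}(2,5)$, matching the claimed rank.

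For the final assertion, $\mathrm{G}(2,T(-1))$ is a Grassmann bundle over the smooth variety $\mP(V)$ and is therefore smooth, and a Grassmann bundle over a smooth base is smooth; hence $\Zpq_3\cong\mathrm{G}(2,\sW)$ is smooth.

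The step I expect to be the main obstacle is the compatible matching of the two descriptions of the flag bundle: one must check that the isomorphism $\mP(\sS^*)\cong\mathrm{G}(2,\widetilde{\sQ})$, built over the $\mathrm{G}(3,\cdot)$-side and the $\mP(\cdot)$-side respectively, intertwines $\Lrho_{\Zpq^u_3}$ with the Grassmann-bundle projection, and that the tautological line restricts to exactly $\Lwedge^2\sR$ so that $\sW$ has rank $5$ rather than dropping rank along $\mathrm{G}(2,T(-1))$. Everything else is bookkeeping with the universal sequences.
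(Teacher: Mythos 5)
Your proposal is correct, and it reaches the statement by a more structural route than the paper, although both turn on the same piece of linear algebra. The paper's own proof is a short fiberwise check: for a point $([\overline V_2],[V_1])\in \mathrm{G}(2,T(-1))$ it identifies the fiber of $\Lrho_{\Zpq_3}$ with $\{[V_3]\mid \Lwedge^2\overline V_2\subset V_3\subset \Lwedge^2(V/V_1)\}\simeq \mathrm{G}(2,\Lwedge^2(V/V_1)/\Lwedge^2\overline V_2)\simeq \mathrm{G}(2,5)$, leaving the relative bundle structure implicit in the uniformity of that description. You instead exploit the two forgetful maps of the flag bundle $\mathrm{F}(1,3;T(-1)^{\wedge2})$: besides the given description $\Zpq^u_3=\mP(\sS^*)$ over $\hcoY_3$, you realize $\Zpq^u_3$ as the relative Grassmannian of $2$-planes in the tautological rank-$5$ quotient on $\mP(T(-1)^{\wedge2})$, then restrict along the relative Pl\"ucker embedding, under which the tautological sub-line bundle becomes $\Lwedge^2\sR$ for the tautological rank-$2$ subbundle $\sR\subset \Lpi_G^{\;*}T(-1)$; this exhibits $\Zpq_3$ globally as the Grassmann bundle of $2$-planes in the rank-$5$ bundle $\Lpi_G^{\;*}(T(-1)^{\wedge2})/\Lwedge^2\sR$. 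What your version buys is that the $\mathrm{G}(2,5)$-bundle structure is produced by an explicit rank-$5$ bundle, so local triviality and smoothness are immediate; what the paper's version buys is brevity, since the pointwise fiber description already encodes the same relative Grassmannian. The step you single out as the main obstacle is not actually one: $\mP(\sS^*)$ and the relative Grassmannian in the tautological quotient both represent the functor of flags $\ell\subset V_3$ in the fibers of $T(-1)^{\wedge2}$, compatibly with the two projections, so the identification is canonical. One cosmetic point: the symbol $\widetilde{\sQ}$ you introduce for the tautological quotient collides with the sheaf $\widetilde{\sQ}$ on $\widetilde{\hcoY}$ appearing in the resolution (\ref{eqn:fin5}), so it should be renamed.
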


\begin{proof} By definition, the fiber over $([\bar{a}\wedge\bar{b}],[V_{1}])\in\rG(2,T(-1))$
consists of $\bar{U}\simeq\mC^{3}$ (or points $([\bar{a}\wedge\bar{b}],([\bar{U}],[V_{1}]))\in\Zpq_{3}$)
satisfying \[
\bar{a}\wedge\bar{b}\in\bar{U}\subset\wedge^{2}(V/V_{1}).\]
The claim is immediate if we write the above condition as $\mC\subset\bar{U}\subset\mC^{6}$.
\end{proof}

We define \[
\Zpq_{\rho}:=\Lpi_{3'}^{-1}(\Prt_{\rho})=\mP(\sS|_{\Prt_{\rho}})\text{ and \ensuremath{\Zpq_{\sigma}}:=\ensuremath{\Lpi_{3'}^{-1}}(\ensuremath{\Prt_{\sigma}})=\ensuremath{\mP}(\ensuremath{\sS|_{\Prt_{\sigma}}}).}\]
Then the fibers of $\Zpq_{\rho}\to\Prt_{\rho}$ and $\Zpq_{\sigma}\to\Prt_{\sigma}$
are the family of planes parameterized by $\Prt_{\rho}$ and $\Prt_{\sigma}$
respectively. The diagram (\ref{eq:diag4}) naturally restricts to
$\Prt_{\rho}$ and $\Prt_{\sigma}$, respectively.

\begin{prop} \label{cla:P1bdl} $\Lpi_{G'}\vert_{\Zpq_{\rho}}\colon\Zpq_{\rho}\to\mathrm{G}(2,T(-1))$
is a $\mP^{1}$-bundle. If we consider this as a sub-bundle of the
$\mathrm{G}(2,5)$-bundle $\Lpi_{G'}\colon\Zpq_{3}\to\mathrm{G}(2,T(-1))$,
then the fibers of $\Lpi_{G'}\vert_{\Zpq_{\rho}}$ are conics in $\mathrm{G}(2,5)$.
Similar properties hold also for $\Lpi_{G'}|_{\Zpq_{\sigma}}\colon\Zpq_{\sigma}\to\mathrm{G}(2,T(-1))$.
\end{prop}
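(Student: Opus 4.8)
The plan is to describe the fibers of $\Lrho_{\Zpq_\rho}$ explicitly and to locate them, via the Plücker embedding, inside the $\mathrm{G}(2,5)$-fibers of the ambient bundle $\Lrho_{\Zpq_3}$. Recall that over $[V_1]\in\mP(V)$ a $\rho$-plane is $\mathrm{P}_{V_2/V_1}$ for a line $\overline V_1:=V_2/V_1\subset V/V_1$, and that its span in $\Lwedge^2(V/V_1)$ is the three-dimensional subspace $V_3=\overline V_1\wedge(V/V_1)$ defining the corresponding point of $\hcoY_3$. A point of $\Zpq_3$ lying over $([\overline V_2],[V_1])\in\mathrm{G}(2,T(-1))$ and over $[V_3]\in\hcoY_3$ is one satisfying $\Lwedge^2\overline V_2\subset V_3$. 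First I would record the elementary fact that, for $V_3=\overline V_1\wedge(V/V_1)$ a $\rho$-plane, the incidence $\Lwedge^2\overline V_2\subset V_3$ holds if and only if $\overline V_1\subset\overline V_2$. Granting this, the fiber of $\Lrho_{\Zpq_\rho}$ over $([\overline V_2],[V_1])$ is $\{[\overline V_1]\mid\overline V_1\subset\overline V_2\}=\mP(\overline V_2)\cong\mP^1$; since $\overline V_2$ is the fiber of the tautological rank-two subbundle of $\mathrm{G}(2,T(-1))$, this identifies $\Zpq_\rho$ with its projectivization and so exhibits $\Lrho_{\Zpq_\rho}$ as a $\mP^1$-bundle.

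To see that this $\mP^1$ is a conic in the $\mathrm{G}(2,5)$-fiber, I would fix $([\overline V_2],[V_1])$, set $L:=\Lwedge^2\overline V_2$, and use that the fiber of $\Lrho_{\Zpq_3}$ is $\mathrm{G}\bigl(2,\Lwedge^2(V/V_1)/L\bigr)$, a point $[V_3]$ with $L\subset V_3$ being recorded by the plane $V_3/L$. Choosing a basis $e_1,\dots,e_4$ of $V/V_1$ with $\overline V_2=\langle e_1,e_2\rangle$ and writing $\overline V_1=\langle a e_1+b e_2\rangle$, a one-line computation gives $V_3/L=\langle a\,\overline{e_1\wedge e_3}+b\,\overline{e_2\wedge e_3},\,a\,\overline{e_1\wedge e_4}+b\,\overline{e_2\wedge e_4}\rangle$, whose Plücker coordinate depends on $[a:b]$ only through $[a^2:ab:b^2]$. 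This is exactly the parametrization of a smooth conic in the Grassmannian fiber, which is the assertion.

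For $\Prt_\sigma$ I would proceed in complete parallel: a $\sigma$-plane over $[V_1]$ is attached to a three-dimensional subspace $\overline V_3:=V_4/V_1\subset V/V_1$ with $V_3=\Lwedge^2\overline V_3$, and here $\Lwedge^2\overline V_2\subset V_3$ holds if and only if $\overline V_2\subset\overline V_3$. Thus the fiber of $\Lrho_{\Zpq_\sigma}$ over $([\overline V_2],[V_1])$ is $\{[\overline V_3]\mid\overline V_2\subset\overline V_3\}=\mP\bigl((V/V_1)/\overline V_2\bigr)\cong\mP^1$, realizing $\Zpq_\sigma$ as the projectivization of the tautological rank-two quotient bundle of $\mathrm{G}(2,T(-1))$; repeating the coordinate computation with $\overline V_3=\langle e_1,e_2,c e_3+d e_4\rangle$ produces the Plücker parametrization $[c^2:cd:d^2]$, again a conic. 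The only substantive point, I expect, is the clean translation of the two membership conditions $\Lwedge^2\overline V_2\subset V_3$ into the nested-subspace conditions $\overline V_1\subset\overline V_2$ and $\overline V_2\subset\overline V_3$; once these are established, both the $\mP^1$-bundle structure and the degree-two Plücker parametrization follow by the same fiberwise linear algebra over $\mathrm{G}(2,T(-1))$.
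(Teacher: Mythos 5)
Your proposal is correct, and the two halves of it relate to the paper's proof in different ways. For the $\mP^1$-bundle statement you and the paper use the same underlying incidence fact: your observation that $\Lwedge^2\overline V_2\subset \overline V_1\wedge(V/V_1)$ iff $\overline V_1\subset\overline V_2$ is exactly the paper's identification of the fiber with the pencil $\{V_2\mid V_1\subset V_2\subset V_3\}\simeq\mP^1$ of $\rho$-planes through a fixed point of $\mathrm{G}(2,V/V_1)$; your phrasing via projectivizations of the tautological sub- and quotient bundles of $\mathrm{G}(2,T(-1))$ is in fact a little stronger, since it gives the Zariski-local triviality globally rather than just a pointwise description of fibers. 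Where you genuinely diverge is the conic assertion: the paper does no coordinate computation but instead observes that the fiber is a \emph{line} in the parameter space $\mP(V/V_1)$ of $\rho$-planes, and then invokes the previously established fact that the fiber of $\Prt_\rho\to\mP(V)$ sits inside $\mathrm{G}(3,\Lwedge^2 V/V_1)$ as the second Veronese variety $v_2(\mP(V/V_1))$, so that lines become conics; this is short but tacitly uses that the Plücker polarization of the ambient $\mathrm{G}(3,\Lwedge^2 V/V_1)$ restricts to the Plücker polarization of the sub-Grassmannian $\mathrm{G}(2,5)=\mathrm{G}(2,\Lwedge^2(V/V_1)/L)$. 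Your direct computation of the Plücker parametrization $[a^2:ab:b^2]$ inside $\mathrm{G}(2,\Lwedge^2(V/V_1)/L)$ verifies degree two with respect to the polarization that the statement literally refers to, at the cost of a coordinate calculation, and it also makes the $\sigma$-case explicit (with the parametrization $[c^2:cd:d^2]$), whereas the paper disposes of that case with ``can be proved similarly.'' Both routes are sound; yours is self-contained, the paper's leverages the structural result $\Prt_\rho=v_2(\mP(T(-1)))$ already available from the construction of the Sarkisov link.
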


\begin{proof} Consider a point $([\bar{a}\wedge\bar{b}],[V_{1}])\in\mathrm{G}(2,T(-1))$.
We set $V_{3}=\langle a,b,v_{1}\rangle$ with $V_{1}=\langle v_{1}\rangle$
and $\bar{a}=a\text{ mod }V_{1}$, $\bar{b}=b\text{ mod }V_{1}$.
By definition, the fiber over the point consists of $[\bar{U}]\subset\rG(3,\wedge^{2}(V/V_{1}))$
satisfying \[
\bar{a}\wedge\bar{b}\in\bar{U}\subset\wedge^{2}\left(V/V_{1}\right)\text{ and }[U]=[\bar{U}\wedge V_{1}]\text{ is a }\rho\text{-plane}.\]
We note that the former condition is rephrased by $\wedge^{3}V_{3}\subset U$,
and the latter by $[U]=\mathrm{P}_{V_{2}}=[(V/V_{2})\wedge(\wedge^{2}V_{2})]$
for some $V_{2}$ with $V_{1}\subset V_{2}$. \textcolor{black}{From
these, we see that the fiber is described by $\left\{ [\bar{U}]=[(V/V_{2})\wedge(V_{2}/V_{1})]\mid V_{1}\subset V_{2}\subset V_{3}\right\} \simeq\mP(V_{3}/V_{1})$. }

\textcolor{black}{Similarly, we described the fiber $\Lpi_{G'}^{-1}([\bar{a}\wedge\bar{b}],[V_{1}])$
by $\left\{ [\bar{U}]\mid\bar{a}\wedge\bar{b}\in\bar{U}\subset\wedge^{2}(V/V_{1})\right\} $
$\simeq\rG(2,5)$. Let us consider the Pl\"ucker embedding $\bar{[U}]\in\rG(3,\wedge^{2}(V/V_{1}))\mapsto[\wedge^{3}\bar{U}]\in\mP(\wedge^{3}(\wedge^{2}(V/V_{1})))$
and also the decomposition $\wedge^{3}(\wedge^{2}(V/V_{1}))=\ft{S}^{2}(V/V_{1})\oplus\ft{S}^{2}(V/V_{1})^{*}$
for the projective space $\mP(\wedge^{3}(\wedge^{2}(V/V_{1})))$.
Obviously, the fiber $\Lpi_{G'}^{-1}([\bar{a}\wedge\bar{b}],[V_{1}])$
is mapped to a Grassmann $\rG(2,5)$ in $\rG(3,\wedge^{2}(V/V_{1}))$.
For the other fiber, using the {}``double spin coordinate'' of $\rG(3,6)$
given in \cite[Appendix A]{HoTa3}, it is straightforward to see that
$\Lpi_{G'}\vert_{\Zpq_{\rho}}^{-1}([\bar{a}\wedge\bar{b}],[V_{1}])\simeq\mP(V_{3}/V_{1})$
is mapped into the linear subspace $\mP(\ft{S}^{2}(V_{3}/V_{1}))\subset\mP(\ft{S}^{2}(V/V_{1}))$
by the second Veronese embedding, hence its image is given by a conic. }

The corresponding properties for $\Lpi_{G'}\vert_{\Zpq_{\sigma}}$
follow in a similar way.\end{proof} 

Finally, we observe a fact about the relative Euler sequence for the
projective bundle $\Lpi_{\rho}\colon\Zpq_{\rho}=\mP(\sS|_{\Prt_{\rho}})\to\Prt_{\rho}$;
\begin{equation}
0\to\sO_{\Zpq_{\rho}}(-1)\to\Lpi_{\rho}^{\;*}(\sS|_{\Prt_{\rho}})\to\sR_{\rho}\to0,\label{eq:EulerS}\end{equation}
 where we set $\sR_{\rho}:=T_{\Zpq_{\rho}/\Prt_{\rho}}\otimes\sO_{\Zpq_{\rho}}(-1)$.

\begin{lem} \label{cla:WZp} Let $\eQ_{\rho}$ be the pull-back of
the universal quotient bundle $\eQ$ on $\mathrm{G}(3,V)$ by the
composition $\Lrho_{G}\circ\Lpi_{G'}\vert_{\Zpq_{\rho}}:\Zpq_{\rho}\to\mathrm{G}(2,T(-1))\to\mathrm{G}(3,V)$.
Then \[
\eQ_{\rho}\simeq\sR_{\rho}\otimes{\Lpi_{\rho}}^{*}\sO_{\mP(T(-1))}(1),\]
 where $\sO_{\mP(T(-1))}(1)$ is the tautological sheaf on $\mP(T(-1))=F(1,2,V)=\Prt_{\rho}$.
\end{lem}

\begin{proof} Let $([\bar{U}],[V_{1}])$ with $[\bar{U}]=[(V/V_{2})\wedge(V_{2}/V_{1})]$
be a point of $\Prt_{\rho}$. Then a point $z_{\rho}=([\bar{U}_{1}],([\bar{U}],[V_{1}]))$
in the fiber $\Lpi_{\rho}^{-1}(\Prt_{\rho})\subset\Zpq_{\rho}$ may
be written by $\bar{U}_{1}=(V_{3}/V_{2})\wedge(V_{2}/V_{1})$ with
$V_{2}\subset V_{3}$. Since the composition $\Lrho_{G}\circ\Lpi_{G'}\vert_{\Zpq_{\rho}}$
sends $z_{\rho}$ to $\wedge^{3}V_{3}$, we have $\eQ_{\rho}\vert_{z_{\rho}}=V/V_{3}$.
On the other hand, the Euler sequence (\ref{eq:EulerS}) restricts
at $z_{\rho}$ to $0\to\bar{U}_{1}\to\bar{U}\to\bar{U}/\bar{U}_{1}\to0.$
Hence we have \[
\sR_{\rho}\otimes\Lpi_{\rho}^{\;*}\sO_{\mP(T(-1))}(1)\vert_{z_{\rho}}=\bar{U}/\bar{U}_{1}\otimes(V_{2}/V_{1})^{*}\simeq V/V_{3},\]
 which shows the claim. \end{proof}

$\;$

\subsection{Generically conic bundles $\Lpi_{\Zpq_{2}}\colon\Zpq_{2}\to\hcoY_{2}$
\label{Z2Y2}}

Let us recall that $\Zpq_{3}^{u}=\mP(\sS$) is the projective bundle
over $\hcoY_{3}$ and denote the natural projection by $\Lpi_{3^{u}}:\Zpq_{3}^{u}\to\hcoY_{3}$.
The generically conic bundle $\Lpi_{3'}:\Zpq_{3}\to\hcoY_{3}$ is
the fiber-wise intersection of $\Zpq_{3}^{u}$ with the Grassmann
bundle $\rG(2,T(-1))\times_{\mP(V)}\hcoY_{3}$. Let $\Zpq_{\rho}$
be the inverse image $\Lpi_{3'}^{-1}(\Prt_{\rho})$ of $\Prt_{\rho}\simeq F(1,2,V)$.
$\hcoY_{2}$ is the blow-up of $\hcoY_{3}$ along $\Prt_{\rho}$ with
the exceptional divisor $F_{\rho}$. We will consider the blow-up
$\rho_{2'}:\Zpq_{2}\to\Zpq_{3}$ of $\Zpq_{3}$ along $\Zpq_{\rho}$
and denote by $E_{\rho}$ its exceptional divisor. Then there is a
projection $\Lpi_{2'}:\Zpq_{2}\to\hcoY_{2}$ with the following commutative
diagram:\[
\begin{matrix}\xymatrix{(\Zpq_{\rho}\subset\Zpq_{3})\ar[d]^{\Lpi_{3'}} & \ar[l]^{\rho_{2'}}(E_{\rho}\subset\Zpq_{2})\ar[d]^{\Lpi_{2'}}\\
(\Prt_{\rho}\subset\hcoY_{3}) & \ar[l]^{\rho_{2}}(F_{\rho}\subset\hcoY_{2})}
\end{matrix}\]
By construction, the fibers of $\Lpi_{2'}$ over $\hcoY_{2}\setminus(F_{\rho}\cup\Lrho_{2}^{-1}(\Prt_{\sigma}))$
are $\tau$-conics in the fibers of the Grassmann bundle $\rG(2,T(-1))$.
We will show that the fibers of $\Lpi_{2'}$ over $F_{\rho}$ are
$\rho$-conics.

We first describe the exceptional set $F_{\rho}$ (see also \cite[Sect.~5.4]{HoTa3}).

\begin{lem} \label{cla:G(2T)} $\mathrm{G}(2,T(-1))\in|2\Hwt+\Lwt|$.
\end{lem} 

\begin{proof} Note that $\mathrm{G}(2,T(-1))$ is a divisor in $\mP(T(-1)^{\wedge2})$.
Fix a point $[V_{1}]\in\mP(V)$. The defining equation of $\rG(2,V/V_{1})$
in $\mP(\wedge^{2}(V/V_{1}))$ is the Pl\"ucker quadric, which defines
a symmetric bi-linear form $\wedge^{2}(V/V_{1})\times\wedge^{2}(V/V_{1})$
$\to\wedge^{4}(V/V_{1}).$ Since this globalizes to $\wedge^{2}T(-1)\times\wedge^{2}T(-1)\to\wedge^{4}T(-1)\simeq\sO(1)$,
the defining equation of $\rG(2,T(-1))$ in $\mP(T(-1)^{\wedge2})$
is an element of $H^{0}(\mP(V),\ft{S}^{2}(\Omega(1)^{\wedge2})\otimes\sO(1)).$
This proves the claim.\end{proof}

\begin{lem} \label{lem:Normal-Bundles}The normal bundles of $\Prt_{\rho}$
in $\hcoY_{3}$ and $\Prt_{\sigma}$ in $\hcoY_{3}$, respectively,
are given by \[
\sN_{\Prt_{\rho}/\hcoY_{3}}=\ft{S}^{2}\sS^{*}\otimes L_{\hcoY_{3}}\vert_{\Prt_{\rho}}\text{ and }\sN_{\Prt_{\sigma}/\hcoY_{3}}=\ft{S}^{2}\sS^{2}\otimes L_{\hcoY_{3}}\vert_{\Prt_{\sigma}}.\]

\end{lem}

\begin{proof} Recall that both $\Prt_{\rho}$ and $\Prt_{\sigma}$
consist of points $([\bar{U}],[V_{1}])\in\hcoY_{3}$ satisfying $\mP(\bar{U})\cap\rG(2,V/V_{1})=\mP(\bar{U})$
in $\mP(\wedge^{2}(V/V_{1}))$. As described in the above Lemma \ref{cla:G(2T)},
the defining equation of $\rG(2,T(-1))\subset\mP(T(-1)^{\wedge2})$
is given by a section of $\ft{S}^{2}(T(-1)^{\wedge2})^{*}\otimes\sO(1)$
over $\mP(V)$. Pulling back this by $\Lpi_{3}:\hcoY_{3}\to\mP(V)$
and composing with the surjection $\Lpi_{3}^{\;*}(T(-1)^{\wedge2})^{*}\to\sS^{*}$,
we obtain a section of $\ft{S}^{2}\sS^{*}\otimes L_{\hcoY_{3}}$ over
$\hcoY_{3}$. $\Prt_{\rho}\sqcup\Prt_{\sigma}$ is exactly the scheme
of the zeros of this section (which is isomorphic to the orthogonal
Grassmann bundle $\mathrm{OG}(3,T(-1)^{\wedge2})$). The claimed forms
of the normal bundles follow from this. \end{proof}

\begin{prop} $F_{\rho}=\mP(\ft{S}^{2}\sS^{*}\otimes L_{\hcoY_{3}}\vert_{\Prt_{\rho}})$.
The fibers of $F_{\rho}\to\Prt_{\rho}$ are conics in the $\rho$-planes
parametrized by $\Prt_{\rho}$. \end{prop}

\begin{proof} The first claim follows from $F_{\rho}=\mP(\sN_{\Prt_{\rho}/\hcoY_{3}})$
with the above Lemma \ref{lem:Normal-Bundles}. For the second claim,
let us recall that $\Prt_{\rho}\subset\Zpq_{3}$ consists of points
$([\bar{U}],[V_{1}])$ with $[\bar{U}]=[(V/V_{2})\wedge(V_{2}/V_{1})]$
parametrized by $[V_{1}\subset V_{2}]\in F(1,2,V)$. Then the fiber
of $F_{\rho}\to\Prt_{\rho}$ over a point $([\bar{U}],[V_{1}])$ is
given by \[
\mP(\ft{S}^{2}\sS^{*}\otimes L_{\hcoY_{3}}\vert_{([\bar{U}],[V_{1}])})=\mP(\ft{S}^{2}\bar{U}^{*}\otimes V_{1}^{*})\simeq\mP(\ft{S}^{2}\bar{U}^{*}),\]
which we can identify with conics on the $\rho$-plane $\mP(\bar{U})$
as claimed. \end{proof}

Under the composition map $\hcoY_{2}\to\widetilde{\hcoY}\to\hcoY$,
generic points on $F_{\rho}$ are mapped to points $([Q],q)\in\hcoY$
with $\rank Q=3$ and the corresponding (smooth) $\rho$-conic $q$
in $\rG(3,V)$, i.e., the $\mP^{1}$-family of planes contained in
the quadric $Q$. More precisely, the image of $F_{\rho}$ in $\widetilde{\hcoY}$
has a bijective correspondence to the pairs $([Q],q)$ with $\rank Q\leq3$
and a $\rho$-conic $(\rank q\leq3)$ which parametrizes planes contained
in the quadric $Q$. See \cite[Sect.~5.5, 5.6]{HoTa3} for more complete
descriptions. 

\begin{prop}\label{prop:gen-conic-bundle2} $E_{\rho}\to F_{\rho}$
is the universal family of conics parametrized by $F_{\rho}$. Hence
$\Zpq_{2}\to\hcoY_{2}$ is a generically conic bundle with fibers
over $\hcoY_{2}\setminus\Prt_{\sigma}$ being conics on $\rG(3,V)$
and the fibers over $\Prt_{\sigma}$ being $\sigma$-planes on $\rG(3,V)$.
$($For notational simplicity, we write the transform of $\Prt_{\sigma}\subset\hcoY_{3}$
on $\hcoY_{2}$ by the same $\Prt_{\sigma}$.$)$

\end{prop}

\begin{proof} We describe the blow-up $\Lrho_{2'}:\Zpq_{2}\to\Zpq_{3}$
along $\Zpq_{\rho}\subset\Zpq_{3}$. For this, we consider the subvariety
$\Zpq_{\rho}=\Lpi_{3'}^{-1}(\Prt_{\rho})=\mP(\sS\vert_{\Prt_{\rho}})$
in $\Zpq_{3}^{u}=\mP(\sS$) with the following normal bundle sequence:\[
0\to\sN_{\Zpq_{\rho}/\Zpq_{3}}\to\sN_{\Zpq_{\rho}/\Zpq_{3}^{u}}\to\sN_{\Zpq_{3}/\Zpq_{3}^{u}}\vert_{\Zpq_{\rho}}\to0.\]
We note that since $\Lpi_{3^{u}}:\Zpq_{3}^{u}\to\hcoY_{3}$ is a projective
bundle and hence is flat, we have \[
\sN_{\Zpq_{\rho}/\Zpq_{3}^{u}}=(\Lpi_{3^{u}}\vert_{\Zpq_{\rho}})^{*}\sN_{\Prt_{\rho}/\Zpq_{3}}=(\Lpi_{3^{u}}\vert_{\Zpq_{\rho}})^{*}\ft{S}^{2}\sS^{*}(L_{\hcoY_{3}})\vert_{\Prt_{\rho}},\]
where we use Lemma \ref{lem:Normal-Bundles} for the normal bundle
$\sN_{\Prt_{\rho}/\Zpq_{3}}$. From Lemma \ref{cla:G(2T)} and the
definition $\Zpq_{3}=\rG(2,T(-1))\times_{\mP(V)}\hcoY_{3}\subset\Zpq_{3}^{u}$,
we have\begin{equation}
0\to\sN_{\Zpq_{\rho}/\Zpq_{3}}\to(\Lpi_{3^{u}}\vert_{\Zpq_{\rho}})^{*}\ft{S}^{2}\sS^{*}(L_{\hcoY_{3}})\vert_{\Prt_{\rho}}\to\sO_{\Zpq_{3}^{u}}(2H_{\Zpq_{3}^{u}}+L_{\Zpq_{3}^{u}})\vert_{\Zpq_{\rho}}\to0,\label{eq:Normal-bundle-sequence}\end{equation}
where $H_{\Zpq_{3}^{u}}=\Lpi_{G'}^{*}\sO_{\rG(2,T(-1))}(1)$ and $L_{\Zpq_{3}^{u}}=(\Lpi_{3}\circ\Lpi_{3^{u}})^{*}\sO(1)$.
The exceptional set (divisor) $E_{\rho}$ of the blow-up is given
by $\mP(\sN_{\Zpq_{\rho}/\Zpq_{3}}).$ Now, take a point $([\bar{U}],[V_{1}])\in\Prt_{\rho}$
with $[\bar{U}]=[(V/V_{2})\wedge(V_{2}/V_{1})]$ and $[V_{1}\subset V_{2}]\in F(1,2,V)$.
Then the fiber of $\Zpq_{\rho}\to\Prt_{\rho}$ over the point $([\bar{U}],[V_{1}])$
is the plane $\mP(\bar{U})$ in $\rG(2,V/V_{1})\subset\mP(\wedge^{2}(V/V_{1}))$.
Restricting the sequence (\ref{eq:Normal-bundle-sequence}) to $\mP(\bar{U})$,
we obtain \[
0\to\sN_{\Zpq_{\rho}/\Zpq_{3}}\vert_{\mP(\bar{U})}\to\ft{S}^{2}\bar{U}^{*}\otimes V_{1}^{*}\otimes\sO_{\mP(\bar{U})}\to\sO_{\mP(\bar{U})}(2)\otimes V_{1}^{*}\to0.\]
If we further restrict this to a point $x\in\mP(\bar{U})$, we see
that the stalk $(\sN_{\Zpq_{\rho}/\Zpq_{3}})_{x}$ at $x$ is given
by the kernel of the map $\ft{S}^{2}\bar{U}^{*}\to\sO_{\mP(\bar{U})}(2)_{x}$,
which we identify with the conics in $\mP(\bar{U})$ passing through
$x$. Now we recall that the fiber of $F_{\rho}\to\Prt_{\rho}$ over
$([\bar{U}],[V_{1}])$ is given by $\mP(\ft{S}^{2}\bar{U})$ which
parametrizes the conics in $\mP(\bar{U})$. Therefore, we see that
$E_{\rho}=\mP(\sN_{\Zpq_{\rho}/\Zpq_{3}}\vert_{\mP(\bar{U})})$ describes
the conics in $\mP(\bar{U})$ which correspond to each point of $F_{\rho}$,
i.e., the universal family of conics in the $\rho$-planes as claimed. 

The rest of the claims are clear since $\Zpq_{3}\to\hcoY_{3}$ is
a generically conic bundle over $\hcoY_{3}\setminus\Prt_{\rho}\sqcup\Prt_{\sigma}$.
\end{proof}

\begin{defn} \label{def:Z2t}For later use, we introduce 

\begin{myitem2} 

\item[\;\;(i)] $\mathrm{G}(2,4)$-bundle $B(2,4,\hcoY_{2}):=\mathrm{G}(2,T(-1))\times_{\mP(V)}\hcoY_{2}$, 

\item[\;\;(ii)] $\mP^{2}$-bundle $\Zpq_{2}^{u}:=\Zpq_{3}^{u}\times_{\hcoY_{3}}\hcoY_{2}=\mP(\Lrho_{2}^{*}\sS)$
over $\hcoY_{2}$ and 

\item[\;\;(iii)] $\Zpq_{2}^{t}:=B(2,4,\hcoY_{2})\cap\Zpq_{2}^{u}$,
the intersection over $\hcoY_{2}$.

\end{myitem2} 

\end{defn} 

Since $\Lpi_{3^{u}}:\Zpq_{3}^{u}=\mP(\sS)\to\hcoY_{3}$ is a flat
fibration, the morphism $\Zpq_{3}^{u}\times_{\hcoY_{3}}\hcoY_{2}\to\Zpq_{3}^{u}$
is the blow-up along $\Lpi_{3^{u}}^{-1}(\Prt_{\rho})$ with its exceptional
divisor $\Zpq_{3}^{u}\times_{\hcoY_{3}}F_{\rho}=\mP(\Lrho_{2}^{*}\sS|_{F_{\rho}})$.
The definition $\Zpq_{2}^{t}$ corresponds to the intersection $\Zpq_{3}=(\mathrm{G}(2,T(-1))\times_{\mP(V)}\hcoY_{3})\cap\Zpq_{3}^{u}$.
We note that $\Zpq_{2}^{t}$ is the total transform of $\Zpq_{3}$
under the blow-up $\Zpq_{2}^{u}\to\Zpq_{3}^{u}$ since it contains
the exceptional divisor. The superscript $t$ has been used to remind
this. $\Zpq_{2}^{t}$ is reduced since $\Zpq_{3}$ is smooth by Proposition
\ref{cla:G25}. $\Zpq_{2}^{t}$ will play important roles in our proof
of Proposition \ref{prop:est} (see Lemma \ref{cla:est}).

$\;$

We now fit the generically conic bundles in the following diagam:\begin{equation}
\begin{matrix}\xyZYmain\end{matrix}\label{eq:ZYmainDiagram}\end{equation}
Here we set $\hcoY_{2}^{o}=\hcoY_{2}\setminus\Prt_{\sigma},$ $\widetilde{\hcoY^{o}}=\widetilde{\hcoY}\setminus\Prt_{\sigma}$
and $\Zpq_{2}^{o}=\Zpq_{2}\setminus\Lpi_{2'}^{-1}(\Prt_{\sigma})$
and define $\iota_{2},\tilde{\iota}$ and $\iota_{2'}$ to be the
respective inclusions as in the above diagram. This diagram will be
used extensively in our construction of the ideal sheaf $\sI$ on
$\widetilde{\hcoY}\times\hchow$ in the next section. In the rest
of this section, we will prepare some preliminary results related
to this diagram.

\subsection{The Grassmann bundle $\rG(2,T(-1))\to\mP(V)$}

Fix a point $[V_{1}]\in\mP(V)$. Then the Euler sequence $0\to\sO(-1)\to V\otimes\sO\to T(-1)\to0$
over $[V_{1}]$ is represented by $0\to V_{1}\to V\to V/V_{1}\to0$.
We denote the projection by $\Lpi_{V_{1}}:V\to V/V_{1}$. We consider
the dual vector space $V^{*}$ to $V$ and identify the dual $(V/V_{1})^{*}$
in $V^{*}$ as $\{\varphi\in V^{*}\mid\varphi|_{V_{1}}=0\}$. Then
it is easy to deduce the following isomorphisms: \begin{equation}
\begin{matrix}\xyGiso\end{matrix}\qquad\qquad\label{eq:Giso}\end{equation}
 where $\overline{V}_{2}$ is a two dimensional subspace in $V/V_{1}$.

Note that Grassmann bundles $\mathrm{G}(2,T(-1))$ and $\mathrm{G}(2,\Omega(1))$,
respectively, are embedded into the projective bundles $\mP(T(-1)^{\wedge2})$
and $\mP(\Omega(1)^{\wedge2})$, which have the following relations:

\begin{lem} \label{cla:compare} There is a natural isomorphism $\mP(T(-1)^{\wedge2})\simeq\mP(\Omega(1)^{\wedge2})$,
and under this isomorphism we have the following identifications$:$
\[
\Hwomega=\Hwt+\Lwt\text{ and }\Lwomega=\Lwt.\]
 \end{lem}

\begin{proof} By the natural isomorphism $T(-1)^{\wedge2}\simeq\Omega(1)^{\wedge2}\otimes\Lwedge^{4}T(-1)\simeq\Omega(1)^{\wedge2}\otimes\sO(1)$
and our definitions of $H_{\mP(\sE)}$ and $L_{\Sigma}$ introduced
above (\ref{eq:univ}), the assertion follows. \end{proof}

Since we have $H_{\mP(\Omega(1)^{\wedge2})}\vert_{\rG(2,\Omega(1))}=\Lrho^{\;*}\sO_{\rG(2,V^{*})}(1)$
and also we can identify $\sO_{\mathrm{G}(2,V^{*})}(1)=\sO_{\mathrm{G}(3,V)}(1)$
in (\ref{eq:Giso})$,$ we have $H_{\mP(\Omega(1)^{\wedge2})}\vert_{\rG(2,T(-1))}$
$=\Lrho_{G}^{\;*}\sO_{\rG(3,V)}(1).$ Hence, using the above lemma,
we have \begin{equation}
N_{\mathrm{G}(2,T(-1))}=\left(\Hwt+\Lwt\right)\big|_{\mathrm{G}(2,T(-1))},\label{eq:pullback}\end{equation}
where $N_{\mathrm{G}(2,T(-1))}:=\Lrho_{G}^{\;*}\sO_{\mathrm{G}(3,V)}(1)$
following the convention in Subsection \ref{sub:Birat-Y}.

$\;$

\subsection{Canonical divisors $K_{\Zpq_{i}/\hcoY_{i}}$}

We calculate $N_{\Zpq_{3}}=(\Lrho_{G}\circ\Lpi_{G'})^{*}\sO_{G(3,V)}(1)$
and the relative canonical divisors $K_{\Zpq_{i}/\hcoY_{i}}:=K_{\Zpq_{i}}-\Lpi_{i'}^{\;*}K_{\hcoY_{i}}$
$(i=2,3)$ .

\begin{prop} \label{cla:cano} 

\begin{myitem2}

\item[\;$(1)$] $N_{\Zpq_{3}}=H_{\mP(\sS)}|_{\Zpq_{3}}+L_{\Zpq_{3}}$,

\item[\;$(2)$] $K_{\Zpq_{3}/\hcoY_{3}}=\Lpi_{\Zpq_{3}}^{\;*}(\det\sQ-L_{\hcoY_{3}})-N_{\Zpq_{3}},$

\item[\;$(3)$] $K_{\Zpq_{2}/\hcoY_{2}}=M_{\Zpq_{2}}-N_{\Zpq_{2}}.$

\end{myitem2}

\end{prop}

\begin{proof} (1) follows from (\ref{eq:pullback}) and Lemma \ref{cla:compare}
since $H_{\mP(\sS)}|_{\Zpq_{3}}$ is given by the restriction of $\Hwt$
by (\ref{eq:univ}). For (2), recall $\Zpq_{3}^{u}=\mP(\sS)$ and
the projection $\Lpi_{3^{u}}:\Zpq_{3}^{u}\to\hcoY_{3}.$ Then we have
$K_{\Zpq_{3}^{u}/\hcoY_{3}}=-3H_{\mP(\sS)}+\Lpi_{3^{u}}^{*}\det\sS^{*}$
from the Euler sequence for $\Zpq_{3}^{u}=\mP(\sS)$ \[
0\to\sO_{\Zpq_{3}^{u}}(-1)\to\Lpi_{3^{u}}^{*}\sS\to T_{\Zpq_{3}^{u}/\hcoY_{3}}(-1)\to0.\]
Note the inclusion $i:\mathrm{G}(2,T(-1))\hookrightarrow\mP(T(-1)^{\wedge2})$
and the definition $\Zpq_{3}=\Zpq_{3}^{u}\cap i(\mathrm{G}(2,T(-1)))$.
Then, by Proposition \ref{cla:G(2T)}, we have $\Zpq_{3}\in|2H_{\mP(\sS)}+L_{\Zpq_{3}^{u}}|$.
Hence, by the adjunction formula and Proposition \ref{prop:div-relations}
(2), we have \[
K_{\Zpq_{3}/\hcoY_{3}}=\{K_{\Zpq_{3}^{u}/\hcoY_{3}}+(2H_{\mP(\sS)}+L_{\Zpq_{3}^{u}})\}\big\vert_{\Zpq_{3}}=-H_{\mP(\sS)}|_{\Zpq_{3}}+L_{\Zpq_{3}}+\Lpi_{3'}^{\;*}\det\sS^{*}.\]
Now by (1) and Proposition \ref{prop:div-relations} (1), we obtain
(2). For (3), recall that $E_{\rho}=\Lpi_{2'}^{\;-1}(F_{\rho})$ is
the exceptional divisor of $\Lrho_{2'}\colon{\Zpq}_{2}\to{\Zpq}_{3}$.
Since the codimension of $\Prt_{\rho}$ in ${\hcoY}_{3}$ is three
and that of $\Lpi_{3'}^{-1}(\Prt_{\rho})$ in ${{\Zpq}_{3}}$ is two,
we have \[
K_{{\Zpq}_{2}/{\hcoY}_{2}}=\Lrho_{2'}^{\;*}K_{{\Zpq}_{3}/{\hcoY}_{3}}-E_{\rho}.\]
 Using (2), we obtain $K_{\Zpq_{2}/\hcoY_{2}}=\Lpi_{2'}^{\;*}(\Lrho_{2}^{\;*}\det\sQ-L_{\hcoY_{2}}-F_{\rho})-{N}_{\Zpq_{2}}.$
Now, by Proposition \ref{prop:div-relations} (5), the claimed relation
follows. \end{proof}

$\;$

\subsection{The subvariety $\Zpq_{3}^{u}$ in $\mP(T(-1)^{\wedge2})\times_{\mP(V)}\hcoY_{3}$}

The univeral family of planes $\Zpq_{3}^{u}$ is contained in $\mP(T(-1)^{\wedge2})\times_{\mP(V)}\hcoY_{3}$,
which is a $\mP^{5}$-bundle over $\hcoY_{3}$. In later sections,
we will be required to describe the ideal sheaf of $\Zpq_{3}^{u}$
in $\mP(T(-1)^{2})\times_{\mP(V)}\hcoY_{3}$ as a projective subbundle.
Here we recall the following standard lemma:

\begin{lem} \label{lem:KoszulZ} Let $X$ be a variety, and $0\to\sA\to\sB\to\sC\to0$
a short exact sequence of locally free $\sO_{X}$-modules. Associated
to the surjection $\sB\to\sC$, we may regards $\mP(\sC^{*})$ as
a subbundle of $\mP(\sB^{*})$. As such, the subvariety $\mP(\sC^{*})$
is the complete intersection with respect to a section of $\pi^{*}\sA^{*}\otimes\sO_{\mP(\sB^{*})}(1)$,
where $\pi$ is the natural projection $\mP(\sB^{*})\to X$. \end{lem}

\begin{proof} Let $\sI$ be the ideal sheaf of $\mP(\sC^{*})$ in
$\mP(\sB^{*})$. We have a natural exact sequence $0\to\sI\otimes\sO_{\mP(\sB^{*})}(1)\to\sO_{\mP(\sB^{*})}(1)\to\sO_{\mP(\sC^{*})}(1)\to0$.
Pushing forward this on $X$, we have $0\to\pi_{*}(\sI\otimes\sO_{\mP(\sB^{*})}(1))\to\sB\to\sC\to0$.
Therefore $\sA\simeq\pi_{*}(\sI\otimes\sO_{\mP(\sB^{*})}(1))$. Moreover,
by a natural map $\pi^{*}\pi_{*}(\sI\otimes\sO_{\mP(\sB^{*})}(1))\to\sI\otimes\sO_{\mP(\sB^{*})}(1)$,
we obtain $\pi^{*}\sA\to\sI\otimes\sO_{\mP(\sB^{*})}(1)$. Investigating
this along fibers, we see that this is surjective. \end{proof}

\begin{prop} \label{cla:KoszulZ} The subvariety $\Zpq_{3}^{u}$
in $\mP(T(-1)^{2})\times_{\mP(V)}\hcoY_{3}$ is the complete intersection
with respect to a section of $\sO_{\mP(T(-1)^{2})}(1)\boxtimes\sQ$,
hence $\sO_{\Zpq_{3}^{u}}$ has the following Koszul resolution as
a $\sO_{\mP(T(-1)^{2})\times_{\mP(V)}\hcoY_{3}}$-module$:$ \begin{eqnarray*}
0\to\Lwedge^{3}\{\sO_{\mP(T(-1)^{2})}(-1)\boxtimes\sQ^{*}\}\to\Lwedge^{2}\{\sO_{\mP(T(-1)^{2})}(-1)\boxtimes\sQ^{*}\}\to\\
\sO_{\mP(T(-1)^{2})}(-1)\boxtimes\sQ^{*}\to\sO_{\mP(T(-1)^{2})\times_{\mP(V)}\hcoY_{3}}\to\sO_{\Zpq_{3}^{u}}\to0.\end{eqnarray*}
 \end{prop}

\begin{proof} Since $\Zpq_{3}^{u}=\mP(\sS)$, the assertion follows
by applying Lemma \ref{lem:KoszulZ} to the dual of (\ref{eq:univ}).
\end{proof}

\newpage{}

\section{The ideal sheaf $\sI$ of a closed subscheme $\Delta$ on $\widetilde{\hcoY}\times\hchow$}

\label{section:Ker} We formulate a natural incidence correspondence
between $\widetilde{\hcoY}$ and $\hchow$ and consider its ideal
sheaf $\sI$ on $\widetilde{\hcoY}\times\hchow$. We obtain a locally
free resolution of the ideal sheaf, which will play a central role
for our proof of the derived equivalence between $X$ and $Y$ in
Section \ref{section:BC}. 

$\;$

\subsection{The ideal sheaf $\sI$ and its locally free resolution}

Let us consider an incident relation in $\rG(3,V)\times\rG(2,V)$
by \[
\Delta_{0}=\left\{ ([V_{3}],[V_{2}])\mid V_{2}\subset V_{3}\right\} ,\]
which coincides with the flag variety $F(2,3,V)$. We note that $\rG(3,V)$
is connected to $\Zpq_{2},\Zpq_{3}$ and $\rG(2,T(-1))$ by the morphisms
in the diagram (\ref{eq:ZYmainDiagram}). Considering the products
of these morphisms with $g:\hchow\to\rG(2,V)$, we denote the pull-backs
of $\Delta_{0}\subset\rG(3,V)\times\rG(2,V)$ in $\Zpq_{2}\times\hchow,$
$\Zpq_{3}\times\hchow$ and $\rG(2,T(-1))\times\hchow$, respectively,
by \[
\Delta_{2},\;\Delta_{3},\;\Delta_{G},\]
 and their ideal sheaves by $\sI_{\Delta_{2}},\sI_{\Delta_{3}}$ and
$\sI_{\Delta_{G}}$. Recall that we have introduced the following
universal sheaves:\[
0\to\eS\to V\otimes\sO_{\rG(3,V)}\to\eQ\to0,\;\;0\to\eF\to V\otimes\sO_{\rG(2,V)}\to\eG\to0.\]

\begin{prop} \label{cla:Delta} The ideal sheaf $\sI_{\Delta_{0}}$
of $\Delta_{0}$ has the following Koszul resolution$:$ \begin{equation}
0\to\Lwedge^{4}({\eQ}^{*}\boxtimes{\sF})\to\Lwedge^{3}({\eQ}^{*}\boxtimes{\sF})\to\Lwedge^{2}({\eQ}^{*}\boxtimes{\sF})\to{\eQ}^{*}\boxtimes{\sF}\to\sI_{\Delta_{0}}\to0.\label{eq:delta0}\end{equation}
 \end{prop} 

\begin{proof} Tensoring the two surjections $V\otimes\sO_{\mathrm{G}(3,V)}\to\eQ$
and $V^{*}\otimes\sO_{\mathrm{G}(2,V)}\to\sF^{*}$, we obtain a map
$(V\otimes V^{*})\otimes\sO_{\mathrm{G}(3,V)\times\mathrm{G}(2,V)}\to\eQ\boxtimes\sF^{*}$.
Associated to the identity element in $\Hom(V,V)\simeq V\otimes V^{*}$,
we obtain a map $\sO_{\mathrm{G}(3,V)\times\mathrm{G}(2,V)}\to\eQ\boxtimes\sF^{*}$.
We show that $\Delta_{0}$ is the scheme of zeros of the section associated
to this map. Indeed, at a point $([V_{3}],[V_{2}])$ of $\mathrm{G}(3,V)\times\mathrm{G}(2,V)$,
the fiber of $\eQ$ is $V/V_{3}$ and the fiber of $\sF^{*}$ is $V_{2}^{*}$.
Then it is easy to see that the identity in $V\otimes V^{*}$ is contained
in the kernel of the natural map $V\otimes V^{*}\to(V/V_{3})\otimes V_{2}^{*}$
if and only if $V_{2}\subset V_{3}$, namely, $([V_{3}],[V_{2}])\in\Delta_{0}$.
\end{proof}

Denote by $\mu_{2}$ the composition $\Lrho_{G}\circ\Lpi_{G'}\circ\Lrho_{2'}:\Zpq_{2}\to G(3,V)$.
Pulling back the above resolution, we obtain the following locally
free resolution of $\sI_{\Delta_{2}}$: \begin{equation}
\begin{aligned}0\to\Lwedge^{4}(\mu_{2}^{*}{\eQ}^{*}\boxtimes g^{*}\sF)\to & \Lwedge^{3}(\mu_{2}^{*}{\eQ}^{*}\boxtimes g^{*}\sF)\to\hfill\\
 & \Lwedge^{2}(\mu_{2}^{*}{\eQ}^{*}\boxtimes g^{*}\sF)\to\mu_{2}^{*}{\eQ}^{*}\boxtimes g^{*}\sF\to\sI_{\Delta_{2}}\to0.\end{aligned}
\label{eqnarray:I''}\end{equation}
We extend all morphisms in the diagram (\ref{eq:ZYmainDiagram}) by
taking the product $\times\id_{\hchow}$ to the corresponding morphisms
between the products with $\hchow$ and indicate these by $\check{\;}$,
e.g., $\check{\Lpi}_{3'}=\Lpi_{3'}\times\id_{\hchow}$. We consider
the restriction $\sI_{\Delta_{2}}^{o}=\check{\iota}_{2}^{*}\sI_{\Delta_{2}}$
of the ideal sheaf $\sI_{\Delta_{2}}$ to $\Zpq_{2}^{o}\times\hchow$.
In what follows, for simplicity, we abuse $\cLpi_{2'*}^{o}$ for $\cLpi_{2'*}^{o}\circ\check{\iota}_{2}^{*}$
when it is clear from the context. 

\begin{prop} \label{prop:step1} Define $\sI_{2}^{o}:=\check{\Lpi}_{2'*}^{o}\sI_{\Delta_{2}}^{o}$,
which is an ideal sheaf on ${\hcoY}_{2}^{o}\times{\hchow}$. There
is an exact sequence on ${\hcoY}_{2}^{o}\times{\hchow}:$ \begin{equation}
\begin{aligned}0\to R^{1}\check{\Lpi}_{2'*}^{o}\Lwedge^{4}(\mu_{2}^{*}{\eQ}^{*}\boxtimes g^{*}\sF)\to & R^{1}\check{\Lpi}_{2'*}^{o}\Lwedge^{3}(\mu_{2}^{*}{\eQ}^{*}\boxtimes g^{*}\sF)\to\hfill\\
 & R^{1}\check{\Lpi}_{2'*}^{o}\Lwedge^{2}(\mu_{2}^{*}{\eQ}^{*}\boxtimes g^{*}\sF)\to\sI_{2}^{o}\to0.\end{aligned}
\label{eqnarray:I'}\end{equation}
 \end{prop}

We prove this proposition in Subsection \ref{subsection:LocY2}. Our
strategy to obtain the ideal sheaf $\sI$ on $\widetilde{\hcoY}\times\hchow$
consists of the following four steps:

\begin{myitem} 

\item[S1.] We evaluate each term of (\ref{eqnarray:I'}) by Grothendieck-Verdier
duality (Subsection \ref{sub:Evaluating5.3}).

\item[S2.] We further simplify the sheaves which result from (S1)
to obtain a locally free resolution of $\sI_{2}^{o}$ (Subsection
\ref{sub:Simplify-I2}).

\item[S3.] We define the push forward $\sI^{o}:=(\tLrho_{2}\times\id)_{*}\sI_{2}^{o}$
and obtain its locally free resolution on $\widetilde{\hcoY}^{o}\times\hchow$
(Subsection \ref{subsection:LocY}). 

\item[S4.] Finally, we define the ideal sheaf $\sI=(\tilde{\iota}\times\id)_{*}\sI^{o}$
on $\widetilde{\hcoY}\times\hchow$ with its locally free resolution
(Subsection \ref{sub:Step-4}).

\end{myitem} Our final results can be summerized in the following
form:

\begin{thm} \label{thm:resolY} The ideal sheaf $\sI$ has the following
$\mathrm{SL}(V)$-equivariant locally free resolution$:$ \begin{equation}
\begin{matrix}0\to\widetilde{\sS}_{L}\boxtimes\sO_{\hchow}\to\widetilde{\sT}^{*}\boxtimes g^{*}\sF^{*}\to\Big(\sO_{\widetilde{\hcoY}}\boxtimes g^{*}\ft{S}^{2}\sF^{*}\Big)\oplus\left(\widetilde{\sQ}^{*}(M_{\widetilde{\hcoY}})\boxtimes\sO_{\hchow}(L_{\hchow})\right)\\
\qquad\;\;\qquad\qquad\qquad\qquad\to\;\;\sI\otimes\Big(\sO_{\widetilde{\hcoY}}(M_{\widetilde{\hcoY}})\boxtimes\sO_{\hchow}(2L_{\hchow})\Big)\;\;\to\;\;0\end{matrix}\label{eqn:fin5}\end{equation}
with the locally free sheaves $\widetilde{\sS}_{L}=(\widetilde{\sS}_{L}^{*})^{*}$,
$\widetilde{\sQ}$ and $\widetilde{\sT}$ on $\widetilde{\hcoY}$
introduced in \cite[Subsect.~6.1,~6.2]{HoTa3}. Let $\Delta$ be the
closed subscheme defined by $\sI$. Then $\Delta$ is an $\SL(V)$-invariant,
normal, and Cohen-Macaulay variety. \end{thm}

For convenience, we recall that the locally free sheaves $\widetilde{\sS}_{L}=(\widetilde{\sS}_{L}^{*})^{*}$,
$\widetilde{\sQ}$ and $\widetilde{\sT}$ are defined by the properties;
\[
\Lrho_{2}^{\;*}\sS(-L_{\hcoY_{2}})=\tLrho_{2}^{\;*}\widetilde{\sS}_{L},\;\Lrho_{2}^{\;*}\sQ=\tLrho_{2}^{\;*}\widetilde{\sQ},\;\sT_{2}=\tLrho_{2}^{\;*}\widetilde{\sT},\]
where $\sS$ and $\sQ$ are the universal sheaves on $\hcoY_{3}$
and $\sT_{2}$ is defined by the exact sequence \begin{equation}
0\to\sT_{2}^{*}\to\Lpi_{2}^{\;*}\Omega(1)\xrightarrow{}(\Lrho_{2}|_{F_{\rho}})^{*}\sO_{\mP(T(-1))}(1)\to0.\label{eq:T*}\end{equation}

$\;$

\subsection{Relations to the (dual) Lefschetz collections\label{subsection:State} }

Let us introduce the following ordered collection: \begin{equation}
\begin{aligned} & \;\;\,(\sE_{3},\sE_{2},\sE_{1a},\sE_{1b})=((\widetilde{\sS}_{L}^{*})^{*},\;\widetilde{\sT}^{*},\;\sO_{\widetilde{\hcoY}},\;\widetilde{\sQ}^{*}(M_{\widetilde{\hcoY}})),\\
 & (\mathcal{F}_{3},\mathcal{F}_{2},\mathcal{F}'_{1a},\mathcal{F}_{1b})=(\,\sO_{\hchow},\; g^{*}\sF^{*},\; g^{*}\ft{S}^{2}\sF^{*},\;\sO_{\hchow}(L_{\hchow})\,),\end{aligned}
\label{eqn:ordered-EF}\end{equation}
and write the locally free resolution (\ref{eqn:fin5}) as \[
0\to\sE_{3}\boxtimes\mathcal{F}_{3}\to\sE_{2}\boxtimes\mathcal{F}_{2}\to\sE_{1a}\boxtimes\mathcal{F}'_{1a}\oplus\sE_{1b}\boxtimes\mathcal{F}_{1b}\to\sI\otimes\Big(\sO_{\widetilde{\hcoY}}(M_{\widetilde{\hcoY}})\boxtimes\sO_{\hchow}(2L_{{\hchow}})\Big)\to0.\]
The ordered collection $(\mathcal{F}_{i})_{i\in I}=(\mathcal{F}_{3},\mathcal{F}_{2},\mathcal{F}{}_{1a},\mathcal{F}_{1b})$
for the dual Lefschetz collection of $\hchow$ (\cite[Thm.~3.4.5]{HoTa3})
and $(\sE_{i})_{i\in I}=(\sE_{3},\sE_{2},\sE_{1a},\sE_{1b})$ for
the Lefschetz collection of $\widetilde{\hcoY}$ ({[}\textit{ibid},~Thm.~8.1.1{]})
are defined from (\ref{eqn:ordered-EF}) with a slight modification
$\mathcal{F}_{1a}:=\mathcal{F}'_{1a}/\sO_{\hchow}(-H_{\hchow}+2L_{\hchow})$.
The modification of $\mathcal{F}'_{1a}$ will be explained in Proposition
\ref{cla:Vs}, where we will consider the ideal sheaf $\sI$ in the
(pull-back of the) universal family of hyperplane sections in $\mP(\ft{S}^{2}V^{*})\times\mP(\ft{S}^{2}V)$.
Here we explain the obvious duality in the following the quiver diagrams
(determined in \cite{HoTa3}) which represents the non-vanishing $\Hom$'s
among the ordered collections $(\sE_{i})_{i\in I}$ and $(\mathcal{F}_{i})_{i\in I}$:
\vspace{0.5cm}
 \begin{equation}
\begin{matrix}\xyquiverI\\[3cm]
\Hom_{\sO_{\widetilde{\hcoY}}}(\sE_{i},\sE_{j})\end{matrix}\hspace{1cm}\begin{matrix}\xyquiverII\\[3cm]
\Hom_{\sO_{\hchow}}(\mathcal{F}_{i},\mathcal{F}_{j})\end{matrix}\quad\qquad\label{eq:quivers}\end{equation}

$\;$

The duality in the above diagrams is related to the $\mathrm{SL}(V)$-equivariance
of the resolution (\ref{eqn:fin5}) (or corresponding resolution (\ref{eqnarray:onVs}))
as follows: Let us consider two pairs $(\sA_{1},\sA_{2})$ and $(\sB_{1},\sB_{2})$
of $\mathrm{SL}(V)$-equivariant, locally free sheaves on an $\mathrm{SL}(V)$-variety
in general. By tensoring the evaluation maps $\Hom(\sA_{1},\sA_{2})\otimes\sA_{1}\to\sA_{2}$
and $\Hom(\sB_{1},\sB_{2})\otimes\sB_{1}\to\sB_{2}$, we have \[
(\Hom(\sA_{1},\sA_{2})\otimes\Hom(\sB_{1},\sB_{2}))\otimes(\sA_{1}\otimes\sB_{1})\to\sA_{2}\otimes\sB_{2}.\]
 Suppose that we have the following duality as $\mathrm{SL}(V)$-modules:
\begin{equation}
\Hom(\sA_{1},\sA_{2})\simeq\Hom(\sB_{1},\sB_{2})^{*}.\label{eq:assump}\end{equation}
Then corresponding to the identity element in \[
\Hom(\sA_{1},\sA_{2})\otimes\Hom(\sB_{1},\sB_{2})\simeq\Hom(\Hom(\sB_{1},\sB_{2}),\Hom(\sB_{1},\sB_{2})),\]
 there is a unique $\mathrm{SL}(V)$-equivariant morphism \begin{equation}
\sA_{1}\otimes\sB_{1}\to\sA_{2}\otimes\sB_{2}.\label{eq:map}\end{equation}

Now we can see each morphism in the resolution from the above general
framework; (a) $\sE_{3}\boxtimes\mathcal{F}_{3}\to\sE_{2}\boxtimes\mathcal{F}_{2}$,
(b) $\sE_{2}\boxtimes\mathcal{F}_{2}\to\sE_{1a}\boxtimes\mathcal{F}'_{1a}$,
(c) $\sE_{2}\boxtimes\mathcal{F}_{2}\to\sE_{1b}\boxtimes\mathcal{F}_{1b}$,
(d) $\sE_{1a}\boxtimes\mathcal{F}'_{1a}\to\sO_{\widetilde{\hcoY}}(M_{\widetilde{\hcoY}})\boxtimes\sO_{\hchow}(2L_{\hchow})$
and (e) $\sE_{1b}\boxtimes\mathcal{F}_{1b}\to\sO_{\widetilde{\hcoY}}(M_{\widetilde{\hcoY}})\boxtimes\sO_{\hchow}(2L_{\hchow})$.
For (a) and (c), we can verify the relation (\ref{eq:assump}) directly
from the diagrams. For (b), we read the isomorphism $\Hom(\sE_{2},\sE_{1a})\simeq V$
from (\ref{eq:quivers}). To compute $\Hom(\mathcal{F}_{2},\mathcal{F}'_{1a})$,
we use the exact sequence $0\to\sO_{\hchow}(-H_{\hchow}+2L_{\hchow})\to\mathcal{F}'_{1a}\to\mathcal{F}_{1a}\to0$.
The vanishing $H^{\bullet}(\hchow,g^{*}\sF\otimes\sO_{\hchow}(-H_{\hchow}+2L_{\hchow}))=0$
follows from the fact that $g:\hchow\to{\rm G}(2,V)$ is a $\mP^{2}$-bundle.
Therefore we read $\Hom(\mathcal{F}_{2},\mathcal{F}'_{1a})\simeq\Hom(\mathcal{F}_{2},\mathcal{F}_{1a})\simeq V^{*}$
from (\ref{eq:quivers}). For (d) and (e), respectively, we use the
following calculations: \[
\begin{aligned} & \Hom(\sE_{1a},\sO_{\widetilde{\hcoY}}(M_{\widetilde{\hcoY}}))=H^{0}(\widetilde{\hcoY},\sO_{\widetilde{\hcoY}}(M_{\widetilde{\hcoY}}))\simeq\ft{S}^{2}V,\\
 & \Hom(\mathcal{F}'_{1a},\sO_{\hchow}(2L_{\hchow}))=H^{0}(\mathrm{G}(2,V),\ft{S}^{2}\eF\otimes\sO(2L_{\hchow}))\simeq\ft{S}^{2}V^{*},\end{aligned}
\]
 and \[
\begin{aligned} & \Hom(\sE_{1b},\sO_{\widetilde{\hcoY}}(M_{\widetilde{\hcoY}}))=H^{0}(\hcoY_{3},\sQ)=H^{0}(\mP(V),T(-1)^{2})\simeq\Lwedge^{2}V^{*},\\
 & \Hom(\mathcal{F}_{1b},\sO_{\hchow}(2L_{\hchow}))=H^{0}(\mathrm{G}(2,V),\sO_{\hchow}(L_{\hchow}))\simeq\Lwedge^{2}V.\end{aligned}
\]

$\;$

\subsection{Closed subschemes $\Delta_{3}\subset\Zpq_{3}\times\hchow$ and $\Delta_{2}\subset\Zpq_{2}\times\hchow$}

\label{Closed}

For a point $x\in\hchow$, we denote by $\Delta_{3,x}$ the fiber
of $\Delta_{3}\to\hchow$ over $x$. Let $l_{x}$ be the line in $\mP(V)$
corresponding to $g(x)\in\mathrm{G}(2,V)$.

\begin{prop} \label{cla:delta3} For any point $x\in\hchow$, the
variety $\Delta_{3,x}$ has a structure of a $\mathrm{G}(2,5)$-bundle
over the blow-up of $\mP(V)$ along $l_{x}$. \end{prop}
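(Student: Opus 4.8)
The plan is to reduce the statement to an identification of the fibre $\Delta_{G,x}$ of $\Delta_G\to\hchow$ with a blow-up, and then to lift this through the Grassmann-bundle structure $\Delta^{\zpq}_3\to\Delta_G$. Recall that $\Delta^{\zpq}_3$ is the pull-back of $\Delta_G$ and is a $\mathrm{G}(2,5)$-bundle over it (since $\Zpq_3\to\mathrm{G}(2,T(-1))$ is one by Proposition \ref{cla:G25}). As the structure morphism $\Delta^{\zpq}_3\to\hchow$ factors through $\Delta_G\to\hchow$, the fibre $\Delta^{\zpq}_{3,x}$ is a $\mathrm{G}(2,5)$-bundle over $\Delta_{G,x}$. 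Hence it suffices to prove $\Delta_{G,x}\cong\mathrm{Bl}_{l_x}\mP(V)$; the $\mathrm{G}(2,5)$-bundle structure over this blow-up is then automatic.

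First I would make the incidence description explicit. By the isomorphisms (\ref{eq:Giso}) we may write $\mathrm{G}(2,T(-1))=\mathrm{F}(1,3,V)=\{([V_1],[V_3])\mid V_1\subset V_3\}$, with $\Lpi_G$ sending $([V_1],[V_3])\mapsto[V_1]$ and $\Lrho_G$ sending $([V_1],[V_3])\mapsto[V_3]$. Writing $V_2':=g(x)\in\mathrm{G}(2,V)$ for the plane with $l_x=\mP(V_2')$, unwinding the definition of $\Delta_G$ as the pull-back of $\Delta_0=\mathrm{F}(2,3,V)$ along $\Lrho_G\times g$ yields
\[
\Delta_{G,x}=\{([V_1],[V_3])\mid V_1\subset V_3,\ V_2'\subset V_3\}.
\]

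Next I would identify this with the blow-up. Projecting by $[V_3]\mapsto[V_3/V_2']\in\mP(V/V_2')$ exhibits $\Delta_{G,x}$ as the closure of the graph of the linear projection $\mP(V)\dashrightarrow\mP(V/V_2')$ from $l_x$, which is the standard realization of $\mathrm{Bl}_{l_x}\mP(V)$. Concretely, letting $\sW$ be the rank-three bundle on $\mP(V/V_2')$ with fibre $V_3$ over $[V_3/V_2']$ (the preimage of the tautological line, so $0\to V_2'\otimes\sO\to\sW\to\sO_{\mP(V/V_2')}(-1)\to 0$), one gets $\Delta_{G,x}\cong\mP(\sW)$, whose fibre over $[V_3/V_2']$ is $\mP(V_3)=\{[V_1]\mid V_1\subset V_3\}$. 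Cross-checking through the other projection $\Lpi_G|_{\Delta_{G,x}}\colon\Delta_{G,x}\to\mP(V)$, $([V_1],[V_3])\mapsto[V_1]$: for $[V_1]\notin l_x$ the space $V_1+V_2'$ is three-dimensional, forcing $V_3=V_1+V_2'$, so this map is an isomorphism over $\mP(V)\setminus l_x$; over a point of $l_x$ the fibre is $\{[V_3]\mid V_2'\subset V_3\}\cong\mP^2$, exactly the exceptional $\mP^2$ of the blow-up of a line in $\mP^4$. In particular $\Delta_{G,x}\cong\mathrm{Bl}_{l_x}\mP(V)$ is smooth, and hence so is $\Delta^{\zpq}_{3,x}$.

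For the smoothness of the whole of $\Delta^{\zpq}_3$, not merely its fibres, I would exhibit $\Delta_G$ as an iterated projective bundle over $\hchow$. Projecting to $(x,[V_3])$ realizes the intermediate space $\{(x,[V_3])\mid g(x)\subset V_3\}$ as $\mP(g^*\sG)$, a $\mP^2$-bundle over $\hchow$ (here $\sG$ is the universal quotient of (\ref{eq:univG25})), over which $\Delta_G$ is the $\mP^2$-bundle with fibre $\mP(V_3)$; thus $\Delta_G$ is smooth, and since $\Delta^{\zpq}_3\to\Delta_G$ is a $\mathrm{G}(2,5)$-bundle, $\Delta^{\zpq}_3$ is smooth as well. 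The one step requiring genuine care, and the main obstacle, is the identification of $\Delta_{G,x}$ with the blow-up over the centre $l_x$: one must verify that the fibre of $\Lpi_G|_{\Delta_{G,x}}$ over a point of $l_x$ is the full $\mP^2=\mP(V/V_2')$ and that this matches the projectivized normal bundle, which is cleanest through the graph/projection description rather than by a direct local computation.
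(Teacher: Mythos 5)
Your proposal is correct and takes essentially the same route as the paper: reduce via the $\mathrm{G}(2,5)$-bundle structure of $\Delta^{\zpq}_3\to\Delta_G$ to showing $\Delta_{G,x}\cong \mathrm{Bl}_{l_x}\mP(V)$, then use the incidence description $\Delta_{G,x}=\{([V_3],[V_1])\mid V_1\subset V_3,\ l_x\subset\mP(V_3)\}$ inside $\mathrm{G}(2,T(-1))=\mathrm{F}(1,3,V)$ and identify the projection to $\mP(V)$ with the blow-up. You go somewhat further than the paper, which simply asserts this last identification (``it turns out that\dots''): your graph-of-the-linear-projection argument, and your iterated projective-bundle description of $\Delta_G$ over $\hchow$ giving smoothness of the total space $\Delta^{\zpq}_3$ directly, are both correct and supply details the paper leaves implicit.
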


\begin{proof} Recall that $\Delta_{3}$ is the pull-back of $\Delta_{G}$
by $\check{\Lpi}_{G'}:\Zpq_{3}\times\hchow\to\rG(2,T(-1))\times\hchow$.
Let $\Delta_{G,x}$ be the fiber of ${\Delta}_{G}\to\hchow$ over
$x$. By Proposition \ref{cla:G25}, $\Delta_{3,x}$ is a $\mathrm{G}(2,5)$-bundle
over $\Delta_{G,x}$. We show that $\Delta_{G,x}$ is isomorphic to
the blow-up of $\mP(V)$ along $l_{x}$. We describe $\mathrm{G}(2,T(-1))$
as the universal family of planes on $\mP(V)$, namely, \[
\mathrm{G}(2,T(-1))=\{([V_{3}],[V_{1}])\mid V_{1}\subset V_{3}\}\subset\mathrm{G}(3,V)\times\mP(V).\]
 Then it holds that \[
\Delta_{G,x}=\{([V_{3}],[V_{1}])\mid V_{1}\subset V_{3},l_{x}\subset\mP(V_{3})\}\subset\mathrm{G}(2,T(-1)).\]
We see that the natural projection morphism from $\Delta_{G,x}$ to
$\mP(V)$ sending $([V_{3}],[V_{1}])$ to $[V_{1}]$ is the blow-up
of $\mP(V)$ along the line $l_{x}$. \end{proof}

$\;$ 

Define $\Delta_{\hcoY_{3}}:=\cLpi_{3'}(\Delta_{3})$ in $\hcoY_{3}\times\hchow$
with its reduced structure. 

\begin{prop} \label{cla:descrdel3} Let $[V_{1}]$ be a point of
$\mP(V)$. If $[V_{1}]\not\in l_{x}$, then the fiber of $\Delta_{\hcoY_{3},x}\to\mP(V)$
is isomorphic to $\mathrm{G}(2,5)$. If $[V_{1}]\in l_{x}$, then
the fiber of $\Delta_{\hcoY_{3},x}\to\mP(V)$ is isomorphic to the
$8$-dimensional Schubert cycle $\{[\bar{U}]\mid\mP(\bar{U})\cap\bar{P}_{x}\not=\emptyset\}\subset\mathrm{G}(3,\Lwedge^{2}V/V_{1})$,
where $\bar{P}_{x}$ is a fixed plane of $\mP(\Lwedge^{2}V/V_{1})$.
In particular, $\Delta_{3,x}\to\Delta_{\hcoY_{3},x}$ is and hence
$\Delta_{3}\to\Delta_{\hcoY_{3}}$ is birational. \end{prop}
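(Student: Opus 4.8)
The plan is to trivialise everything over the fibres of $\Lpi_{\hcoY_3}\colon \hcoY_3\to \mP(V)$ and reduce the two assertions to elementary linear algebra inside $V/V_1$. Write $W_x\subset V$ for the two–dimensional subspace with $l_x=\mP(W_x)$, so that $g(x)=[W_x]\in \mathrm{G}(2,V)$. Fix $[V_1]\in \mP(V)$. By Proposition \ref{prop:Z3} the fibre of $\Lpi_{\Zpq_3}$ over a point $[V_3]\in \mathrm{G}(3,\wedge^2(V/V_1))$ is the conic $\mP(V_3)\cap \mathrm{G}(2,V/V_1)$, and under the universal family of planes (Proposition \ref{cla:univPlane}) a point $[\overline{V}_2]$ of this conic corresponds to the three–dimensional subspace $\Lpi_{V_1}^{-1}(\overline{V}_2)\subset V$ containing $V_1$. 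Since $\Delta^{\zpq}_3$ is the pull–back of $\Delta_0=\mathrm{F}(2,3,V)$ under $\Zpq_3\to \mathrm{G}(3,V)$ and $g$, a point $([V_3],[\overline{V}_2])$ of $\Delta^{\zpq}_{3,x}$ lying over $[V_1]$ is cut out by the single incidence $W_x\subset \Lpi_{V_1}^{-1}(\overline{V}_2)$; and $\Delta_{3,x}$ is its image under $\Lpi_{\Zpq_3}$, that is, the set of $[V_3]$ for which such an $\overline{V}_2$ exists. I would then split according to whether $V_1\subset W_x$.

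If $[V_1]\notin l_x$ then $\Lpi_{V_1}|_{W_x}$ is injective, and $W_x\subset \Lpi_{V_1}^{-1}(\overline{V}_2)$ forces $\overline{V}_2=\Lpi_{V_1}(W_x)=:\overline{W}_x$, a unique two–plane in $V/V_1$; hence the incidence becomes $\wedge^2\overline{W}_x\subset V_3$, and the fibre of $\Delta_{3,x}\to \mP(V)$ over $[V_1]$ is the variety of three–spaces of $\wedge^2(V/V_1)$ containing the fixed line $\wedge^2\overline{W}_x$, namely $\mathrm{G}(2,\wedge^2(V/V_1)/\wedge^2\overline{W}_x)\simeq \mathrm{G}(2,5)$. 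If $[V_1]\in l_x$ then $V_1\subset W_x$, and the incidence reads $\overline{w}_x:=W_x/V_1\subset \overline{V}_2$, so $\overline{V}_2$ ranges over all two–planes of $V/V_1$ containing the fixed line $\overline{w}_x$. Under the Plücker embedding these sweep out exactly the plane $\mP(\Pi)$ with $\Pi:=\overline{w}_x\wedge (V/V_1)\subset \wedge^2(V/V_1)$, and the key observation is that $\mP(\Pi)$ lies entirely on the quadric $\mathrm{G}(2,V/V_1)$ (it is one ruling, through the point $\overline{w}_x$). Consequently the existence of an admissible $\overline{V}_2$, a priori a point of $\mP(V_3)\cap \mP(\Pi)\cap \mathrm{G}(2,V/V_1)$, is equivalent to the bare linear condition $\mP(V_3)\cap \mP(\Pi)\neq \emptyset$; this exhibits the fibre as the Schubert cycle $\{[U_3]\mid \mP(U_3)\cap \Pi\neq \emptyset\}\subset \mathrm{G}(3,\wedge^2(V/V_1))$, which has codimension one in the nine–dimensional $\mathrm{G}(3,6)$ and hence dimension $8$.

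For birationality I would note that over the dense open set $\mP(V)\setminus l_x$ the subspace $\overline{V}_2=\overline{W}_x$ found in the first case is the unique admissible one, so $\Lpi_{\Zpq_3}$ maps the part of $\Delta^{\zpq}_{3,x}$ over $\mP(V)\setminus l_x$ bijectively onto the corresponding part of $\Delta_{3,x}$. By Proposition \ref{cla:delta3}, $\Delta^{\zpq}_{3,x}$ is a $\mathrm{G}(2,5)$–bundle over the blow–up of $\mP(V)$ along $l_x$, hence irreducible of dimension $10$, matching the dimension of $\Delta_{3,x}$ read off from the first case; since we are in characteristic zero, generic injectivity then gives that $\Lpi_{\Zpq_3}\colon \Delta^{\zpq}_{3,x}\to \Delta_{3,x}$ is birational, and letting $x$ vary yields the birationality of $\Delta^{\zpq}_3\to \Delta_3$. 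The one genuinely non-formal step is the second case: one must recognise that the locus of admissible $\overline{V}_2$ is a plane lying on the quadric $\mathrm{G}(2,V/V_1)$, for it is precisely this that makes the quadratic Plücker condition redundant and leaves only the incidence $\mP(V_3)\cap \Pi\neq \emptyset$ defining the asserted Schubert cycle.
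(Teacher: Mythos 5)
Your proof is correct and follows essentially the same route as the paper: the same case division on whether $[V_1]$ lies on $l_x$, with uniqueness of the admissible $V_3$ (equivalently of $\overline{V}_2$) giving the fiber $\mathrm{G}(2,5)$ and the birationality off $l_x$, and the admissible $\overline{V}_2$'s sweeping out a plane contained in the quadric $\mathrm{G}(2,V/V_1)$ giving the Schubert cycle over $l_x$. The only presentational difference is that you spell out the key observation—that this plane lies entirely on the Grassmannian, so the quadratic Pl\"ucker condition is vacuous and only the linear incidence $\mP(U_3)\cap \Pi\neq\emptyset$ remains—which the paper leaves implicit behind its remark that conics in $\Pi_x$ are $\rho$-conics.
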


\begin{proof} We can write \begin{equation}
\Delta_{3,x}=\left\{ ([\bar{a}\wedge\bar{b}],([\bar{U}],[V_{1}]))\in\Zpq_{3}\mid l_{x}\subset\mP(V_{3}),V_{3}=\langle a,b,v\rangle,V_{1}=\langle v\rangle\right\} ,\label{3x}\end{equation}
where $a\in V$ is determined by $\bar{a}=a\text{ mod }V_{1}$ and
similarly for $b$. Also, we have \begin{equation}
\Delta_{\hcoY_{3},x}=\left\{ ([\bar{U}],[V_{1}])\in\hcoY_{3}\mid\,^{\exists}V_{3}\text{ s.t. }\wedge^{2}(V_{3}/V_{1})\subset\bar{U}\text{ and }l_{x}\subset\mP(V_{3})\right\} .\label{3x2}\end{equation}
 If $[V_{1}]\not\in l_{x}$, then $V_{3}$ in (\ref{3x2}) is unique
by the condition $l_{x}\subset\mP(V_{3})$, i.e., $V_{3}=V_{1}\oplus\mC l_{x}$.
Then the three dimensional subspaces $\bar{U}$ satisfying $\wedge^{2}(V_{3}/V_{1})\subset\bar{U}\subset\wedge^{2}(V/V_{1})$
determines the fiber of $\Delta_{\hcoY_{3},x}\to\mP(V)$ over $[V_{1}]$,
which is isomorphic to $\mathrm{G}(2,5)$. In particular, identifying
$[\bar{a}\wedge\bar{b}]$ with $[\wedge^{2}(V_{3}/V_{1})]$, we see
that the morphism $\Delta_{3,x}\to\Delta_{\hcoY_{3},x}$ is birational,
and so is $\Delta_{3}\to\Delta_{\hcoY_{3}}$. Now, assume that $[V_{1}]\in l_{x}$.
Then $V_{3}$ satisfying $l_{x}\subset\mP(V_{3})$ form a $\rho$-plane
${\rm \mathrm{P}}_{V_{2,x}}=\left\{ [\Pi]\mid V_{2,x}\subset\Pi\right\} \subset\rG(3,V)$,
where $V_{2,x}$ is determined by $l_{x}=\mP(V_{2,x})$. Writing the
corresponding plane $\bar{P}_{x}=\left\{ [\wedge^{2}(\Pi/V_{1})]\mid[\Pi]\in\mathrm{P}_{V_{2,x}}\right\} \simeq\mP^{2}$
in $\mP(\wedge^{2}(V/V_{1}))$, we can rephrase the condition $^{\exists}V_{3}\text{ s.t. }\wedge^{2}(V_{3}/V_{1})\subset\bar{U}$
by $\bar{P}_{x}\cap\mP(\bar{U})\not=\emptyset$. Then the fiber of
$\Delta_{\hcoY_{3},x}\to\mP(V)$ over $[V_{1}]$ can be identified
with the Schubert cycle, $\{[\bar{U}]\mid\mP(\bar{U})\cap\bar{P}_{x}\not=\emptyset\}\subset\mathrm{G}(3,\Lwedge^{2}V/V_{1})$.
\end{proof}

$\;$

\begin{rem} The Schubert cycle $\{[\bar{U}]\mid\mP(\bar{U})\cap\bar{P}_{x}\not=\emptyset\}$
has a natural resolution of singularities; $\{([\wedge^{2}(V_{3}/V_{1})],[\bar{U}])\mid[\wedge^{2}(V_{3}/V_{1})]\in\mP(\bar{U}),l_{x}\subset\mP(V_{3})\}$,
which is contained in $\Delta_{3,x}$ and has a $\mathrm{G}(2,5)$-bundle
structure over $\{[V_{3}]\mid l_{x}\subset\mP(V_{3})\}\simeq\mP^{2}$.
\end{rem}

$\;$

\begin{prop} \label{cla:delta2} $\Delta_{2,x}$ is the blow-up of
$\Delta_{3,x}$ along $\Zpq_{\rho}\cap\Delta_{3,x}$. In particular,
$\Delta_{2,x}$ is and hence $\Delta_{2}$ is smooth. \end{prop}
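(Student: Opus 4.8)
The plan is to identify $\Delta^{\zpq}_{2,x}\to\Delta^{\zpq}_{3,x}$ with a blow-up along a smooth centre and then invoke the fact that blowing up a smooth variety along a smooth subvariety preserves smoothness.

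First I would reduce to a transversality statement. Since $\Delta^{\zpq}_2$ is the pull-back of $\Delta^{\zpq}_3$ and the modification $\Lrho_{\Zpq_2}\colon\Zpq_2\to\Zpq_3$ touches only the $\Zpq$-factor, restriction over $x$ gives $\Delta^{\zpq}_{2,x}=\Zpq_2\times_{\Zpq_3}\Delta^{\zpq}_{3,x}$, i.e. the scheme-theoretic preimage (total transform) of $\Delta^{\zpq}_{3,x}$ under the blow-up of $\Zpq_3$ along $\Zpq_\rho$. The strict transform of $\Delta^{\zpq}_{3,x}$ is, in general, the blow-up $\mathrm{Bl}_{\Zpq_\rho\cap\Delta^{\zpq}_{3,x}}\Delta^{\zpq}_{3,x}$ of $\Delta^{\zpq}_{3,x}$ along the scheme-theoretic intersection, and it is a closed subscheme of the total transform; so the assertion amounts to showing that the two coincide and that the centre is smooth.

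The heart of the matter is the transversality of $\Zpq_\rho$ and $\Delta^{\zpq}_{3,x}$ inside $\Zpq_3$. Both are smooth, by Propositions \ref{cla:P1bdl} and \ref{cla:delta3}, so I would check that they meet in the codimension expected from $\Zpq_\rho\subset\Zpq_3$: the intersection $\Zpq_\rho\cap\Delta^{\zpq}_{3,x}$ is a $\mP^1$-bundle over $\Delta_{G,x}$, hence of fibre dimension one in the $\mathrm{G}(2,5)$-bundle $\Delta^{\zpq}_{3,x}\to\Delta_{G,x}$, matching the fibrewise codimension of $\Zpq_\rho$ in $\Zpq_3$. Transversality is then transparent from the bundle structures over $\mathrm{G}(2,T(-1))$: over a point of $\Delta_{G,x}$ the variety $\Delta^{\zpq}_{3,x}$ contains the \emph{entire} $\mathrm{G}(2,5)$-fibre whereas $\Zpq_\rho$ is a sub-$\mP^1$-bundle over \emph{all} of $\mathrm{G}(2,T(-1))$, so the fibre directions of $\Delta^{\zpq}_{3,x}$ and the horizontal directions of $\Zpq_\rho$ together span the tangent space of $\Zpq_3$. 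Transversality forces the restricted ideal of $\Zpq_\rho$ to cut out $\Zpq_\rho\cap\Delta^{\zpq}_{3,x}$ cleanly, so the total transform carries no excess exceptional component and equals the strict transform $\mathrm{Bl}_{\Zpq_\rho\cap\Delta^{\zpq}_{3,x}}\Delta^{\zpq}_{3,x}$, which is smooth.

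For the smoothness of the total space I would run the same argument relatively instead of gluing the fibres: $\Delta^{\zpq}_2=\Zpq_2\times_{\Zpq_3}\Delta^{\zpq}_3$ is the blow-up of the smooth variety $\Delta^{\zpq}_3$ along $\Zpq_\rho\cap\Delta^{\zpq}_3$, a $\mP^1$-bundle over $\Delta_G$ and therefore smooth, whence $\Delta^{\zpq}_2$ is smooth. I expect the transversality verification to be the only genuine obstacle: one must exclude an excess intersection that would introduce a spurious exceptional divisor in the preimage and break the identification with the blow-up, and this is exactly what the dimension count against Propositions \ref{cla:P1bdl} and \ref{cla:delta3}, together with the sub-bundle description, secures.
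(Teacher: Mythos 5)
Your proof is correct and takes essentially the same route as the paper, whose entire argument consists of noting that $\Zpq_{\rho}\cap\Delta^{\zpq}_{3,x}$ is smooth (being a $\mP^1$-bundle over $\Delta_{G,x}$, Proposition \ref{cla:P1bdl}) and that $\Zpq_2\to\Zpq_3$ is the blow-up along $\Zpq_{\rho}$, leaving the compatibility of the total transform with the blow-up implicit. Your transversality verification --- that the $\mathrm{G}(2,5)$-fibre directions of $\Delta^{\zpq}_{3,x}$ together with the horizontal directions of the sub-$\mP^1$-bundle $\Zpq_{\rho}$ span the tangent space of $\Zpq_3$, so that total and strict transforms coincide --- is exactly the detail the paper omits, and it is carried out correctly.
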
 

\begin{proof} Recall that $\Zpq_{2}\to\Zpq_{3}$ is the blow-up along
$\Zpq_{\rho}=\Lpi_{3'}^{-1}(\Prt_{\rho})\subset\Zpq_{3}$. By Proposition
\ref{cla:P1bdl}, we see that $\Zpq_{\rho}\cap\Delta_{3,x}$ is a
$\mP^{1}$-bundle over the smooth variety $\Delta_{G,x}\subset\rG(2,T(-1))$
(see Proposition \ref{cla:delta3}), and hence $\Zpq_{\rho}\cap\Delta_{3,x}$
is smooth. Therefore, $\Delta_{2,x}$ is smooth and so is $\Delta_{2}$.
\end{proof}

$\;$

\subsection{Locally free resolution (\ref{eqnarray:I'}) of $\sI_{2}^{o}$ on
$\hcoY_{2}^{o}\times{\hchow}$\label{subsection:LocY2}}

We start preparing the following two lemmas: 

\begin{lem} \label{cla:rhoplane} Let ${\rm {P}={\rm {P}_{V_{2}}}}$
be the $\rho$-plane in $\mathrm{G}(3,V)$ associated to some two
dimensional vector space $V_{2}$ in $V$ $($cf.~Subsection  $\ref{sub:Birat-Y})$.
Then $\eQ|_{{\rm {P}}}\simeq T_{{\rm {P}}}(-1)$. \end{lem}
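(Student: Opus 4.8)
The plan is to identify the $\rho$-plane concretely and then recognize $\eQ|_{\mathrm{P}}$ as a tautological quotient bundle. First I would observe that a three-dimensional subspace $\Pi\subset V$ with $V_2\subset\Pi$ carries exactly the same information as the line $\Pi/V_2$ in the three-dimensional quotient $W:=V/V_2$. Hence the assignment $[\Pi]\mapsto[\Pi/V_2]$ identifies $\mathrm{P}=\mathrm{P}_{V_2}$ with $\mP(W)\simeq\mP^2$.

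Next I would restrict $\eQ$. Since the fiber of $\eQ$ at a point $[\Pi]\in\mathrm{G}(3,V)$ is $V/\Pi$, and for $[\Pi]\in\mathrm{P}$ there is a canonical isomorphism $V/\Pi\simeq W/(\Pi/V_2)$, it follows that $\eQ|_{\mathrm{P}}$ is precisely the rank-two tautological quotient bundle $\eQ_W$ on $\mP(W)$, the one fitting into the Euler sequence $0\to\sO_{\mP(W)}(-1)\to W\otimes\sO_{\mP(W)}\to\eQ_W\to 0$. The standard identification $T_{\mP(W)}\simeq\sHom(\sO_{\mP(W)}(-1),\eQ_W)\simeq\eQ_W(1)$, read directly off this sequence, then yields $\eQ_W\simeq T_{\mP(W)}(-1)$, so that $\eQ|_{\mathrm{P}}\simeq T_{\mathrm{P}}(-1)$ as bundles.

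The one point demanding care — and essentially the only possible obstacle — is to check that the two competing notions of $\sO(1)$ agree, so that the twist in $T_{\mathrm{P}}(-1)$ is the correct one. Here the twist $(-1)$ in $T_{\mathrm{P}}(-1)$ is taken with respect to $\sO_{\mathrm{G}(3,V)}(1)|_{\mathrm{P}}$, the restriction of the Plücker polarization. Under the Plücker embedding $[\Pi]\mapsto[\Lwedge^3\Pi]=[\Lwedge^2 V_2\wedge(\Pi/V_2)]$, which depends linearly on the line $\Pi/V_2\subset W$, the restriction $\sO_{\mathrm{G}(3,V)}(1)|_{\mathrm{P}}$ coincides with the hyperplane bundle $\sO_{\mP(W)}(1)$. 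With the two polarizations matched, the asserted isomorphism $\eQ|_{\mathrm{P}}\simeq T_{\mathrm{P}}(-1)$ follows. The argument is purely formal; beyond this bookkeeping no genuine computation is required.
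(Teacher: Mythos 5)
Your proof is correct and takes essentially the same route as the paper's: both identify ${\rm P}_{V_2}$ with $\mP(V/V_2)$, observe that the universal surjection $V\otimes\sO\to\eQ$ restricts to (factors through) $V/V_2\otimes\sO_{\rm P}\to\eQ|_{\rm P}$, and recognize this as the Euler sequence, giving $\eQ|_{\rm P}\simeq T_{\rm P}(-1)$. Your added verification that the Pl\"ucker polarization restricts on ${\rm P}$ to the intrinsic $\sO_{\mP(V/V_2)}(1)$ is a correct extra precaution which the paper leaves implicit.
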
 

\begin{proof} From the natural surjection $V\otimes\sO_{\mathrm{G}(3,V)}\to\eQ$,
we obtain the surjection $V/V_{2}\otimes\sO_{{\rm {P}}}\to\eQ|_{{\rm {P}}}$.
Since ${\rm {P}\simeq\mP(V/V_{2})}$, this surjection can be identified
in the Euler sequence of ${\rm {P}}$. Therefore $\eQ|_{{\rm {P}}}\simeq T_{{\rm {P}}}(-1)$.
\end{proof}

\begin{lem} \label{cla:conic} Let $q$ be a $\tau$- or $\rho$-conic
on $\mathrm{G}(3,V)$. Then $H^{\bullet}({\eQ}^{*}|_{q})=0$. Moreover,
if $q$ is smooth, then ${\eQ}|_{q}\simeq\sO_{\mP^{1}}(1)^{\oplus2}$.
\end{lem}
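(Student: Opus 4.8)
The plan is to reduce both assertions to a single cohomological vanishing and then read off the splitting type in the smooth case. First I would dualize the universal sequence (\ref{eq:univG35}) to get $0\to \eQ^*\to V^*\otimes \sO_{\mathrm{G}(3,V)}\to \eS\to 0$, so that a global section of $\eQ^*|_q$ is the same datum as a vector $\phi\in V^*$ whose image $\bar\phi\in H^0(\mathrm{G}(3,V),\eS)$ restricts to the zero section on $q$. Since $c_1(\eQ)=\sigma_1=c_1(\sO_{\mathrm{G}(3,V)}(1))$ and $q$ is a conic, one has $\deg(\eQ^*|_q)=-\sigma_1\cdot q=-2$; as $q$ is a plane conic it has arithmetic genus $0$, so Riemann--Roch on $q$ gives $\chi(\eQ^*|_q)=\deg(\eQ^*|_q)+2\,\chi(\sO_q)=-2+2=0$. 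Hence it suffices to prove $H^0(\eQ^*|_q)=0$, for then $H^1(\eQ^*|_q)=0$ follows from $\chi=0$.

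For the vanishing of $H^0$ I would argue by contradiction. A nonzero section yields a nonzero $\phi\in V^*$ as above; writing $V_4:=\Ker\phi$, the section $\bar\phi\in H^0(\eS)$ vanishes exactly on $\{[\Pi]\mid \Pi\subset V_4\}=\mathrm{G}(3,V_4)$, which, being the zero scheme of a regular section of the rank-$3$ bundle $\eS$ on the smooth $6$-fold $\mathrm{G}(3,V)$ cut out in the expected codimension, is the linearly embedded $\mathrm{G}(3,V_4)\cong\mP^3\subset\mP(\Lwedge^3 V)$. The condition that $\phi$ define a section of $\eQ^*|_q$ is precisely that $q$ be contained \emph{scheme-theoretically} in $\mathrm{G}(3,V_4)$; since $\mathrm{G}(3,V_4)$ is a linear subspace, the plane $\mP^2_q$ spanned by $q$ also lies in $\mathrm{G}(3,V_4)$. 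But every plane contained in the Pl\"ucker $\mP^3=\mathrm{G}(3,V_4)$ is a hyperplane of that $\mP^3$, and such hyperplanes are exactly the $\sigma$-planes ${\rm P}_{V_1V_4}=\{[\Pi]\mid V_1\subset\Pi\subset V_4\}$. Thus $\mP^2_q$ is a $\sigma$-plane and $q$ is a $\sigma$-conic, contrary to hypothesis. Working scheme-theoretically here is what makes the argument apply uniformly to singular and even non-reduced (double line) conics, for which the reduced locus need not span $\mP^2_q$.

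Finally, for the ``moreover'' part I would use that a smooth conic is isomorphic to $\mP^1$, so $\eQ|_q\cong \sO_{\mP^1}(a)\oplus\sO_{\mP^1}(b)$ with $a+b=\deg(\eQ|_q)=\sigma_1\cdot q=2$. The already established vanishing $H^\bullet(\eQ^*|_q)=H^\bullet(\sO_{\mP^1}(-a)\oplus\sO_{\mP^1}(-b))=0$ forces $-a=-b=-1$, since $\sO_{\mP^1}(m)$ has vanishing cohomology only for $m=-1$; hence $\eQ|_q\cong\sO_{\mP^1}(1)^{\oplus2}$.

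The main obstacle is the $H^0$-vanishing step, and specifically making the implication ``nonzero section $\Rightarrow \mP^2_q\subset\mathrm{G}(3,V_4)$'' rigorous at the scheme level so that the degenerate conics are genuinely covered; once one grants scheme-theoretic containment $q\subset\mathrm{G}(3,V_4)$ and recalls that planes in the Pl\"ucker $\mP^3$ are $\sigma$-planes, the rest is formal. As a cross-check, and an alternative for the $\rho$-conic case, one may instead invoke Lemma \ref{cla:rhoplane}: for a $\rho$-conic $q\subset{\rm P}_{V_2}\cong\mP^2$ one has $\eQ^*|_q\cong\Omega_{{\rm P}_{V_2}}(1)|_q$, and restricting along $0\to\Omega_{{\rm P}_{V_2}}(1)(-q)\to\Omega_{{\rm P}_{V_2}}(1)\to\Omega_{{\rm P}_{V_2}}(1)|_q\to0$ with $q\in|\sO(2)|$ reduces the vanishing to $H^\bullet(\mP^2,\Omega_{\mP^2}(1))=H^\bullet(\mP^2,\Omega_{\mP^2}(-1))=0$, which is Bott's formula.
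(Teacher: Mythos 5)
Your proof is correct, but it follows a genuinely different route from the paper's. The paper argues by cases: for a $\tau$-conic it embeds $q$ in some $S\simeq \mathrm{G}(2,4)\subset \mathrm{G}(3,V)$ (quoting an earlier result of the authors), tensors the Koszul resolution of $\sO_q$ as an $\sO_S$-module with $\eQ^*|_S$, and applies the Bott theorem; for a $\rho$-conic it uses $\eQ|_{\mP^2_q}\simeq T_{\mP^2_q}(-1)$ and Bott vanishing on $\mP^2$ --- exactly the alternative you mention at the end as a cross-check. Your argument instead is uniform and essentially representation-free: left-exactness of $H^0$ applied to $0\to \eQ^*|_q\to V^*\otimes\sO_q\to \eS|_q\to 0$ identifies $H^0(\eQ^*|_q)$ with the set of $\phi\in V^*$ whose associated section of $\eS$ vanishes on $q$, and such a $\phi$ would force $q$, hence its span $\mP^2_q$, into the Pl\"ucker $\mP^3=\mathrm{G}(3,\Ker\phi)$, whose planes are precisely the $\sigma$-planes ${\rm P}_{V_1V_4}$; Riemann--Roch ($\chi=0$) then upgrades $H^0=0$ to $H^\bullet=0$, and the smooth case follows as in the paper. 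What your approach buys is the absence of any case division, of the Bott theorem, and of the external input that $\tau$-conics lie on a $\mathrm{G}(2,4)$; it also explains conceptually why the $\sigma$-hypothesis is exactly what is needed. What the paper's approach buys is that its Koszul-plus-Bott computations are reused almost verbatim in the neighboring Lemmas \ref{cla:conic2} and \ref{cla:locfree}, where your global-sections trick has no analogue (those concern twists like $\Lwedge^i\eQ^{\oplus 2}\otimes\sO_q(-1)$, not subsheaves of trivial bundles). One point you should make fully explicit: the zero scheme of the section $\bar\phi$ of $\eS$ must be shown to be the \emph{reduced} $\mathrm{G}(3,V_4)$ (e.g.\ by checking the derivative of $\bar\phi$ is surjective at every zero, so the zero scheme is smooth), since for a non-reduced $q$ the scheme-theoretic inclusion $q\subset Z(\bar\phi)$ only yields $\mP^2_q\subset\mP(\Lwedge^3 V_4)$ once $Z(\bar\phi)$ itself is known to lie in that linear space.
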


\begin{proof} We denote by $\mP_{q}^{2}$ the plane spanned by $q$.
Let us first assume that $q$ is a $\tau$-conic. As we can see in
(\ref{eq:plane-to-conic}), there exists an $S\simeq\mathrm{G}(2,4)$
in $\rG(3,V)$ such that $q\subset S$. The conic $q$ is a complete
intersection in $S$ since $\mP_{q}^{2}\cap S=q$, and then $\sO_{q}$
has the following Koszul resolution as a $\sO_{S}$-module: \[
0\to\sO_{S}(-3)\to\sO_{S}(-2)^{\oplus3}\to\sO_{S}(-1)^{\oplus3}\to\sO_{S}\to\sO_{q}\to0.\]
 Tensoring this exact sequence with ${\eQ}^{*}|_{S}$ and using Theorem
\ref{thm:Bott}, it is easy to derive $H^{\bullet}({\eQ}^{*}|_{q})=0$. 

Now consider the case that $q$ is a $\rho$-conic. By Lemma \ref{cla:rhoplane},
it holds that $\eQ|_{\mP_{q}^{2}}\simeq T_{\mP_{q}^{2}}(-1)$. Tensoring
${\eQ}^{*}|_{\mP_{q}^{2}}$ with the exact sequence $0\to\sO_{\mP_{q}^{2}}(-2)\to\sO_{\mP_{q}^{2}}\to\sO_{q}\to0$,
we have \[
0\to\Omega_{\mP_{q}^{2}}^{1}(-1)\to\Omega_{\mP_{q}^{2}}^{1}(1)\to{\eQ}^{*}|_{q}\to0.\]
 By Theorem \ref{thm:Bott}, it holds that all the cohomology groups
of $\Omega_{\mP_{q}^{2}}^{1}(-1)$ and $\Omega_{\mP_{q}^{2}}^{1}(1)$
vanish. Thus we have $H^{\bullet}({\eQ}^{*}|_{q})=0$.

The remaining property follows from the vanishing property of the
cohomology. \end{proof}

The following argument to obtain (\ref{eqnarray:I'}) is inspired
by \cite[Lemma 8.2]{Ku2}.

$\;$

\noindent \textbf{Proof of (\ref{eqnarray:I'}).} Let us split (\ref{eqnarray:I''})
into the short exact sequences: \begin{equation}
0\to\Lwedge^{4}(\mu_{2}^{*}{\eQ}^{*}\boxtimes g^{*}\sF)\to\Lwedge^{3}(\mu_{2}^{*}{\eQ}^{*}\boxtimes g^{*}\sF)\to\sK_{1}\to0.\label{eq:sp1}\end{equation}
 \begin{equation}
0\to\sK_{1}\to\Lwedge^{2}(\mu_{2}^{*}{\eQ}^{*}\boxtimes g^{*}\sF)\to\sK_{2}\to0.\label{eq:sp2}\end{equation}
 \begin{equation}
0\to\sK_{2}\to\mu_{2}^{*}{\eQ}^{*}\boxtimes g^{*}\sF\to\sI_{\Delta_{2}}\to0.\label{eq:sp3}\end{equation}
 By (\ref{eq:sp3}) and Lemma \ref{cla:conic}, we have \begin{equation}
\cLpi_{2'*}^{o}(\mu_{2}^{*}{\eQ}^{*}\boxtimes g^{*}\sF)=R^{1}\cLpi_{2'*}^{o}(\mu_{2}^{*}{\eQ}^{*}\boxtimes g^{*}\sF)=0.\label{eq:R1}\end{equation}
 Hence we have $\cLpi_{2'*}^{o}\sI_{\Delta_{2}}=R^{1}\cLpi_{2'*}^{o}\sK_{2}$,
and $\cLpi_{2'*}^{o}\sK_{2}=0$. Thus by (\ref{eq:sp2}), \[
0\to R^{1}\cLpi_{2'*}^{o}\sK_{1}\to R^{1}\cLpi_{2'*}^{o}\Lwedge^{2}(\mu_{2}^{*}{\eQ}^{*}\boxtimes g^{*}\sF)\to R^{1}\cLpi_{2'*}^{o}\sK_{2}=\cLpi_{2'*}^{o}\sI_{\Delta_{2}}\to0.\]
 Since $\Lwedge^{2}(\mu_{2}^{*}{\eQ}^{*}\boxtimes g^{*}\sF)|_{q}\simeq\sO_{\mP^{1}}(-2)^{\oplus6}$
on a smooth fiber $q$ by Lemma \ref{cla:conic} and $\cLpi_{2'*}^{o}\Lwedge^{2}(\mu_{2}^{*}{\eQ}^{*}\boxtimes g^{*}\sF)$
is torsion free, we have $\cLpi_{2'*}^{o}\Lwedge^{2}(\mu_{2}^{*}{\eQ}^{*}\boxtimes g^{*}\sF)=0$.
This implies that $\cLpi_{2'*}^{o}\sK_{1}=0$ by (\ref{eq:sp2}).
Thus by (\ref{eq:sp1}), we have \[
0\to R^{1}\cLpi_{2'*}^{o}\Lwedge^{4}(\mu_{2}^{*}{\eQ}^{*}\boxtimes g^{*}\sF)\to R^{1}\cLpi_{2'*}^{o}\Lwedge^{3}(\mu_{2}^{*}{\eQ}^{*}\boxtimes g^{*}\sF)\to R^{1}\cLpi_{2'*}^{o}\sK_{1}\to0.\]
 Therefore we obtain the exact sequence (\ref{eqnarray:I'}). \hfill
$\square$

$\;$

Let $\Delta_{2}^{o}$ and $\Delta_{\hcoY_{2}}^{o}$ be the closed
subschemes of $\Zpq_{2}^{o}\times\hchow$ and $\hcoY_{2}^{o}\times\hchow$
defined by $\sI_{\Delta_{2}}^{o}$ and $\sI_{2}^{o}$ respectively.
We set $\cLpi_{\Delta_{2}^{o}}:=\cLpi_{2'}|_{\Delta_{2}^{o}}$. The
following lemma will be used in Subsection \ref{subsection:CM}.

\begin{lem} \label{cla:CM1} ${\cLpi}_{\Delta_{2}^{o}*}\sO_{\Delta_{2}^{o}}=\sO_{\Delta_{\hcoY_{2}}^{o}}$
and $R^{1}\cLpi_{\Delta_{2}^{o}*}\sO_{\Delta_{2}^{o}}=0$. \end{lem}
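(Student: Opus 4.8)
The plan is to deduce both identities from the conic-bundle structure of $\cLpi_{\Zpq_2}$, rather than by a direct fibrewise study of $\Lpi_{\Delta}$. Write $j\colon {\Delta^{\zpq}_2}^{o}\hookrightarrow \Zpq^o_2\times \hchow$ and $i\colon \Delta^o_2\hookrightarrow \hcoY^o_2\times\hchow$ for the two closed immersions. Since $\cLpi_{\Zpq_2}\circ j=i\circ \Lpi_{\Delta}$ and $i$ is finite, one has $i_*R^\bullet \Lpi_{\Delta *}\sO_{{\Delta^{\zpq}_2}^{o}}\simeq R^\bullet\cLpi_{\Zpq_2 *}\sO_{{\Delta^{\zpq}_2}^{o}}$, so it suffices to prove the two statements $\cLpi_{\Zpq_2 *}\sO_{{\Delta^{\zpq}_2}^{o}}=\sO_{\Delta^o_2}$ and $R^1\cLpi_{\Zpq_2 *}\sO_{{\Delta^{\zpq}_2}^{o}}=0$. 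The first observation is that over $\hcoY^o_2\times\hchow$ every fibre of $\cLpi_{\Zpq_2}$ is a conic which is not a $\sigma$-conic, hence is $1$-dimensional and connected with $H^1(\sO)=0$; therefore $R^{\geq 2}\cLpi_{\Zpq_2 *}=0$ on any sheaf, and (the base being smooth) base change gives $\cLpi_{\Zpq_2 *}\sO_{\Zpq^o_2\times\hchow}=\sO_{\hcoY^o_2\times\hchow}$ and $R^1\cLpi_{\Zpq_2 *}\sO_{\Zpq^o_2\times\hchow}=0$.

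The key intermediate step is the vanishing $R^1\cLpi_{\Zpq_2 *}\sI^{\;\zpq}_2=0$. I would read this off from the short exact sequence (\ref{eq:sp3}), namely $0\to \sK_2\to \bLrho^{\;*}\eQ^*\boxtimes g^*\sF^*\to \sI^{\;\zpq}_2\to 0$: applying $R\cLpi_{\Zpq_2 *}$ and using $R^1\cLpi_{\Zpq_2 *}(\bLrho^{\;*}\eQ^*\boxtimes g^*\sF^*)=0$ from (\ref{eq:R1}) together with $R^2\cLpi_{\Zpq_2 *}\sK_2=0$ ($1$-dimensional fibres), the relevant portion of the long exact sequence becomes $0\to R^1\cLpi_{\Zpq_2 *}\sI^{\;\zpq}_2\to 0$, forcing $R^1\cLpi_{\Zpq_2 *}\sI^{\;\zpq}_2=0$. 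Here (\ref{eq:R1}) is itself a consequence of Lemma \ref{cla:conic}: on each fibre $q$ one has $(\bLrho^{\;*}\eQ^*\boxtimes g^*\sF^*)|_q\simeq (\eQ^*|_q)^{\oplus 2}$, and $H^\bullet(\eQ^*|_q)=0$ precisely because $q$ is not a $\sigma$-conic.

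Finally I would apply $R\cLpi_{\Zpq_2 *}$ to the structure sequence $0\to \sI^{\;\zpq}_2\to \sO_{\Zpq^o_2\times\hchow}\to \sO_{{\Delta^{\zpq}_2}^{o}}\to 0$. Using $\cLpi_{\Zpq_2 *}\sI^{\;\zpq}_2=\sI^o_2$ (the definition of $\sI^o_2$ in Proposition \ref{prop:step1}), $\cLpi_{\Zpq_2 *}\sO_{\Zpq^o_2\times\hchow}=\sO_{\hcoY^o_2\times\hchow}$, and the just-proved $R^1\cLpi_{\Zpq_2 *}\sI^{\;\zpq}_2=0$, the $R^0$ portion of the long exact sequence reads
\[
0\to \sI^o_2\to \sO_{\hcoY^o_2\times\hchow}\to \cLpi_{\Zpq_2 *}\sO_{{\Delta^{\zpq}_2}^{o}}\to 0,
\]
whose cokernel is $\sO_{\hcoY^o_2\times\hchow}/\sI^o_2=\sO_{\Delta^o_2}$; this gives the first identity. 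The continuation $R^1\cLpi_{\Zpq_2 *}\sO_{\Zpq^o_2\times\hchow}\to R^1\cLpi_{\Zpq_2 *}\sO_{{\Delta^{\zpq}_2}^{o}}\to R^2\cLpi_{\Zpq_2 *}\sI^{\;\zpq}_2$ has both outer terms zero (the first by the conic fibres, the second by $1$-dimensionality), so $R^1\cLpi_{\Zpq_2 *}\sO_{{\Delta^{\zpq}_2}^{o}}=0$, which is the second identity.

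The only nonformal input is the fibrewise vanishing $H^\bullet(\eQ^*|_q)=0$ of Lemma \ref{cla:conic}, which is exactly where working over $\hcoY^o_2$ (excluding the $\sigma$-conics sitting over $\Prt_\sigma$) is essential; everything else is a formal consequence of base change and the $1$-dimensionality of the conic fibres. I would therefore expect the main point requiring care to be simply ensuring that the two long exact sequences degenerate as claimed, all genuine geometry having already been absorbed into Lemma \ref{cla:conic} and equation (\ref{eq:R1}).
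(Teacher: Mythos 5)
Your proof is correct and is essentially the paper's own argument: the key vanishing $R^1\cLpi_{\Zpq_2 *}{\sI^{\;\zpq}_2}^{o}=0$ deduced from (\ref{eq:sp3}) and (\ref{eq:R1}), followed by the long exact sequence of higher direct images of $0\to {\sI^{\;\zpq}_2}^{o}\to \sO_{\Zpq^o_2\times\hchow}\to \sO_{{\Delta^{\zpq}_2}^{o}}\to 0$. The only difference is that you spell out details the paper leaves implicit, namely the $R^{\geq 2}$ vanishing coming from the one-dimensional conic fibres and the identification of $R^\bullet\Lpi_{\Delta *}$ with $R^\bullet\cLpi_{\Zpq_2 *}$ through the closed immersions.
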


\begin{proof} By (\ref{eq:sp3}) and (\ref{eq:R1}), we have $R^{1}\cLpi_{2'*}^{o}\sI_{\Delta_{2}}^{o}=0$.
Taking the higher direct image of the exact sequence $0\to\sI_{\Delta_{2}}^{o}\to\sO_{\Zpq_{2}^{o}\times\hchow}\to\sO_{\Delta_{2}^{o}}\to0$,
we obtain the exact sequence $0\to\sI_{2}^{o}\to\sO_{\hcoY_{2}^{o}\times\hchow}\to\cLpi_{\Delta_{2}^{o}*}\sO_{\Delta_{2}^{o}}\to0$
and $R^{1}\cLpi_{\Delta_{2}^{o}*}\sO_{\Delta_{2}^{o}}=0$ since $R^{1}\cLpi_{2'*}^{o}\sI_{\Delta_{2}^{o}}=0$
and $R^{1}\cLpi_{2'*}^{o}\sO_{\Zpq_{2}^{o}\times\hchow}=0$. Hence
$\cLpi_{\Delta_{2}^{o}*}\sO_{\Delta_{2}^{o}}=\sO_{\Delta_{\hcoY_{2}}^{o}}$.
\end{proof}

$\;$

\subsection{Step 1: Evaluating (\ref{eqnarray:I'}) \label{sub:Evaluating5.3}}

In this step, we rewrite each term of the resolution (\ref{eqnarray:I'})
by the Grothendieck-Verdier duality. 

\noindent \begin{lem} \label{cla:conic2} Let $q$ be a $\tau$-
or $\rho$-conic on $\mathrm{G}(3,V)$. Then $H^{1}(q,\Lwedge^{i}{\eQ}^{\oplus2}\otimes\sO_{q}(-1))=0$
$(1\leq i\leq4)$. \end{lem}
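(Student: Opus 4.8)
The plan is to follow the strategy of Lemma \ref{cla:conic} closely, first transposing the desired $H^1$-vanishing into an $H^0$-vanishing by Serre duality and then reducing, case by case, to standard Bott-type computations on a smooth ambient space. Since $q$ is a conic in its spanning plane $\mP^2_q\subset\mP(\Lwedge^3 V)$, it is a Gorenstein curve of arithmetic genus $0$ whose dualizing sheaf is $\omega_q=\sO_{\mP^2_q}(-1)|_q$ by adjunction; on the other hand $\sO_q(1)=\sO_{\mathrm{G}(3,V)}(1)|_q=\sO_{\mP^2_q}(1)|_q$, so that $\sO_q(-1)\simeq\omega_q$. Using $\det\eQ\simeq\sO_{\mathrm{G}(3,V)}(1)$, and hence $(\Lwedge^i\eQ^{\oplus 2})^\vee=\Lwedge^i((\eQ^*)^{\oplus2})$, Serre duality for the locally free sheaf $\Lwedge^i\eQ^{\oplus2}\otimes\sO_q(-1)$ gives
\[
H^1\bigl(q,\Lwedge^i\eQ^{\oplus2}\otimes\sO_q(-1)\bigr)^*\simeq H^0\bigl(q,\Lwedge^i((\eQ^*)^{\oplus2})\bigr),
\]
so it suffices to prove $H^0(q,\Lwedge^i((\eQ^*)^{\oplus2}))=0$ for $1\le i\le 4$.

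Next I would decompose each $\Lwedge^i((\eQ^*)^{\oplus2})=\Lwedge^i(\eQ^*\otimes\mC^2)$ by the Cauchy formula as $\bigoplus_{\lambda}\ft{\Sigma}^{\lambda}\eQ^*\otimes\ft{\Sigma}^{\lambda'}\mC^2$, where $\lambda$ runs over partitions of $i$ contained in the $2\times2$ box (both $\eQ^*$ and $\mC^2$ having rank $2$). Using $\ft{\Sigma}^{(1,1)}E=\det E$ and $\ft{\Sigma}^{(2,1)}E\simeq E\otimes\det E$ for a rank-two bundle $E$, the only Schur functors of $\eQ^*$ that occur for $1\le i\le4$ are $\eQ^*$, $\ft{S}^2\eQ^*$, $\det\eQ^*=\sO_q(-1)$, $\eQ^*\otimes\det\eQ^*=\eQ^*(-1)$, and $(\det\eQ^*)^{2}=\sO_q(-2)$. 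Of these, $H^0(\sO_q(-1))=H^0(\omega_q)=H^1(\sO_q)^*=0$ and $H^0(\sO_q(-2))=0$ hold because $q$ is a connected conic of arithmetic genus $0$, while $H^0(\eQ^*|_q)=0$ is already part of Lemma \ref{cla:conic}. Thus the whole statement is reduced to the two vanishings $H^0(q,\ft{S}^2\eQ^*)=0$ and $H^0(q,\eQ^*(-1))=0$.

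These last two I would establish exactly as in the proof of Lemma \ref{cla:conic}, splitting according to the type of $q$ (recall $q$ is not a $\sigma$-conic). If $q$ is a $\tau$-conic, then $q$ is a complete intersection in $S\simeq\mathrm{G}(2,4)\subset\mathrm{G}(3,V)$, and by the description of $S$ in \cite[Subsect.5.4]{HoTa3} the restriction $\eQ|_S$ is the universal rank-two quotient bundle of $S=\mathrm{G}(2,V/V_1)$ (with $\sO_S(1)=\sO_{\mathrm{G}(3,V)}(1)|_S$ the Plücker class); tensoring the Koszul resolution $0\to\sO_S(-3)\to\sO_S(-2)^{\oplus3}\to\sO_S(-1)^{\oplus3}\to\sO_S\to\sO_q\to0$ with $\ft{S}^2\eQ^*|_S$ and with $\eQ^*(-1)|_S$ and applying Theorem \ref{thm:Bott} yields the vanishing. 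If $q$ is a $\rho$-conic, then $\eQ|_{\mP^2_q}\simeq T_{\mP^2_q}(-1)$ by Lemma \ref{cla:rhoplane}, so $\eQ^*|_{\mP^2_q}\simeq\Omega^1_{\mP^2_q}(1)$; tensoring the two-term resolution $0\to\sO_{\mP^2_q}(-2)\to\sO_{\mP^2_q}\to\sO_q\to0$ with $\ft{S}^2\Omega^1_{\mP^2_q}(2)$ and with $\Omega^1_{\mP^2_q}$ and applying Theorem \ref{thm:Bott} on $\mP^2_q$ finishes the argument.

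I expect the main obstacle to be the $\tau$-conic case: beyond correctly identifying $\eQ|_S$ on $S\simeq\mathrm{G}(2,4)$, one must carry out one Bott computation on $\mathrm{G}(2,4)$ for each Schur summand and each twist $\sO_S(-k)$ ($0\le k\le3$) appearing in the Koszul complex, and keep careful track of the cohomological degrees, so that the hypercohomology spectral sequence of the Koszul complex genuinely forces $H^0$ on $q$ to vanish rather than merely bounding it; the $\rho$-conic case is parallel but lighter, since it lives on $\mP^2_q$ and uses only a two-term resolution.
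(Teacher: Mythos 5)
Your proof is correct, but it reorganizes the computation in a way the paper does not. The paper attacks $H^1$ directly: for each $i$ it makes your same case division, tensors the Koszul resolution of $\sO_q$ on $S\simeq\mathrm{G}(2,4)$ (for $\tau$-conics), resp.\ the resolution $0\to\sO_{\mP^2_q}(-2)\to\sO_{\mP^2_q}\to\sO_q\to 0$ (for $\rho$-conics, using $\eQ|_{\mP^2_q}\simeq T_{\mP^2_q}(-1)$ from Lemma \ref{cla:rhoplane}), with the full bundle $\Lwedge^i\eQ^{\oplus2}\otimes\sO(-1)$, and kills the relevant cohomology of each twist by Theorem \ref{thm:Bott}, with no dualization and no explicit Schur decomposition. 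Your preprocessing -- Serre duality on the Gorenstein curve $q$ via $\omega_q\simeq\sO_q(-1)$, then the Cauchy decomposition of $\Lwedge^i((\eQ^*)^{\oplus2})$ (the same formula as the paper's (\ref{eqn:decompL})) -- is sound and buys real economy: three of the five irreducible pieces are dispatched by Lemma \ref{cla:conic} and genus-zero facts, so only $\ft{S}^2\eQ^*|_q$ and $\eQ^*(-1)|_q$ ever reach the Koszul-plus-Bott machine, whereas the paper must process every Schur constituent of $\Lwedge^i\eQ^{\oplus2}$ for all four values of $i$. What the paper's version buys in exchange is uniformity and no reliance on duality for a possibly non-reduced conic. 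Two small points: the computations you defer do close (in the $\tau$-case the Bott weights of $\ft{S}^2\eQ^*(-j)$ and $\eQ^*(-1-j)$ on $\mathrm{G}(2,4)$ either contain a repeated entry or sort with more than $j$ inversions, so $H^j=0$ for $0\le j\le 3$, exactly what the Koszul chase needs, and the $\mP^2_q$ computations are immediate), so the obstacle you anticipate at the end is not one; and $H^0(q,\eQ^*(-1))=0$ needs no Bott at all, since a general linear form on $\mP^2_q$ is a non-zerodivisor on $\sO_q$ and induces an injection $\eQ^*(-1)|_q\hookrightarrow\eQ^*|_q$, whose $H^0$ vanishes by Lemma \ref{cla:conic}.
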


\begin{proof} Assume that $q$ is a $\tau$-conic. As in the proof
of Lemma \ref{cla:conic}, we take $S\simeq\mathrm{G}(2,4)$ in $\rG(3,V)$
such that $q\subset S$ and consider the Koszul resolution of $\sO_{q}$
on $S$. Tensoring this exact sequence with $\Lwedge^{i}{\eQ}|_{S}^{\oplus2}\otimes\sO_{S}(-1)$,
we obtain \begin{eqnarray*}
0\to(\Lwedge^{i}{\eQ}|_{S}^{\oplus2})(-4)\to(\Lwedge^{i}{\eQ}|_{S}^{\oplus2})(-3)^{\oplus3}\to(\Lwedge^{i}{\eQ}|_{S}^{\oplus2})(-2)^{\oplus3}\to\\
\Lwedge^{i}{\eQ}|_{S}^{\oplus2}(-1)\to\Lwedge^{i}{\eQ}^{\oplus2}\otimes\sO_{q}(-1)\to0.\end{eqnarray*}
 Using Theorem \ref{thm:Bott}, it is easy to derive the assertion. 

When $q$ is a $\rho$-conic, as in the proof of Lemma \ref{cla:conic},
tensoring $(\Lwedge^{i}{\eQ}^{\oplus2}|_{\mP_{q}^{2}})(-1)\simeq(\Lwedge^{i}{T_{\mP_{q}^{2}}(-1)}^{\oplus2})(-1)$
with the exact sequence $0\to\sO_{\mP_{q}^{2}}(-2)\to\sO_{\mP_{q}^{2}}\to\sO_{q}\to0$,
we have \[
0\to(\Lwedge^{i}{T_{\mP_{q}^{2}}(-1)}^{\oplus2})(-3)\to(\Lwedge^{i}{T_{\mP_{q}^{2}}(-1)}^{\oplus2})(-1)\to\Lwedge^{i}{\eQ}^{\oplus2}\otimes\sO_{q}(-1)\to0.\]
 Computing all the cohomology groups of $(\Lwedge^{i}{T_{\mP_{q}^{2}}(-1)}^{\oplus2})(-3)$
and $(\Lwedge^{i}{T_{\mP_{q}^{2}}(-1)}^{\oplus2})(-1)$ by Theorem
\ref{thm:Bott}, we have the assertion. \end{proof}

$\;$

By Lemma \ref{cla:conic2}, we may apply the second part of the Grothendieck-Verdier
duality \ref{cla:duality} to the morphism $\cLpi_{2'}^{o}:{\Zpq}_{2}^{o}\times{\hchow}\to{\hcoY}_{2}^{o}\times{\hchow}$,
and then we have \[
R^{1}\cLpi_{2'*}^{o}\Lwedge^{i}(\mu_{2}^{*}{\eQ}^{*}\boxtimes g^{*}\sF)\simeq\Big(\cLpi_{2'*}^{o}\big\{\Lwedge^{i}(\mu_{2}^{*}{\eQ}\boxtimes g^{*}\sF^{*})\otimes\omega_{\Zpq_{2}\times{\hchow}/\hcoY_{2}\times{\hchow}}\big\}\Big)^{*}.\]
 Note that $\omega_{\Zpq_{2}\times{\hchow}/\hcoY_{2}\times{\hchow}}=\mathrm{pr}_{2}^{*}\omega_{\Zpq_{2}/\hcoY_{2}}=\omega_{\Zpq_{2}/\hcoY_{2}}\boxtimes\sO_{{\hchow}}$.
By Proposition \ref{cla:cano}, we have $K_{\Zpq_{2}/\hcoY_{2}}=M_{\Zpq_{2}}-N_{\Zpq_{2}}$.
Thus we have \begin{equation}
\begin{aligned} & R^{1}\cLpi_{2'*}^{o}\Lwedge^{i}(\mu_{2}^{*}{\eQ}^{*}\boxtimes g^{*}\sF)\simeq\\
 & \big(\cLpi_{2'*}^{o}(\Lwedge^{i}(\mu_{2}^{*}{\eQ}\boxtimes g^{*}\sF^{*})\otimes(\sO_{\Zpq_{2}^{o}}(-{N}_{\Zpq_{2}^{o}})\boxtimes\sO_{{\hchow}})\big)\otimes(\sO_{\hcoY_{2}^{o}}(M_{\hcoY_{2}^{o}})\boxtimes\sO_{{\hchow}})\big)^{*}.\end{aligned}
\label{eq:duality}\end{equation}
 To simplify this, we use the following formula (see \cite[Exercise 6.11]{FH}):
\begin{equation}
\Lwedge^{i}(\mu_{2}^{*}{\eQ}\boxtimes g^{*}\sF^{*})\simeq\bigoplus_{\lambda}\ft{\Sigma}^{\lambda}\mu_{2}^{*}{\eQ}\boxtimes\ft{\Sigma}^{\lambda'}g^{*}\sF^{*},\label{eqn:decompL}\end{equation}
 where $\lambda$ are partitions of $i$ with at most $2$ rows and
column, and $\lambda'$ is the dual partition to $\lambda$.

\vspace{0.2cm}
 \begin{prop} \label{prop:Io2} The exact sequence $(\ref{eqnarray:I'})$
twisted by $\sO_{{\hcoY}_{2}^{o}}(M_{{\hcoY}_{2}^{o}})\boxtimes\sO_{\hchow}(2L_{{\hchow}})$
is evaluated as \begin{equation}
\begin{aligned}0\to & \{\Lpi_{2'*}^{o}\sO_{\Zpq_{2}^{o}}(N_{\Zpq_{2}^{o}})\}^{*}\boxtimes\sO_{{\hchow}}\to\{\cLpi_{2'*}^{o}(\mu_{2}^{*}{\eQ})\}^{*}\boxtimes g^{*}\sF^{*}\to\\
 & \sO_{\hcoY_{2}^{o}}\boxtimes g^{*}\ft{S}^{2}\sF^{*}\oplus\{\Lpi_{2'*}^{o}(\{\mu_{2}^{*}\ft{S}^{2}{\eQ}\}(-{N}_{\Zpq_{2}^{o}}))\}^{*}\boxtimes\sO_{{\hchow}}(L_{\hchow})\to\\
 & \hspace{4cm}\sI_{2}^{o}\otimes\sO_{{\hcoY}_{2}^{o}}(M_{{\hcoY}_{2}^{o}})\boxtimes\sO_{\hchow}(2L_{{\hchow}})\to0\end{aligned}
\label{eqnarray:fin}\end{equation}
 \end{prop}

\begin{proof} By using (\ref{eqn:decompL}), we calculate $\Lwedge^{4}(\mu_{2}^{*}{\eQ}\boxtimes g^{*}\sF^{*})\simeq\ft{\Sigma}^{(2,2)}\mu_{2}^{*}\eQ\boxtimes\ft{\Sigma}^{(2,2)}g^{*}\eF^{*}=\sO_{\Zpq_{2}}(2N_{\Zpq_{2}})\boxtimes\sO_{\hchow}(2L_{{\hchow}})$.
Then we have \[
R^{1}\cLpi_{2'*}^{o}\Lwedge^{4}(\mu_{2}^{*}{\eQ}^{*}\boxtimes g^{*}\sF^{*})\simeq(\{\Lpi_{2'*}^{o}\sO_{\Zpq_{2}^{o}}(N_{\Zpq_{2}^{o}})\}^{*}\otimes\sO_{\hcoY_{2}^{o}}(-M_{\hcoY_{2}^{o}}))\boxtimes\sO_{\hchow}(-2L_{{\hchow}}).\]
 Similarly, we evaluate $\Lwedge^{3}(\mu_{2}^{*}{\eQ}\boxtimes g^{*}\sF^{*})\simeq\mu_{2}^{*}{\eQ}(N_{\Zpq_{2}})\boxtimes g^{*}\sF^{*}(L_{{\hchow}})$
with $\lambda=\lambda'=(2,1)$ and have \[
R^{1}\cLpi_{2'*}^{o}\Lwedge^{3}(\mu_{2}^{*}{\eQ}^{*}\boxtimes g^{*}\sF)\simeq\{\Lpi_{2'*}^{o}(\mu_{2}^{*}{\eQ})^{*}\otimes\sO_{\hcoY_{2}}(-M_{\hcoY_{2}})\}\boxtimes(g^{*}\sF^{*}(-2L_{{\hchow}})),\]
 where we use $\eF=\eF^{*}(-L_{\hchow}).$ Finally, we have \[
\begin{aligned}\Lwedge^{2}(\mu_{2}^{*}{\eQ}\boxtimes g^{*}\sF^{*}) & \simeq\Lwedge^{2}\mu_{2}^{*}{\eQ}\boxtimes\ft{S}^{2}(g^{*}\sF^{*})\oplus\ft{S}^{2}(\mu_{2}^{*}{\eQ})\boxtimes\Lwedge^{2}g^{*}\sF^{*}\\
 & \simeq\sO_{\Zpq_{2}}(N_{\Zpq_{2}})\boxtimes g^{*}\ft{S}^{2}\sF^{*}\oplus\mu_{2}^{*}\ft{S}^{2}{\eQ}\boxtimes\sO_{\hchow}(L_{{\hchow}}).\end{aligned}
\]
 Using this we evaluate $R^{1}\cLpi_{2'*}^{o}\Lwedge^{2}(\mu_{2}^{*}{\eQ}^{*}\boxtimes g^{*}\sF)$
as \[
\sO_{\hcoY_{2}^{o}}(-M_{\hcoY_{2}^{o}})\boxtimes g^{*}\ft{S}^{2}\sF\oplus\,\{\Lpi_{2'*}^{o}(\{\mu_{2}^{*}\ft{S}^{2}\eQ\}(-{N}_{\Zpq_{2}^{o}}))^{*}\otimes\sO_{\hcoY_{2}^{o}}(-M_{\hcoY_{2}^{o}})\}\boxtimes\sO_{\hchow}(-L_{{\hchow}}),\]
and use $\eF=\eF^{*}(-L_{\hchow})$ again to obtain (\ref{eqnarray:fin}).\end{proof}

$\;$

\subsection{Step 2: A locally free resolution of $\sI_{2}^{o}$ on $\hcoY_{2}^{o}\times\hchow$
\label{sub:Simplify-I2}}

We will characterize the following sheaves: \begin{equation}
\Lpi_{2'*}^{o}\sO_{\Zpq_{2}^{o}}(N_{\Zpq_{2}^{o}}),\;\;\Lpi_{2'*}^{o}(\mu_{2}^{*}{\eQ}),\;\;\Lpi_{2'*}^{o}(\{\mu_{2}^{*}\ft{S}^{2}\eQ\}(-{N}_{\Zpq_{2}^{o}})),\label{eq:sheaves}\end{equation}
 which have appeared in the resolution (\ref{eqnarray:fin}). Below
is a preliminary result.

\begin{lem} \label{cla:locfree} All the sheaves in $(\ref{eq:sheaves})$
are locally free on ${\hcoY}_{2}^{o}$. \end{lem}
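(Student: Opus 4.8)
The plan is to deduce local freeness from the theorem on cohomology and base change (Grauert's theorem) applied to the generically conic bundle $\Lpi_{\Zpq_2}\colon \Zpq_2\to \hcoY_2$. First I would record that, over the reduced (indeed smooth) locus $\hcoY^o_2$, the restriction $\Lpi_{\Zpq_2}\colon \Zpq^o_2\to \hcoY^o_2$ is flat: as recalled in Subsection \ref{Z2Y2}, $\Zpq_2$ is a prime divisor in the $\mP^2$-bundle $\Zpq^u_2$, and over $\hcoY^o_2$ it meets every fibre of $\Zpq^u_2\to \hcoY_2$ in a conic, hence is a relative effective Cartier divisor containing no fibre; such a divisor is flat over the base (locally it is cut out by a fibrewise nonzerodivisor), and its fibres are exactly the conics $q$ occurring as fibres of $\Lpi_{\Zpq_2}$. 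Since each of the three sheaves in $(\ref{eq:sheaves})$ is locally free on $\Zpq_2$, it is flat over $\hcoY^o_2$, and $\Lpi_{\Zpq_2}$ is proper; by Grauert's theorem it therefore suffices to show that for each such $\sF$ the function $h^0(q,\sF|_q)$ is constant. As $q$ is a curve and $\chi(q,\sF|_q)$ is constant by flatness, this reduces to the fibrewise vanishing $H^1(q,\sF|_q)=0$.

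It remains to verify these three vanishings. The fibres $q$ over $\hcoY^o_2$ are precisely the conics on $\mathrm{G}(3,V)$ which are \emph{not} $\sigma$-conics, so each is a $\tau$-conic or a $\rho$-conic, and each is a Gorenstein plane curve in its span $\mP^2_q$ with $\omega_q\simeq \sO_q(-1)$ by adjunction; moreover $N_{\Zpq_2}|_q=\sO_q(1)$. For $\sF=\sO_{\Zpq_2}(N_{\Zpq_2})$ we have $\sF|_q=\sO_q(1)$, and the plane-conic sequence $0\to \sO_{\mP^2_q}(-1)\to \sO_{\mP^2_q}(1)\to \sO_q(1)\to 0$ together with $H^\bullet(\mP^2_q,\sO(-1))=H^{>0}(\mP^2_q,\sO(1))=0$ gives $H^1(q,\sO_q(1))=0$. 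For $\sF=\bLrho^{\;*}\eQ$, identifying $q$ with its image conic we have $\sF|_q=\eQ|_q$, which is locally free, so Serre duality on the Gorenstein curve $q$ yields $H^1(q,\eQ|_q)^*\simeq H^0(q,\eQ^*|_q\otimes \omega_q)=H^0(q,\eQ^*|_q(-1))$; a general linear form on $\mP^2_q$ is a nonzerodivisor on the pure one-dimensional scheme $q$, giving an inclusion $\eQ^*|_q(-1)\hookrightarrow \eQ^*|_q$ and hence $H^0(q,\eQ^*|_q(-1))\hookrightarrow H^0(q,\eQ^*|_q)=0$ by Lemma \ref{cla:conic}. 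For $\sF=\{\bLrho^{\;*}\ft{S}^2\eQ\}(-N_{\Zpq_2})$ we have $\sF|_q=(\ft{S}^2\eQ)|_q(-1)$; from the decomposition $\Lwedge^2(\eQ^{\oplus 2})\simeq \ft{S}^2\eQ\oplus (\Lwedge^2\eQ)^{\oplus 3}$ this is a direct summand of $\Lwedge^2\eQ^{\oplus 2}\otimes \sO_q(-1)$, whose first cohomology vanishes by Lemma \ref{cla:conic2} (case $i=2$), so $H^1(q,(\ft{S}^2\eQ)|_q(-1))=0$.

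The main point — and the only place any real work is hidden — is that these vanishings must hold \emph{uniformly} over all fibres, including the reducible and non-reduced conics, since semicontinuity alone would only bound $h^1$ from above and allow jumps on special fibres. This uniformity is exactly what Lemmas \ref{cla:conic} and \ref{cla:conic2} already supply for every conic that is not a $\sigma$-conic: their proofs treat $\tau$-conics via the Koszul resolution on an $S\simeq \mathrm{G}(2,4)\supset q$ and $\rho$-conics via the plane model using $\eQ|_{\mP^2_q}\simeq T_{\mP^2_q}(-1)$ (Lemma \ref{cla:rhoplane}), followed by Bott's theorem \ref{thm:Bott}. Thus no further fibrewise analysis is needed, and Grauert's theorem delivers local freeness of all three pushforwards on $\hcoY^o_2$, completing the proof.
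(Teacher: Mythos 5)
Your proof is correct, and it shares the paper's overall framework --- Grauert's theorem applied to the conic bundle $\Zpq^o_2\to \hcoY^o_2$, reducing local freeness to a cohomological statement that must hold uniformly on \emph{every} fibre, including reducible and non-reduced conics --- but you carry out the fibrewise verifications by a genuinely different method. The paper checks constancy of $h^0$ directly: it identifies $H^0(q,N_{\Zpq_2}|_q)\simeq H^0(\mP^2_q,\sO_{\mP^2_q}(1))\simeq\mC^3$, gets $h^0(q,\bLrho^{\;*}\eQ|_q)=4$ from global generation plus Riemann--Roch, and for the third sheaf redoes a Bott-theorem computation in the plane model $(\ft{S}^2 T_{\mP^2_q}(-1))(-1)$ (written out for $\rho$-conics, with $\tau$-conics treated ``similarly''), arriving at the explicit description (\ref{eq:H1}). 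You instead reduce to the fibrewise vanishing of $H^1$ (via constancy of $\chi$, or equivalently cohomology and base change) and then recycle the already-established Lemmas \ref{cla:conic} and \ref{cla:conic2}: Serre duality on the Gorenstein plane curve $q$ with $\omega_q\simeq\sO_q(-1)$, combined with a nonzerodivisor argument, disposes of $\bLrho^{\;*}\eQ|_q$, and the summand decomposition $\Lwedge^2(\eQ^{\oplus 2})\simeq \ft{S}^2\eQ\oplus(\Lwedge^2\eQ)^{\oplus 3}$ reduces the third sheaf to the $i=2$ case of Lemma \ref{cla:conic2}. Your route is more economical and uniform (no fresh case division between $\tau$- and $\rho$-conics beyond what those lemmas already contain), and your explicit verification that $\Zpq^o_2\to\hcoY^o_2$ is flat supplies a hypothesis of Grauert's theorem that the paper leaves tacit. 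What the paper's version buys in exchange is the actual fibre dimensions ($3$, $4$, $3$) and the concrete identifications of the fibres of the pushforwards --- with $H^0(\mP^2_q,\sO_{\mP^2_q}(1))$ and with (\ref{eq:H1}) --- which are cited again later, in the proof of Proposition \ref{prop:est} and through (\ref{eq:finaleq}); your argument proves the lemma as stated, but recovering those ranks and identifications would require a small supplement (e.g.\ computing $\chi$ on a fibre).
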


\begin{proof} By the Grauert theorem, it suffices to check that the
dimensions of $H^{0}$-terms of the restrictions to fibers are constant
on ${\hcoY}_{2}^{o}$. Let $q$ be the fiber of $\Zpq_{2}^{o}\to\hcoY_{2}^{o}$
over a point of ${\hcoY}_{2}^{o}$. Let $[V_{1}]\in\mP(V)$ be the
image of $q$ by $\Zpq_{2}^{o}\to\mP(V)$. We may consider $q$ as
a conic on $\mathrm{G}(2,V/V_{1})$. Let $\mP_{q}^{2}$ be the plane
spanned by $q$.

For $\Lpi_{2'*}^{o}\sO_{\Zpq_{2}^{o}}(N_{\Zpq_{2}^{o}})$, we have
\[
H^{0}(q,N_{\Zpq_{2}^{o}}|_{q})\simeq H^{0}(q,\sO_{q}(1))\simeq H^{0}(\mP_{q}^{2},\sO_{\mP_{q}^{2}}(1))\simeq\mC^{3}.\]
For the sheaf $\Lpi_{2'*}^{o}(\mu_{2}^{*}{\eQ})$, the sheaf $\mu_{2}^{*}{\eQ}|_{q}$
is generated by global sections and its degree is two. Therefore,
by the Riemann-Roch theorem, $H^{0}(q,\mu_{2}^{*}{\eQ}|_{q})\simeq\mC^{4}.$
Finally, for the sheaf $\Lpi_{2'*}^{o}(\{\mu_{2}^{*}\ft{S}^{2}\eQ\}(-{N}_{\Zpq_{2}^{o}}))$,
we can show \[
H^{0}(q,(\{\mu_{2}^{*}\ft{S}^{2}\eQ\}(-{N}_{\Zpq_{2}^{o}}))|_{q})\simeq\mC^{3}\]
 by similar computations to those in the proof of Lemma \ref{cla:conic2}.
Here we present the calculations only for the case where $q$ is a
$\rho$-conic: Tensoring $(\{\mu_{2}^{*}\ft{S}^{2}\eQ\}(-{N}_{\Zpq_{2}^{o}}))|_{\mP_{q}^{2}}\simeq(\ft{S}^{2}T_{\mP_{q}^{2}}(-1))(-1)$
with the exact sequence $0\to\sO_{\mP_{q}^{2}}(-2)\to\sO_{\mP_{q}^{2}}\to\sO_{q}\to0$,
we have \[
0\to(\ft{S}^{2}T_{\mP_{q}^{2}}(-1))(-3)\to(\ft{S}^{2}T_{\mP_{q}^{2}}(-1))(-1)\to(\ft{S}^{2}T_{\mP_{q}^{2}}(-1))(-1)|_{q}\to0.\]
 We compute the cohomology groups by Theorem \ref{thm:Bott}. It turns
out that all the cohomology groups of $(\ft{S}^{2}T_{\mP_{q}^{2}}(-1))(-1)$
vanish, and the only nonvanishing cohomology group of $(\ft{S}^{2}T_{\mP_{q}^{2}}(-1))(-3)$
is $H^{1}$ with \begin{equation}
H^{1}(\mP_{q}^{2},(\ft{S}^{2}T_{\mP_{q}^{2}}(-1))(-3))\simeq\bar{U}^{*}\otimes\Lwedge^{3}\bar{U}^{*},\label{eq:H1}\end{equation}
 where $\bar{U}$ is the three dimensional subspace of $\wedge^{2}(V/V_{1})$
such that $\mP_{q}^{2}=\mP(\bar{U})$. Consequently, we have \[
H^{0}(q,(\{\mu_{2}^{*}\ft{S}^{2}\eQ\}(-{N}_{\Zpq_{2}^{o}}))|_{q})\simeq H^{1}(\mP_{q}^{2},(\ft{S}^{2}T_{\mP_{q}^{2}}(-1))(-3))\simeq\mC^{3}.\]
 \end{proof}

\subsubsection{Part 1: }

Here we consider sheaves on $\Zpq_{2}^{t}$ (see Definition \ref{def:Z2t})
which have similar forms to each of the sheaves in (\ref{eq:sheaves}).
Recall that we have defined $\Zpq_{2}^{t}=B(2,4,\hcoY_{2})\cap\mP(\Lrho_{2}^{*}\sS$)
in Definition \ref{def:Z2t} with the Grassmann bundle $B(2,4,\hcoY_{2})$.
An advantage of working on $\Zpq_{2}^{t}$ is that its structure sheaf
$\sO_{\Zpq_{2}^{t}}$ has a nice Koszul resolution as $\sO_{B(2,4)}$-module,
where and hereafter we abbreviate $B(2,4,\hcoY_{2})$ in subscripts
to $B(2,4)$.

By Proposition \ref{cla:KoszulZ} and $\Zpq_{2}^{u}:=\mP(\Lrho_{2}^{*}\sS$),
the variety $\Zpq_{2}^{u}$ is a complete intersection in $\mP(T(-1)^{2})\times_{\mP(V)}\hcoY_{2}$
with respect to a section of $\sO_{\mP(T(-1)^{2})}(1)\boxtimes\Lrho_{2}^{*}\sQ$.
Therefore $\Zpq_{2}^{t}=B(2,4,\hcoY_{2})\cap\Zpq_{2}^{u}$ is the
complete intersection in $B(2,4,\hcoY_{2})$ by a section of $(\sO_{\mP(T(-1)^{2})}(1)\boxtimes\Lrho_{2}^{*}\sQ)|_{B(2,4)}$.
By (\ref{eq:pullback}) and Proposition \ref{cla:compare}, we have
an isomorphism \[
(\sO_{\mP(T(-1)^{2})}(1)\boxtimes\Lrho_{2}^{*}\sQ)|_{B(2,4)}\simeq\sO_{\mathrm{G}(2,T(-1))}({N}_{\mathrm{G}(2,T(-1))}-L_{\mathrm{G}(2,T(-1))})\boxtimes\Lrho_{2}^{*}\sQ.\]
 From Proposition \ref{cla:KoszulZ}, we see that the sheaf $\sO_{\Zpq_{2}^{t}}$
has the following Koszul resolution as a $\sO_{B(2,4)}$-module: \begin{equation}
0\to\sA_{3}\to\sA_{2}\to\sA_{1}\to\sO_{B(2,4)}\to\sO_{\Zpq_{2}^{t}}\to0,\label{eqnarray:KoszulZ2}\end{equation}
 where we set \[
\sA_{i}:=\sO_{\mathrm{G}(2,T(-1))}(-i{N}_{\mathrm{G}(2,T(-1))}+iL_{\mathrm{G}(2,T(-1))})\boxtimes\Lwedge^{i}\Lrho_{2}^{*}\sQ^{*}\text{ for }\ i=0,1,2,3.\]

Let $\Lpi_{2^{t}}\colon\Zpq_{2}^{t}\to\hcoY_{2}$ and $\Lrho_{2^{t}}\colon\Zpq_{2}^{t}\to\mathrm{G}(3,V)$
be the natural morphisms, and $N_{\Zpq_{2}^{t}}$ the pull-back of
$N_{\mathrm{G}(2,T(-1))}$. Then, using the Koszul resolution (\ref{eqnarray:KoszulZ2}),
we have

\begin{lem} \label{cla:est} \begin{myitem2}

\item[\rm{(i)}]$\Lpi_{2^{t}*}\sO_{\Zpq_{2}^{t}}(N_{\Zpq_{2}^{t}})\simeq\Lrho_{2}^{*}\sS^{*}(L_{\hcoY_{2}})$, 

\item[\rm{(ii)}]$\Lpi_{2^{t}*}(\Lrho_{2^{t}}^{*}{\eQ})\simeq\Lpi_{2}^{*}T(-1)$,
and 

\item[\rm{(iii)}]$\Lpi_{2^{t}*}(\{\Lrho_{2^{t}}^{*}\ft{S}^{2}{\eQ}\}(-N_{\Zpq_{2}^{t}}))\simeq\Lrho_{2}^{*}\sQ(-M_{\hcoY_{2}}-F_{\rho}).$ 

\end{myitem2}

\end{lem}

\begin{proof} We show (i)--(iii) using Theorem \ref{thm:Bott} and
noting that $\pr_{2}\colon B(2,4,\hcoY_{2})\to\hcoY_{2}$ is a $\mathrm{G}(2,4)$-bundle.
Let $\Gamma$ be a fiber of $\pr_{2}$.

\noindent(i) We tensor (\ref{eqnarray:KoszulZ2}) with $\sO_{B(2,4)}(N_{B(2,4)})$,
which is the pull-back to $B(2,4,\hcoY_{2})$ of $\sO_{\mP(\Omega(1)^{2})}(1)|_{\mathrm{G}(2,T(-1))}$
by (\ref{eq:pullback}) and Lemma \ref{cla:compare}. It is easy to
see that\[
\begin{aligned} & R^{\bullet}\pr_{2*}(\sO_{B(2,4)}({N}_{B(2,4)})\otimes\sA_{1})=0\text{ for }\bullet>0,\\
 & R^{\bullet}\pr_{2*}(\sO_{B(2,4)}({N}_{B(2,4)})\otimes\sA_{i})=0\text{ for }\bullet\geq0\text{ for }i=2,3,\end{aligned}
\]
since $(\sO_{B(2,4)}({N}_{B(2,4)})\otimes\sA_{i})|_{\Gamma}\simeq\sO_{\Gamma}(-i+1)$.
Moreover, we have \[
\pr_{2*}(\sO_{B(2,4)}({N}_{B(2,4)})\otimes\sA_{1})=\Lrho_{2}^{*}\sQ^{*}(L_{\hcoY_{2}}),\;\;\pr_{2*}\sO_{B(2,4)}({N}_{B(2,4)})=\Lpi_{2}^{*}T(-1)^{2}.\]
 Therefore we obtain the short exact sequence \[
0\to\Lrho_{2}^{*}\sQ^{*}(L_{\hcoY_{2}})\to\Lpi_{2}^{*}T(-1)^{2}\to\Lpi_{2^{t}*}\sO_{\Zpq_{2}^{t}}(N_{\Zpq_{2}^{t}})\to0,\]
 which coincides with the pull-back of the dual of the universal exact
sequence (\ref{eq:univ}) twisted by $L_{\hcoY_{2}}$ by the proof
of Lemma \ref{lem:KoszulZ}. Thus $\Lpi_{2^{t}*}\sO_{\Zpq_{2}^{t}}(N_{\Zpq_{2}^{t}})\simeq\Lrho_{2}^{*}\sS(L_{\hcoY_{2}})$
as claimed.

\noindent(ii) We tensor (\ref{eqnarray:KoszulZ2}) with the pull
back $\eQ_{B(2,4)}$ on $B(2,4,\hcoY_{2})$ of $\eQ$. We see that
$R^{\bullet}\pr_{2*}(\eQ_{B(2,4)}\otimes\sA_{i})=0$ for $\bullet\geq0$
and $i=1,2,3$ by Theorem \ref{thm:Bott} since $(\eQ_{B(2,4)}\otimes\sA_{i})|_{\Gamma}\simeq\eQ_{\Gamma}(-i)$,
where $\eQ_{\Gamma}$ is the universal quotient bundle of rank $2$
on $\Gamma\simeq\mathrm{G}(2,4)$. Moreover, $\pr_{2*}\eQ_{B(2,4)}\simeq\Lpi_{2}^{*}T(-1)$.
Therefore we have $\Lpi_{2^{t}*}((\Lrho_{2^{t}}^{*}{\eQ})\simeq\Lpi_{2}^{*}T(-1)$
as claimed.

\noindent(iii) We tensor (\ref{eqnarray:KoszulZ2}) with $\ft{S}^{2}\eQ_{B(2,4)}(-{N}_{B(2,4)})$.
We see that \[
\begin{aligned} & R^{\bullet}\pr_{2*}(\ft{S}^{2}\eQ_{B(2,4)}(-{N}_{B(2,4)})\otimes\sA_{i})=0\text{ for }\bullet\geq0\text{ and }i=0,1,3,\\
 & R^{\bullet}\pr_{2*}(\ft{S}^{2}\eQ_{B(2,4)}(-{N}_{B(2,4)})\otimes\sA_{2})=0\text{ for }\bullet\not=2\end{aligned}
\]
 by Theorem \ref{thm:Bott} since $(\ft{S}^{2}\eQ_{B(2,4)}(-{N}_{B(2,4)})\otimes\sA_{i})|_{\Gamma}\simeq\ft{S}^{2}\eQ_{\Gamma}(-i-1)$.
Moreover, \[
R^{2}\pr_{2*}(\ft{S}^{2}\eQ_{B(2,4)}(-{N}_{B(2,4)})\otimes\sA_{2})\simeq\{\Lwedge^{2}\Lrho_{2}^{*}\sQ^{*}\}(L_{\hcoY_{2}})\simeq\Lrho_{2}^{*}(\sQ\otimes\det\sQ^{*})(L_{\hcoY_{2}}).\]
 By Proposition \ref{prop:div-relations} (5), we also have \[
\Lrho_{2}^{*}(\sQ\otimes\det\sQ^{*})(L_{\hcoY_{2}})\simeq\Lrho_{2}^{*}\sQ(-M_{\hcoY_{2}}-F_{\rho}).\]
 Therefore we have \[
\Lpi_{2^{t}*}(\{\Lrho_{2^{t}}^{*}\ft{S}^{2}\eQ\}(-{N}_{\Zpq_{2}^{t}}))\simeq R^{2}\pr_{2*}(\ft{S}^{2}\eQ_{B(2,4)}(-{N}_{B(2,4)})\otimes\sA_{2}),\]
 which is isomorphic to $\Lrho_{2}^{*}\sQ(-M_{\hcoY_{2}}-F_{\rho})$
as claimed. \end{proof}

\begin{prop} \label{prop:injections}There exist the following injective
morphisms:

\begin{myitem2}

\item[\rm{(i)}] $\Lpi_{2^{t}*}\sO_{\Zpq_{2}^{t}}(N_{\Zpq_{2}^{t}})\hookrightarrow\Lpi_{2'*}\sO_{\Zpq_{2}}(N_{\Zpq_{2}}),$

\item[\rm{(ii)}] $\Lpi_{2^{t}*}(\Lrho_{2^{t}}^{*}\eQ)\hookrightarrow\Lpi_{2'*}(\mu_{2}^{*}\eQ)$,

\item[\rm{(iii)}] $\Lpi_{2^{t}*}(\left\{ \Lrho_{2^{t}}^{*}\ft{S}^{2}\eQ\right\} (-N_{\Zpq_{2}^{t}})\hookrightarrow\Lpi_{2'*}(\left\{ \mu_{2}^{*}\ft{S}^{2}\eQ\right\} (-N_{\Zpq_{2}})$.

\end{myitem2}

\end{prop}

\begin{proof} Since $\Zpq_{2}^{t}$ is the total transform of the
blow-up $\Zpq_{3}\times_{\hcoY_{3}}\hcoY_{2}\to\Zpq_{3}$ along $\Lpi_{3'}^{-1}(\Prt_{\rho})$
and $\Zpq_{2}$ is the strict transform of $\Zpq_{3}$, we have a
natural morphism $\Lpi_{2^{t}*}(\sB\vert_{\Zpq_{2}^{t}})$ $\to\Lpi_{2'*}(\sB\vert_{\Zpq_{2}})$
for the sheaves on $\hcoY_{2}$ associated to any sheaf $\sB$ on
$\Zpq_{2}^{u}=\mP(\Lrho_{2}^{*}\sS$). We note that the morphism is
injective if the sheaf $\Lpi_{2^{t}*}(\sB\vert_{\Zpq_{2}^{t}})$ is
locally free. Lemma \ref{cla:est} indicates that this is the case
for sheaves $\sB$ such that $\sB\vert_{\Zpq_{2}^{t}}=\Lpi_{2^{t}*}\sO_{\Zpq_{2}^{t}}(N_{\Zpq_{2}^{t}}),\Lpi_{2^{t}*}(\Lrho_{2^{t}}^{*}\eQ)$
and $\Lpi_{2^{t}*}\big(\Lrho_{2^{t}}^{*}\ft{S}^{2}\eQ(-N_{\Zpq_{2}^{t}})\big)$.
\end{proof}

$\;$

\subsubsection{Part 2:}

Here we determine the sheaves in (\ref{eq:sheaves}) and complete
our construction of the locally free resolution of $\sI_{2}^{o}\otimes\{\sO_{\hcoY_{2}^{o}}(M_{\hcoY_{2}^{o}})\boxtimes\sO_{\hchow}(2L_{{\hchow}})\}$
as follows: \begin{equation}
\begin{aligned}0\;\to\; & \iota_{2}^{*}\Lrho_{2}^{*}\sS(-L_{\hcoY_{2}^{o}})\boxtimes\sO_{{\hchow}}\;\to\;\iota_{2}^{*}\sT_{2}^{*}\boxtimes g^{*}\sF^{*}\;\to\\
 & \qquad\sO_{\hcoY_{2}^{o}}\boxtimes g^{*}\ft{S}^{2}\sF^{*}\oplus\iota_{2}^{*}\Lrho_{2}^{*}\sQ^{*}(M_{{\hcoY}_{2}^{o}})\boxtimes\sO_{\hchow}(L_{\hchow})\;\to\\
 & \qquad\qquad\qquad\sI_{2}^{o}\otimes\{\sO_{\hcoY_{2}^{o}}(M_{\hcoY_{2}^{o}})\boxtimes\sO_{\hchow}(2L_{{\hchow}})\}\;\to\;0.\end{aligned}
\label{eqnarray:fin2}\end{equation}

\noindent \begin{prop} \label{prop:est} It holds that \begin{myitem2}

\item[\rm{(1)}]$\Lpi_{2*}\sO_{\Zpq_{2}}(N_{\Zpq_{2}})\simeq\Lrho_{2}^{*}\sS^{*}(L_{\hcoY_{2}})$. 

\item[\rm{(2)}]$\Lpi_{2*}(\mu_{2}^{*}{\eQ})\simeq\sT_{2}$ $($cf.~$(\ref{eq:T*}))$,
and 

\item[\rm{(3)}]$\Lpi_{2*}(\{\mu_{2}^{*}\ft{S}^{2}\eQ\}(-{N}_{\Zpq_{2}}))\simeq\Lrho_{2}^{*}\sQ(-M_{\hcoY_{2}}).$ 

\end{myitem2}\end{prop}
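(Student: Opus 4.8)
The plan is to derive each of (1)--(3) from the computation already carried out on the total transform $\Zpq^t_2$ in Lemma \ref{cla:est}, by controlling the difference between the two pushforwards along the divisor $F_{\rho}$. As observed just before Lemma \ref{cla:est}, for any sheaf $\sB$ on $\Zpq^u_2$ the restriction $\sB|_{\Zpq^t_2}\twoheadrightarrow \sB|_{\Zpq_2}$ induces a natural comparison map $\Lpi_{\Zpq^t_2 *}(\sB|_{\Zpq^t_2})\to \Lpi_{\Zpq_2 *}(\sB|_{\Zpq_2})$ that is an isomorphism over $\hcoY_2\setminus F_{\rho}$ and is injective once the source is locally free. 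Taking $\sB=\sO_{\Zpq^u_2}(N)$, $\bLrho^{*}\eQ$, and $\{\bLrho^{*}\ft{S}^2\eQ\}(-N)$ in turn, the sources are the locally free sheaves of Lemma \ref{cla:est} and the targets are the three sheaves in (\ref{eq:sheaves}), locally free by Lemma \ref{cla:locfree}. Thus in each case I obtain an injection of locally free sheaves which is an isomorphism away from $F_{\rho}$, so that the target is an elementary modification of the Lemma \ref{cla:est} sheaf supported on $F_{\rho}$; the whole task is to pin down this modification.

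For (1) and (3) the modification is detected by a first Chern class computation. Source and target are locally free of the same rank, and the cokernel of the injection is a torsion sheaf supported on the degeneracy divisor $D\subseteq F_{\rho}$ of the map, with $[D]=c_1(\text{target})-c_1(\text{source})$. I will evaluate this class by pushing the relevant Chern/Segre classes forward along the generically conic bundle $\Lpi_{\Zpq_2}$, using (\ref{eq:M}) and the relations of Lemma \ref{cla:c1c2} exactly as in the proof of Lemma \ref{cla:est}. For (1) this yields $[D]=0$, hence $D=0$ and the injection $\Lrho_{\hcoY_2}^{*}\sS(L_{\hcoY_2})\hookrightarrow \Lpi_{\Zpq_2 *}\sO_{\Zpq_2}(N_{\Zpq_2})$ is an isomorphism. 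For (3) it returns precisely the twist by $\sO(F_{\rho})$ carrying $\Lrho_{\hcoY_2}^{*}\sQ(-M_{\hcoY_2}-F_{\rho})$ to $\Lrho_{\hcoY_2}^{*}\sQ(-M_{\hcoY_2})$; matching with the tautological inclusion $\Lrho_{\hcoY_2}^{*}\sQ(-M_{\hcoY_2}-F_{\rho})\hookrightarrow \Lrho_{\hcoY_2}^{*}\sQ(-M_{\hcoY_2})$, whose cokernel is the rank-three sheaf $\Lrho_{\hcoY_2}^{*}\sQ(-M_{\hcoY_2})|_{F_{\rho}}$, identifies the target as claimed. A fibrewise dimension count over a point of $F_{\rho}$, where the fibre of $\Zpq_2\to\hcoY_2$ is a $\rho$-conic, serves as a cross-check.

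The remaining case (2) is the genuinely delicate one, because the target $\sT$ is itself defined by an elementary modification along $F_{\rho}$. Dualizing the defining sequence (\ref{eq:T*}) of $\sT^{*}$ (note $\Lpi_{\hcoY_2}^{*}\Omega(1)$ is locally free, so only $\sE xt^1$ of the torsion quotient survives) gives
\[
0\to\Lpi_{\hcoY_2}^{*}T(-1)\to\sT\to \sE xt^1\big((\Lrho_{\hcoY_2}|_{F_{\rho}})^{*}\sO_{\mP(T(-1))}(1),\,\sO_{\hcoY_2}\big)\to 0,
\]
whose cokernel is the line bundle $(\Lrho_{\hcoY_2}|_{F_{\rho}})^{*}\sO_{\mP(T(-1))}(1)\otimes\sO(F_{\rho})|_{F_{\rho}}$ on $F_{\rho}$. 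It therefore suffices to show that the cokernel of $\Lpi_{\hcoY_2}^{*}T(-1)\hookrightarrow \Lpi_{\Zpq_2 *}(\bLrho^{*}\eQ)$ is this same line bundle, and that the two modifications coincide. The plan is to restrict the computation to $F_{\rho}$: its preimage $\Lpi_{\Zpq_2}^{-1}(F_{\rho})=E_{\rho}$ is the exceptional divisor of the blow-up $\Zpq_2\to\Zpq_3$ along $\Zpq_{\rho}=\Lpi_{\Zpq_3}^{-1}(\Prt_{\rho})$, the fibres there are $\rho$-conics, and on $\Zpq_{\rho}$ the quotient is governed by Lemma \ref{cla:WZp}, $\eQ_{\Zpq_{\rho}}\simeq\sR_{\Zpq_{\rho}}\otimes\Lpi_{\Zpq_{\rho}}^{*}\sO_{\mP(T(-1))}(1)$, with $\sR_{\Zpq_{\rho}}$ sitting in the relative Euler sequence (\ref{eq:EulerS}). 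Pushing this description down produces exactly the factor $\sO_{\mP(T(-1))}(1)$ over $\Prt_{\rho}$, and transporting it through the blow-up identifies the cokernel with $(\Lrho_{\hcoY_2}|_{F_{\rho}})^{*}\sO_{\mP(T(-1))}(1)$, matching (\ref{eq:T*}) and giving $\Lpi_{\Zpq_2 *}(\bLrho^{*}\eQ)\simeq\sT$. The hard part is precisely this last identification: (1) and (3) are settled by Chern-class bookkeeping that only sees codimension-one behaviour, whereas in (2) one must use the fine geometry of the $\rho$-conic fibres, via (\ref{eq:EulerS}) and Lemma \ref{cla:WZp}, to determine not merely that the cokernel is a line bundle on $F_{\rho}$ but exactly which line bundle it is.
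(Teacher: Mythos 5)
The weight of your argument rests on Chern-class bookkeeping, and for part (3) that bookkeeping is insufficient in principle: a first Chern class does not identify an elementary modification. Granting the injection $\Lrho_{\hcoY_2}^{\;*}\sQ(-M_{\hcoY_2}-F_{\rho})\hookrightarrow \Lpi_{\Zpq_2 *}(\{\bLrho^{\;*}\ft{ S}^2 \eQ\}(-N_{\Zpq_2}))$ coming from Lemma \ref{cla:est}, the equality $[D]=c_1(\mathrm{target})-c_1(\mathrm{source})=3F_{\rho}$ only says the determinant of the comparison map vanishes to total order $3$ along $F_{\rho}$; it does not follow that the target is $\Lrho_{\hcoY_2}^{\;*}\sQ(-M_{\hcoY_2})$. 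Locally the modification could, for instance, twist a single rank-one summand by $3F_{\rho}$ rather than all three summands by $F_{\rho}$; both choices have the same degeneracy divisor, so ``matching with the tautological inclusion'' is a non sequitur. This is exactly why the paper does something finer for (3): it computes the full rank-three restriction of the pushforward to $F_{\rho}$ via base change to the $\mP^2$-bundle $\widebar{\Zpq^u_{\rho}}\to F_{\rho}$, relative duality, Lemma \ref{cla:calc}, and the Bott theorem \ref{thm:Bott}, finding it equal to $\Lrho_{F_{\rho}}^{\;*}\sQ(-M_{\hcoY_2}|_{F_{\rho}})$; it is this restriction, not a determinant, that pins the sheaf down. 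For (1) your logic ($[D]=0$ forces the injection of equal-rank bundles to be an isomorphism) is sound, but the input $c_1$ of the pushforward is not delivered by the tools you cite: Lemma \ref{cla:c1c2} and (\ref{eq:M}) have nothing to do with a Grothendieck-Riemann-Roch computation on the singular conic fibration, and the proof of Lemma \ref{cla:est} is a Bott computation, not a Chern-class one. (The paper settles (1) fiberwise: both sheaves have fiber $H^0(\mP^2_q,\sO_{\mP^2_q}(1))$ at every $y\in F_{\rho}$, so the injection is surjective on fibers, hence an isomorphism.)

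Your part (2) contains a concrete error and leaves the crucial step unargued. Dualizing (\ref{eq:T*}), the cokernel of $\Lpi_{\hcoY_2}^{\;*}T(-1)\hookrightarrow\sT$ is $\sE xt^1(i_*L,\sO_{\hcoY_2})\simeq i_*\bigl(L^{-1}\otimes \sO_{\hcoY_2}(F_{\rho})|_{F_{\rho}}\bigr)$, where $L=(\Lrho_{\hcoY_2}|_{F_{\rho}})^{*}\sO_{\mP(T(-1))}(1)$ and $i$ is the inclusion of $F_{\rho}$ --- not $i_*(L\otimes\sO(F_{\rho})|_{F_{\rho}})$ as you first assert, and not $i_*L$ as you claim to produce at the end; your two identifications are also inconsistent with each other. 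More fundamentally, even a correct matching of the cokernels would not finish the proof: an extension is not determined up to isomorphism by its sub and its quotient, so ``the two modifications coincide'' is the entire content of (2) and your sketch gives no mechanism for it. The paper avoids this trap by staying on the dual side inside one fixed bundle: the fiber map $V/V_1\to H^0(q,\bLrho^{\;*}\eQ|_q)$ factors through $V/V_2$ (Lemma \ref{cla:rhoplane}), which exhibits $\{\Lpi_{\Zpq_2 *}(\bLrho^{\;*}\eQ)\}^{*}$ and $\sT^{*}$ as subsheaves of the single bundle $\Lpi_{\hcoY_2}^{\;*}\Omega(1)$ with $\{\Lpi_{\Zpq_2 *}(\bLrho^{\;*}\eQ)\}^{*}\subset\sT^{*}$, and equality then follows by comparing the one-dimensional kernels on fibers. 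Your ingredients (Lemma \ref{cla:WZp}, the Euler sequence (\ref{eq:EulerS}), the $\rho$-conic geometry over $F_{\rho}$) are the right ones --- the paper uses them in Lemma \ref{cla:calc} --- but the argument that forces equality of sheaves, rather than abstract isomorphism of their pieces, is missing.
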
 

\begin{proof} From Lemma \ref{prop:injections}, we have a natural
injection $\Lpi_{\Zpq_{2}^{t}*}(\sB|_{\Zpq_{2}^{t}})\hookrightarrow\Lpi_{2'*}(\sB|_{\Zpq_{2}})$
for the sheaves $\sB$ described there. Note that the injection is
isomorphic outside $F_{\rho}$. Let $y$ be a point of $F_{\rho}$
and $q$ the fiber of $\Zpq_{2}\to\hcoY_{2}$ over $y$. Let $[V_{1}]$
be the image of $y$ on $\mP(V)$. We write $\mP_{q}^{2}=\mP(\bar{U})$,
where the plane $\mP_{q}^{2}$ in $\mathrm{G}(2,V/V_{1})$ is spanned
by the conic $q$, and $\bar{U}$ is a three-dimensional subspace
of $\Lwedge^{2}(V/V_{1})$. Since $q$ is a $\rho$-conic, there exists
a $2$-dimensional subspace $V_{2}$ such that $V_{1}\subset V_{2}$
and $\bar{U}=V/V_{2}\otimes V_{2}/V_{1}\simeq V/V_{2}$.

To see (1), we note the injection in Proposition \ref{prop:injections}
(i), \[
\Lrho_{2}^{*}\sS^{*}(L_{\hcoY_{2}})\simeq\Lpi_{2^{t}*}\sO_{\Zpq_{2}^{t}}(N_{\Zpq_{2}^{t}})\hookrightarrow\Lpi_{2'*}\sO_{\Zpq_{2}}(N_{\Zpq_{2}}).\]
 with the first isomorphism in Lemma \ref{cla:est} (i). This injection
must be an isomorphism since both the fibers of $\Lrho_{2}^{*}\sS(L_{\hcoY_{2}})$
and $\Lpi_{2'*}\sO_{\Zpq_{2}}(N_{\Zpq_{2}})$ at $y$ are isomorphic
to $H^{0}(\mP_{q}^{2},\sO_{\mP_{q}^{2}}(1))$.

We show (2). In the proof of Lemma \ref{cla:est} (ii), we have shown
that $\pr_{2*}(\Lpi_{B(2,4)}^{*}T(-1))\simeq\Lpi_{2^{t}*}(\Lrho_{2^{t}}^{*}{\eQ})$,
where $\Lpi_{B(2,4)}=\Lpi_{2}\circ\pr_{2}\colon B(2,4,\hcoY_{2})\to\mP(V)$
is the natural morphism. We now compute the map \begin{equation}
\pr_{2*}(\Lpi_{B(2,4)}T(-1))\otimes k(y)\simeq\Lpi_{2^{t}*}(\Lrho_{2^{t}}^{*}{\eQ})\otimes k(y)\to\Lpi_{2'*}(\mu_{2}^{*}{\eQ})\otimes k(y).\label{eq:(2)}\end{equation}
 Since $\pr_{2*}$ and $\Lpi_{2}$ are flat near $y$, we can compute
$\pr_{2*}(\Lpi_{B(2,4)}^{*}T(-1))\otimes k(y)\simeq H^{0}(\mathrm{G}(2,V/V_{1}),V/V_{1}\otimes\sO_{\mathrm{G}(2,V/V_{1})})\simeq V/V_{1}$
and $\Lpi_{2*}(\mu_{2}^{*}{\eQ})\otimes k(y)\simeq H^{0}(q,\mu_{2}^{*}{\eQ}|_{q})$
by the Grauert theorem. Then, by Lemma \ref{cla:rhoplane}, the map
$V/V_{1}\to H^{0}(q,\mu_{2}^{*}{\eQ}|_{q})$ factors through $H^{0}({\mP_{q}^{2}},T_{\mP_{q}^{2}}(-1))\simeq V/V_{2}$.
Therefore the cokernel of the dual of the map (\ref{eq:(2)}) is isomorphic
to $(V_{2}/V_{1})^{*}$, which can be identified with the fiber of
$\Lrho_{2}\vert_{F_{\rho}}^{*}\sO_{\mP(T(-1))}(1)$ at $y$ (where
$\Lrho_{2}\vert_{F_{\rho}}:F_{\rho}\to\Prt_{\rho}\simeq\mP(T(-1))$
). Hence it holds that $\{\Lpi_{2'*}(\mu_{2}^{*}{\eQ})\}^{*}\subset\sT_{2}^{*}$
by (\ref{eq:T*}). Since both the kernel of \[
(\{\Lpi_{2'*}(\mu_{2}^{*}{\eQ})\}^{*})\otimes k(y)\to(\{\Lpi_{2^{t}*}(\Lrho_{2^{t}}^{*}{\eQ})\}^{*})\otimes k(y)\]
 and that of \[
(\sT_{2}^{*})\otimes k(y)\to(\{\Lpi_{2^{t}*}(\Lrho_{2^{t}}^{*}{\eQ})\}^{*})\otimes k(y)\]
 are one-dimensional, the images of them must coincide. Combined with
the property $\{\Lpi_{2'*}(\mu_{2}^{*}{\eQ})\}^{*}\subset\sT_{2}^{*}$,
we obtain $\{\Lpi_{2'*}(\mu_{2}^{*}{\eQ})\}^{*}=\sT_{2}^{*}$ as claimed. 

Finally we show (3). Let $E_{\rho}^{u}:=\mP(\Lrho_{2}^{*}\sS|_{F_{\rho}})$
which is the restrictions of $\Zpq_{2}^{u}$ over ${F_{\rho}}$, and
recall our definition $E_{\rho}=\Lpi_{2'}^{-1}(F_{\rho})$ in Subsection
\ref{Z2Y2}. Note that $E_{\rho}^{u}$ is the exceptional divisor
of the blow-up $\Zpq_{2}^{u}\to\Zpq_{3}^{u}$ discussed after Definition
\ref{def:Z2t}. We summarize the geometry of blow-up and morphisms
in the following diagram: \begin{equation}
\begin{matrix}\xydiagIIIp\end{matrix}\label{eq:diag3'}\end{equation}
 Let $\eQ_{E_{\rho}^{u}}$ be the pull-back of $\eQ_{\rho}$ (introduced
in Proposition \ref{cla:WZp}) by $E_{\rho}^{u}\to\Zpq_{\rho}^{u}=\Zpq_{\rho}$.
Then, by the proof of Lemma \ref{cla:locfree}, we have \begin{equation}
\Lpi_{2'*}\big(\{\mu_{2}^{*}\ft{S}^{2}\eQ\}(-{N}_{\Zpq_{2}})|_{E_{\rho}}\big)\simeq R^{1}\Lpi_{2^{u}*}\big(\ft{S}^{2}\eQ_{E_{\rho}^{u}}(-N_{\Zpq_{2}^{u}}\vert_{E_{\rho}^{u}}-\Zpq_{2}|_{E_{\rho}^{u}})\big).\label{eq:finaleq}\end{equation}
In Lemma \ref{cla:calc} below, we rewrite the r.h.s.~of (\ref{eq:finaleq})
in terms of the pull-back $\sR_{E_{\rho}^{u}}$ of $\sR_{\rho}$ (introduced
in (\ref{eq:EulerS})) to $E_{\rho}^{u}$ and some divisors on $F_{\rho}$.
The first factor $R^{1}\Lpi_{2^{u}*}(\ft{S}^{2}\sR_{E_{\rho}^{u}}(-3\det\sR_{E_{\rho}^{u}}))$
in (\ref{eq:onemore}) can be evaluated by the Bott theorem \ref{thm:Bott}
applied to the projective bundle $E_{\rho}^{u}\to F_{\rho}$, and
turns out to be isomorphic to \[
\Lrho_{F_{\rho}}^{\;*}(\sS^{*}\otimes\det\sS^{*}|_{\Prt_{\rho}})\simeq\Lrho_{F_{\rho}}^{\;*}(\sQ\otimes\det\sQ|_{\Prt_{\rho}})(-4L_{F_{\rho}}),\]
where we use Proposition \ref{prop:div-relations} (1) and (3). Therefore,
we finally obtain$\Lrho_{F_{\rho}}^{\;*}\sQ(-M_{\hcoY_{2}}|_{F_{\rho}})$
for the r.h.s. of (\ref{eq:finaleq}). \end{proof}

\begin{lem} \label{cla:calc} Let $\sR_{E_{\rho}^{u}}$ be the pull-back
of the sheaf $\sR_{\rho}=T_{\Zpq_{\rho}/\Prt_{\rho}}\otimes\sO_{\Zpq_{\rho}}(-1)$
defined in $(\ref{eq:EulerS})$. The r.h.s. of $(\ref{eq:finaleq})$
is isomorphic to \begin{equation}
R^{1}\Lpi_{2^{u}*}(\ft{S}^{2}\sR_{E_{\rho}^{u}}(-3\det\sR_{E_{\rho}^{u}}))\otimes\sO_{F_{\rho}}(-\Lrho_{F_{\rho}}^{\;*}\det\sQ|_{\Prt_{\rho}}+4L_{F_{\rho}}-M_{\hcoY_{2}}|_{F_{\rho}}).\label{eq:onemore}\end{equation}
 \end{lem} 

\begin{proof} Let us first note the following relations which follows
from Lemma \ref{cla:WZp}, \[
\ft{S}^{2}\eQ_{\rho}\simeq\ft{S}^{2}\sR_{\rho}\otimes\Lpi_{\rho}^{*}\sO_{\mP(T(-1))}(2)\simeq\ft{S}^{2}\sR_{\rho}(\Lpi_{\rho}^{*}(\det\sQ|_{\Prt_{\rho}})-2L_{\Zpq_{\rho}}),\]
where for the the second isomorphism we use Proposition \ref{prop:div-relations}
(4) and the equality $H_{\Prt_{\rho}}=\sO_{\mP(T(-1))}(1)$. Then,
pulling this back to $E_{\rho}^{u}$, we obtain \begin{equation}
\ft{S}^{2}\eQ_{E_{\rho}^{u}}\simeq\ft{S}^{2}\sR_{E_{\rho}^{u}}(\Lpi_{2^{u}\vert_{\rho}}^{*}\Lrho_{F_{\rho}}^{\;*}(\det\sQ|_{\Prt_{\rho}})-2L_{E_{\rho}^{u}}).\label{eq:pull2}\end{equation}
Second, we take the determinants of (\ref{eq:EulerS}) to have \begin{equation}
H_{\mP(\sS|_{\Prt_{\rho}})}=\det\sR_{\rho}+\Lpi_{\rho}^{*}(\det\sS^{*}|_{\Prt_{\rho}})=\det\sR_{\rho}+\Lpi_{\rho}^{*}(\det\sQ|_{\Prt_{\rho}})-3L_{\Zpq_{\rho}^{u}},\label{eq:detEulerS}\end{equation}
 where Proposition \ref{prop:div-relations} (1) is used for the second
equality. Therefore, using Proposition \ref{cla:cano} (1), we obtain
\[
{N}_{\Zpq_{3}^{u}}|_{{\Zpq}_{\rho}^{u}}=\det\sR_{\rho}+\Lpi_{3^{u}\vert_{\rho}}^{*}(\det\sQ|_{\Prt_{\rho}})-2L_{\Zpq_{\rho}^{u}},\]
 and then pulling this back to $E_{\rho}^{u}$, we have \begin{equation}
{N}_{\Zpq_{2}^{u}}|_{E_{\rho}^{u}}=\det\sR_{E_{\rho}^{u}}+\Lpi_{2^{u}\vert_{\rho}}^{*}(\Lrho_{F_{\rho}}^{\;*}\det\sQ|_{\Prt_{\rho}})-2L_{E_{\rho}^{u}}.\label{eq:pull1}\end{equation}
Now, from (\ref{eq:pull2}) and (\ref{eq:pull1}), we have \begin{equation}
\ft{S}^{2}\eQ_{E_{\rho}^{u}}(-N_{\Zpq_{2}^{u}}|_{E_{\rho}^{u}})\simeq\ft{S}^{2}\sR_{E_{\rho}^{u}}(-\det\sR_{E_{\rho}^{u}}).\label{eq:pull3}\end{equation}

Let us compute the class of the divisor $\Zpq_{2}|_{E_{\rho}^{u}}(=E_{\rho})$
in $E_{\rho}^{u}$. We see that $\Zpq_{2}\in|2H_{\mP(\Lrho_{2}^{*}\sS)}+L_{\Zpq_{2}^{u}}-\Lpi_{2^{u}}^{*}F_{\rho}|$
since $\Zpq_{2}$ is the strict transform of $\Zpq_{3}$ by the blow-up
$\Zpq_{2}^{u}\to\Zpq_{3}^{u}\simeq\mP(\sS)$ and $\Zpq_{3}\in|2H_{\mP(\sS)}+L_{\Zpq_{3}^{u}}|$
by the proof of Proposition \ref{cla:cano}. Using Proposition \ref{prop:div-relations}
(5), we now have \[
2H_{\mP(\Lrho_{2}^{*}\sS)}+L_{\Zpq_{2}^{u}}-\Lpi_{2^{u}}^{*}F_{\rho}=2H_{\mP(\Lrho_{2}^{*}\sS)}+2L_{\Zpq_{2}^{u}}-\Lpi_{2^{u}}^{*}(\Lrho_{F_{\rho}}^{\;*}\det\sQ|_{\Prt_{\rho}}-M_{\hcoY_{2}}|_{F_{\rho}}).\]
 Therefore, by (\ref{eq:detEulerS}), we obtain \begin{equation}
\Zpq_{2}\vert_{E_{\rho}^{u}}\in|2\det\sR_{E_{\rho}^{u}}+\Lpi_{2^{u}}^{*}(\Lrho_{F_{\rho}}^{\;*}\det\sQ|_{\Prt_{\rho}})-4L_{E_{\rho}^{u}}+\Lpi_{2^{u}}^{*}(M_{\hcoY_{2}}|_{F_{\rho}})|.\label{eq:pull4}\end{equation}
 From (\ref{eq:pull3}) and (\ref{eq:pull4}), we obtain the claimed
form for $(\ref{eq:finaleq})$. \end{proof}

$\;$

\subsection{Step 3: A locally free resolution of $\sI^{o}$ on $\widetilde{\hcoY}^{o}\times{\hchow}$}

\label{subsection:LocY}~

Set $\tLrho\,'_{2}:=\tLrho_{2}^{o}\times{\mathrm{id}}$. We calculate
the pushforward $\tLrho\,'_{2*}$ of the exact sequence (\ref{eqnarray:fin2}).
To do this, we split (\ref{eqnarray:fin2}) as follows: \begin{equation}
0\to\iota_{2}^{*}\Lrho_{2}^{*}\sS(-L_{\hcoY_{2}^{o}})\boxtimes\sO_{{\hchow}}\to\iota_{2}^{*}\sT_{2}^{*}\boxtimes g^{*}\sF^{*}\to\sC\to0\label{eq:sp1'}\end{equation}
 and \begin{equation}
\begin{aligned}0\;\to\;\sC\;\to\;\sO_{\hcoY_{2}^{o}}\boxtimes & g^{*}\ft{S}^{2}\sF^{*}\oplus\iota_{2}^{*}\Lrho_{2}^{*}\sQ^{*}(M_{\hcoY_{2}^{o}})\boxtimes\sO_{\hchow}(L_{\hchow})\;\to\\
 & \;\;\quad\sI_{2}^{o}\otimes\{\sO_{\hcoY_{2}^{o}}(M_{\hcoY_{2}^{o}})\boxtimes\sO_{\hchow}(2L_{{\hchow}})\}\;\to\;0.\end{aligned}
\label{eq:sp2'}\end{equation}
 Since $\Lrho_{2}^{*}\sS(-L_{\hcoY_{2}})$, $\sT_{2}^{*}$ and $\Lrho_{2}^{*}\sQ^{*}(M_{\hcoY_{2}})$
are the pull-backs of locally free sheaves $\widetilde{\sS}_{L}$,
$\widetilde{\sT}^{*}$, and $\widetilde{\sQ}^{*}(M_{{\widetilde{\hcoY}}})$
on $\widetilde{\hcoY}$, the higher direct images of $\iota_{2}^{*}\Lrho_{2}^{*}\sS(-L_{\hcoY_{2}})\boxtimes\sO_{\hchow}$
and $\iota_{2}^{*}\sT_{2}^{*}\boxtimes g^{*}\sF$ vanish. Therefore
the pushforward of (\ref{eq:sp1'}) is still exact and the higher
direct images of $\sC$ vanish. Then the pushforward of (\ref{eq:sp1'})
is also exact. Therefore we obtain the following exact sequence on
$\widetilde{\hcoY}^{o}\times\hchow$: \begin{equation}
\begin{aligned}0\to\tilde{\iota}^{*}\widetilde{\sS}_{L}\boxtimes\sO_{{\hchow}}\to\tilde{\iota}^{*}\widetilde{\sT}^{*}\boxtimes g^{*}\sF^{*}\to & \sO_{\widetilde{\hcoY}^{o}}\boxtimes g^{*}\ft{S}^{2}\sF^{*}\oplus\tilde{\iota}^{*}\widetilde{\sQ}^{*}(M_{\widetilde{\hcoY}^{o}})\boxtimes\sO_{\hchow}(L_{\hchow})\\
 & \to\sI^{o}\otimes\{\sO_{\widetilde{\hcoY}^{o}}(M_{\widetilde{\hcoY}^{o}})\boxtimes\sO_{\hchow}(2L_{{\hchow}})\}\to0,\end{aligned}
\label{eqnarray:fin3}\end{equation}
 where $\sI^{o}:=\tLrho\,'_{2*}\sI_{2}^{o}$. 

\begin{lem} \label{cla:CM2} Let $\Delta^{o}$ be the closed subscheme
of $\widetilde{\hcoY}^{o}\times\hchow$ defined by $\sI_{2}^{o}$.
Define $\tLrho_{\Delta_{\hcoY_{2}}^{o}}:=\tLrho\,'_{2}|_{\Delta_{\hcoY_{2}}^{o}}$,
then $\tLrho_{\Delta_{\hcoY_{2}}^{o}*}\sO_{\Delta_{\hcoY_{2}}^{o}}=\sO_{\Delta^{o}}$
and $R^{1}\tLrho_{\Delta_{\hcoY_{2}}^{o}*}\sO_{{\Delta_{2}}^{o}}=0$.
\end{lem}

\begin{proof} By (\ref{eq:sp1'}) and (\ref{eq:sp2'}), we have $R^{1}\tLrho\,'_{2*}{\sI_{2}}^{o}=0$
since $\Lrho_{2}^{*}\sS(-L_{\hcoY_{2}})$, $\sT^{*}$ and $\Lrho_{2}^{*}\sQ^{*}(M_{{\hcoY_{2}}})$
are the pull-backs of locally free sheaves on $\widetilde{\hcoY}$.
Taking the higher direct image of the exact sequence $0\to{\sI_{2}}^{o}\to\sO_{\hcoY_{2}^{o}\times\hchow}\to\sO_{\Delta_{\hcoY_{2}}^{o}}\to0$,
we obtain the exact sequence $0\to{\sI}^{o}\to\sO_{\widetilde{\hcoY}^{o}\times\hchow}\to\tLrho_{\Delta_{\hcoY_{2}}^{o}*}\sO_{\Delta_{\hcoY_{2}}^{o}}\to0$
and also $R^{1}\tLrho_{\Delta_{\hcoY_{2}}^{o}*}\sO_{\Delta_{\hcoY_{2}}^{o}}=0$
since $R^{1}\tLrho\,'_{2*}\sI_{2}^{o}=0$ and $R^{1}\tLrho\,'_{2*}\sO_{\hcoY_{2}^{o}\times\hchow}=0$.
Hence $\tLrho_{\Delta_{\hcoY_{2}}^{o}*}\sO_{\Delta_{\hcoY_{2}}^{o}}=\sO_{\Delta^{o}}$.

\end{proof}

$\;$

\subsection{Step 4: Ideal sheaf $\sI$ and its resolution (\ref{eqn:fin5}) \label{sub:Step-4}}

Let $\sI:=\tilde{\iota}_{*}\sI^{o}$ and set $\Gamma_{\widetilde{\hcoY}}:=\widetilde{\hcoY}\times{\hchow}\setminus\widetilde{\hcoY}^{o}\times{\hchow}$.
We note that $\codim\Gamma_{\widetilde{\hcoY}}=6$ since the codimension
of $\sP_{\sigma}$ in $\widetilde{\hcoY}$ is $6$. Define a sheaf
$\sA$ on $\widetilde{\hcoY}^{o}$ by \[
0\to\tilde{\iota}^{*}\widetilde{\sS}_{L}\boxtimes\sO_{{\hchow}}\to\tilde{\iota}^{*}\widetilde{\sT}^{*}\boxtimes g^{*}\sF^{*}\to\sA\to0,\]
then, we have the following exact sequence \begin{equation}
\begin{aligned} & 0\to\widetilde{\sS}_{L}\boxtimes\sO_{{\hchow}}\to\widetilde{\sT}^{*}\boxtimes g^{*}\sF^{*}\to\tilde{\iota}_{*}\sA\to R^{1}\tilde{\iota}_{*}(\tilde{\iota}^{*}\widetilde{\sS}_{L}\boxtimes\sO_{{\hchow}})\\
 & \to R^{1}\tilde{\iota}_{*}(\tilde{\iota}^{*}\widetilde{\sT}^{*}\boxtimes g^{*}\sF^{*})\to R^{1}\tilde{\iota}_{*}\sA\to R^{2}\tilde{\iota}_{*}(\tilde{\iota}^{*}\widetilde{\sS}_{L}\boxtimes\sO_{{\hchow}}),\end{aligned}
\label{eqnarray:split1}\end{equation}
 and also from (\ref{eqnarray:fin3}), \begin{equation}
\begin{aligned}0\to\tilde{\iota}_{*}\sA\to & \sO_{\widetilde{\hcoY}}\boxtimes g^{*}\ft{S}^{2}\sF^{*}\oplus\widetilde{\sQ}^{*}(M_{\widetilde{\hcoY}})\boxtimes\sO_{\hchow}(L_{\hchow})\to\\
 & \sI\otimes\{\sO_{\widetilde{\hcoY}}(M_{\widetilde{\hcoY}})\boxtimes\sO_{\hchow}(2L_{{\hchow}})\}\to R^{1}\tilde{\iota}_{*}\sA,\end{aligned}
\label{eqnarray:split2}\end{equation}
 where we note that $\tilde{\iota}_{*}(\sE|_{{\widetilde{\hcoY}}^{o}\times{\hchow}})=\sE$
for a locally free sheaf $\sE$ on ${\widetilde{\hcoY}}\times{\hchow}$
since $\codim\Gamma_{\widetilde{\hcoY}}\geq2$, and $\iota_{\tilde{\iota}_{*}}(\sI^{o}\otimes\{\sO_{\widetilde{\hcoY}^{o}}(M_{\widetilde{\hcoY}^{o}})\boxtimes\sO_{\hchow}(2L_{{\hchow}})\})=\sI\otimes\{\sO_{\widetilde{\hcoY}}(M_{\widetilde{\hcoY}})\boxtimes\sO_{\hchow}(2L_{{\hchow}})\}$
by definition. For a sheaf $\sE$ on ${\widetilde{\hcoY}}\times{\hchow}$,
it holds that $R^{i}\tilde{\iota}_{*}(\sE|_{{\widetilde{\hcoY}^{o}}\times{\hchow}})=\sH_{\Gamma_{\widetilde{\hcoY}}}^{i+1}(\sE)$
for $i>0$ by \cite[p.9, Corollary 1.9]{H}. Moreover, if $\sE$ is
locally free, then ${\sH}_{\Gamma_{\widetilde{\hcoY}}}^{i+1}(\sE)=0$
for $i+1<4$ by \cite[p.44, Theorem 3.8]{H} since $\codim\Gamma_{\widetilde{\hcoY}}=6$.
Therefore, \[
R^{1}\tilde{\iota}_{*}(\tilde{\iota}^{*}\widetilde{\sS}_{L}\boxtimes\sO_{\hchow})\simeq R^{1}\tilde{\iota}_{*}(\tilde{\iota}^{*}\widetilde{\sT}^{*}\boxtimes g^{*}\sF^{*})\simeq R^{2}\tilde{\iota}_{*}(\tilde{\iota}^{*}\widetilde{\sS}_{L}\boxtimes\sO_{{\hchow}})=0.\]
 From (\ref{eqnarray:split1}), we see that $R^{1}\tilde{\iota}_{*}\sA=0$.
Consequently, we obtain the claim that the sequence (\ref{eqn:fin5})
in Theorem \ref{thm:resolY} is exact on $\widetilde{\hcoY}\times\hchow$
by (\ref{eqnarray:split1}) and (\ref{eqnarray:split2}). \hfill
$\square$

$\;$

\subsection{$\Delta$ is normal and Cohen-Macaulay}

\label{subsection:CM}

Let $\Delta\subset\widetilde{\hcoY}\times\hchow$ be the variety determined
by the ideal sheaf $\sI$. We show that $\Delta$ is normal and is
Cohen-Macaulay to complete our proof of Theorem \ref{thm:resolY}.

We see that $\Delta_{2}$ is smooth by Proposition \ref{cla:delta2},
and also $\Delta_{2}\to\Delta_{\hcoY_{2}}$ is birational since $\Delta_{3}\to\Delta_{\hcoY_{3}}$
is birational by Proposition \ref{cla:descrdel3} and so are $\Delta_{2}\to\Delta_{3}$
and $\Delta_{\hcoY_{2}}\to\Delta_{\hcoY_{3}}.$ Recall that we have
set $\cLpi_{\Delta_{2}^{o}}:=\cLpi_{2'}|_{\Delta_{2}^{o}}$ and $\tLrho_{\Delta_{\hcoY_{2}}^{o}}:=\tLrho\,'_{2}|_{\Delta_{\hcoY_{2}}^{o}}$.
We check the claimed properties above separately on $\widetilde{\hcoY}^{o}\times\hchow$
and $\Prt_{\sigma}\times\hchow$.

Let us first consider the properties on $\widetilde{\hcoY}^{o}\times\hchow$.
We show that \begin{equation}
\text{\ensuremath{R^{i}(\tLrho_{\Delta_{\hcoY_{2}}^{o}}\circ\cLpi_{\Delta_{2}^{o}})_{*}\sO_{\Delta_{2}^{o}}=0}\; for \ensuremath{i>0}\;\text{ and }\;(\ensuremath{\tLrho_{\Delta_{\hcoY_{2}}^{o}}\circ\cLpi_{\Delta_{2}^{o}})_{*}\sO_{\Delta_{2}^{o}}\simeq\sO_{\Delta^{o}}}.}\label{eq:normality}\end{equation}
 The latter shows that $\Delta^{o}=\tilde{\iota}^{*}\Delta$ is normal
since $\Delta_{2}^{o}$ is smooth, and the former show that $\Delta^{o}$
has only rational singularities, and then is Cohen-Macaulay. By using
the Leray spectral sequence, the relations in (\ref{eq:normality})
follow from Lemma \ref{cla:CM1} for $\cLpi_{\Delta_{2}^{o}}$ and
Lemma \ref{cla:CM2} for $\tLrho_{\Delta_{\hcoY_{2}}^{o}}$.

Now we consider the properties on $\Prt_{\sigma}\times\hchow$. By
the result above, we see that $\Delta$ is regular in codimension
one since the codimension of $\Delta\cap(\Prt_{\sigma}\times\hchow)$
is greater than two. Therefore it suffices to show that $\Delta$
is Cohen-Macaulay at any point of $\Delta\cap(\Prt_{\sigma}\times\hchow)$.
This follows from taking the local cohomology sequence of the locally
free resolution (\ref{eqn:fin5}) since $\widetilde{\hcoY}\times\hchow$
is smooth and the length of the locally free part of (\ref{eqn:fin5})
is three.

\begin{rem} Since we have shown $\Delta$ is reduced, $\Delta$ is
the closure of $\Delta^{o}$. \end{rem}

$\;$\vspace{1cm}

\section{The universal family of hyperplane sections}

\label{subsection:Univ} Let $\Vs\subset\widetilde{\hcoY}\times{\hchow}$
be the pull-back of the universal family of hyperplane sections in
$\mP({\ft S}^{2}V^{*})\times\mP({\ft S}^{2}V)$. We can regard $\Vs$
as the family of the pull-backs of hyperplanes in $\chow$ parameterized
by $\widetilde{\hcoY}$, and also as the family of the pull-backs
of hyperplanes in $\Hes$ parameterized by ${\hchow}$. We simply
say that $\Vs$ is the family of hyperplane sections of $\widetilde{\hcoY}$
and $\hchow$. Note that the fiber of $\Vs$ over a point $x\in X\subset\hchow$
is the pull-back of the hyperplane section $w_{\bm{x}\bm{y}}^{\perp}\cap\Hes$
of $\Hes$, where $x=w_{\bm{x}\bm{y}}$ as a point of ${\ft S}^{2}\mP(V)$.

In Proposition \ref{cla:Vs}, we show that $\Delta$ is a closed subscheme
of $\Vs$. Moreover, we give a locally free resolution of the ideal
sheaf of $\Delta$ in $\Vs$ as an $\sO_{\widetilde{\hcoY}\times\hchow}$-module.
It should be noted that the locally free sheaves in this resolution
are exactly those used to construct the (dual) Lefschetz collections
in $\sD^{b}(\widetilde{\hcoY})$ and $\sD^{b}(\hchow)$ in \cite{HoTa3}.
In Proposition \ref{cla:hyp}, we show that any hyperplane section
of $\widetilde{\hcoY}$ corresponding to a point of $\hchow$ has
only canonical singularities. Although technical, this is important
to apply the Kawamata-Viehweg vanishing theorem in our proof of the
derived equivalence (Lemma \ref{cla:vanishing}).

\vspace{0.3cm}
 Let us begin with a preliminary discussion. The ideal sheaf $\sI_{\Vs}$
of $\Vs$ on $\widetilde{\hcoY}\times{\hchow}$ is isomorphic to $\sO_{\widetilde{\hcoY}}(-M_{\widetilde{\hcoY}})\boxtimes\sO_{{\hchow}}(-H_{{\hchow}})$.
Note that $\Vs$ has a natural $\mathrm{SL}(V)$-action. Therefore
the injection $\sO_{\widetilde{\hcoY}}(-M_{\widetilde{\hcoY}})\boxtimes\sO_{{\hchow}}(-H_{{\hchow}})\to\sO_{\widetilde{\hcoY}\times{\hchow}}$
is $\mathrm{SL}(V)$-equivariant. We can apply the construction as
in Subsection \ref{subsection:State} to this map since $\Hom(\sO_{\widetilde{\hcoY}}(-M_{\widetilde{\hcoY}}),\sO_{\widetilde{\hcoY}}))\simeq\ft{S}^{2}V$
and $\Hom(\sO_{{\hchow}}(-H_{{\hchow}}),\sO_{{\hchow}})\simeq\ft{S}^{2}V^{*}$.
Since the above injection is $\mathrm{SL}(V)$-equivariant, and $\ft{S}^{2}V\otimes\ft{S}^{2}V^{*}$
contains a unique one-dimensional representation, which is generated
by the identity element, we see that the above injection is induced
from the identity element as in Subsection \ref{subsection:State}.

\begin{prop} \label{cla:Vs} $\sI$ contains $\sI_{\Vs}$, equivalently,
the subvariety $\Delta$ is contained in $\Vs$. Set $\sI_{\Delta/\Vs}:=\sI/\sI_{\Vs}$,
the ideal sheaf of $\Delta$ in $\Vs$. Denote by $\iota_{\Vs}$ the
closed immersion $\Vs\hookrightarrow\widetilde{\hcoY}\times\hchow$.
Then $\iota_{\Vs*}\sI_{\Delta/\Vs}$ has the following locally free
resolution on $\widetilde{\hcoY}\times\hchow:$ \begin{equation}
\begin{aligned}0\;\to\; & \widetilde{\sS}_{L}\boxtimes\sO_{{\hchow}}\;\to\;\widetilde{\sT}^{*}\boxtimes g^{*}\sF^{*}\;\to\;\\
 & \sO_{\widetilde{\hcoY}}\boxtimes\{g^{*}\ft{S}^{2}\sF^{*}/\sO_{\hchow}(-H_{\hchow}+2L_{\hchow})\}\oplus\widetilde{\sQ}^{*}(M_{\widetilde{\hcoY}})\boxtimes\sO_{\hchow}(L_{\hchow})\;\to\\
 & \qquad\qquad\qquad\qquad\quad\;\;\iota_{{\Vs}*}\sI_{\Delta/\Vs}\otimes\{\sO_{\widetilde{\hcoY}}(M_{\widetilde{\hcoY}})\boxtimes\sO_{\hchow}(2L_{{\hchow}})\}\;\to\;0.\end{aligned}
\label{eqnarray:onVs}\end{equation}
 where the inclusion $\sO_{\hchow}(-H_{\hchow}+2L_{\hchow})\subset g^{*}\ft{S}^{2}\sF^{*}$
is defined by the Euler sequence $0\to\sO_{\mP(\ft{S}^{2}\sF)}(-1)\to g^{*}\ft{S}^{2}\sF\to T_{\mP(\ft{S}^{2}\sF)/\mathrm{G}(2,V)}(-1)\to0$
for $\hchow=\mP({\ft S}^{2}\sF)$ and the relation $\eF=\eF^{*}(L_{\hchow})$
with $L_{\hchow}=g^{*}\sO_{\rG(2,V)}(1)$. \end{prop}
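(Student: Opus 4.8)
The plan is to derive both assertions from the resolution (\ref{eqn:fin5}) of $\sI$ in Theorem \ref{thm:resolY}, exploiting the $\SL(V)$-equivariance of its maps.

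\emph{Step 1: the inclusion $\Delta\subset\Vs$.} I would first check this set-theoretically on a dense open locus. By construction $\Delta$ arises from the pull-back of $\Delta_0=\mathrm{F}(2,3,V)$, so over the locus where it parametrizes honest flag data a point of $\Delta$ is a pair $(y,x)$ with $y$ lying over $([Q],q)\in\hcoY$ and $x$ lying over $l_x\in\mathrm{G}(2,V)$, admitting a plane $\mP(V_3)$ with $l_x\subset\mP(V_3)\subset Q$ and $[V_3]\in q$. In particular $l_x\subset Q$; writing $x=w_{\bm{x}\bm{y}}$ this gives ${}^t\bm{x}A_Q\bm{y}=0$, i.e. $[Q]\in w_{\bm{x}\bm{y}}^{\perp}$, which is precisely the condition $(y,x)\in\Vs$. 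Since $\Delta$ is reduced (Subsection \ref{subsection:CM}) and $\Vs$ is closed, containment on a dense open set forces $\Delta\subset\Vs$, equivalently $\sI_{\Vs}\subset\sI$.

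\emph{Step 2: the resolution as a quotient complex.} Write $\sL:=\sO_{\widetilde{\hcoY}}\boxtimes\sO_{\hchow}(-H_{\hchow}+2L_{\hchow})$ and $\Theta:=\sO_{\widetilde{\hcoY}}(M_{\widetilde{\hcoY}})\boxtimes\sO_{\hchow}(2L_{\hchow})$. Then $\sI_{\Vs}\otimes\Theta\simeq\sL$, and the stated Euler-sequence inclusion realizes $\sL$ as the tautological sub-line-bundle of the term $\sE_{1a}\boxtimes\mathcal{F}'_{1a}=\sO_{\widetilde{\hcoY}}\boxtimes g^*\ft{S}^2\sF$ of (\ref{eqn:fin5}). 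Placing $\sL$ in homological degree $-1$ (inside the first summand) and $\sI_{\Vs}\otimes\Theta$ in degree $0$, these form a two-term subcomplex $D^\bullet$ of the resolution $C^\bullet$ of $\sI\otimes\Theta$, provided the final map of $C^\bullet$ carries $\sL$ into $\sI_{\Vs}\otimes\Theta$ (Step 3). Granting this, $D^\bullet=[\sL\xrightarrow{\sim}\sI_{\Vs}\otimes\Theta]$ is acyclic; as $C^\bullet$ is acyclic, so is $C^\bullet/D^\bullet$, which is therefore again a resolution. The quotient alters only two terms: $\sE_{1a}\boxtimes\mathcal{F}'_{1a}$ becomes $\sO_{\widetilde{\hcoY}}\boxtimes\{g^*\ft{S}^2\sF/\sO_{\hchow}(-H_{\hchow}+2L_{\hchow})\}$ and the target becomes $(\sI/\sI_{\Vs})\otimes\Theta=\iota_{\Vs*}\sI_{\Delta/\Vs}\otimes\Theta$, yielding precisely (\ref{eqnarray:onVs}).

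\emph{Step 3: the commutativity, and the main obstacle.} The delicate point is that the last map $(d)$ of (\ref{eqn:fin5}) restricts on $\sL$ to a nonzero multiple of the inclusion $\sI_{\Vs}\otimes\Theta\hookrightarrow\sI\otimes\Theta$. I would argue this by invariance: the composite $\sL\hookrightarrow\sE_{1a}\boxtimes\mathcal{F}'_{1a}\xrightarrow{(d)}\sI\otimes\Theta\hookrightarrow\Theta$ is an $\SL(V)$-invariant section of $\sL^\vee\otimes\Theta\simeq\sO_{\widetilde{\hcoY}}(M_{\widetilde{\hcoY}})\boxtimes\sO_{\hchow}(H_{\hchow})$, hence an invariant element of $H^0(\sO(M_{\widetilde{\hcoY}}))\otimes H^0(\sO(H_{\hchow}))\simeq\ft{S}^2V\otimes\ft{S}^2V^*$. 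Since $\ft{S}^2V$ is $\SL(V)$-irreducible, this invariant space is one-dimensional, spanned by the canonical pairing section $s_{\Vs}$ cutting out $\Vs$ (the very section discussed before the statement). Thus the composite equals $c\,s_{\Vs}$; unwinding the construction of $(d)$ as the product of the evaluation maps $\ft{S}^2V\otimes\sO_{\widetilde{\hcoY}}\to\sO(M_{\widetilde{\hcoY}})$ and $\ft{S}^2V^*\otimes g^*\ft{S}^2\sF\to\sO(2L_{\hchow})$, precomposed with the tautological section of $\sL$ corresponding to the Chow point $w_{\bm{x}\bm{y}}$, its value at $(y,x)$ is the pairing $\langle A_Q,w_{\bm{x}\bm{y}}\rangle={}^t\bm{x}A_Q\bm{y}$, which is not identically zero, so $c\neq0$. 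This is the step I expect to demand the most care, since it is where the equivariant bookkeeping must be matched with the explicit fibrewise pairing; it simultaneously reconfirms $\sI_{\Vs}\subset\sI$ and supplies the commutativity needed in Step 2.
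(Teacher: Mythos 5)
Your proposal is correct, and its engine is the same as the paper's: the paper also realizes $\sI_{\Vs}$ (after untwisting) as the Euler-type line subbundle of the term $\sO_{\widetilde{\hcoY}}\boxtimes g^*\ft{S}^2\sF$ of (\ref{eqn:fin5}) --- it phrases this as the map induced by the dual of the natural surjection $\ft{S}^2(g^*\sF)\to\sO_{\hchow}(H_{\hchow})$ --- composes with the last map of (\ref{eqn:fin5}) into $\sI\hookrightarrow\sO_{\widetilde{\hcoY}\times\hchow}$, identifies the composite with the canonical section cutting out $\Vs$ using the one-dimensionality of the $\SL(V)$-invariants in $\ft{S}^2V\otimes\ft{S}^2V^*$, and then obtains (\ref{eqnarray:onVs}) by exactly the quotient-complex mechanism you spell out in Step~2. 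Two differences are worth recording. First, your Step~1 (the set-theoretic inclusion $\Delta\subset\Vs$ from $l_x\subset\mP(V_3)\subset Q$, combined with reducedness of $\Delta$ from Subsection \ref{subsection:CM}) does not appear in the paper, where the containment is deduced purely from the equivariance argument; it is valid and a useful independent confirmation, but note that it cannot substitute for your Step~3, since the quotient argument needs the last differential of (\ref{eqn:fin5}) to carry the line subbundle isomorphically onto $\sI_{\Vs}\otimes\{\sO_{\widetilde{\hcoY}}(M_{\widetilde{\hcoY}})\boxtimes\sO_{\hchow}(2L_{\hchow})\}$, which is strictly more than the inclusion $\sI_{\Vs}\subset\sI$. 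Second, your Step~3 is more careful than the paper on the one genuinely delicate point: the paper's phrase ``by the uniqueness of such a map, its image coincides with $\sI_{\Vs}$'' tacitly assumes the composite is \emph{nonzero} (the zero map is also equivariant), and your fibrewise evaluation of the composite section of $\sO_{\widetilde{\hcoY}}(M_{\widetilde{\hcoY}})\boxtimes\sO_{\hchow}(H_{\hchow})$ as the pairing ${}^t\bm{x}A_Q\bm{y}$ --- i.e.\ tracing the identity element through the two evaluation maps that define the map (d) --- is precisely what rules that out. So: same route, with an extra geometric check in Step~1 and an explicit nonvanishing verification in Step~3 that the paper leaves implicit.
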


\begin{proof} We have an $\mathrm{SL}(V)$-equivariant map \[
\sO_{\widetilde{\hcoY}}(-M_{\widetilde{\hcoY}})\boxtimes\sO_{{\hchow}}(-H_{{\hchow}})\to\sO_{\widetilde{\hcoY}}(-M_{\widetilde{\hcoY}})\boxtimes\ft{S}^{2}(g^{*}\sF),\]
 which is induced from the inclusion $\sO_{\mP(\ft{S}^{2}\sF)}(-1)\to g^{*}\ft{S}^{2}\sF$.
Therefore we have a $\mathrm{SL}(V)$-equivariant map \[
\sO_{\widetilde{\hcoY}}(-M_{\widetilde{\hcoY}})\boxtimes\sO_{{\hchow}}(-H_{{\hchow}})\to\sO_{\widetilde{\hcoY}}(-M_{\widetilde{\hcoY}})\boxtimes\ft{S}^{2}(g^{*}\sF)\oplus\widetilde{\sQ}^{*}\boxtimes\sO_{\hchow}(-L_{\hchow})\to\sI\hookrightarrow\sO_{\widetilde{\hcoY}\times{\hchow}}.\]
 By the uniqueness of such a map, its image coincides with $\sI_{\Vs}$.
Therefore $\sI_{\Vs}\subset\sI$.

The proof of the remaining assertion follows from the above discussion
and Theorem \ref{thm:resolY}. \end{proof}

$\;$ 

Recall that $f\colon\hchow\to\chow$ is the Hilbert-Chow morphism
and $E_{f}$ is the $f$-exceptional divisor as in Subsection \ref{HilbChow}.
Let $x$ be a point of $\mP(V)$ and $e$ any point of $E_{f}$ such
$f(e)=[2x]$. Then the fiber of $\Vs\to\hchow$ over $e$ is the pull-back
of the hyperplane section of $\Hes$ parameterizing singular quadrics
which contain the point $x$ by the duality between $\mP({\ft S}^{2}V)$
and $\mP({\ft S}^{2}V^{*})$. In particular, the fiber is independent
of a choice of $e$ once we fix a point $x$, thus we denote it by
$V_{x}$.

$\;$

\begin{prop} \label{cla:hyp} Any fiber of $\Vs\to\hchow$ is normal
and has only canonical singularities. \end{prop}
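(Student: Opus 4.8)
The plan is to analyze the fibres one $\SL(V)$-orbit at a time. Since $f\colon\hchow\to\chow$ is $\SL(V)$-equivariant and $\chow$ decomposes into just two orbits, its smooth locus of reduced pairs $\bm{x}+\bm{y}$ and $\chow_0=\Sing\chow$ of double points $2x$, it suffices to treat a general fibre $V_{\bm{x}\bm{y}}$, cut out by $\bm{x}^{t}A_Q\bm{y}=0$, and the special fibre $V_x$ over $E_f$, cut out by $\bm{x}^{t}A_Q\bm{x}=0$. In either case the fibre is the zero locus of a section of $\sO_{\widetilde{\hcoY}}(M_{\widetilde{\hcoY}})$ on the smooth variety $\widetilde{\hcoY}$, hence a Gorenstein, Cohen--Macaulay (in particular $S_2$) Cartier divisor; normality then reduces to checking regularity in codimension one ($R_1$), and canonicity to exhibiting a resolution with non-negative discrepancies. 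I would organise both checks along the rank stratification of the quadric $[Q]\in\Hes$ underlying a point of $\widetilde{\hcoY}$, since the geometry of $\widetilde{\hcoY}\to\hcoY\to\Hes$ is governed by this rank.

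First I would dispose of the rank $4$ stratum, where $\widetilde{\hcoY}\cong\hcoY\to\Hes$ is an \'etale double cover and $\Hes$ is smooth, so that $V_x$ is \'etale-locally the hyperplane section $\{\bm{x}^{t}A_Q\bm{x}=0\}\cap\Hes$. The key computation is that the projective tangent hyperplane to $\Hes$ at a corank-one quadric $[Q]$ equals $\{[Q']\mid v^{t}A_{Q'}v=0\}$, the quadrics through the vertex $v=\Sing Q$ (because $\mathrm{adj}\,A_Q=\alpha\,vv^{t}$ is rank one). Hence $V_x$ is tangent to $\Hes$ exactly along the $9$-dimensional family of rank-$4$ quadrics with vertex $x$, whereas the rank-two form $\bm{x}\otimes\bm{y}+\bm{y}\otimes\bm{x}$ is never proportional to a rank-one vertex form, so $V_{\bm{x}\bm{y}}$ is smooth along the whole rank-$4$ locus. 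Using the model $\UU=\mP(\ft{S}^2\Omega(1))\to\mP(V)$, near $t=x$ the equation becomes the universal quadric $A(v)=0$ with $v\in V/x$; completing the square against the full-rank form gives the local model $\{\sum_{i=1}^{4}w_i^{2}=0\}\times(\text{smooth})$, a family of three-dimensional ordinary double points. This locus has codimension three in $V_x$ and is terminal, hence canonical, settling $R_1$ and canonicity over rank $4$.

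The substance, and the part I expect to be the \textbf{main obstacle}, is the rank $\le 3$ stratum. Over rank-$3$ quadrics $\hcoY$ is still smooth but the double cover is ramified, and over rank $\le 2$ quadrics one meets $G_{\hcoY}=\Sing\hcoY$, exactly the locus over which the desingularization $\widetilde{\hcoY}\to\hcoY$ of \cite{HoTa3} is non-trivial, with exceptional divisor $F_{\widetilde{\hcoY}}$ carrying $\mP^2\times\mP^2$-fibres over rank-$2$ points. The engine is that the universal-quadric model $\{A(v)=0\}$ is singular precisely along the section $\{v=0\}$, along which the equation has multiplicity two; blowing up this codimension-four centre produces an exceptional divisor of discrepancy $4-1-2=1\ge0$, and iterating over the determinantal strata where $A$ drops rank keeps the discrepancies non-negative, so the universal quadric has canonical singularities. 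The difficulty is to propagate this through the double cover: I would pull $\{\bm{x}^{t}A_Q\bm{x}=0\}$ back to the smooth model $\hcoY_2$, which maps by $\tLrho_{\hcoY_2}$ to $\widetilde{\hcoY}$ and by a degree-two morphism to $\UU$, and then verify that the ramification of $\hcoY_2\to\UU$ and the $\mP^2\times\mP^2$-contraction neither create a codimension-one component of $\Sing V_x$ nor lower any discrepancy below zero over the deepest rank-$1$ and rank-$2$ strata. Concretely I would combine the local equations of $\widetilde{\hcoY}\to\hcoY$ recorded in \cite[Subsect.6.8 and Sect.7]{HoTa3} with the universal-quadric models to write an explicit log resolution of $(\widetilde{\hcoY},V_x)$ and read off the discrepancies directly.

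Assembling the strata then finishes the argument: each contributes a singular locus of codimension $\ge2$ (codimension three over rank $4$, strictly smaller-dimensional loci over rank $\le3$), so with $S_2$ the fibre is normal, and the stratum-by-stratum resolution has discrepancies $\ge0$, so the fibre is canonical. The same reasoning, being milder over rank $4$, applies verbatim to the general fibre $V_{\bm{x}\bm{y}}$, completing the proof.
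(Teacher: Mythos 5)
Your reduction to the two $\SL(V)$-orbits and your treatment of the rank~$4$ stratum are sound: the tangent-hyperplane computation via the adjugate, the resulting compound-ODP local model $\{\sum_{i=1}^4 w_i^2=0\}$ of codimension three for $V_x$, and the smoothness of $V_{\bm{x}\bm{y}}$ along rank~$4$ are all correct, and normality does reduce to $R_1$ plus the $S_2$ property of a Cartier divisor in the smooth $\widetilde{\hcoY}$. But the proposal has a genuine gap exactly where you flag the "main obstacle": over the rank~$\le 3$ locus nothing is actually proved. The discrepancy count you do perform ($4-1-2=1$, "iterating over the determinantal strata") concerns the universal quadric $\{A(v)=0\}$, not the fiber $V_x\subset\widetilde{\hcoY}$; it says nothing about how discrepancies behave after passing through the ramified double cover $\hcoY\to\Hes$ (ramified over rank~$\le 3$, which meets $V_x$ in codimension two, so it cannot be ignored for canonicity) nor through the resolution $\widetilde{\hcoY}\to\hcoY$, whose exceptional divisor $F_{\widetilde{\hcoY}}$ has $\mP^2\times\mP^2$ fibers over rank-$2$ points. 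The remaining steps are stated as intentions ("I would verify\dots", "I would combine the local equations\dots and read off the discrepancies"), i.e.\ a program for a computation that is never carried out, and it is precisely this computation that constitutes the content of the proposition.

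For contrast, the paper avoids all local equations and stratified discrepancy analysis by a global argument: it first reduces to the special fiber $V_x$ and to its transform $V_x^t\in|M_{\hcoY_2}|$ on $\hcoY_2$ (using that $-K_{V^t_x}$ is relatively ample over $V_x$), then resolves $V^t_x$ explicitly by $W_2$, the pull-back through the conic-bundle tower $\Zpq_2\to\Zpq_3\to\mathrm{G}(2,T(-1))$ of the Schubert cycle $W_0=\{[\Pi]\mid x\in\mP(\Pi)\}\simeq\mathrm{G}(2,4)$; smoothness of $W_2$ is free from the bundle structures. Normality and rational singularities of $V^t_x$ then follow from the Koszul resolution $0\to\Lwedge^2\eQ^*\to\eQ^*\to\sI_{W_0}\to 0$ together with pushforward and Grothendieck--Verdier duality computations ($\Lpi_{W*}\sO_{W_2}\simeq\sO_{V^t_x}$, $R^{>0}\Lpi_{W*}\sO_{W_2}=0$), and canonicity follows in one line from the canonical divisor computation $K_{W_2}=\Lpi_{\Zpq_2}^*(K_{\hcoY_2}+M_{\hcoY_2})|_{W_2}$, i.e.\ the resolution is crepant. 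If you want to salvage your approach, the missing piece is to replace your intended log resolution over rank~$\le 3$ by an actual one with computed discrepancies; the paper's Schubert-cycle resolution is in effect the clean way to produce it all at once.
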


\begin{proof} The proof is similar to the argument in Subsection
\ref{subsection:CM}.

It suffices to show the assertion for $V_{x}$ ($x\in\mP(V)$) since
it is a special fiber of $\Vs\to\hchow$. Let $V_{x}^{t}$ is the
strict transform of $V_{x}$ on $\hcoY_{2}$, which is also the total
transform since $V_{x}$ does not contains the center $G_{\widetilde{\hcoY}}$
of the birational morphism $\hcoY_{2}\to\widetilde{\hcoY}$. Hence
$V_{x}^{t}\in|M_{\hcoY_{2}}|$. We see that $-K_{V_{x}^{t}}$ is $(\widetilde{\Lrho}_{2}|_{V_{x}^{t}})$-ample
since $V_{x}^{t}\in|M_{\hcoY_{2}}|$ and $-K_{\hcoY_{2}}=10M_{\hcoY_{2}}$
is $\widetilde{\Lrho}_{2}$-ample. Therefore it suffices to show similar
assertions for $V_{x}^{t}$. Let $W_{G}$, $W_{3}$ and $W_{2}$ be
the pull-backs on $\mathrm{G}(2,T(-1))$, $\Zpq_{3}$, and $\Zpq_{2}$,
respectively, of the subvariety $W_{0}:=\{\Pi\mid x\in\Pi\}\simeq\mathrm{G}(2,4)$
in $\mathrm{G}(3,V)$. It is easy to see that $V_{x}^{t}$ is the
image of $W_{2}$ set-theoretically and $W_{2}\to V_{x}^{t}$ is birational
(note that, once we fix a general quadric $Q$ and a $\mP^{1}$-family
$q$ of planes in $Q$, there is only one plane in $q$ which contains
$x$). We show that $W_{2}$ is smooth, therefore $W_{2}\to V_{x}^{t}$
is a resolution of singularities. Indeed, $W_{3}$ is smooth since
$W_{0}\simeq\mathrm{G}(2,4)$, $W_{G}\to W_{0}$ is a $\mP^{2}$-bundle,
and $W_{3}\to W_{G}$ is a $\mathrm{G}(2,5)$-bundle by Proposition
\ref{cla:G25}. Then $W_{2}$ is also smooth since $W_{2}\to W_{3}$
is the blow-up along $\Lpi_{3'}^{-1}(\Prt_{\rho})\cap W_{3}$, which
is a $\mP^{1}$-bundle over $W_{G}$ by Proposition \ref{cla:P1bdl},
and hence is smooth.

Similarly to the proof of Proposition \ref{cla:Delta}, we see that
the ideal sheaf $\sI_{\Gamma_{x}}$ of $\Gamma_{x}$ has the following
Koszul resolution: \begin{equation}
0\to\Lwedge^{2}{\eQ}^{*}\to{\eQ}^{*}\to\sI_{\Gamma_{x}}\to0.\label{eq:resolW}\end{equation}

First we check the assertions on ${\hcoY}_{2}^{o}$. For simplicity
of notation, we abbreviate the symbols for the restrictions. Since
$V_{x}^{t}$ is Gorenstein, we have only to show $V_{x}^{t}$ is normal
and has only rational singularities. In the same way to show (\ref{eq:R1}),
we obtain $\Lpi_{2'*}^{o}\iota_{2}^{*}\mu_{2}^{*}{\eQ}^{*}=R^{1}\Lpi_{2'*}^{o}\iota_{2}^{*}\mu_{2}^{*}{\eQ}^{*}=0$.
Therefore we have \begin{equation}
\Lpi_{2'*}^{o}\sI_{W_{2}}^{o}\simeq R^{1}\Lpi_{2'*}^{o}\Lwedge^{2}\iota_{2}^{*}\mu_{2}^{*}{\eQ}^{*},\label{eq:I'W}\end{equation}
 where $\sI_{W_{2}}^{o}$ is the ideal sheaf of $W_{2}$ in $\hcoY_{2}^{o}$.
In the same way to show (\ref{eq:duality}), we obtain by the Grothendieck-Verdier
duality \ref{cla:duality} \[
R^{1}\Lpi_{2'*}^{o}\Lwedge^{2}\iota_{2}^{*}\mu_{2}^{*}{\eQ}^{*}\simeq\big({\Lpi}_{2'*}^{o}\{\Lwedge^{2}\iota_{2}^{*}\mu_{2}^{*}{\eQ}\otimes\sO_{\Zpq_{2}^{o}}(-{N}_{\Zpq_{2}^{o}})\}\otimes\sO_{\hcoY_{2}^{o}}(M_{\hcoY_{2}^{o}})\big)^{*}\simeq\sO_{\hcoY_{2}^{o}}(-M_{\hcoY_{2}^{o}}).\]
 Therefore $\Lpi_{2'*}^{o}\sI_{W_{2}}^{o}$ is nothing but the ideal
sheaf $\sI_{V_{x}^{t}}^{o}$ of $V_{x}^{t}$ since $V_{x}^{t}$ is
the image of $W_{2}$ set-theoretically. In other words, $V_{x}^{t}$
is the scheme-theoretic pushforward of $W_{2}$. We set $\Lpi_{W}^{0}:=\Lpi_{2'}^{o}|_{W_{2}}\colon W_{2}\to V_{x}^{t}$.
In the same way to show Lemma \ref{cla:CM2}, we also have \begin{equation}
\text{\ensuremath{R^{k}\Lpi_{W*}^{o}\sO_{W_{2}}=0}for \ensuremath{k>0}and \ensuremath{\Lpi_{W*}^{o}\sO_{W_{2}}\simeq\sO_{V_{x}^{t}}}}.\label{eq:ratsingW1}\end{equation}
 Since $W_{2}$ is smooth, the latter shows that $V_{x}^{t}$ is normal
and the former shows that $V_{x}^{t}$ has only rational singularities.

Second we show the assertions on the whole $\hcoY_{2}$. By the above
argument on $\hcoY_{2}^{o}$, we see that $V_{x}^{t}$ is regular
in codimension one since the codimension of $V_{x}^{t}\cap\Prt_{\sigma}$
in $V_{x}^{t}$ is greater than two. Therefore $V_{x}^{t}$ is normal
since $V_{x}^{t}$ is Gorenstein. To check $V_{x}^{t}$ has only canonical
singularities, we have only to show that $W_{2}\to V_{x}^{t}$ is
crepant since $W_{2}$ is smooth. This follows by calculating the
canonical divisor of $W_{2}$. Note that $\sN_{W_{2}/\Zpq_{2}}\simeq\mu_{2}^{*}{\eQ}|_{W_{2}}$
by (\ref{eq:resolW}). Therefore $\det\sN_{W_{2}/\Zpq_{2}}\simeq\sO_{\Zpq_{2}}(-N_{\Zpq_{2}})|_{W_{2}}$
since $\det\eQ=\sO_{\mathrm{G}(3,V)}(1)$. Thus we have \[
\begin{aligned}K_{W_{2}} & =K_{\Zpq_{2}}|_{W_{2}}+\det\sN_{W_{2}/\Zpq_{2}}\\
 & =\{\Lpi_{2'}^{*}(K_{\hcoY_{2}}+M_{\hcoY_{2}})-N_{\Zpq_{2}}\}|_{W_{2}}+N_{\Zpq_{2}}|_{W_{2}}=\Lpi_{2'}^{*}(K_{\hcoY_{2}}+M_{\hcoY_{2}})|_{W_{2}},\end{aligned}
\]
 where we use Proposition \ref{cla:cano} (3) for the second equality.\end{proof}

$\;$

\section{The family of curves on $Y$ parameterized by $X$ revisited}

\label{section:flat}

Let $X$ and $Y$ be smooth Calabi-Yau threefolds which are mutually
orthogonal linear sections of $\chow$ and $\hcoY$ respectively,
as described in Subsection \ref{section:XY} and Section \ref{section:family}.
In this section, we show that a family of curves on $Y$ parameterized
by $X$ comes out from $\Delta\to\hchow$ and this family coincides
with one constructed in Section \ref{section:family}. We prove also
the family is flat.

\subsection{Flatness of $\Delta\to\hchow$ and the locally free resolution of
$\sI_{x}$}

\label{subsection:ideal}

Below, we denote by $(-1)$ the tensor product of $\sO_{\widetilde{\hcoY}}(-M_{\widetilde{\hcoY}})$.

\begin{prop} \label{cor:flat} \rm{(1)} The scheme $\Delta$ is
flat over $\hchow$. 

\noindent \rm{(2)} Let $\Delta_{x}$ be the fiber of $\Delta\to\hchow$
over a point $x\in\hchow$. Then the ideal sheaf $\sI_{x}$ of $\Delta_{x}$
in $\widetilde{\hcoY}$ is $\sI\otimes_{\sO_{\widetilde{\hcoY}\times\hchow}}\sO_{\widetilde{\hcoY}_{x}}$,
where $\widetilde{\hcoY}_{x}\simeq\widetilde{\hcoY}$ is the fiber
of $\widetilde{\hcoY}\times\hchow\to\hchow$ over $x$. Moreover,
the exact sequence $(\ref{eqn:fin5})$ remains to be exact after restricting
on $\widetilde{\hcoY}_{x}$ and gives the following locally free resolution
of $\sI_{x}:$ \begin{eqnarray}
\ 0\to\widetilde{\sS}_{L}(-1)\to{\widetilde{\sT}^{*}}(-1)^{\oplus2}\to\sO_{\widetilde{\hcoY}}(-1)^{\oplus3}\oplus\widetilde{\sQ}^{*}\to\sI_{x}\to0.\label{eqnarray:Cx}\end{eqnarray}
 \end{prop}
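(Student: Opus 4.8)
The plan is to derive both assertions from Theorem \ref{thm:resolY}, which already yields that $\Delta$ is a Cohen--Macaulay variety carrying the global locally free resolution (\ref{eqn:fin5}) of $\sI$. For the flatness in (1) I would invoke the criterion that a Cohen--Macaulay scheme is flat over a regular base precisely when all its fibres have the expected dimension $\dim\Delta-\dim\hchow$; since $\hchow$ is smooth and $\Delta$ is Cohen--Macaulay, the whole matter reduces to the equidimensionality of the fibres of $\pr_2\colon\Delta\to\hchow$. Once flatness is in hand, statement (2) is essentially formal, so the substance of the argument sits in the fibre-dimension count.

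To establish equidimensionality I would use the fibrewise descriptions built up in Subsection \ref{Closed}. Proposition \ref{cla:delta3} exhibits $\Delta^{\zpq}_{3,x}$ as a $\mathrm{G}(2,5)$-bundle over the blow-up of $\mP(V)$ along $l_x$, whose dimension is manifestly independent of $x$; Proposition \ref{cla:descrdel3} shows $\Delta^{\zpq}_{3,x}\to\Delta_{3,x}$ is birational, and Proposition \ref{cla:delta2} identifies $\Delta^{\zpq}_{2,x}$ with a blow-up of $\Delta^{\zpq}_{3,x}$. Because the entire construction of $\Delta$ — pulling back $\Delta_0$, the blow-ups, the conic-bundle push-forward along $\cLpi_{\Zpq_2}$, and the transform $\tLrho_{\hcoY_2}$ to $\widetilde{\hcoY}$ — is performed uniformly in $x$, the dimension of $\Delta_x$ is the same for every $x$; combined with the irreducibility of $\Delta$ and upper semicontinuity of fibre dimension this pins each fibre to the generic value and gives flatness.

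For (2) I would argue by flat base change along $\pr_2$. Flatness of $\Delta$ forces $\sI\otimes_{\sO_{\widetilde{\hcoY}\times\hchow}}\sO_{\widetilde{\hcoY}_x}$ to be the ideal sheaf $\sI_x$ of the fibre $\Delta_x$ (the sequence $0\to\sI\to\sO_{\widetilde{\hcoY}\times\hchow}\to\sO_{\Delta}\to 0$ stays exact after restriction to $\widetilde{\hcoY}_x$, since $\sO_{\Delta}$ is $\hchow$-flat). Each term of (\ref{eqn:fin5}) is an external product $\sE_j\boxtimes\mathcal{F}_j$ with $\mathcal{F}_j$ locally free on $\hchow$, hence flat over $\hchow$; as $\sI\otimes\{\sO_{\widetilde{\hcoY}}(M_{\widetilde{\hcoY}})\boxtimes\sO_{\hchow}(2L_{\hchow})\}$ is also $\hchow$-flat, splitting (\ref{eqn:fin5}) into short exact sequences shows every syzygy sheaf is $\hchow$-flat, so the resolution remains exact after $\otimes\,\sO_{\widetilde{\hcoY}_x}$. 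It then remains to read off the restricted terms: tensoring with $k(x)$ replaces $\sO_{\hchow}$, $g^{*}\sF$, $g^{*}\ft{S}^2\sF$ and $\sO_{\hchow}(L_{\hchow})$ by vector spaces of dimensions $1,2,3,1$, and after twisting the whole sequence by $\sO_{\widetilde{\hcoY}}(-M_{\widetilde{\hcoY}})$ one obtains precisely (\ref{eqnarray:Cx}).

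The hard part is the equidimensionality underlying flatness. The generic fibre dimension is immediate, but one must exclude any jump over the special loci of $\hchow$ — in particular over points whose fibre meets $\Prt_{\sigma}\times\hchow$, where the clean $\Zpq_3$-model is unavailable and the divisorial transform $\tLrho_{\hcoY_2}$ intervenes. Here I would exploit, as in Subsection \ref{subsection:CM}, that $\Prt_{\sigma}$ has codimension $6$ in $\widetilde{\hcoY}$, so that $\Delta\cap(\Prt_{\sigma}\times\hchow)$ is of high codimension and too small to carry a fibre of excess dimension; this, together with the Cohen--Macaulayness of $\Delta$, closes the dimension count and hence establishes flatness.
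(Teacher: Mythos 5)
Your proposal is correct and takes essentially the same route as the paper's proof: part (1) by the miracle-flatness criterion (Cohen--Macaulayness of $\Delta$ from Theorem \ref{thm:resolY}, smoothness of $\hchow$, and equidimensionality of the fibers read off from the descriptions in Subsection \ref{Closed}, notably Proposition \ref{cla:descrdel3}), and part (2) by Tor-vanishing/flatness arguments applied to $0\to\sI\to\sO_{\widetilde{\hcoY}\times\hchow}\to\sO_{\Delta}\to 0$ and to the resolution (\ref{eqn:fin5}) split into short exact sequences, followed by reading off the fibers of the $\hchow$-factors and twisting by $\sO_{\widetilde{\hcoY}}(-M_{\widetilde{\hcoY}})$. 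The paper is merely terser, citing Matsumura for the injectivity of $\sI\otimes k(x)\to\sO_{\widetilde{\hcoY}_x}$ and noting that all fibers of $\Delta\to\hchow$ are in fact isomorphic (the construction is pulled back from $\mathrm{G}(2,V)$ equivariantly), which makes your extra codimension argument over $\Prt_{\sigma}$ unnecessary but harmless.
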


\begin{proof} By Proposition \ref{cla:descrdel3}, $\Delta\to\hchow$
is equi-dimensional (actually all fibers are isomorphic). Therefore
$\Delta$ is flat over $\hchow$ since $\Delta$ is Cohen-Macaulay
and $\hchow$ is smooth.

Tensoring the exact sequence $0\to\sI\to\sO_{\widetilde{\hcoY}\times\hchow}\to\sO_{\Delta}\to0$
with $k(x)$, 
we obtain the exact sequence $\sI\otimes k(x)\to\sO_{\widetilde{\hcoY}_{x}}\to\sO_{\Delta_{x}}\to0$.
Then, applying \cite[Theorem 22.5 (1)$\Rightarrow$(2)]{M}, we see
that $\sI\otimes k(x)\to\sO_{\widetilde{\hcoY}_{x}}$ is injective
since $\Delta$ is flat over $\hchow$. Note that, for any coherent
$\sO_{\widetilde{\hcoY}\times\hchow}$-module $\sA$ and a point $x\in\hchow$,
it holds that $\sA\otimes_{\sO_{\widetilde{\hcoY}\times\hchow}}\sO_{\widetilde{\hcoY}_{x}}\simeq\sA\otimes_{\sO_{\hchow}}k(x)$
since $\sO_{\widetilde{\hcoY}_{x}}\simeq{\sO_{\widetilde{\hcoY}\times\hchow}}\otimes_{\sO_{\hchow}}k(x)$.
Therefore $\sI_{x}\simeq\sI\otimes k(x)\simeq\sI\otimes_{\sO_{\widetilde{\hcoY}\times\hchow}}\sO_{\widetilde{\hcoY}_{x}}$.

Proof of the exactness of (\ref{eqnarray:Cx}) is similar. \end{proof}

\vspace{0.3cm}

\subsection{Cutting a family of curves $\Cd\to X$ from $\Delta$ }

\label{subsection:Cutting}

The generically conic bundle $\Zpq_{2}\to\hcoY_{2}$ in Proposition
\ref{prop:gen-conic-bundle2} defines a conic bundle $\Zpq_{2}^{o}\to\hcoY_{2}^{o}$
over $\hcoY_{2}^{o}=\hcoY_{2}\setminus\Prt_{\sigma}$, which we can
write explicitly as\[
\Zpq_{2}^{o}=\left\{ ([\bar{c}],(\bar{q},[\bar{U}],[V_{1}]))\,\Big|\,[\bar{c}]\in\bar{q},{\,\bar{q}\text{ is a conic in }\mP(\bar{U})\cap\rG(2,V/V_{1})\atop \text{ and }([\bar{U}],[V_{1}])\in\hcoY_{3}\setminus\Prt_{\sigma}}\right\} .\]
Let us recall $\overline{\hcoY}=\left\{ [U]\in\rG(3,\wedge^{3}V)\mid[U]=[\bar{U}\wedge V_{1}]\text{ for some }V_{1}\right\} $
and define \[
\widetilde{\Zpq}^{o}=\left\{ ([c],(q,[U]))\,\Big|\,[c]\in q,{q\text{ is a conic in }\mP(U)\cap\rG(3,V)\atop \text{ and }[U]\in\overline{\hcoY}\setminus\overline{\Prt}_{\sigma}}\,\right\} .\]
Then there is a conic bundle $\widetilde{\Zpq}^{o}\to\widetilde{\hcoY}^{o}$
with the following commutative diagram:\[
\xymatrix{\Zpq_{2}^{o}\ar[d]_{\Lpi_{2'}^{o}}\ar[r]_{\tLrho_{2'}^{o}} & \widetilde{\Zpq}^{o}\ar[d]^{\Lpi_{\widetilde{\Zpq}}^{o}}\\
\hcoY_{2}^{o}\ar[r]^{\tLrho_{2}^{o}} & \widetilde{\hcoY}^{o}}
\]
We note that the conic bundle $\widetilde{\Zpq}^{o}\to\widetilde{\hcoY}^{o}$
over $\widetilde{\hcoY}^{o}\setminus F_{\widetilde{\hcoY}}$ is isomorphic
to the conic bundle $\Zpq\to\hcoY$ over $\hcoY\setminus G_{\hcoY}$
in Subsection \ref{section:Quintic} (see \cite[Prop.~4.2.5,~5.2.1]{HoTa3}).

As we noted in Subsection \ref{section:XY}, we can assume that $X\subset\hchow$
and $Y$ is disjoint from $G_{\hcoY}=\Sing\hcoY$. Since $\widetilde{\hcoY}\to\hcoY$
is isomorphic outside $G_{\hcoY}$, we can also assume $Y\subset\widetilde{\hcoY}$.
Note that the subvariety $Y\times X$ of $\widetilde{\hcoY}\times{\hchow}$
is contained in $\Vs$ since $X$ and $Y$ are mutually orthogonal.
Now we consider the following restriction of $\Delta\subset\widetilde{\hcoY}\times\hchow$:
\[
\Cd={\Delta}|_{X\times Y}.\]
 We denote by $I$ the ideal sheaf of $\Cd$ in $Y\times X$ and by
$I_{x}$ the ideal sheaf of $C_{x}$ in $Y_{x}$, where $Y_{x}=Y$
is the fiber of $Y\times X\to X$ over $x$.

\vspace{0.2cm}
 \begin{prop} \label{prop:Cflat} The scheme $\Cd$ is flat over
$X$ and its fiber over $x\in X$ coincides with $C_{x}$ defined
in Subsection $\ref{subsection:curves}$. Moreover, it holds that
$I\simeq\sI_{\Delta/\Vs}\otimes_{\sO_{\Vs}}\sO_{Y\times X}$ and $I_{x}\simeq\sI_{\Delta_{x}/\Vs_{x}}\otimes_{\sO_{\widetilde{\hcoY}_{x}}}\sO_{Y_{x}}$.
\end{prop}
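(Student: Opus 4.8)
The plan is to carry out the entire argument inside the universal hyperplane section $\Vs$. By Proposition \ref{cla:Vs} we have $\Delta\subset \Vs$, and by the orthogonality of $X$ and $Y$ we have $Y\times X\subset \Vs$; thus $\Cd=\Delta\cap(Y\times X)$ is to be computed as an intersection inside $\Vs$, and it is this framing, rather than the intersection in $\widetilde{\hcoY}\times\hchow$ (which is improper, since both factors lie in the divisor $\Vs$), that makes the dimensions work out. The geometric heart of the matter is that this intersection is proper with one-dimensional fibres over $X$, and I would establish this first.

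First I would identify the fibres set-theoretically. A point of $\Delta_x\cap Y$ is a pair $([Q],q)\in Y$, with $[Q]\in P$ a singular quadric and $q$ a connected family of planes of $Q$, such that some plane $\mP(\Pi)\in q$ contains $l_x$; this is exactly the incidence $\{l_x\subset \mP(\Pi)\subset Q\}$ cut out on $Y$, i.e.\ the image $C_x$ of $\gamma_x$ from Subsection \ref{subsection:curves}. To make this precise I would use that $Y$ is disjoint from $G_{\hcoY}=\Sing\hcoY$ and that over $\widetilde{\hcoY}\setminus\Prt_{\sigma}\supset Y$ the generically conic bundle $\Lpi_{\Zpq_3}\colon \Zpq_3\to \hcoY_3$ obtained from $\Delta_0$ coincides with $\Lpi_{\Zpq}\colon \Zpq\to \hcoY$ used in Section \ref{section:family} (Propositions \ref{prop:Z3} and \ref{prop:genus3degree5}); hence pulling $\Delta_0$ back through either model yields the same subscheme over $Y$. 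In particular each $\Cd_x$ is a curve, so inside the fibre $\Vs_x$ (of dimension $12$) the subvariety $\Delta_x$ (of dimension $10$) meets $Y$ (of codimension $9$) in the expected dimension $1$, with no excess component.

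Granting this properness, I would obtain the ideal-sheaf identities and flatness together by homological algebra. Since $\Vs$ is a Cartier divisor in the smooth variety $\widetilde{\hcoY}\times\hchow$ it is Cohen-Macaulay; $\Delta$ is Cohen-Macaulay by Theorem \ref{thm:resolY}; and $Y\times X$ is a local complete intersection in $\Vs$ (being a complete intersection in $\widetilde{\hcoY}\times\hchow$ contained in the divisor $\Vs$). Proper intersection and Cohen-Macaulayness of $\Delta$ then make the local equations of $Y\times X$ an $\sO_{\Delta}$-regular sequence, so $\mathcal{T}\!or^{\sO_{\Vs}}_{i}(\sO_{\Delta},\sO_{Y\times X})=0$ for $i>0$ and $\Cd$ is Cohen-Macaulay. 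Tensoring $0\to \sI_{\Delta/\Vs}\to \sO_{\Vs}\to \sO_{\Delta}\to 0$ with $\sO_{Y\times X}$ now yields $I\simeq \sI_{\Delta/\Vs}\otimes_{\sO_{\Vs}}\sO_{Y\times X}$; moreover the same input shows that the resolution (\ref{eqnarray:onVs}), whose terms are locally free on $\widetilde{\hcoY}\times\hchow$, restricts to $Y\times X$ as a locally free resolution of $I$ by sheaves flat over $X$. Running the identical computation in each fibre $\Vs_x\subset \widetilde{\hcoY}_x$ (again a Cohen-Macaulay divisor, with $\Delta_x$ Cohen-Macaulay by Proposition \ref{cor:flat}) gives $I_x\simeq \sI_{\Delta_x/\Vs_x}\otimes_{\sO_{\widetilde{\hcoY}_x}}\sO_{Y_x}$ and shows the family resolution specializes fibrewise. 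By the fibrewise exactness criterion, equivalently by the constancy over the integral base $X$ of the Hilbert polynomial $5n-2$ of $\Cd_x=C_x$ (Proposition \ref{prop:genus3degree5}), the scheme $\Cd$ is flat over $X$; this may alternatively be organized exactly as in Proposition \ref{cor:flat}, restricting the flat family $\Delta\to\hchow$ to $X$, cutting by the regular sequence cutting out $Y$, and invoking \cite[Theorem 22.5]{M}. Finally, flatness together with the set-theoretic agreement and the absence of embedded points (both $\Cd_x$ and $C_x$ being Cohen-Macaulay of pure dimension one) upgrades $\Cd_x=C_x$ to a scheme-theoretic equality.

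I expect the main obstacle to be the proper-intersection step, namely verifying that $\Delta_x\cap Y$ acquires no excess-dimensional component over the special loci where the conic bundle degenerates (the rank $3$ points and the transform of $\Prt_{\sigma}$). This is precisely where the detailed birational geometry of the several models of $\Zpq\to\hcoY$ and the regularity of $P$ must enter; once equidimensionality of $\Cd\to X$ is secured, the homological consequences, namely the vanishing of the higher $\mathcal{T}\!or$, the two restriction isomorphisms, and flatness, follow formally.
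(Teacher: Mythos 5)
Your overall strategy coincides with the paper's: work inside $\Vs$, identify the fibres of $\Cd\to X$ with the curves $C_x$ by matching the conic-bundle models near $Y$ (legitimate, since $Y$ avoids $G_{\hcoY}$ and $\Prt_\sigma$), and then extract flatness and the two restriction isomorphisms from Cohen--Macaulayness of $\Delta$ together with a regular-sequence/Tor argument and Matsumura's local criterion. The gap is in the one place where you dispose of the key technical point in a parenthesis: the claim that $Y\times X$ is a local complete intersection in $\Vs$ ``being a complete intersection in $\widetilde{\hcoY}\times\hchow$ contained in the divisor $\Vs$''. That inference is false in general: a complete intersection lying inside a Cartier divisor need not be lci in that divisor (the origin inside $\{xy=0\}\subset\mA^2$ is a complete intersection in $\mA^2$, but its ideal in $\{xy=0\}$ needs two generators while its codimension there is one). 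Since $Y\times X$ is smooth of codimension $15$, lci-ness in $\Vs$ at a point is equivalent to the defining equation of $\Vs$ being a \emph{minimal} generator of the ideal of $Y\times X$ there, i.e.\ to smoothness of $\Vs$ at that point --- and smoothness of $\Vs$ along $\Cd$ is not known (Proposition \ref{cla:hyp} only gives that the fibres $\Vs_x$ are normal with canonical singularities). Your regular-sequence and $\mathcal{T}\!or$-vanishing steps are run against ``the local equations of $Y\times X$'' in $\Vs$, so they inherit this gap.

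What actually makes the count come out right --- and this is the paper's key device, which reappears later in Lemma \ref{cla:tower} --- is orthogonality: for $x\in X$ the hyperplane $x^\perp$ contains $P$, so among the ten hyperplanes cutting out $P$ one may be taken to be $x^\perp$ itself, whose pullback is $\Vs_x\supset\Delta_x$. Hence $Y$ is cut out of $\Vs_x$ by only \emph{nine} members $M_1,\dots,M_9$ of $|M_{\widetilde{\hcoY}}|_{\Vs_x}|$, and these nine form a regular sequence on the ten-dimensional Cohen--Macaulay scheme $\Delta_x$ precisely because $C_x$ is one-dimensional; flatness of $\Cd\to X$ and both ideal-sheaf identities then follow from \cite[Theorem 22.5 and its Corollary]{M}, exactly along the lines of your own fallback ``as in Proposition \ref{cor:flat}''. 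Note also that your first-choice flatness argument via constancy of the Hilbert polynomial $5n-2$ is circular: Proposition \ref{prop:genus3degree5} computes $p_a(\gamma_x)=3$, not the arithmetic genus of the scheme-theoretic fibre $\Cd_x$ for every $x$, and constancy of the latter is a consequence of flatness rather than an input to it. With the nine-equation device substituted for the lci claim (and the Hilbert-polynomial detour dropped), your argument closes and is the paper's proof.
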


\begin{proof} Recall that the curve $C_{x}\subset Y$ in Subsection
$\ref{subsection:curves}$ is defined through $\gamma_{x}=G_{x}\cap\Lpi_{\Zpq}^{-1}(Y)$,
which consists of $([\Pi],[Q])\in\Zpq$ satisfying $l_{x}\subset\mP(\Pi)$
and $[Q]\in P$. Let us define $\Delta_{\widetilde{\Zpq}^{o}}\subset\widetilde{\Zpq}^{o}\times\hchow$
to be the image of $\Delta_{2}^{o}\subset\widetilde{\Zpq_{2}}^{o}\times\hchow$
under $\tLrho_{2'}^{o}\times\id_{\hchow}$, and denote by $\Delta_{\widetilde{\Zpq}^{o},x}$
the fiber over $x\in X$. Then, by definition, the intersection $\Delta_{\widetilde{\Zpq}^{o},x}\cap\Lpi_{\widetilde{\Zpq}}^{o-1}(Y)$
consists of points $([c],(q,[U]))\in\widetilde{\Zpq}^{o}$ such that
$l_{x}\subset\mP(V_{c})$ and $\Lpi_{\widetilde{\Zpq}}(([c],(q,[U])))\in Y$
with the three dimensional subspace $V_{c}\subset V$ representing
$c=[\wedge^{3}V_{c}].$ Here we note that $\Lpi_{\widetilde{\hcoY}}(([c],(q,[U])))\in Y$
is equivalent to $[Q_{y}]\in P$ for the quadric $Q_{y}$ which corresponds
to $y=\Lpi_{\widetilde{\hcoY}}((q,[U]))$. Therefore $\gamma_{x}$
can identified with $\Delta_{\widetilde{\Zpq}^{o},x}\cap\Lpi_{\widetilde{\Zpq}}^{o-1}(Y)$.
Now, due to the isomorphism $\widetilde{\Zpq}^{o}\to\widetilde{\hcoY}^{o}$
over $\widetilde{\hcoY}^{o}\setminus F_{\widetilde{\hcoY}}$ to the
conic bundle $\Zpq\to\hcoY$ over $\hcoY\setminus G_{\hcoY},$ the
curve $C_{x}=\Lpi_{\hcoY}(\gamma_{x})$ can be identified with the
curve that is defined by the fiber of the scheme $\Cd$ over $x\in X$.
Hereafter we denote by $C_{x}$ the fiber of $\Cd\to X$ over $x\in X$.

Recall that the fiber $\Vs_{x}$ of $\Vs\to\hchow$ over $x$ contains
$\Delta_{x}$ by Proposition \ref{cla:Vs}. Since $\Vs_{x}$ contains
also $Y$, we see that $Y$ is the complete intersection in $\Vs_{x}$
of $9$ members $M_{1},\dots,M_{9}$ of $|M_{\widetilde{\hcoY}}|_{\Vs_{x}}|$.
Therefore $C_{x}$ is cut out from $\Delta_{x}$ in $\Vs_{x}$ by
$M_{1},\dots,M_{9}$. By \cite[Corollary to Theorem 23.3]{M}, ${\Delta}_{x}$
is Cohen-Macaulay for any $x\in\hchow$ since $\Delta$ is Cohen-Macaulay
by Theorem \ref{thm:resolY} and is flat over $\hchow$ by Proposition
\ref{cor:flat}. Since $C_{x}$ is one-dimensional for any $x\in X$,
the divisors $M_{1},\dots,M_{9}$ form a regular sequence \cite[Theorem 17.4 iii)]{M}.
Note that $\Delta|_{\widetilde{\hcoY}\times X}\to X$ is flat by Proposition
\ref{cor:flat} and its ideal sheaf is $\sI\otimes\sO_{\widetilde{\hcoY}\times X}$
by \cite[Theorem 22.5 (2)$\Rightarrow$(1)]{M}. Therefore, by \cite[Corollary to Theorem 22.5 (2)$\Rightarrow$(1)]{M},
$\Cd$ is flat over $X$ and is cut out in $\Vs$ from $\Delta|_{\widetilde{\hcoY}\times X}$
by a regular sequence. The latter implies that $\sI_{\Delta/\Vs}\otimes_{\sO_{\widetilde{\hcoY}\times\hchow}}\sO_{Y\times X}$
is the ideal sheaf of $\Cd$ in $X\times Y$. Similarly, we have $I_{x}=\sI_{\Delta_{x}/\Vs_{x}}\otimes_{\sO_{\widetilde{\hcoY}_{x}}}\sO_{Y_{x}}$
since the scheme $C_{x}$ is cut out in $\Vs_{x}$ from ${\Delta}_{x}$
by a regular sequence. \end{proof}

$\;$\pagebreak{}

\section{Derived equivalence}

\label{section:BC} In this section, we derive the main result of
this article: 

\begin{thm} \label{thm:main} Let $X$ and $Y$ be smooth Calabi-Yau
threefolds which are mutually orthogonal linear sections of $\chow$
and $\hcoY$ respectively. Let $I$ be the ideal sheaf as in Proposition
$\ref{prop:Cflat}$. Then the Fourier-Mukai functor $\Phi_{I}$ with
$I$ as its kernel is an equivalence between $\sD^{b}(X)$ and $\sD^{b}(Y)$.
\end{thm}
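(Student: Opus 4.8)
The plan is to apply the Fourier--Mukai criterion of Theorem \ref{thm:D}. Since $X$ and $Y$ are Calabi--Yau threefolds we have $\dim X=\dim Y=3$ and $\omega_X\simeq \sO_X$, $\omega_Y\simeq \sO_Y$, while $I$ is flat over $X$ by Proposition \ref{prop:Cflat}; thus the criterion reduces the assertion to the full faithfulness of $\Phi_I$, and by its last clause it suffices to verify the two pointwise conditions: (i) $\Hom_Y(I_{x},I_{x})\simeq \mC$ for every $x\in X$, and (ii) $\Ext^i_Y(I_{x_1},I_{x_2})=0$ for all $i$ whenever $x_1\neq x_2$. Here each $I_{x}$ is the ideal sheaf of the curve $C_{x}$ on $Y$, and its locally free resolution is obtained by restricting (\ref{eqnarray:Cx}) to $Y$; this resolution has the same shape for all $x$, only its differentials --- the $\mathrm{SL}(V)$-equivariant maps of Subsection \ref{subsection:State} evaluated at $x$ --- depending on the point.

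For (i) I would argue uniformly in $x$. Each $C_{x}$ is a subscheme of pure codimension two in the smooth threefold $Y$, so $I_{x}^{\vee\vee}\simeq \sO_Y$ and hence $\Hom_Y(I_{x},I_{x})\hookrightarrow \Hom_Y(I_{x},\sO_Y)=H^0(Y,\sO_Y)=\mC$; since scalars already give a copy of $\mC$, equality holds. I would read the purity of $C_{x}$ off the resolution (\ref{eqnarray:Cx}) together with the flatness of Proposition \ref{prop:Cflat}, so that the argument applies to the possibly singular special fibres as well as to the smooth general member of Proposition \ref{prop:genus3degree5}.

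Condition (ii) is the heart of the matter. I would compute $\Ext^\bullet_Y(I_{x_1},I_{x_2})$ by applying $R\sHom_Y(-,I_{x_2})$ to the resolution of $I_{x_1}$ and resolving $I_{x_2}$ in turn, obtaining a hypercohomology spectral sequence whose entries are built from the groups $\Ext^\bullet_Y(\sE_i|_Y,\sE_j|_Y)$ among the restrictions to $Y$ of the Lefschetz collection $\widetilde{\sS}_L^*,\widetilde{\sT}^*,\sO_{\widetilde{\hcoY}},\widetilde{\sQ}^*$, tensored with the fibres of the dual collection $(\mathcal F_i)$ at $x_1$ and $x_2$. Each $\Ext^\bullet_Y(\sE_i|_Y,\sE_j|_Y)$ I would transport back to $\widetilde{\hcoY}$ via the Koszul resolution of $\sO_Y$ as an $\sO_{\widetilde{\hcoY}}$-module --- recall that $Y$ is a complete intersection of hyperplane sections of $\widetilde{\hcoY}$, and that these hyperplane sections have only canonical singularities by Proposition \ref{cla:hyp}, which is exactly what is required to run Kawamata--Viehweg vanishing in this step. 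The essential input is then \cite[Thm.8.1.1]{HoTa3}: the vanishing of the $\Ext$-groups among $(\sE_i)$ on $\widetilde{\hcoY}$ paired with the dual-Lefschetz structure of $(\mathcal F_i)$ on $\hchow$. Combined with the containment $\Delta\subset \Vs$ and the resolution (\ref{eqnarray:onVs}) of $\sI_{\Delta/\Vs}$, these vanishings organize the spectral sequence so that every contribution with $x_1\neq x_2$ cancels, while at $x_1=x_2$ exactly the class of (i) survives.

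I expect the main obstacle to be precisely this last bookkeeping: matching each surviving term of the double complex against the correct vanishing --- either the collection vanishing of \cite[Thm.8.1.1]{HoTa3} or the Kawamata--Viehweg vanishing supplied by Proposition \ref{cla:hyp} --- and verifying that the $x_1,x_2$-dependence of the differentials forces total acyclicity off the diagonal. This is the step where the full strength of the mutually dual (Lefschetz and dual-Lefschetz) collections and of the containment $\Delta\subset\Vs$ must be used, rather than any single isolated vanishing statement; it is the analogue, in the present symmetroid setting, of the corresponding computation in the Grassmann--Pfaffian case of \cite{BC} and \cite{HPD2}.
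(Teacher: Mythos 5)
Your reduction via Theorem \ref{thm:D} and your argument for condition (i) are sound and essentially coincide with the paper's (Lemma \ref{cla:Hom}, which needs only $\dim C_x\le 1$, not purity). The gap is in condition (ii), and it begins with a false premise: restricting (\ref{eqnarray:Cx}) to $Y$ does \emph{not} give a locally free resolution of $I_x$. The complex (\ref{eqnarray:Cx}) has three locally free terms because $\Delta_x$ is Cohen--Macaulay of codimension $3$ in $\widetilde{\hcoY}$ ($\dim\Delta_x=10$, $\dim\widetilde{\hcoY}=13$); since $Y$ has codimension $10$ in $\widetilde{\hcoY}$, the expected dimension of $\Delta_x\cap Y$ is $0$, whereas $C_x$ is a curve. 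This excess intersection produces nonvanishing higher $\mathrm{Tor}$'s, so the restricted complex is not exact (were it exact, it would present a codimension-$3$ subscheme of the threefold $Y$, not the codimension-$2$ curve $C_x$). This is precisely why the paper routes everything through $\Vs$: Proposition \ref{prop:Cflat} shows $I_x\simeq \sI_{\Delta_x/\Vs_x}\otimes_{\sO_{\widetilde{\hcoY}_x}}\sO_{Y_x}$, where inside the hyperplane section $\Vs_x$ the intersection is proper and $C_x$ is cut out of $\Delta_x$ by a regular sequence; the relevant resolution is (\ref{eqnarray:onVs}), not (\ref{eqnarray:Cx}).

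More seriously, the step you defer as ``bookkeeping'' is the entire content of the proof, and the single spectral sequence you propose cannot supply it. Theorem \ref{thm:digGvan} vanishes only in the twist range $1\le t\le 9$; transporting $\Ext^\bullet_Y(\sE_i|_Y,\sE_j|_Y)$ to $\widetilde{\hcoY}$ via the Koszul resolution of $\sO_Y$ produces boundary terms at twists $t=0$ and $t=10$ (for instance $\Hom(\sE_i,\sE_i)\ne 0$, and the $t=10$ terms are Serre-dual to cohomology twisted by $2F_{\widetilde{\hcoY}}$), which no quoted vanishing kills; the asserted off-diagonal cancellation is exactly what remains unproved. The paper's mechanism is instead an induction down the tower of Lemma \ref{cla:tower}, $Y=Y_0\subset\cdots\subset Y_9\subset Y_{10}=\widetilde{\hcoY}$, whose first two members are \emph{not} general: $Y_9=\Vs_{x_1}$ and $Y_8=\Vs_{x_1}\cap\Vs_{x_2}$. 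Because $\Delta_{x_1}\subset\Vs_{x_1}$ (Proposition \ref{cla:Vs}), one has $0\to\sO_{\widetilde{\hcoY}}(-1)\to\sI_{x_1}\to\iota_{Y_9*}\sI_{x_1;9}\to 0$, and each step trades one unit of twist via Grothendieck--Verdier duality, converting Proposition \ref{prop:Extvan} into the untwisted vanishing $\Ext^\bullet_Y(I_{x_1},I_{x_2})=0$ after ten steps; Step 2 needs the separate Claim \ref{cla:vanishing}, where Proposition \ref{cla:hyp} and Kawamata--Viehweg vanishing actually enter. Note finally where $x_1\ne x_2$ is used: it guarantees $\Vs_{x_1}\ne\Vs_{x_2}$ (projective duality, $X\subset\chow\subset\mP(\ft{S}^2V)$), so that $Y_8$ is a genuine complete intersection. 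This is the very point that fails for the reverse family $\{C_y\}_{y\in Y}$ (Section 10), so any correct proof must invoke it; your proposal never does.
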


Since the cohomology groups related to the locally free resolution
of $\sI$ have been computed in \cite[Thm.8.1.1]{HoTa3}, the rest
of our proof of the derived equivalence between $X$ and $Y$ proceeds
in the same way as that of \cite{BC}. Namely, we show that the functor
$\Phi_{I}:\sD^{b}(X)\to\sD^{b}(Y)$ is an equivalence by verifying
the conditions (i) and (ii) of Theorem \ref{thm:D}. The condition
(i) may be verified by the following general lemma \cite[Proposition 4.5]{BC}.
We include the proof for completeness.

\begin{lem} \label{cla:Hom} Let $Y$ be a smooth projective threefold
and $I$ an ideal sheaf of $\sO_{Y}$ such that the closed subscheme
$C$ defined by $I$ is of $($not necessarily pure$)$ dimension
less than or equal to one. Then $\Hom(I,I)\simeq\mC$. \end{lem}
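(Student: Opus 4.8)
The plan is to compute $\Hom(I,I)$ by comparing it with $\Hom(I,\sO_Y)$ through the structure sequence
\[
0 \to I \to \sO_Y \to \sO_C \to 0,
\]
exploiting that $C$ has codimension at least two in the threefold $Y$. If $C=\emptyset$ then $I=\sO_Y$ and the claim is trivial, so I assume $C$ nonempty throughout.

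First I would compute $\Hom(I,\sO_Y)$. Applying $\sHom(-,\sO_Y)$ to the structure sequence and passing to the long exact sequence of global $\Ext$, the contributions of $\sO_C$ are governed by the local sheaves $\sE xt^i(\sO_C,\sO_Y)$. Since $\dim C\le 1$ while $\dim Y=3$, the support of $\sO_C$ has $\codim \ge 2$; and for a coherent sheaf supported in codimension $\ge c$ on a smooth variety one has $\sE xt^i(\sO_C,\sO_Y)=0$ for $i<c$. Here this yields $\sE xt^0(\sO_C,\sO_Y)=\sE xt^1(\sO_C,\sO_Y)=0$, and by the local-to-global spectral sequence this forces $\Hom(\sO_C,\sO_Y)=\Ext^1(\sO_C,\sO_Y)=0$. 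The long exact sequence then collapses to $\Hom(I,\sO_Y)\simeq \Hom(\sO_Y,\sO_Y)=H^0(\sO_Y)=\mC$, generated by the inclusion $\iota\colon I\hookrightarrow \sO_Y$.

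Next I would apply $\Hom(I,-)$ to the same structure sequence. Left-exactness of $\Hom$ gives
\[
0 \to \Hom(I,I) \to \Hom(I,\sO_Y) \xrightarrow{\ \alpha\ } \Hom(I,\sO_C),
\]
so that $\Hom(I,I)=\ker\alpha$. By the previous step $\Hom(I,\sO_Y)$ is one-dimensional, spanned by $\iota$, and $\alpha(\iota)$ is the composite $I\hookrightarrow \sO_Y\twoheadrightarrow \sO_C$, which vanishes because $I$ is by definition the kernel of $\sO_Y\to\sO_C$. Hence $\alpha=0$ and $\Hom(I,I)\simeq\Hom(I,\sO_Y)\simeq\mC$.

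The only genuine input is the vanishing $\sE xt^0(\sO_C,\sO_Y)=\sE xt^1(\sO_C,\sO_Y)=0$, which is precisely where the hypothesis $\dim C\le 1$ (equivalently $\codim C\ge 2$) is used; this is a standard consequence of local duality, or of the Auslander–Buchsbaum formula applied to the regular local rings of $Y$, and it holds irrespective of whether $C$ is reduced or carries embedded points. No further subtlety intervenes, so I do not anticipate a serious obstacle.
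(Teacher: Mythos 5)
Your proof is correct, and its skeleton coincides with the paper's: both reduce $\Hom(I,I)$ to $\Hom(I,\sO_Y)$ via the structure sequence $0\to I\to \sO_Y\to \sO_C\to 0$, and both compute $\Hom(I,\sO_Y)\simeq \mC$ by applying $\Hom(-,\sO_Y)$ and killing the terms coming from $\sO_C$. The one genuine difference is how the key vanishing $\Ext^1(\sO_C,\sO_Y)=0$ is obtained. The paper argues globally, via Serre duality: $\Ext^1(\sO_C,\sO_Y)$ is dual to $H^2(Y,\sO_C\otimes \omega_Y)$, which vanishes because $\sO_C\otimes\omega_Y$ is supported in dimension $\leq 1$. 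You argue locally: $\sE xt^i(\sO_C,\sO_Y)=0$ for $i<\codim C=2$ because the local rings of $Y$ are regular, and then the local-to-global spectral sequence kills $\Hom(\sO_C,\sO_Y)$ and $\Ext^1(\sO_C,\sO_Y)$. One small caveat: this low-degree vanishing is a grade (or local-duality) statement; the Auslander--Buchsbaum formula you also invoke controls vanishing of $\Ext^i$ \emph{above} the projective dimension, not below the codimension, so it is not the right citation --- but the fact itself is standard and correctly stated, so this is a misattribution rather than a gap. What each route buys: yours is local, needing only smoothness for the vanishing (projectivity and connectedness enter solely through $\Hom(\sO_Y,\sO_Y)=\mC$), while the paper's Serre-duality argument is a one-line global shortcut. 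Your endgame is also slightly sharper: rather than noting, as the paper does, that $\Hom(I,I)$ contains the scalars and injects into the one-dimensional space $\Hom(I,\sO_Y)$, you observe that the generator $\iota$ maps to zero in $\Hom(I,\sO_C)$, so the injection is automatically an isomorphism. Both finishes are valid.
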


\begin{proof} Taking $\Hom(I,-)$ of the exact sequence \begin{equation}
0\to I\to\sO_{Y}\to\sO_{C}\to0,\label{eq:OC}\end{equation}
 we obtain an injection $\Hom(I,I)\to\Hom(I,\sO_{Y})$. We have only
to show $\Hom(I,\sO_{Y})\simeq\mC$ since $\Hom(I,I)$ contains at
least constant maps. To compute $\Hom(I,\sO_{Y})$, we take $\Hom(-,\sO_{Y})$
of (\ref{eq:OC}). Then we obtain the exact sequence $0\to\Hom(\sO_{Y},\sO_{Y})\to\Hom(I,\sO_{Y})\to\Ext^{1}(\sO_{C},\sO_{Y})$.
By the Serre duality, we have $\Ext^{1}(\sO_{C},\sO_{Y})\simeq H^{2}(Y,\sO_{C}\otimes\omega_{Y})$,
where the r.h.s. is $0$ since $\dim C\leq1$. Therefore we have $\Hom(I,\sO_{Y})\to\Hom(\sO_{Y},\sO_{Y})\simeq\mC$.
\end{proof}

In what follows, we show the property (ii) of Theorem \ref{thm:D},
i.e., the vanishing: \begin{equation}
\Ext^{\bullet}(I_{x_{1}},I_{x_{2}})=0\text{ for any two distinct points }\ensuremath{x_{1}}\text{ and }\ensuremath{x_{2}}\text{ of }\ensuremath{X}.\label{eq:aim}\end{equation}

We denote by $(-t)$ the tensor product of $\sO_{\widetilde{\hcoY}}(-tM_{\widetilde{\hcoY}})$.
Then the result (3) of \cite[Thm.8.1.1]{HoTa3} may be read as:

\begin{thm} \label{thm:digGvan} $H^{\bullet}({\sA}^{*}\otimes{\sB}(-t))=0\,(1\leq t\leq9)$,
where ${\sA}$ and ${\sB}$, respectively, are one of the sheaves
$\widetilde{\sS}_{L}$, $\widetilde{\sT}^{*}$, $\sO_{\widetilde{\hcoY}}$,
$\widetilde{\sQ}^{*}(M_{\widetilde{\hcoY}})$. \end{thm}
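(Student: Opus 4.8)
This statement merely repackages the vanishing part~(3) of \cite[Thm.8.1.1]{HoTa3}, so the plan is to reconstruct that vanishing by transporting each group $H^{\bullet}(\widetilde{\hcoY},\sA^*\otimes \sB(-t))=\Ext^{\bullet}(\sA,\sB(-t))$ to the explicit birational models of $\widetilde{\hcoY}$ built in Section~\ref{section:birationalY}, where every sheaf in sight becomes a concrete expression in the tautological bundles $\sS,\sQ$ on $\hcoY_3=\mathrm{G}(3,T(-1)^{\wedge2})$ and the bundles $T(-1),\Omega(1)$ pulled back from $\mP(V)$. The computation then reduces to the Bott theorem (Theorem~\ref{thm:Bott}) on the Grassmann bundle $\Lpi_{\hcoY_3}\colon \hcoY_3\to \mP(V)$, followed by Borel--Weil--Bott on $\mP^4$.

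First I would pull everything up along the blow-up $\tLrho_{\hcoY_2}\colon \hcoY_2\to \widetilde{\hcoY}$ of the smooth centre $G_{\rho}$. Since this is a blow-up of a smooth variety along a smooth subvariety, $R\tLrho_{\hcoY_2 *}\sO_{\hcoY_2}=\sO_{\widetilde{\hcoY}}$, and by the projection formula $H^{\bullet}(\widetilde{\hcoY},\sE)=H^{\bullet}(\hcoY_2,\tLrho_{\hcoY_2}^{\;*}\sE)$ for any locally free $\sE$. Under this pull-back, and using the characterizations recalled in Subsection~\ref{subsection:State}, $\widetilde{\sS}_L^*$ becomes $\Lrho_{\hcoY_2}^{\;*}\sS^*(-L_{\hcoY_2})$, $\widetilde{\sQ}^*(M_{\widetilde{\hcoY}})$ becomes $\Lrho_{\hcoY_2}^{\;*}\sQ^*(M_{\hcoY_2})$, the twist $M_{\widetilde{\hcoY}}$ becomes $M_{\hcoY_2}$, and the latter is rewritten by~(\ref{eq:M}) as $\Lrho_{\hcoY_2}^{\;*}(\det\sQ)-L_{\hcoY_2}-F_{\rho}$. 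The remaining sheaf $\widetilde{\sT}^*$ pulls back to $\sT^*$, which by~(\ref{eq:T*}) is the kernel of a surjection $\Lpi_{\hcoY_2}^{\;*}\Omega(1)\to (\Lrho_{\hcoY_2}|_{F_{\rho}})^*\sO_{\mP(T(-1))}(1)$, i.e.\ an elementary modification of $\Lpi_{\hcoY_2}^{\;*}\Omega(1)$ supported on $F_{\rho}$. I would then push down along the second blow-up $\Lrho_{\hcoY_2}\colon \hcoY_2\to \hcoY_3$ of the codimension-three centre $\Prt_{\rho}$: away from the exceptional corrections every factor is a pull-back of a sheaf built from $\sS,\sQ,\det\sQ$ and from $\sO(i),T(-1)^{\wedge j}$ on $\mP(V)$, and $R\Lpi_{\hcoY_3 *}$ of such a sheaf is computed termwise by Theorem~\ref{thm:Bott}. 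The stated bound $1\le t\le 9$ is exactly what keeps all the Schur weights that occur inside the Bott-acyclic chamber, which is the structural reason for the range.

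The genuinely delicate part---and the reason this is more than a one-line Bott calculation---is bookkeeping the two exceptional corrections. On one hand, $\widetilde{\sT}$ is not a pull-back, so its contribution must be split through~(\ref{eq:T*}) and the extra term computed on $F_{\rho}$, where one uses $\sQ|_{\Prt_{\rho}}\simeq \sS(L_{\hcoY_3})|_{\Prt_{\rho}}$ from~(\ref{eq:mislead}), the relation $\det\sQ|_{\Prt_{\rho}}=2(\Ht+L_{\Prt_{\rho}})$ from~(\ref{det}), and the Veronese structure $\Prt_{\rho}=v_2(\mP(T(-1)))$. On the other hand, the twist carries the $-tF_{\rho}$ term from~(\ref{eq:M}), so pushing down along $\Lrho_{\hcoY_2}$ produces exceptional contributions supported on $\Prt_{\rho}$ that one must show either vanish or cancel against the main term. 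I expect this cancellation, rather than any single Bott evaluation, to be the main obstacle. To keep the finite case analysis ($\sA,\sB$ each ranging over four sheaves and $t$ over nine values) under control, I would group the computations by the irreducible $\mathrm{SL}(V)$-module to which each $\sA^*\otimes \sB$ belongs, exactly as in Subsection~\ref{subsection:State}, and use Serre duality on $\hcoY_2$---where $-K_{\hcoY_2}=10M_{\hcoY_2}$ by the proof of Proposition~\ref{cla:hyp}---to identify the twist $-t$ with $-(10-t)$ and thereby halve the range to $1\le t\le 5$.
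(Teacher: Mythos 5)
Your plan correctly recognizes what the paper itself does with this statement: Theorem \ref{thm:digGvan} is not proved here at all, but imported verbatim as part (3) of \cite[Thm.8.1.1]{HoTa3}; indeed the paper explicitly declines to reproduce the relevant machinery, saying the detailed description of $\Lrho_{\widetilde{\hcoY}}$ is ``needed only for computations of cohomology groups of certain locally free sheaves on $\widetilde{\hcoY}$'' in [ibid.~Sect.8] --- i.e.\ precisely for this theorem. Your pull-back dictionary to $\hcoY_2$ is accurate ($\widetilde{\sS}_L^*$ and $\widetilde{\sQ}^*(M_{\widetilde{\hcoY}})$ become $\Lrho_{\hcoY_2}^{\;*}\sS^*(-L_{\hcoY_2})$ and $\Lrho_{\hcoY_2}^{\;*}\sQ^*(M_{\hcoY_2})$, the twist is rewritten by (\ref{eq:M}), and $\widetilde{\sT}^*$ pulls back to the kernel in (\ref{eq:T*})), and reduction to Theorem \ref{thm:Bott} on $\hcoY_3\to\mP(V)$ is surely the spirit of the computation in \cite{HoTa3}. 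But as a proof there is a genuine gap: essentially all of the content lies in the step you defer. The direct images along $\hcoY_2\to\hcoY_3$ of sheaves carrying the $-tF_{\rho}$ twist from (\ref{eq:M}), and of the elementary modification defining $\sT^*$, produce contributions supported on $\Prt_{\rho}$, and you only say you ``expect'' these to vanish or cancel. Nothing recalled in this paper --- (\ref{eq:mislead}), (\ref{det}) and (\ref{eq:M}) by themselves --- carries that bookkeeping out for the sixteen pairs $(\sA,\sB)$ and nine values of $t$; that is exactly the work done in the sections of \cite{HoTa3} which this paper deliberately omits.

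Moreover, the one concrete device you offer to make the case analysis feasible --- Serre duality with ``$-K_{\hcoY_2}=10M_{\hcoY_2}$'' to identify $t$ with $10-t$ and halve the range --- is unsound. The canonical class relevant here is $K_{\widetilde{\hcoY}}=-10M_{\widetilde{\hcoY}}+2F_{\widetilde{\hcoY}}$: this is forced by the formula $K_{Y_9}=-9M_{Y_9}+2F_{Y_9}$ used in Step 2 of Section \ref{section:BC}, since $Y_9=\Vs_{x_1}\in|M_{\widetilde{\hcoY}}|$. Serre duality therefore gives $H^{\bullet}(\sA^*\otimes\sB(-t))^*\simeq H^{13-\bullet}\bigl(\sB^*\otimes\sA((t-10)M_{\widetilde{\hcoY}}+2F_{\widetilde{\hcoY}})\bigr)$, and the extra twist by $2F_{\widetilde{\hcoY}}$ destroys the clean $t\leftrightarrow 10-t$ symmetry between the four sheaves; coping with precisely this $2F_{\widetilde{\hcoY}}$ twist is what makes Claim \ref{cla:vanishing} a genuine argument (Kawamata--Viehweg vanishing, Proposition \ref{cla:hyp}, and the resolution (\ref{eqnarray:Cx})), rather than a formal consequence of duality. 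The sentence you quote from the proof of Proposition \ref{cla:hyp} cannot be read as an equality of divisor classes on $\hcoY_2$: a multiple of $M_{\hcoY_2}$ is a pull-back under $\tLrho_{\hcoY_2}$, hence relatively trivial and never $\tLrho_{\hcoY_2}$-ample as asserted in that same sentence, and it contradicts the formula for $K_{Y_9}$ above. So the proposed halving of the range collapses, and with it the claim that the remaining finite computation is under control.
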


\noindent It is standard to derive the following proposition from
Proposition \ref{cor:flat} and Theorem \ref{thm:digGvan}. \begin{prop}
\label{prop:Extvan} For any two points $x_{1}$ and $x_{2}$ of $\hchow$,
it holds \[
\Ext^{\bullet}(\sI_{x_{1}},\sI_{x_{2}}(-t))=0\,(1\leq t\leq9).\]
 \end{prop}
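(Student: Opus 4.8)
The plan is to resolve both ideal sheaves by the locally free resolution of Proposition \ref{cor:flat} and reduce the $\Ext$-computation to the cohomology vanishing of Theorem \ref{thm:digGvan}. The key observation is that every term of the resolution (\ref{eqnarray:Cx}) has the shape $\sA(-1)$ with $\sA$ one of the four sheaves $\widetilde{\sS}^*_L$, $\widetilde{\sT}^*$, $\sO_{\widetilde{\hcoY}}$, $\widetilde{\sQ}^*(M_{\widetilde{\hcoY}})$ figuring in Theorem \ref{thm:digGvan}; for the last summand one uses the identity $\widetilde{\sQ}^* = \widetilde{\sQ}^*(M_{\widetilde{\hcoY}})(-1)$. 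Throughout I write $(1)$ for $\otimes\,\sO_{\widetilde{\hcoY}}(M_{\widetilde{\hcoY}})$, dual to the convention $(-1)$.

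First I would fix, for $i=1,2$, the locally free resolution $L_i^{\bullet}\to \sI_{x_i}$ provided by (\ref{eqnarray:Cx}), so that each term of $L_i^{\bullet}$ is a direct sum of sheaves of the form $\sA(-1)$ as above. Since $L_1^{\bullet}$ is a bounded complex of locally free sheaves quasi-isomorphic to $\sI_{x_1}$, its dual $(L_1^{\bullet})^{\vee}$ is again a bounded complex of locally free sheaves and computes $\Rhom(\sI_{x_1},-)$; replacing $\sI_{x_2}(-t)$ by $L_2^{\bullet}(-t)$ and using that $(L_1^{\bullet})^{\vee}$ is locally free, I obtain in $\sD^b(\widetilde{\hcoY})$ the isomorphism
\[
\Rhom(\sI_{x_1}, \sI_{x_2}(-t)) \simeq (L_1^{\bullet})^{\vee}\otimes L_2^{\bullet}(-t).
\]
This is a bounded complex of locally free sheaves whose terms are direct sums of $(\sA_1^*\otimes \sA_2)(-t)$, where $\sA_1(-1)$ and $\sA_2(-1)$ run through the summands of $L_1^{\bullet}$ and $L_2^{\bullet}$; here the twist matches exactly because $\sA_1^*(1)\otimes \sA_2(-1)(-t)=(\sA_1^*\otimes \sA_2)(-t)$.

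Next I would feed this complex into the hypercohomology spectral sequence
\[
E_1^{p,q}=H^q\big(\widetilde{\hcoY},\,((L_1^{\bullet})^{\vee}\otimes L_2^{\bullet}(-t))^{p}\big)\Longrightarrow \Ext^{p+q}(\sI_{x_1}, \sI_{x_2}(-t)).
\]
Each $E_1^{p,q}$ is a direct sum of groups $H^{\bullet}\big((\sA_1^*\otimes \sA_2)(-t)\big)=H^{\bullet}(\sA_1^*\otimes \sA_2(-t))$ with $\sA_1,\sA_2$ in the distinguished four-element set, so these vanish for $1\le t\le 9$ by Theorem \ref{thm:digGvan} (take $\sA=\sA_1$, $\sB=\sA_2$). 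Hence $E_1^{p,q}=0$ for all $p,q$ and the spectral sequence yields $\Ext^{\bullet}(\sI_{x_1}, \sI_{x_2}(-t))=0$, as claimed.

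The argument is formal once (\ref{eqnarray:Cx}) and Theorem \ref{thm:digGvan} are available; the one point deserving care — and the reason the ranges line up — is the twist bookkeeping, namely that dualizing $L_1^{\bullet}$ contributes $(1)$, the term $\widetilde{\sQ}^*$ secretly carries a $(-1)$, and tensoring the two resolutions with $\sI_{x_2}(-t)$ leaves a net twist of exactly $-t$, placing every relevant cohomology group in the window $1\le t\le 9$ covered by the vanishing theorem. This matching of ranges, together with the fact that the four sheaves in (\ref{eqnarray:Cx}) are precisely those in Theorem \ref{thm:digGvan}, is presumably what the authors mean by calling the derivation standard.
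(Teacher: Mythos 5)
Your proof is correct and is precisely the ``standard'' derivation that the paper alludes to (the paper gives no details, citing only Proposition \ref{cor:flat} and Theorem \ref{thm:digGvan}): resolve both $\sI_{x_1}$ and $\sI_{x_2}$ by (\ref{eqnarray:Cx}), observe that every term is a sum of sheaves $\sA(-1)$ with $\sA$ among $\widetilde{\sS}_L^*$, $\widetilde{\sT}^*$, $\sO_{\widetilde{\hcoY}}$, $\widetilde{\sQ}^*(M_{\widetilde{\hcoY}})$, and reduce via the dual complex and the hypercohomology spectral sequence to the vanishings $H^{\bullet}(\sA_1^*\otimes\sA_2(-t))=0$ for $1\le t\le 9$. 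Your twist bookkeeping, including the identification $\widetilde{\sQ}^*=\widetilde{\sQ}^*(M_{\widetilde{\hcoY}})(-1)$, matches the ranges exactly, so nothing is missing.
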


\vspace{0.3cm}
 We derive the vanishing (\ref{eq:aim}) from Proposition \ref{prop:Extvan}
following \cite[Subsection 5.6]{BC}. For the derivation, it is important
to a choose suitable sequence of the complete intersections of the
members of $|M_{\widetilde{\hcoY}}|$ containing $Y$.

\begin{lem} \label{cla:tower} There exists a tower of complete intersections
of $\widetilde{\hcoY}$ \[
Y_{0}\subset Y_{1}\subset\cdots\subset Y_{9}\subset Y_{10}\;(\dim Y_{i}=3+i),\]
by the members of $|M_{\widetilde{\hcoY}}|$, which satisfies the
following conditions: Set ${\Delta}_{x_{i};j}:={{\Delta}_{x_{i}}}|_{Y_{j}}$
and denote by $\sI_{x_{i};j}$ the ideal sheaf of ${{\Delta}_{x_{i};j}}$
in $Y_{j}$ and by $\iota_{Y_{j}}$ the embedding $Y_{j}\hookrightarrow Y_{j+1}.$
Then 

\begin{myitem2}

\item[\rm{(1)}]$Y_{0}=Y$ and $Y_{10}=\widetilde{\hcoY}$. 

\item[\rm{(2)}]$Y_{9}=\Vs_{x_{1}}$, where $\Vs_{x_{1}}$ is the
fiber of $\Vs\to\hchow$ over $x_{1}$ $($cf.~Section $\ref{subsection:Univ})$.
In particular, $Y_{9}$ contains ${\Delta}_{x_{1}}$ $($ Proposition
$\ref{cla:Vs})$, and hence $\iota_{Y_{9}*}\sI_{x_{1};9}=\sI_{x_{1}}/\sO_{\widetilde{\hcoY}}(-Y_{9})\simeq\sI_{x_{1}}/\sO_{\widetilde{\hcoY}}(-1)$.
The ideal sheaf $\sI_{x_{2};9}$ is equal to $\sI_{x_{2}}\otimes\sO_{Y_{9}}\simeq\iota_{Y_{9}}^{*}\sI_{x_{2}}$. 

\item[\rm{(3)}]$Y_{8}=\Vs_{x_{1}}\cap\Vs_{x_{2}}$, where the intersection
is taken in $\widetilde{\hcoY}$. In particular, $Y_{8}$ contains
${{\Delta}_{x_{2}}}|_{Y_{9}}$, and hence $\iota_{Y_{8}*}\sI_{x_{2};8}=\sI_{x_{2};9}/\sO_{Y_{9}}(-Y_{8})\simeq\sI_{x_{2};9}/\sO_{Y_{9}}(-1)$.
The ideal sheaf $\sI_{x_{1};8}$ is equal to $\sI_{x_{1};9}\otimes\sO_{Y_{8}}\simeq\iota_{Y_{8}}^{*}\sI_{x_{1};9}$. 

\item[\rm{(4)}]For any $j\leq7$, the ideal sheaf $\sI_{x_{i};j}$
is equal to $\sI_{x_{i};j+1}\otimes\sO_{Y_{j}}\simeq\iota_{Y_{j}}^{*}\sI_{x_{i};j+1}$. 

\end{myitem2}\end{lem}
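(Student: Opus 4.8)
The plan is to build the entire tower from one base-point-free-off-$Y$ system. Let $\Lambda\subset|M_{\widetilde{\hcoY}}|$ be the linear subsystem of pull-backs to $\widetilde{\hcoY}$ of the hyperplanes of $\mP({\ft S}^2 V^*)$ that contain $P$. Since the hyperplanes through the $4$-plane $P$ form a $\mP^9$ with base locus exactly $P$, the system $\Lambda$ is a $\mP^9$ with $\mathrm{Bs}\,\Lambda=Y$; in particular every member contains $Y$, and $\Lambda$ is base-point free off $Y$. As $x_1,x_2\in X\subset P^{\perp}$, the sections $\Vs_{x_1},\Vs_{x_2}$ lie in $\Lambda$ and their defining equations are independent once $x_1\neq x_2$; I would extend them to a basis $s_0=x_1,s_1=x_2,s_2,\dots,s_9$ of the $10$-dimensional space of sections cutting out $Y$, and set $Y_j:=\widetilde{\hcoY}\cap\{s_0=\dots=s_{9-j}=0\}$. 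Then $Y_{10}=\widetilde{\hcoY}$, $Y_9=\Vs_{x_1}$, $Y_8=\Vs_{x_1}\cap\Vs_{x_2}$ and $Y_0=Y$, as demanded by (1)--(3). Every ideal-sheaf identity is then local and of one of two kinds: when the cutting section already lies in the ideal of the relevant $\Delta$ (so $Y_j$ contains it) one gets the quotient form $\sI_{\bullet}/\sO_{\widetilde{\hcoY}}(-1)$; when the cutting section is a non-zerodivisor on the relevant structure sheaf, tensoring $0\to\sI\to\sO\to\sO_{\Delta}\to 0$ by $\sO_{Y_j}$ yields $\sI_{x_i;j}\simeq\iota_{Y_j}^{*}\sI_{x_i;j+1}$. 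Thus the lemma reduces to a regular-sequence analysis of the cuts against $\Delta_{x_1}$ and $\Delta_{x_2}$.

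By Proposition~\ref{cla:Vs} we have $\Delta\subset\Vs$, hence $\Delta_{x_i}\subset\Vs_{x_i}$; this produces the quotient directions in (2) and (3) ($\Vs_{x_1}\supset\Delta_{x_1}$, and $\Vs_{x_2}\supset\Delta_{x_2;9}=\Delta_{x_2}$). Because $\Delta$ is Cohen--Macaulay (Theorem~\ref{thm:resolY}) and flat over $\hchow$ (Proposition~\ref{cor:flat}), each $\Delta_{x_i}$ and each of its proper sections is Cohen--Macaulay of pure dimension with no embedded component, so the required non-zerodivisor property is precisely the statement that the cutting divisor contains no component of the $\Delta$ being cut. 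For the eight cuts by $s_2,\dots,s_9$ I would apply Bertini inside $\Lambda$: since $\mathrm{Bs}\,\Lambda=Y$, a general member of $\Lambda$ avoids any prescribed irreducible subvariety not contained in $Y$, so a general choice of $s_2,\dots,s_9$ (a finite intersection of open dense conditions, compatible with completing the basis) cuts every component of $\Delta_{x_1;j+1}$ and $\Delta_{x_2;j+1}$ properly for all $j\le 7$, giving (4).

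What remains for the Bertini step is that no component of $\Delta_{x_i;j+1}$ lies in $Y=\mathrm{Bs}\,\Lambda$, which I would settle by dimension using Proposition~\ref{prop:Cflat}: for $\dim\Delta_{x_i;j+1}>\dim Y=3$ containment is impossible; in the borderline dimension it would force a component equal to the irreducible $Y$, contradicting $Y\not\subset\Delta_{x_i}$ (as $\Delta_{x_i}\cap Y=C_{x_i}$ is a curve); and a $2$-dimensional component inside $Y$ would survive the final cut $\{s_9=0\}\supset Y$ intact and make $C_{x_i}$ two-dimensional, again a contradiction.

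The hard part is the pair of good-direction statements for the two \emph{forced} cuts: I must show $\Vs_{x_1}$ contains no component of $\Delta_{x_2}$ and $\Vs_{x_2}$ none of $\Delta_{x_1}$, for \emph{every} distinct pair $x_1,x_2$, and these special members of $\Lambda$ are not covered by Bertini. Here I would argue directly. The image of $\Delta_{x_2}$ under $\widetilde{\hcoY}\to\Hes\subset\mP({\ft S}^2 V^*)$ is the quintic section $\Hes\cap\mP_{x_2}$ of the codimension-three linear space $\mP_{x_2}=\{[Q]\mid l_{x_2}\subset Q\}=({\ft S}^2 l_{x_2})^{\perp}$, and hence spans $\mP_{x_2}$. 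Since $\Vs_{x_1}$ is the hyperplane $w_{x_1}^{\perp}$ with $w_{x_1}=\bm{x}_1\bm{y}_1$ and $l_{x_1}=\langle\bm{x}_1,\bm{y}_1\rangle$ a genuine line (as $X$ avoids the Veronese $\Sing\chow$), the inclusion $\Delta_{x_2}\subset\Vs_{x_1}$ forces $\mP_{x_2}\subset w_{x_1}^{\perp}$, i.e. $w_{x_1}\in{\ft S}^2 l_{x_2}$, i.e. $\bm{x}_1,\bm{y}_1\in l_{x_2}$, so $l_{x_1}=l_{x_2}$. As $X$ embeds in $\mathrm{G}(2,V)$ this gives $x_1=x_2$. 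Therefore, for $x_1\neq x_2$ neither forced cut contains its opposite $\Delta$, the two irreducible fibres are cut properly, and all the regular-sequence conditions of the tower hold.
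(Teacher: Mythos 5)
Your tower is exactly the paper's tower ($Y_9=\Vs_{x_1}$, $Y_8=\Vs_{x_1}\cap\Vs_{x_2}$, the rest cut by further members of the $\mP^9$ of sections vanishing on $Y$), and your reduction of (2)--(4) to non-zerodivisor statements, via Cohen--Macaulayness and the Tor sequence, is the right frame. Where you genuinely diverge is in how those statements are verified. The paper gets all of them at once from the last part of the proof of Proposition \ref{prop:Cflat}: $\Delta_{x_i}$ is Cohen--Macaulay of pure dimension $10$, the nine divisors cutting it down to $C_{x_i}=\Delta_{x_i}\cap Y$ end on a curve, and in a Cohen--Macaulay local ring a dimension drop equal to the number of elements forces a regular sequence \cite[Theorem 17.4 iii)]{M}; equivalently, if any divisor in the chain contained a component of the scheme being cut, that component would survive the remaining cuts and force $\dim C_{x_i}\geq 2$. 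In particular your ``hard part'' --- transversality of the forced cuts $\Vs_{x_1}$ against $\Delta_{x_2}$ and $\Vs_{x_2}$ against $\Delta_{x_1}$ --- comes for free in the paper, with no Bertini argument and no genericity. Your replacement of this dimension count by explicit projective geometry (the image of $\Delta_{x_2}$ in $\mP({\ft S}^2V^*)$ is $\Hes\cap \mP_{x_2}$, which spans $\mP_{x_2}$, so containment in $w_{x_1}^{\perp}$ forces $w_{x_1}\in {\ft S}^2 V_2$ for $l_{x_2}=\mP(V_2)$, hence $l_{x_1}=l_{x_2}$ and $x_1=x_2$) is a correct mechanism and has the virtue of explaining geometrically \emph{why} the forced cuts are transversal; but it is strictly more work than the paper needs.

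There is, however, one genuine gap as written. You correctly state that what must be shown is that $\Vs_{x_1}$ contains no irreducible \emph{component} of $\Delta_{x_2}$, but the spanning argument only excludes the containment of $\Delta_{x_2}$ as a whole; you bridge the two by calling the fibres $\Delta_{x_i}$ irreducible, which is proved neither by you nor anywhere in the paper. This is not innocuous for your method: purity alone does not exclude a $10$-dimensional component of $\Delta_{x_2}$ lying inside the exceptional divisor $F_{\widetilde{\hcoY}}$, and such a component would map into the locus $G_{\hcoY}$ of rank $\leq 2$ points (of dimension $8$), so its image in $\mP_{x_2}$ could be degenerate and invisible to the spanning argument. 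The gap is fillable: $\Delta$ is irreducible (it is the closure of the image of $\Delta^{\zpq}_2$, which is irreducible by Propositions \ref{cla:delta3} and \ref{cla:delta2}) and flat over $\hchow$ (Proposition \ref{cor:flat}), so every component of every fibre has dimension exactly $10$; hence no component of $\Delta_{x_2}$ lies in $\Prt_{\sigma}$ (dimension $7$), the part of $\Delta_{x_2}$ over $\widetilde{\hcoY}\setminus\Prt_{\sigma}$ is dense, and it is irreducible since it is the image of the irreducible fibre $\Delta^{\zpq}_{2,x_2}$. Either supply this argument or, more economically, switch to the paper's dimension count, which needs no irreducibility at all. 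Finally, a small slip: your parenthetical ``$\Delta_{x_2;9}=\Delta_{x_2}$'' is false ($\Delta_{x_2;9}=\Delta_{x_2}\cap\Vs_{x_1}$); what condition (3) actually needs is only the containment $\Delta_{x_2;9}\subset\Vs_{x_2}$, which does follow from $\Delta_{x_2}\subset\Vs_{x_2}$.
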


\begin{proof} We take $Y_{9}=\Vs_{x_{1}}$ and $Y_{8}=\Vs_{x_{1}}\cap\Vs_{x_{2}}$
as in the statement and let $Y_{7},\dots,Y_{0}$ be general complete
intersections containing $Y$ (recall that $Y$ is contained in any
fiber of $\Vs\to\hchow$ over a point of $X$). Note that $\Vs_{x_{1}}$
is irreducible by Proposition \ref{cla:hyp}, and also $\Vs_{x_{1}}\not=\Vs_{x_{2}}$
for $x_{1}\not=x_{2}$. The last property follows from the duality
between $\mP({\ft S}^{2}V)$ and $\mP({\ft S}^{2}V^{*})$ and the
fact that $X$ is embedded in $\chow\subset\mP({\ft S}^{2}V)$.

The descriptions of $\sI_{x_{2};9}$, $\sI_{x_{1};8}$ and $\sI_{x_{i};j}$
($i=1,2$, $0\leq j\leq7$) follow from the last part of the proof
of Proposition \ref{prop:Cflat}. \end{proof}

\vspace{0.2cm}
 The choices of $Y_{9}$ and $Y_{8}$ in the lemma turns out to be
crucial in Steps 1 and 2 of the arguments below. 

\textbf{$\;$}

\textbf{Step 1 (from $\widetilde{\hcoY}$ to $Y_{9}$).}

\noindent In this step, we show \begin{equation}
\Ext_{Y_{9}}^{\bullet-1}(\sI_{x_{1};9},\sI_{x_{2};9}(-t+1))=0\,(1\leq t\leq9).\label{eq:step1}\end{equation}

By $\sI_{x_{2};9}\simeq\iota_{Y_{9}}^{*}\sI_{x_{2}}$, we have \begin{eqnarray}
\Ext_{Y_{9}}^{\bullet-1}(\sI_{x_{1};9},\sI_{x_{2};9}(-t+1))\simeq\Ext_{Y_{9}}^{\bullet-1}(\sI_{x_{1};9},\iota_{Y_{9}}^{*}\sI_{x_{2}}(-t+1))\label{eqnarray:A}\end{eqnarray}
 By applying the Grothendieck-Verdier duality \ref{cla:duality} to
the embedding $\iota_{Y_{9}}\colon Y_{9}\hookrightarrow\widetilde{\hcoY}$,
we have \[
\Ext_{Y_{9}}^{\bullet-1}(\sI_{x_{1};9},\iota_{Y_{9}}^{*}\sI_{x_{2}}(-t+1))\simeq\Ext_{\widetilde{\hcoY}}^{\bullet}(\iota_{Y_{9}*}\sI_{x_{1};9},\sI_{x_{2}}(-t)).\]
 Therefore we have \begin{equation}
\Ext_{Y_{9}}^{\bullet-1}(\sI_{x_{1};9},\sI_{x_{2};9}(-t+1))\simeq\Ext_{\widetilde{\hcoY}}^{\bullet}(\iota_{Y_{9}*}\sI_{x_{1};9},\sI_{x_{2}}(-t)).\label{eq:B}\end{equation}
 Taking $\Hom(-,\sI_{x_{2}}(-t))$ of the exact sequence \[
0\to\sO_{\widetilde{\hcoY}}(-1)\to\sI_{x_{1}}\to\iota_{Y_{9}*}\sI_{x_{1};9}\to0\]
 (cf.~Lemma \ref{cla:tower} (2)), we obtain the exact sequence \begin{equation}
\begin{aligned} & H^{\bullet-1}(\sI_{x_{2}}((-t+1))\to\Ext_{\widetilde{\hcoY}}^{\bullet}(\iota_{Y_{9}*}\sI_{x_{1};9},\sI_{x_{2}}(-t))\to\Ext_{\widetilde{\hcoY}}^{\bullet}(\sI_{x_{1}},\sI_{x_{2}}(-t)),\end{aligned}
\label{eqnarray:C}\end{equation}
 where the last term vanishes by Proposition \ref{prop:Extvan}. 

\begin{cla} $H^{\bullet-1}(\sI_{x_{2}}((-t+1))=0$. \end{cla}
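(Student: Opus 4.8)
The plan is to read the vanishing off directly from the locally free resolution of $\sI_{x_1}$ obtained in Proposition \ref{cor:flat}. First I would apply the twist $(-(t-1))$ to the resolution (\ref{eqnarray:Cx}) of $\sI_{x_1}$, i.e. tensor it with $\sO_{\widetilde{\hcoY}}(-(t-1)M_{\widetilde{\hcoY}})$, obtaining the exact sequence
\[
0\to \widetilde{\sS}_L^*(-t)\to \widetilde{\sT}^*(-t)^{\oplus 2}\to \sO_{\widetilde{\hcoY}}(-t)^{\oplus 3}\oplus \widetilde{\sQ}^*(-t+1)\to \sI_{x_1}(-t+1)\to 0 .
\]
The strategy is then to verify that each of the three locally free terms on the left has vanishing cohomology in the whole range $1\le t\le 9$, so that the hypercohomology spectral sequence of this resolution forces $H^{\bullet}(\sI_{x_1}(-t+1))=0$, which is the asserted claim.

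The key observation is that, after this twist, every locally free term is an instance of Theorem \ref{thm:digGvan} with $\sA=\sO_{\widetilde{\hcoY}}$ (hence $\sA^*=\sO_{\widetilde{\hcoY}}$) and $\sB$ running through the four sheaves $\widetilde{\sS}_L^*$, $\widetilde{\sT}^*$, $\sO_{\widetilde{\hcoY}}$, $\widetilde{\sQ}^*(M_{\widetilde{\hcoY}})$. Indeed $\widetilde{\sS}_L^*(-t)$, $\widetilde{\sT}^*(-t)$ and $\sO_{\widetilde{\hcoY}}(-t)$ are manifestly of the form $\sB(-t)$, so Theorem \ref{thm:digGvan} gives that all their cohomology vanishes for $1\le t\le 9$. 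For the remaining summand I would absorb the twist by writing $\widetilde{\sQ}^*(-t+1)=\{\widetilde{\sQ}^*(M_{\widetilde{\hcoY}})\}(-t)$, which is again of the form $\sB(-t)$ with $\sB=\widetilde{\sQ}^*(M_{\widetilde{\hcoY}})$ and $t$ in the admissible range $1\le t\le 9$; Theorem \ref{thm:digGvan} then yields the vanishing of its cohomology as well.

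Finally I would propagate these vanishings through the resolution: splitting the four-term sequence into the two short exact sequences determined by the image of $\widetilde{\sT}^*(-t)^{\oplus 2}$ and chasing the associated long exact cohomology sequences (equivalently, invoking the spectral sequence of the complex of locally free sheaves) shows that the cohomology of $\sI_{x_1}(-t+1)$ coincides with a subquotient of the cohomology of the three locally free terms, all of which vanish, giving $H^{\bullet}(\sI_{x_1}(-t+1))=0$. There is no genuine obstacle here, since the exactness of (\ref{eqnarray:Cx}) is already furnished by Proposition \ref{cor:flat}; the only point demanding care is the bookkeeping of twists, namely matching the untwisted summand $\widetilde{\sQ}^*$ against the normalized sheaf $\widetilde{\sQ}^*(M_{\widetilde{\hcoY}})$ appearing in Theorem \ref{thm:digGvan} so that its twist lands in the range $1\le t\le 9$ rather than $0\le t\le 8$.
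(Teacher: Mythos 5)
Your proof is correct and takes essentially the same route as the paper: the paper's own proof of this claim is a one-line citation of Proposition \ref{cor:flat} (i.e.\ the resolution (\ref{eqnarray:Cx})) together with Theorem \ref{thm:digGvan}, and your write-up simply makes explicit the twist bookkeeping (in particular the identification $\widetilde{\sQ}^*(-t+1)=\{\widetilde{\sQ}^*(M_{\widetilde{\hcoY}})\}(-t)$ needed to stay in the range $1\le t\le 9$) and the splitting of the four-term exact sequence into short exact sequences that the citation leaves implicit.
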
 

\begin{proof} The assertion follows from Proposition \ref{cor:flat}
and Theorem \ref{thm:digGvan}. \end{proof} Therefore we have (\ref{eq:step1})
from (\ref{eq:B}) and (\ref{eqnarray:C}).\\

\vspace{0.3cm}
 \textbf{Step 2 (from $Y_{9}$ to $Y_{8}$).}

\noindent In this step, we show \begin{equation}
\Ext_{Y_{8}}^{\bullet-1}(\sI_{x_{1};8},\sI_{x_{2};8}(-t+1))=0\,(1\leq t\leq9).\label{eq:step2}\end{equation}

Since $\sI_{x_{1};8}\simeq\iota_{Y_{8}}^{*}\sI_{x_{1};9}$, we have
\begin{equation}
\begin{aligned}\Ext_{Y_{8}}^{\bullet-1}(\sI_{x_{1};8},\sI_{x_{2};8}(-t+1)) & \simeq\Ext_{Y_{8}}^{\bullet-1}(\iota_{Y_{8}}^{*}\sI_{x_{1};9},\sI_{x_{2};8}(-t+1))\\
 & \simeq\Ext_{Y_{9}}^{\bullet-1}(\sI_{x_{1};9},\iota_{Y_{8}*}\sI_{x_{2};8}(-t+1)).\end{aligned}
\label{eqnarray:D}\end{equation}
 From (\ref{eqnarray:D}) and $\Hom(\sI_{x_{1};9}(t-1),-)$ of the
exact sequence \[
0\to\sO_{Y_{9}}(-1)\to\sI_{x_{2};9}\to\iota_{Y_{8}*}\sI_{x_{2};8}\to0\]
 (cf.~Lemma \ref{cla:tower} (3)) we obtain the exact sequence \begin{equation}
\begin{aligned}\Ext_{Y_{9}}^{\bullet-1}(\sI_{x_{1};9},\sI_{x_{2};9}(-t+1)) & \to\Ext_{Y_{8}}^{\bullet-1}(\sI_{x_{1};8},\sI_{x_{2};8}(-t+1))\\
 & \to\Ext_{Y_{9}}^{\bullet}(\sI_{x_{1};9},\sO_{Y_{9}}(-t)),\end{aligned}
\label{eqnarray:E}\end{equation}
 where the first term vanishes from (\ref{eq:step1}).

\begin{cla} \label{cla:vanishing} $\Ext_{Y_{9}}^{\bullet}(\sI_{x_{1};9},\sO_{Y_{9}}(-t))=0$.
\end{cla}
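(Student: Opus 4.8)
The plan is to transport the computation off the singular fibre $Y_9=\Vs_{x_1}$ and onto the smooth ambient space $\widetilde{\hcoY}$, exactly in the spirit of Step 1, and then to reduce everything to the cohomology vanishings of Theorem \ref{thm:digGvan} by means of the locally free resolution (\ref{eqnarray:Cx}) of $\sI_{x_1}$. The first move is to apply Grothendieck--Verdier duality to the closed immersion $\iota_{Y_9}\colon Y_9\hookrightarrow \widetilde{\hcoY}$. Since $Y_9\in|M_{\widetilde{\hcoY}}|$ is a Cartier divisor in the smooth variety $\widetilde{\hcoY}$, it is a regular embedding of codimension one with $\omega_{Y_9/\widetilde{\hcoY}}\simeq \sO_{Y_9}(M_{\widetilde{\hcoY}})=\sO_{Y_9}(1)$, so that $\iota_{Y_9}^{!}\,\sO_{\widetilde{\hcoY}}(-t-1)\simeq \sO_{Y_9}(-t)[-1]$. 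Hence, up to a shift,
\[
\Ext^{\bullet}_{Y_9}(\sI_{x_1;9},\sO_{Y_9}(-t))\simeq
\Ext^{\bullet+1}_{\widetilde{\hcoY}}(\iota_{Y_9 *}\sI_{x_1;9},\sO_{\widetilde{\hcoY}}(-t-1)),
\]
and it suffices to prove the vanishing of the right-hand side for $1\le t\le 9$.

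Next I would feed in the short exact sequence of Lemma \ref{cla:tower}(2),
\[
0\to \sO_{\widetilde{\hcoY}}(-1)\to \sI_{x_1}\to \iota_{Y_9 *}\sI_{x_1;9}\to 0,
\]
and apply $\Ext^{\bullet}_{\widetilde{\hcoY}}(-,\sO_{\widetilde{\hcoY}}(-t-1))$. This sandwiches $\Ext^{\bullet}_{\widetilde{\hcoY}}(\iota_{Y_9 *}\sI_{x_1;9},\sO(-t-1))$ between the two flanking groups $\Ext^{\bullet}_{\widetilde{\hcoY}}(\sI_{x_1},\sO(-t-1))$ and $\Ext^{\bullet}_{\widetilde{\hcoY}}(\sO(-1),\sO(-t-1))=H^{\bullet}(\widetilde{\hcoY},\sO(-t))$, so it is enough to kill both. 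The second vanishes for $1\le t\le 9$ by Theorem \ref{thm:digGvan} with $\sA=\sB=\sO_{\widetilde{\hcoY}}$.

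For the first flanking term I would dualize the resolution (\ref{eqnarray:Cx}), i.e. apply $\sHom(-,\sO(-t-1))$, obtaining a complex with terms
\[
\sO_{\widetilde{\hcoY}}(-t)^{\oplus 3}\oplus \widetilde{\sQ}(-t-1),\qquad
\widetilde{\sT}(-t)^{\oplus 2},\qquad \widetilde{\sS}_L(-t),
\]
whose hypercohomology computes $\Ext^{\bullet}_{\widetilde{\hcoY}}(\sI_{x_1},\sO(-t-1))$. Writing each summand in the normal form $\sA^{*}\otimes \sB(-t)$ with $\sA,\sB$ among the four sheaves $\widetilde{\sS}_L^{*},\widetilde{\sT}^{*},\sO_{\widetilde{\hcoY}},\widetilde{\sQ}^{*}(M_{\widetilde{\hcoY}})$ --- for instance $\widetilde{\sQ}(-t-1)=[\widetilde{\sQ}^{*}(M_{\widetilde{\hcoY}})]^{*}(-t)$ with $\sB=\sO_{\widetilde{\hcoY}}$, and $\widetilde{\sT}(-t),\widetilde{\sS}_L(-t)$ with $\sA=\widetilde{\sT}^{*},\widetilde{\sS}_L^{*}$ --- Theorem \ref{thm:digGvan} forces the cohomology of every term to vanish for $1\le t\le 9$. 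Therefore the spectral sequence degenerates to zero, $\Ext^{\bullet}_{\widetilde{\hcoY}}(\sI_{x_1},\sO(-t-1))=0$, both flanking terms vanish, and the desired $\Ext^{\bullet}_{Y_9}(\sI_{x_1;9},\sO_{Y_9}(-t))=0$ follows.

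The one genuinely delicate point is the application of duality on the singular fibre $Y_9$: the cleanest justification is simply that $Y_9$ is a Cartier divisor in the smooth $\widetilde{\hcoY}$, hence a regular embedding for which $\iota_{Y_9}^{!}$ of a line bundle is a line bundle up to shift. This is exactly where Proposition \ref{cla:hyp} does its work, since it guarantees that $Y_9$ is moreover normal and Cohen--Macaulay with only canonical singularities; in the equivalent Serre-duality formulation $\Ext^{i}_{Y_9}(\sI_{x_1;9},\sO_{Y_9}(-t))\simeq H^{12-i}(Y_9,\sI_{x_1;9}(t-9))^{*}$ (using $\omega_{Y_9}\simeq\sO_{Y_9}(-9)$ from adjunction), that regularity is precisely what legitimizes the dualizing sheaf and the Kawamata--Viehweg vanishing on $Y_9$. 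The remaining subtlety is purely bookkeeping of twists: the ranges match exactly, as $1\le t\le 9$ makes every twist that appears land in $\{-9,\dots,-1\}$, the interval covered by Theorem \ref{thm:digGvan}; I expect verifying this matching (rather than any conceptual difficulty) to be the main thing to check carefully.
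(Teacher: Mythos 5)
Your proof is correct, but it takes a genuinely different route from the paper's. The paper stays on $Y_9$ as long as possible: it applies Serre--Grothendieck duality on the singular variety $Y_9$ itself, using $K_{Y_9}=-9M_{Y_9}+2F_{Y_9}$, so that the claim becomes the vanishing of $H^{12-\bullet}(Y_9,\sI_{x_1;9}((t-9)M_{Y_9}+2F_{Y_9}))$; it then proves the comparison isomorphism
\[
H^{12-\bullet}(Y_9, \sI_{x_1;9}((t-9)M_{Y_9}+2F_{Y_9}))
\simeq
H^{12-\bullet}(\widetilde{\hcoY}, \sI_{x_1}((t-9)M_{\widetilde{\hcoY}}+2F_{\widetilde{\hcoY}}))
\]
by playing the structure-sheaf sequences of $\Delta_{x_1}$ on $\widetilde{\hcoY}$ and on $Y_9$ against the Kawamata--Viehweg vanishing theorem --- which is precisely where Proposition \ref{cla:hyp} (canonical singularities of $\Vs_{x_1}$) enters --- and finally kills the right-hand side using the resolution (\ref{eqnarray:Cx}) and Serre duality back on $\widetilde{\hcoY}$, landing on the four vanishings $H^{\bullet}(\widetilde{\sS}_L(-t))$, $H^{\bullet}(\widetilde{\sT}(-t))$, $H^{\bullet}(\sO_{\widetilde{\hcoY}}(-t))$, $H^{\bullet}(\widetilde{\sQ}(-t-1))$ of Theorem \ref{thm:digGvan}. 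You instead transport the problem to the smooth ambient space at the very first step, via $\iota_{Y_9}^{!}\sO_{\widetilde{\hcoY}}(-t-1)\simeq\sO_{Y_9}(-t)[-1]$ for the Cartier-divisor embedding (the same duality maneuver as the paper's Step 1), and then run the sandwich coming from $0\to\sO_{\widetilde{\hcoY}}(-1)\to\sI_{x_1}\to\iota_{Y_9 *}\sI_{x_1;9}\to 0$ together with the dualized resolution (\ref{eqnarray:Cx}). You end at exactly the same four vanishings from Theorem \ref{thm:digGvan}, but you avoid duality on a singular space, avoid the canonical-divisor bookkeeping with $F_{\widetilde{\hcoY}}$, avoid Kawamata--Viehweg entirely, and in fact never need Proposition \ref{cla:hyp}: the only input about $Y_9$ is that it is a member of $|M_{\widetilde{\hcoY}}|$. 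That is a real simplification, and it makes Step 2 structurally parallel to Step 1.

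One caveat: your closing aside is inaccurate on two counts. Adjunction does not give $\omega_{Y_9}\simeq\sO_{Y_9}(-9)$; since $K_{\widetilde{\hcoY}}=-10M_{\widetilde{\hcoY}}+2F_{\widetilde{\hcoY}}$, it gives $K_{Y_9}=-9M_{Y_9}+2F_{Y_9}$, and it is exactly the extra term $2F_{Y_9}$ that forces the paper into the Kawamata--Viehweg argument. Correspondingly, Proposition \ref{cla:hyp} is not what legitimizes your main argument --- the regular-embedding form of Grothendieck--Verdier duality requires nothing about the singularities of $Y_9$ --- it would only be needed if one insisted on the intrinsic Serre-duality formulation on $Y_9$, which your proof, as written in its main line, never uses.
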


\begin{proof} Set $F_{Y_{9}}:=F_{\widetilde{\hcoY}}|_{Y_{9}}$. By
the Serre-Grothendieck duality, it holds \begin{equation}
\Ext_{Y_{9}}^{\bullet}(\sI_{x_{1};9},\sO_{Y_{9}}(-t))\simeq H^{12-\bullet}(Y_{9},\sI_{x_{1};9}((t-9)M_{Y_{9}}+2F_{Y_{9}}))^{*}\label{eq:step2dual}\end{equation}
 since $K_{Y_{9}}=-9M_{Y_{9}}+2F_{Y_{9}}$. We show that \begin{equation}
H^{12-\bullet}(Y_{9},\sI_{x_{1};9}((t-9)M_{Y_{9}}+2F_{Y_{9}}))\simeq H^{12-\bullet}(\widetilde{\hcoY},\sI_{x_{1}}((t-9)M_{\widetilde{\hcoY}}+2F_{\widetilde{\hcoY}})).\label{eq:tY-Y9}\end{equation}
 Since ${\Delta}_{x_{1};9}={{\Delta}_{x_{1}}}$, we have the following
two exact sequences on $\widetilde{\hcoY}$ and $Y_{9}$ respectively:
\begin{equation}
\begin{aligned}0\to\sI_{x_{1}}((t-9)M_{\widetilde{\hcoY}}+2F_{\widetilde{\hcoY}}) & \to\sO_{\widetilde{\hcoY}}((t-9)M_{\widetilde{\hcoY}}+2F_{\widetilde{\hcoY}})\\
 & \to\sO_{{\Delta}_{x_{1}}}((t-9)M_{\widetilde{\hcoY}}+2F_{\widetilde{\hcoY}})\to0,\end{aligned}
\label{eqnarray:tY}\end{equation}
 and \begin{equation}
\begin{aligned}0\to\sI_{x_{1};9}((t-9)M_{Y_{9}}+2F_{Y_{9}}) & \to\sO_{Y_{9}}((t-9)M_{Y_{9}}+2F_{Y_{9}})\\
 & \to\sO_{{\Delta}_{x_{1}}}((t-9)M_{\widetilde{\hcoY}}+2F_{\widetilde{\hcoY}})\to0.\end{aligned}
\label{eqnarray:Y9}\end{equation}
 Since $(t-9)M_{\widetilde{\hcoY}}+2F_{\widetilde{\hcoY}}=(t+1)M_{\widetilde{\hcoY}}+K_{\widetilde{\hcoY}}$
and $(t+1)M_{\widetilde{\hcoY}}$ is nef and big, the cohomology groups
$H^{12-\bullet}(\widetilde{\hcoY},\sO_{\widetilde{\hcoY}}((t-9)M_{\widetilde{\hcoY}}+2F_{\widetilde{\hcoY}}))$
vanish for $12-\bullet>0$ by the Kawamata-Viewheg vanishing theorem.
Moreover, it holds that $H^{0}(\widetilde{\hcoY},\sO_{\widetilde{\hcoY}}((t-9)M_{\widetilde{\hcoY}}+2F_{\widetilde{\hcoY}}))\simeq H^{0}({\hcoY},\sO_{{\hcoY}}((t-9)M_{{\hcoY}}))$,
and the latter vanishes if $t\leq8$, and is isomorphic to $\mC$
if $t=9$. If $t=9$, then $H^{0}(\widetilde{\hcoY},\sO_{\widetilde{\hcoY}}(2F_{\widetilde{\hcoY}}))\simeq H^{0}({\Delta}_{x_{1}},\sO_{{\Delta}_{x_{1}}}(2F_{\widetilde{\hcoY}}))$
and hence $H^{0}(\widetilde{\hcoY},\sI_{x_{1}}(2F_{\widetilde{\hcoY}}))=0$
by (\ref{eqnarray:tY}). In the remaining cases, we have $H^{12-\bullet}(\widetilde{\hcoY},\sI_{x_{1}}((t-9)M_{\widetilde{\hcoY}}+2F_{\widetilde{\hcoY}}))\simeq H^{11-\bullet}({\Delta}_{x_{1}},\sO_{{\Delta}_{x_{1}}}((t-9)M_{\widetilde{\hcoY}}+2F_{\widetilde{\hcoY}}))$
by (\ref{eqnarray:tY}). Similarly, by (\ref{eqnarray:Y9}) and the
Kawamata-Viewheg vanishing theorem, we have $H^{0}(Y_{9},\sI_{x_{1};9}(2F_{Y_{9}}))=0$,
and in the remaining cases, $H^{12-\bullet}(Y_{9},\sI_{x_{1};9}((t-9)M_{Y_{9}}+2F_{Y_{9}}))\simeq H^{11-\bullet}({\Delta}_{x_{1}},\sO_{{\Delta}_{x_{1}}}((t-9)M_{\widetilde{\hcoY}}+2F_{\widetilde{\hcoY}}))$,
where we need $Y_{9}=\Vs_{x_{1}}$ has only canonical singularities
(see Proposition \ref{cla:hyp}).

Therefore we have the isomorphism (\ref{eq:tY-Y9}).

Now we have only to show the vanishings of $H^{12-\bullet}(\widetilde{\hcoY},\sI_{x_{1}}((t-9)M_{\widetilde{\hcoY}}+2F_{\widetilde{\hcoY}}))$
by (\ref{eq:step2dual}) and (\ref{eq:tY-Y9}). These follow from
the vanishings of \[
\begin{aligned}H^{12-\bullet}(\widetilde{\hcoY},\widetilde{\sS}_{L}((t-10)M_{\widetilde{\hcoY}}+2F_{\widetilde{\hcoY}})),\, & \;\; H^{12-\bullet}(\widetilde{\hcoY},\widetilde{\sT}^{*}((t-10)M_{\widetilde{\hcoY}}+2F_{\widetilde{\hcoY}})),\\
H^{12-\bullet}(\widetilde{\hcoY},\sO_{\widetilde{\hcoY}},((t-10)M_{\widetilde{\hcoY}}+2F_{\widetilde{\hcoY}})),\, & \;\; H^{12-\bullet}(\widetilde{\hcoY},\widetilde{\sQ}^{*}((t-9)M_{\widetilde{\hcoY}}+2F_{\widetilde{\hcoY}}))\end{aligned}
\]
 by (\ref{eqnarray:Cx}). Those cohomology groups are Serre-dual to
\[
\begin{aligned}H^{\bullet+1}(\widetilde{\hcoY},\widetilde{\sS}_{L}^{*}(-t)),\, & \quad H^{\bullet+1}(\widetilde{\hcoY},\widetilde{\sT}(-t)),\\
H^{\bullet+1}(\widetilde{\hcoY},\sO_{\widetilde{\hcoY}}(-t)),\, & \quad H^{\bullet+1}(\widetilde{\hcoY},\widetilde{\sQ}(-t-1)),\end{aligned}
\]
 which vanish by Theorem \ref{thm:digGvan}. Therefore, we have $\Ext_{Y_{9}}^{\bullet}(\sI_{x_{1};9},\sO_{Y_{9}}(-t))=0$
by (\ref{eq:step2dual}). \end{proof}


Now (\ref{eq:step2}) follows from (\ref{eqnarray:E}).

\vspace{0.8cm}
\textbf{Step 3 (from $Y_{8}$ to $Y_{7}$).}

\noindent In this step, we show \begin{equation}
\Ext_{Y_{7}}^{\bullet-1}(\sI_{x_{1};7},\sI_{x_{2};7}(-t+1))=0\,(1\leq t\leq8).\label{eq:step3}\end{equation}

Since $\sI_{x_{1};7}\simeq\iota_{Y_{7}}^{*}\sI_{x_{1};8}$, we have
\begin{equation}
\begin{aligned}\Ext_{Y_{7}}^{\bullet-1}(\sI_{x_{1};7},\sI_{x_{2};7}(-t+1)) & \simeq\Ext_{Y_{7}}^{\bullet-1}(\iota_{Y_{7}}^{*}\sI_{x_{1};8},\sI_{x_{2};7}(-t+1))\\
 & \simeq\Ext_{Y_{8}}^{\bullet-1}(\sI_{x_{1};8},\iota_{Y_{7}*}\sI_{x_{2};7}(-t+1)).\end{aligned}
\label{eqnarray:D'}\end{equation}
 Note that a defining equation of $Y_{7}$ restricts to a regular
element in each local ring of ${\Delta}_{x_{2};8}$. Hence the sequence
\begin{equation}
0\to\sI_{x_{2};8}(-1)\to\sI_{x_{2};8}\to\iota_{Y_{7}*}\sI_{x_{2};7}\to0\label{eq:87}\end{equation}
 is exact. Therefore, by (\ref{eqnarray:D'}), and $\Hom(\sI_{x_{1};8}(t-1),-)$
of (\ref{eq:87}), we have the following exact sequence: \begin{equation}
\begin{aligned}\Ext_{Y_{8}}^{\bullet-1}(\sI_{x_{1};8},\sI_{x_{2};8}(-t+1)) & \to\Ext_{Y_{7}}^{\bullet-1}(\sI_{x_{1};7},\sI_{x_{2};7}(-t+1))\\
 & \to\Ext_{Y_{8}}^{\bullet}(\sI_{x_{1};8},\sI_{x_{2};8}(-t)),\end{aligned}
\label{eqnarray:C'}\end{equation}
 where $\Ext_{Y_{8}}^{\bullet-1}(\sI_{x_{1};8},\sI_{x_{2};8}(-t+1))$
and $\Ext_{Y_{8}}^{\bullet}(\sI_{x_{1};8},\sI_{x_{2};8}(-t))$ vanish
by (\ref{eq:step2}) if $1\leq t\leq8$, and then we have (\ref{eq:step3}).\\

\vspace{0.3cm}
 \textbf{Step 4 (from $Y_{7}$ to $Y_{6}$, \dots, $Y_{1}$ to $Y$).}

\noindent In this step, we finish the proof of (\ref{eq:aim}). Since
a defining equation of $Y_{i}$ restricts to a regular element in
each local ring of both ${\Delta}_{x_{1};i+1}$ and ${\Delta}_{x_{2};i+1}$,
we can show inductively the following vanishing for any $i\in[0,6]$
in a similar way to the argument of Step 3: \begin{equation}
\Ext_{Y_{i}}^{\bullet-1}(\sI_{x_{1};i},\sI_{x_{2};i}(-t+1))=0\,(1\leq t\leq i+1).\label{eq:step4}\end{equation}

In particular, we have $\Ext_{Y}^{\bullet-1}(I_{x_{1}},I_{x_{2}})=0$,
which is (\ref{eq:aim}). \hfill{}$\square$

\vspace{1cm}
 \pagebreak{}

\section{Further discussions}

In our proof of the derived equivalence, we have used (the ideal sheaf
of) a family of curves $\{C_{x}\}_{x\in X}$ which arises from the
restriction $\sC=\Delta|_{X\times Y}$. Obviously, the other choice
of a family $\{C_{y}\}_{y\in Y}$ should be possible for that purpose.
In this section, we obtain a flat family of curves on $X$ from $\Delta$
for the latter choice, and remark, however, that a technical problem
prevent us to complete a proof by using this family. We also make
a comment on non-invariances of the fundamental groups and the Brauer
groups under the derived equivalence.

\vspace{0.3cm}

\subsection{A family of curves on $X$}

\label{subsection:onX}

For a point $y\in\widetilde{\hcoY}$, we denote by $Q_{y}$ the quadric
in $\mP^{4}$ corresponding to the image of $y$ on $\Hes$. We also
denote by ${\Delta}_{y}$ the fiber of ${\Delta}\to\widetilde{\hcoY}$
over $y$, which is a closed subscheme of ${\hchow}$.

Let $V_{\widetilde{\hcoY}}$ be the open subset of $\widetilde{\hcoY}$
consisting of points $y$ such that $\rank Q_{y}=3$ or $4$. For
a point $y\in V_{\widetilde{\hcoY}}$, let $q_{y}\subset\mathrm{G}(3,V)$
be the conic corresponding to $y$ ($q_{y}$ is one of the connected
components of the families of planes in $Q_{y}$). We describe $\Delta_{y}$
for $y\in V_{\widetilde{\hcoY}}$.

\begin{prop} \label{cla:fiber} ${\Delta}_{y}$ is the restriction
of $\hchow=\mP({\ft S}^{2}\sF)\to\mathrm{G}(2,V)$ over the subset
\[
G_{y}:=\{l\mid l\text{ is contained in a plane belonging to \ensuremath{q_{y}}}\}\subset\mathrm{G}(2,V).\]
 If $\rank Q_{y}=4$, then $G_{y}$ is isomorphic to $\mP(\sO_{\mP^{1}}(-1)^{\oplus2}\oplus\sO_{\mP^{1}}(-2))$
and $\sO_{\mathrm{G}(2,V)}(1)|_{G_{y}}$ is the tautological divisor.
If $\rank Q_{y}=3$, then $G_{y}$ is isomorphic to the image of $\mP(\sO_{\mP^{1}}\oplus\sO_{\mP^{1}}(-2)^{\oplus2})$
by the tautological linear system and ${\sO_{\mathrm{G}(2,V)}(1)}|_{G_{y}}$
is the tautological divisor.

In particular, $\Delta_{y}$ is smooth if $\rank Q_{y}=4$. \end{prop}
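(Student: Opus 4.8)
The plan is to reduce the statement to an explicit description of the incidence correspondence over the conic $q_y$, and then to a splitting computation for a rank three bundle on $\mP^1\simeq q_y$. First I would observe that, since $\rank Q_y\in\{3,4\}$, the conic $q_y$ is a smooth $\tau$- or $\rho$-conic (Subsection~\ref{subsection:conics}), so $y$ lies in $\widetilde{\hcoY}\setminus\Prt_{\sigma}$, where $\Delta$ is cut out as the image of $\Delta^{\zpq}$ under the generically conic bundle $\widetilde{\Zpq}\to\widetilde{\hcoY}$. Unwinding the construction exactly as in the proof of Proposition~\ref{cla:descrdel3}, the fiber of $\Delta^{\zpq}\to\widetilde{\hcoY}$ over $y$ is the pull-back of $\Delta_0$ over $q_y\times\hchow$, namely $\{([V_3],c)\mid [V_3]\in q_y,\ g(c)=[V_2]\subset V_3\}$; projecting to $\hchow$ yields $\Delta_y=g^{-1}(G_y)$ with $G_y$ as in the statement. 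This establishes the first assertion.

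Next I would describe $G_y$ as the image of the incidence variety $I_y:=\{([V_3],[V_2])\mid [V_3]\in q_y,\ V_2\subset V_3\}$ under the projection to $\mathrm{G}(2,V)$. Since lines in the plane $\mP(V_3)$ form $\mathrm{G}(2,V_3)\simeq\mP(V_3^*)$, we have $I_y\simeq\mP(\eS|_{q_y})$, the projectivization of the rank three bundle dual to $\eS^*|_{q_y}$. The core computation is the splitting type of $\eS^*|_{q_y}$ on $q_y\simeq\mP^1$: from Lemma~\ref{cla:conic} one gets $\eQ|_{q_y}\simeq\sO(1)^{\oplus2}$, whence $\deg\eS^*|_{q_y}=-2$; and the vertex of $Q_y$ — a point $V_1$ when $\rank Q_y=4$, a line $V_2$ when $\rank Q_y=3$ (Subsection~\ref{section:Quintic}) — lies in every plane of $q_y$, contributing a trivial subsheaf $\sO^{\oplus r}\hookrightarrow\eS^*|_{q_y}$ (with $r=1$ resp.\ $2$) whose quotient is the tautological sub-bundle of the conic traced in the relevant projective space, i.e.\ $\sO(-1)^{\oplus2}$ resp.\ $\sO(-2)$. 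As the extension has vanishing $\Ext^1=H^1(\mP^1,\sO(\ge 1))$, it splits, giving $\eS|_{q_y}\simeq\sO\oplus\sO(1)^{\oplus2}$ when $\rank Q_y=4$ and $\eS|_{q_y}\simeq\sO^{\oplus2}\oplus\sO(2)$ when $\rank Q_y=3$. Twisting (which leaves the projective bundle unchanged) identifies $I_y$ with $\mP(\sO(-1)^{\oplus2}\oplus\sO(-2))$ resp.\ $\mP(\sO\oplus\sO(-2)^{\oplus2})$.

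It then remains to identify the morphism $I_y\to\mathrm{G}(2,V)$ and the polarization. Writing $\pi\colon I_y\to q_y$ for the projection and using the universal flag $V_2\subset V_3$ on $I_y$ together with the contraction isomorphism $\wedge^2V_2\simeq\sO_{I_y}(-1)\otimes\pi^*(\det\eS^*|_{q_y})$, I would check that the pull-back of $\sO_{\mathrm{G}(2,V)}(1)$ is precisely the tautological divisor of the projective bundle just described. When $\rank Q_y=4$, two distinct planes of one ruling meet only along the vertex, so a line is contained in at most one plane of $q_y$; hence $I_y\to\mathrm{G}(2,V)$ is injective, and an unramifiedness check makes it a closed embedding, giving $G_y\simeq I_y\simeq\mP(\sO(-1)^{\oplus2}\oplus\sO(-2))$ with the tautological polarization. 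When $\rank Q_y=3$, the vertex line $\mP(V_2)$ lies in every plane of $q_y$, so the section $\{V_2'=V_2\}\simeq q_y$ is contracted to the single point $[V_2]$; this is exactly the contraction effected by the base-point-free but not very ample tautological linear system, producing $G_y$ as the stated image. Finally, smoothness for $\rank Q_y=4$ is immediate: $g\colon\hchow\simeq\mP(\ft{S}^2\sF^*)\to\mathrm{G}(2,V)$ is a $\mP^2$-bundle (Subsection~\ref{HilbChow}), hence a smooth morphism, and $G_y$ is smooth (a projective bundle over $\mP^1$), so $\Delta_y=g^{-1}(G_y)$ is smooth. I expect the main obstacle to be the third step — verifying that $I_y\to\mathrm{G}(2,V)$ is a closed embedding in the rank $4$ case and precisely the tautological contraction in the rank $3$ case, and matching $\sO_{\mathrm{G}(2,V)}(1)$ with the tautological class; the splitting computations of the second step are routine once Lemma~\ref{cla:conic} and the cone structure of $Q_y$ are in hand.
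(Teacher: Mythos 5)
Your proposal is correct and follows essentially the same route as the paper's proof: identify $\Delta_y=g^{-1}(G_y)$ with $G_y$ the image of $\Delta_0|_{q_y}=\mP(\eS|_{q_y})$ in $\mathrm{G}(2,V)$, match $\sO_{\mathrm{G}(2,V)}(1)$ with the tautological class of $\mP(\eS(-1)|_{q_y})$ (your contraction isomorphism $\wedge^2V_2\simeq\sO_{I_y}(-1)\otimes\pi^*\det\eS^*|_{q_y}$ is the fiberwise form of the paper's formula (\ref{eq:eStaut})), and then read off $G_y$ from the splitting type of $\eS(-1)|_{q_y}$. The only substantive difference is that you derive the splitting types $\sO_{\mP^1}(-1)^{\oplus2}\oplus\sO_{\mP^1}(-2)$ and $\sO_{\mP^1}\oplus\sO_{\mP^1}(-2)^{\oplus2}$ directly from the cone structure of $Q_y$ (trivial subbundle from the vertex, quotient from the conic, splitting by vanishing of $\Ext^1$), whereas the paper simply cites them from \cite[Subsect.5.3]{HoTa3}.
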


\begin{proof} The first part easily follows from the definition of
$\Delta$.

We describe $G_{y}$. By definition, $G_{y}$ is nothing but the restriction
of $\Delta_{0}\to\mathrm{G}(3,V)$ over $q_{y}$. Since $\Delta_{0}=\mathrm{F}(2,3,V)$,
we have $\Delta_{0}=\mP(\eS^{*})$ as a $\mP^{2}$-bundle over $\mathrm{G}(3,V)$.
We denote by $p_{1}$ the projection $\Delta_{0}\to\mathrm{G}(3,V)$
and by $p_{2}$ the projection $\Delta_{0}\to\mathrm{G}(2,V)$. We
show \begin{equation}
\sO_{\mP(\eS^{*})}(1)\simeq p_{2}^{*}\sO_{\mathrm{G}(2,V)}(1)\otimes p_{1}^{*}\sO_{\mathrm{G}(3,V)}(-1).\label{eq:eStaut}\end{equation}
 Indeed, by $\eS^{*}\simeq\wedge^{2}\eS\otimes\sO_{\mathrm{G}(3,V)}(1)$,
we have $\mP(\eS^{*})\simeq\mP(\wedge^{2}\eS)$ and $\sO_{\mP(\eS^{*})}(1)\simeq\sO_{\mP(\wedge^{2}\eS)}(1)\otimes p_{1}^{*}\sO_{\mathrm{G}(3,V)}(-1)$.
Moreover, by the universal exact sequence $0\to\eS\to V\otimes\sO_{\mathrm{G}(3,V)}\to\eQ\to0$,
we obtain the injection $\wedge^{2}\eS\to\wedge^{2}V\otimes\sO_{\mathrm{G}(3,V)}$.
Therefore, $\sO_{\mP(\wedge^{2}\eS)}(1)=p_{2}^{*}\sO_{\mathrm{G}(2,V)}(1)$,
which implies (\ref{eq:eStaut}). Note that (\ref{eq:eStaut}) is
equivalent to $\sO_{\mP(\eS^{*}(-1))}(1)\simeq p_{2}^{*}\sO_{\mathrm{G}(2,V)}(1)$.

By the discussion in the proof of \cite[Prop.~5.2.1]{HoTa3}, $\eS^{*}(-1)|_{q_{y}}\simeq\sO_{\mP^{1}}(-1)^{\oplus2}\oplus\sO_{\mP^{1}}(-2)$
if $\rank Q_{y}=4$, and $\eS^{*}(-1)|_{q_{y}}\simeq\sO_{\mP^{1}}\oplus\sO_{\mP^{1}}(-2)^{\oplus2}$
if $\rank Q_{y}=3$. 

Therefore, $G_{y}$ is as in the statement. \end{proof}

\vspace{0.2cm}
 Similarly to the proof of Proposition \ref{cor:flat}, we can prove
the following:

\begin{prop} \label{cla:flat2} The scheme $\Delta$ is flat over
$V_{\widetilde{\hcoY}}$. The ideal sheaf $\sI_{y}$ of $\Delta_{y}$
in $\hchow$ is $\sI\otimes\sO_{\hchow_{y}}$ for $y\in V_{\widetilde{\hcoY}}$,
where $\hchow_{y}\simeq\hchow$ is the fiber of $\widetilde{\hcoY}\times\hchow\to\widetilde{\hcoY}$
over $y$. Moreover, the exact sequence $(\ref{eqn:fin5})$ remains
to be exact after restricting on $\hchow_{y}$ and gives the following
locally free resolution of $\sI_{y}:$ \begin{equation}
\begin{aligned}0\to & g^{*}\sO_{\mathrm{G}(2,V)}(-2)^{\oplus3}\to g^{*}(\sF^{*}(-2))^{\oplus4}\to\\
 & g^{*}{\ft S}^{2}\sF\oplus g^{*}\sO_{\mathrm{G}(2,V)}(-1)^{\oplus3}\to\sI_{y}\to0.\end{aligned}
\label{eqnarray:Iy}\end{equation}
 \end{prop}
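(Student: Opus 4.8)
The plan is to follow the proof of Proposition~\ref{cor:flat} almost verbatim, interchanging the roles of the two factors of $\widetilde{\hcoY}\times\hchow$: there one projects to $\hchow$ and cuts out the fibers $\Delta_x$, whereas here one projects to the open locus $V_{\widetilde{\hcoY}}\subset\widetilde{\hcoY}$ and cuts out the fibers $\Delta_y$. The three assertions are established in the same order: flatness, the fibrewise identification of $\sI_y$, and the restricted resolution.

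First I would prove flatness of $\Delta$ over $V_{\widetilde{\hcoY}}$ by miracle flatness. Since $\widetilde{\hcoY}$ is smooth, so is the open subset $V_{\widetilde{\hcoY}}$, and $\Delta$ is Cohen-Macaulay by Theorem~\ref{thm:resolY}; it therefore suffices to verify that $\Delta\to V_{\widetilde{\hcoY}}$ is equi-dimensional. This is exactly where Proposition~\ref{cla:fiber} is used: for every $y\in V_{\widetilde{\hcoY}}$ the base $G_y$ is three-dimensional (a $\mP^2$-bundle over $q_y\simeq\mP^1$ when $\rank Q_y=4$, and the three-dimensional image of such a bundle when $\rank Q_y=3$), while $\Delta_y$ is the restriction of the $\mP^2$-bundle $\hchow=\mP(\ft{ S}^2\sF^*)\to\mathrm{G}(2,V)$ over $G_y$, hence of constant dimension five. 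With flatness in hand the identification $\sI_y\simeq\sI\otimes_{\sO_{\widetilde{\hcoY}\times\hchow}}\sO_{\hchow_y}$ is immediate: tensoring $0\to\sI\to\sO_{\widetilde{\hcoY}\times\hchow}\to\sO_{\Delta}\to 0$ with $k(y)$ yields a right-exact sequence, and \cite[Theorem~22.5]{M} converts flatness into injectivity of $\sI\otimes k(y)\to\sO_{\hchow_y}$, precisely as in Proposition~\ref{cor:flat}.

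For the resolution I would restrict (\ref{eqn:fin5}) to $\hchow_y\simeq\hchow$ and twist by $\sO_{\hchow}(-2L_{\hchow})$ so that the right-hand term becomes $\sI_y$. Under this restriction the locally free sheaves $\widetilde{\sS}_L^*$, $\widetilde{\sT}^*$, $\widetilde{\sQ}^*(M_{\widetilde{\hcoY}})$ pulled back from $\widetilde{\hcoY}$ become trivial bundles of their respective ranks $3$, $4$, $3$ (the ranks being forced by (\ref{eq:univ}) and (\ref{eq:T*})); recalling $L_{\hchow}=g^*\sO_{\mathrm{G}(2,V)}(1)$ together with the identity $\sF^*\simeq\sF\otimes\sO_{\mathrm{G}(2,V)}(-1)$ on $\mathrm{G}(2,V)$, whence $\ft{ S}^2\sF\otimes\sO_{\mathrm{G}(2,V)}(-2)\simeq\ft{ S}^2\sF^*$, the four terms collapse exactly to those of (\ref{eqnarray:Iy}). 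Exactness is preserved because $\Delta$ is flat over $V_{\widetilde{\hcoY}}$: splitting (\ref{eqn:fin5}) into short exact sequences and using that $\mathrm{Tor}_1(\sO_{\Delta},k(y))=0$ over the base, the successive kernels remain flat and the restricted complex stays exact, just as the exactness of (\ref{eqnarray:Cx}) was deduced in Proposition~\ref{cor:flat}.

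The only genuinely new point compared with Proposition~\ref{cor:flat}, and the step I would treat most carefully, is the equi-dimensionality input of the first paragraph. Unlike the fibers $\Delta_x$ of $\Delta\to\hchow$, which are all isomorphic by Proposition~\ref{cla:descrdel3}, the fibers $\Delta_y$ genuinely differ according to whether $\rank Q_y$ equals $3$ or $4$, and in the rank-$3$ case $G_y$ is only the (possibly singular) image of a projective bundle. The hard part is thus to confirm that this image remains three-dimensional for every $y\in V_{\widetilde{\hcoY}}$; this is precisely what Proposition~\ref{cla:fiber} extracts from the splitting type of $\eS(-1)|_{q_y}$ recorded in \cite[Subsect.5.3]{HoTa3}. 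Once constancy of dimension is secured, miracle flatness applies and the remainder of the argument proceeds mechanically.
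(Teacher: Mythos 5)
Your proposal is correct and takes essentially the same approach as the paper: the paper's own proof consists precisely of invoking Proposition \ref{cla:fiber} to get equidimensionality of $\Delta\to\widetilde{\hcoY}$ over $V_{\widetilde{\hcoY}}$ and then repeating the argument of Proposition \ref{cor:flat} (miracle flatness for the Cohen--Macaulay $\Delta$ over the smooth base, \cite[Theorem 22.5]{M} for the identification $\sI_y\simeq\sI\otimes\sO_{\hchow_y}$, and restriction of the resolution (\ref{eqn:fin5})). Your write-up only adds the explicit fibre-dimension count and the rank/twist bookkeeping (ranks $3,4,3$ and $\ft{S}^2\sF(-2)\simeq\ft{S}^2\sF^*$) that the paper leaves implicit, and both are accurate.
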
 

\begin{proof} By Proposition \ref{cla:fiber}, $\Delta\to\widetilde{\hcoY}$
is equi-dimensional over $V_{\widetilde{\hcoY}}$. Therefore all the
assertions can be proved in the same way of the proof of Proposition
\ref{cor:flat}. \end{proof}

The following computation is standard: 

\begin{lem} \label{cla:compS2F} 

$c_{1}(\ft{S}^{2}\sF^{*})=c_{1}(\sO_{\mathrm{G}(2,V)}(3))$, $c_{2}(\ft{S}^{2}\sF^{*})=2c_{1}(\sO_{\mathrm{G}(2,V)}(1))^{2}+4c_{2}(\sF^{*})$,
and $c_{3}(\ft{S}^{2}\sF^{*})=4c_{1}(\sO_{\mathrm{G}(2,V)}(1))c_{2}(\sF^{*})$.
\end{lem}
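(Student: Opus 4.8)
The plan is to apply the splitting principle, since $\sF$ is a locally free sheaf of rank two on $\mathrm{G}(2,V)$ and all of its associated bundles are controlled by the Chern roots of $\sF$. First I would introduce formal Chern roots $a$ and $b$ of $\sF$, so that $c_1(\sF)=a+b$ and $c_2(\sF)=ab$. The key observation is that the symmetric square $\ft{ S}^2\sF$ is locally free of rank three with Chern roots $2a$, $a+b$, and $2b$, these being the weights of $\mathrm{Sym}^2$ of the standard two-dimensional representation.

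Next I would compute the elementary symmetric functions of these three roots and read off the Chern classes directly:
\[
c_1(\ft{ S}^2\sF)=2a+(a+b)+2b=3(a+b),
\]
\[
c_2(\ft{ S}^2\sF)=(2a)(a+b)+(2a)(2b)+(a+b)(2b)=2(a+b)^2+4ab,
\]
\[
c_3(\ft{ S}^2\sF)=(2a)(a+b)(2b)=4ab(a+b).
\]

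Finally I would translate these back into the intrinsic classes appearing in the statement. The one input that deserves a word is the identification $c_1(\sF)=c_1(\sO_{\mathrm{G}(2,V)}(1))$, which holds because the Plücker line bundle $\sO_{\mathrm{G}(2,V)}(1)$ is $\det\sF=\Lwedge^2\sF$ (recall from (\ref{eq:univG25}) that $\sF^*$ is the universal subbundle, so $\det\sF$ is the Plücker class). Substituting $a+b=c_1(\sF)=c_1(\sO_{\mathrm{G}(2,V)}(1))$ and $ab=c_2(\sF)$ into the three displays yields exactly $c_1(\ft{ S}^2\sF)=c_1(\sO_{\mathrm{G}(2,V)}(3))$, $c_2(\ft{ S}^2\sF)=2c_1(\sO_{\mathrm{G}(2,V)}(1))^2+4c_2(\sF)$, and $c_3(\ft{ S}^2\sF)=4c_1(\sO_{\mathrm{G}(2,V)}(1))c_2(\sF)$. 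There is no genuine obstacle here: the computation is entirely mechanical once the Chern roots of $\ft{ S}^2\sF$ are written down, and the only step requiring justification rather than arithmetic is the identification of $c_1(\sF)$ with the hyperplane class on $\mathrm{G}(2,V)$.
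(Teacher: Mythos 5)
Your proof is correct: the splitting-principle computation with Chern roots $2a$, $a+b$, $2b$ for $\ft{S}^2\sF$, together with the identification $\det\sF=\sO_{\mathrm{G}(2,V)}(1)$ coming from the Pl\"ucker embedding, is exactly the standard argument, and all three elementary-symmetric-function evaluations check out. The paper itself gives no proof at all (it introduces the lemma with ``The following computation is standard''), so your write-up supplies precisely the computation the authors had in mind.
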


\vspace{0.3cm}
 Now we can derive the following:

\begin{prop} \label{prop:Cflat2} The scheme $\Cd$ defined as in
Subsection $\ref{subsection:Cutting}$ is flat over $Y$. Let $C_{y}$
be the fiber of $\Cd$ over a point $y\in Y$. Then $C_{y}$ is a
curve of arithmetic genus $14$ and degree $20$. Moreover, if $X$
and $Y$ are general, then a general $C_{y}$ is smooth. \end{prop}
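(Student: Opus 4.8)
The plan is to mirror the flatness arguments of Propositions \ref{cor:flat} and \ref{prop:Cflat}, and then to carry out the numerical computation of the invariants of $C_y$ on the explicit model of $\Delta_y$ furnished by Proposition \ref{cla:fiber}. The key point making this possible is that every point $y\in Y$ lies in $V_{\widetilde{\hcoY}}$: since $Y$ is disjoint from $G_{\hcoY}=\Sing\hcoY$, the locus of rank $1$ and $2$ points, we have $\rank Q_y\in\{3,4\}$ for all $y\in Y$, so Proposition \ref{cla:flat2} applies and $\Delta$ is flat over $Y$ with Cohen--Macaulay fibers $\Delta_y$.

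First I would establish flatness of $\Cd\to Y$ exactly as in the proof of Proposition \ref{prop:Cflat}. Using $Y\times X\subset\Vs$ (Subsection \ref{subsection:Cutting}) and $\Delta\subset\Vs$ (Proposition \ref{cla:Vs}), I would realize, for $y\in Y$, the section $X$ as a complete intersection inside $\Vs_y$ of four members of $|H_{\hchow}|_{\Vs_y}|$: among the five hyperplanes cutting out $X\subset\hchow$, the one dual to $[Q_y]\in P$ defines $\Vs_y$ and hence already vanishes on $\Delta_y\subset\Vs_y$. As $\Delta_y$ is Cohen--Macaulay and $C_y=\Delta_y\cap X$ is one--dimensional, these four members form a regular sequence on each $\Delta_y$, so by \cite[Corollary to Theorem 22.5]{M} the family $\Cd\to Y$ is flat and $I_y\simeq \sI_{\Delta_y/\Vs_y}\otimes\sO_{X}$; moreover $C_y$ coincides with the fiber of $\Cd\to Y$.

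Next I would compute the degree and arithmetic genus for a general $y$, where $\rank Q_y=4$ and Proposition \ref{cla:fiber} presents $\Delta_y=\mP(\ft{S}^2\sF^*)|_{G_y}$ as a $\mP^2$-bundle over $G_y\cong\mP(\sO_{\mP^1}(-1)^{\oplus2}\oplus\sO_{\mP^1}(-2))$ with $H_{\hchow}$ the tautological class. Since $C_y$ is cut from $\Delta_y$ by four members of $|H_{\hchow}|$, its degree is $\deg C_y=H_{\hchow}^5\cdot\Delta_y=\int_{G_y}s_3(\ft{S}^2\sF^*|_{G_y})$, which I would evaluate via the Segre class of this $\mP^2$-bundle together with the Chern classes of Lemma \ref{cla:compS2F} and the classes $c(\sF|_{G_y})$ and $\sO_{\mathrm{G}(2,V)}(1)|_{G_y}$ read off from the description of $G_y$, yielding $20$. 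For the genus, adjunction gives $K_{C_y}=(K_{\Delta_y}+4H_{\hchow})|_{C_y}$; computing $K_{\Delta_y}$ from the relative Euler sequence of the $\mP^2$-bundle (using $\det\ft{S}^2\sF^*$ and $K_{G_y}$) reduces $2p_a(C_y)-2=(K_{\Delta_y}+4H_{\hchow})\cdot H_{\hchow}^4\cdot\Delta_y$ to a further intersection number on $G_y$, giving $p_a(C_y)=14$. Flatness of $\Cd\to Y$ then makes the Hilbert polynomial constant, so degree $20$ and arithmetic genus $14$ hold for every $y\in Y$.

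Finally, for smoothness I would use that $\Delta_y$ is smooth when $\rank Q_y=4$ (Proposition \ref{cla:fiber}). For general regular $P$, a general $y\in Y$ has $\rank Q_y=4$, and $C_y=\Delta_y\cap X$ is a linear section of the smooth fivefold $\Delta_y$ by the four hyperplanes carving out $X$; I would conclude that a general $C_y$ is smooth by a Bertini/generic--smoothness argument organized over the parameter space of regular linear systems $P$, so that the cutting hyperplanes are genuinely general relative to $\Delta_y$. I expect this to be the main obstacle: unlike Proposition \ref{prop:genus3degree5}, where a single \textsl{Macaulay2} computation settled the generic nodal behaviour, here the linear system is fixed once $X$ is fixed, so transversality must be spread out in the family over the moduli of $P$ (or checked on one explicit example and propagated by openness). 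By comparison, the intersection--number bookkeeping on the nontrivial base $G_y$ in the degree and genus computation is the principal routine—though still delicate—calculation.
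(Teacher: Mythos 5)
Your route is essentially the paper's: present $\Delta_y$ via Proposition \ref{cla:fiber} as a $\mP^2$-bundle over $G_y$, extract degree $20$ and genus $14$ from Segre classes and Lemma \ref{cla:compS2F}, and prove generic smoothness by fixing a rank-$4$ point $[Q_y]$ and taking $P$ general among the $4$-planes through it, so that the four cutting hyperplanes become genuinely general and Bertini applies to the smooth fivefold $\Delta_y$ --- this is exactly the device the paper uses, so what you flagged as ``the main obstacle'' is resolved precisely the way you suggest. Your use of flatness to transport the Hilbert polynomial to the rank-$3$ fibers is a mild streamlining of the paper's ``the other case can be studied similarly''.

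The genuine gap is earlier: you assert, without proof, that $C_y=\Delta_y\cap X$ is one-dimensional, and everything hinges on this --- both the regular-sequence argument giving flatness and even the identification $\deg C_y=H_{\hchow}^5\cdot\Delta_y$, which presupposes that the intersection of $\Delta_y$ with the four hyperplanes is proper. Appealing to ``exactly as in Proposition \ref{prop:Cflat}'' hides an asymmetry: in that proof the one-dimensionality of the fibers $C_x$ was already known from Proposition \ref{prop:genus3degree5}, whereas here no prior result supplies it, and since $X$ is fixed, the four hyperplanes cutting $X$ out of $\Vs_y$ are fixed, so no genericity argument can control the dimension; a priori $\Delta_y\cap X$ could contain a surface. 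The paper excludes this with a short but essential step your proposal lacks: first compute $\deg \Delta_y=20$ (an intersection number on $\hchow$, needing no flatness); then any $2$-dimensional component of $\Delta_y\cap X$ would be an effective divisor on $X$ of degree at most $20$, contradicting the fact that $H_{\hchow}|_X$ generates $\Pic X$ modulo torsion with $(H_{\hchow}|_X)^3=35$, so every surface in $X$ has degree a positive multiple of $35$. Note that this forces the degree computation to precede the flatness argument, the opposite of your ordering. With this step inserted, your proof goes through.
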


\vspace{0.1cm}
 \begin{proof} Note that $Y\subset V_{\widetilde{\hcoY}}$. Take
a point $y\in Y$. We only consider the case of $\rank Q_{y}=4$ since
the other case can be studied similarly.

First we show that $\deg\Delta_{y}=20$ with respect to $H_{\hchow}=H_{\mP({\ft S}^{2}\sF)}$.
The degree of $\Delta_{y}$ is evaluated by the Segre class of ${\ft S}^{2}\sF^{*}$
as $s_{3}({\ft S}^{2}\sF^{*})G_{y}$. By the formula $s_{3}({\ft S}^{2}\sF^{*})=c_{3}({\ft S}^{2}\sF^{*})-2c_{1}({\ft S}^{2}\sF^{*})c_{2}({\ft S}^{2}\sF^{*})+c_{1}({\ft S}^{2}\sF^{*})^{3}$
and Lemma \ref{cla:compS2F}, we have $\deg\Delta_{y}=(15c_{1}(\sO_{\mathrm{G}(2,V)}(1))^{3}-20c_{1}(\sO_{\mathrm{G}(2,V)}(1))c_{2}(\sF^{*}))G_{y}$.
By Proposition \ref{cla:fiber}, we have $\deg G_{y}=c_{1}(\sO_{\mathrm{G}(2,V)}(1))^{3}G_{y}=4$.
Note that $c_{2}(\sF^{*})=[\mathrm{G}(2,V_{4})]$ as codimension 2
cycle with some $4$-dimensional space $V_{4}\subset V$. Then we
see that $c_{2}(\sF^{*})G_{y}$ is represented by a conic, which parameterizes
a $\mP^{1}$-family of rulings in the smooth quadric surface $Q_{y}\cap\mP(V_{4})$.
Therefore we have $c_{1}(\sO_{\mathrm{G}(2,V)}(1))c_{2}(\sF^{*})G_{y}=2$.
Consequently, we have $\deg\Delta_{y}=20$.

We show that $C_{y}$ is a curve. Similarly to the proof of Proposition
\ref{prop:Cflat}, $C_{y}$ is a complete intersection in $\Delta_{y}$
by $4$ members of $|H_{\hchow}|$. Therefore $\dim C_{y}\geq1$.
Assume that $\dim C_{y}=2$. Then $\deg C_{y}\leq20$ since $\deg\Delta_{y}=20$.
This is, however, impossible since ${H_{{\hchow}}}|_{X}$ generates
$\Pic X$ modulo torsion and $({H_{{\hchow}}}|_{X})^{3}=35$. Therefore
$C_{y}$ is a curve, and then $\Cd$ is flat over $Y$ as in the proof
of Proposition \ref{prop:Cflat}.

Now we compute the canonical divisor of $C_{y}$. Since $\Delta_{y}\to G_{y}$
is a projective bundle, we have $K_{\Delta_{y}}=-3H_{\hchow}|_{\Delta_{y}}+(g|_{\Delta_{y}})^{*}(c_{1}({\ft S}^{2}\sF^{*})|_{G_{y}}+K_{G_{y}})$.
Since we have seen that $C_{y}$ is a complete intersection in $\Delta_{y}$
by $4$ members of $|H_{\hchow}|$, we have $K_{C_{y}}=H_{\hchow}|_{C_{y}}+(g|_{\Delta_{y}})^{*}(c_{1}({\ft S}^{2}\sF^{*})|_{G_{y}}+K_{G_{y}})|_{C_{y}}$.
Therefore $\deg K_{C_{y}}=H_{\hchow}^{5}\Delta_{y}+H_{\hchow}^{4}(g|_{\Delta_{y}})^{*}(c_{1}({\ft S}^{2}\sF^{*})|_{G_{y}}+K_{G_{y}})$.
We have already computed $H_{\hchow}^{5}\Delta_{y}=20$. By using
the Segre class of ${\ft S}^{2}\sF^{*}$, we have \[
H_{\hchow}^{4}(g|_{\Delta_{y}})^{*}(c_{1}({\ft S}^{2}\sF^{*})|_{G_{y}}+K_{G_{y}})=s_{2}({\ft S}^{2}\sF^{*})|_{G_{y}}(c_{1}({\ft S}^{2}\sF^{*})|_{G_{y}}+K_{G_{y}}).\]
 By Lemma \ref{cla:compS2F}, we have $s_{2}({\ft S}^{2}\sF^{*})=c_{1}({\ft S}^{2}\sF^{*})^{2}-c_{2}({\ft S}^{2}\sF^{*})=7c_{1}(\sO_{\mathrm{G}(2,V)}(1))^{2}-4c_{2}(\sF^{*})$.
By Proposition \ref{cla:fiber}, we have $\sO_{G_{y}}(K_{G_{y}})\simeq\sO_{\mathrm{G}(2,V)}(-3)|_{G_{y}}\otimes p^{*}\sO_{\mP^{1}}(2)$,
where $p$ is the natural morphism $p\colon G_{y}=\mP(\sO_{\mP^{1}}(-1)^{\oplus2}\oplus\sO_{\mP^{1}}(-2))\to q_{y}\simeq\mP^{1}$.
Thus $c_{1}({\ft S}^{2}\sF^{*})|_{G_{y}}+K_{G_{y}}=p^{*}\sO_{\mP^{1}}(2)$.
Hence \[
H_{\hchow}^{4}(g|_{\Delta_{y}})^{*}(c_{1}({\ft S}^{2}\sF^{*})|_{G_{y}}+K_{G_{y}})=(7c_{1}(\sO_{\mathrm{G}(2,V)}(1))^{2}-4c_{2}(\sF^{*}))|_{G_{y}}p^{*}\sO_{\mP^{1}}(2).\]
 Recall that $c_{2}(\sF^{*})|_{G_{y}}$ is represented as a conic
on $\mathrm{G}(2,V)$, which is a section of $p\colon G_{y}\to q_{y}$.
Therefore $(7c_{1}(\sO_{\mathrm{G}(2,V)}(1))^{2}-4c_{2}(\sF^{*}))|_{G_{y}}p^{*}\sO_{\mP^{1}}(2)=6$.
Consequently, we have $\deg K_{C_{y}}=26$, equivalently, $p_{a}(C_{y})=14$.

Let $y$ be a point of $\hcoY$ such that $\rank Q_{y}=4$. Take a
general $4$-plane $P$ in $\mP({\ft S}^{2}V^{*})$ containing $[Q_{y}]$
and define $X$ and $Y$ as before. Note that $y\in Y$. Write $P=\langle Q_{y},Q_{1},\dots,Q_{4}\rangle$.
Let $H_{i}$ be the member of $|H_{\hchow}|$ corresponding to $Q_{i}$.
Note that $\Delta_{y}\subset\Vs_{y}$ by Proposition \ref{cla:Vs}.
Since $P$ is general, $H_{i}$ are general members of $|H_{\hchow}|$.
Therefore, $C_{y}$ is smooth since so is $\Delta_{y}$ and $C_{y}$
is a complete intersection in $\Delta_{y}$ by $H_{1},\dots,H_{4}$.
\end{proof}

One might consider that we can show the derived equivalence by the
Fourier-Mukai functor with the ideal sheaves of the family $\{C_{y}\}_{y\in Y}$.
Actually, computations of Ext groups among the sheaves appearing in
the resolution (\ref{eqnarray:Iy}) are easier than Theorem \ref{thm:digGvan}
(see \cite[Thm.3.4.5]{HoTa3}). However, one technical problem is
involved as follows: When we follow the argument of Section \ref{section:BC},
it is crucial to have the property that, for \textit{any} distinct
two points $y_{1}$ and $y_{2}$ of $Y$, the hyperplane sections
$\Vs_{y_{1}}$ and $\Vs_{y_{2}}$ of $X$ are different (cf.~Lemma
\ref{cla:tower}). This, however, does not hold for $y_{1}$ and $y_{2}$
such that their images on $H$ coincides. This forced us to choice
$\{C_{x}\}_{x\in X}$ in our proof.

\vspace{0.2cm}
 \begin{rem} It should be interesting to find the curves $C_{y}\,(y\in Y)$
of genus 14 and degree 20 in the table of the BPS numbers of $X$
\cite[Table 2]{HoTa1}. In fact, we read from the table the counting
number of the curves of genus 14 and degree 20 as \[
n_{14}^{X}(20)=500.\]
 In a similar way to the BPS number $n_{3}^{Y}(5)=100$ discussed
in Introduction, we may arrange this number as $n_{14}^{X}(20)=(-1)^{\dim Y}e(Y)\times10$.
This time, however, it is not clear whether the factor $10$ has a
nice interpretation from the geometry of $X$. Nevertheless, we expect
that the number $n_{14}^{X}(20)=500$ is 'counting' the Euler numbers
of the parameter spaces of generically smooth family of curves on
$X$ by general properties of the BPS numbers \cite{GV}, since we
were able to verify $n_{14}^{X}(21)=0$ after rather heavy calculations
using mirror symmetry. \end{rem}

\vspace{0.2cm}
 \begin{rem} In the Grassmann-Pfaffian case due to \cite{BC,Ku2},
the constructions of curves $\{C_{x}\}_{x\in X}$ and $\{C_{y}\}_{y\in Y}$
are more straightforward. Assume $X$ is a smooth linear section Calabi-Yau
threefold of $\mathrm{G}(2,7)$, which is embedded in $\mP(\wedge^{2}\mC^{7})$,
and $Y$ is the corresponding (smooth) orthogonal linear section Calabi-Yau
threefolds of $\mathrm{Pf}(7)$ in $\mP(\wedge^{2}(\mC^{*})^{7})$.
Let us write by $[\xi_{x}]=[\xi_{x}^{(1)},\xi_{x}^{(2)}]\simeq\mP^{1}$
the line corresponding to a point $x\in\mathrm{G}(2,7)$, and by $[\eta_{y}]$
a skew symmetric matrix ${\rm rank}\,\eta_{y}\leq4$ corresponding
to $y\in\mathrm{Pf}(7)$. Then the incidence relation used in \cite{BC,Ku2}
is $\Delta=\{([\xi_{x}],[\eta_{y}])|\dim(\xi_{x}\cap{\rm Ker}(\eta_{y}))\geq1\}$.
In this case, the fiber $\Delta_{y}$ of $\Delta\to\mathrm{Pf}(7)$
over $y$ is given by a Schubert cycle $\sigma_{3}$ in $\mathrm{G}(2,7)$
of codimension 3 if ${\rm rank}(\eta_{y})=4$, and simplifies the
proof of the derived equivalence using the family of curves $\{C_{y}\}_{y\in Y}=\{\Delta\cap(X\times\{y\})\}_{y\in Y}$.
As discussed in Introduction, $C_{y}$ is generically a smooth curve
on $X$ of genus 6 and degree 14. The fiber $\Delta_{x}$ of the other
fibration $\Delta\to\mathrm{G}(2,7)$ is easy to be described. It
turns out that \[
\Delta_{x}=\{([\xi_{x}],[\eta_{y}])\mid(\eta_{y}\,\xi_{x}^{(1)})\wedge(\eta_{y}\,\xi_{x}^{(2)})=0\}.\]
 When $X$ and $Y$ are generic and smooth, we can verify by \textsl{Macaulay2}
that $C_{x}=\Delta\cap(\{x\}\times Y)\;(x\in X)$ is generically a
smooth curve on $Y$ of genus 11 and degree 14. The corresponding
BPS number is, unfortunately, outside of the tables available in literatures
(see \cite[Section 4]{HoTa1}). \end{rem}

\vspace{0.3cm}

\subsection{The fundamental groups and the Brauer groups of $X$ and $Y$}

\label{subsection:fund}~

Finally, it should be worth while discussing about non-invariance
of the fundamental groups and the Brauer groups by the derived equivalence
between a Reye congruence $X$ and the double symmetroid $Y$ orthogonal
to $X$.

As for the fundamental groups, we have $\pi_{1}(X)\simeq\mZ_{2}$
by \cite[Prop.3.5.3]{HoTa3}, and $\pi_{1}(Y)\simeq0$ by {[}ibid.~Prop.4.3.4{]}.
To our best knowledge, this seems to be the second example of pairs
of derived equivalent Calabi-Yau threefolds with different fundamental
groups (see \cite{S}).

As for the Brauer groups, we follow the argument of \cite{A}: By
\cite{BK}, the Atiyah-Hirzebruch spectral sequence gives a short
exact sequence for any Calabi-Yau threefold $\Sigma$: \begin{equation}
0\to H_{1}(\Sigma,\mZ)\to K_{\mathrm{top}}^{1}(\Sigma)_{\mathrm{tors}}\to\mathrm{Br}\,(\Sigma)\to0,\label{eq:AT}\end{equation}
 where $K_{\mathrm{top}}(\Sigma)=K_{\mathrm{top}}^{0}(\Sigma)\oplus K_{\mathrm{top}}^{1}(\Sigma)$
is the topological $K$-group and the subscript 'tors' means the torsion
part. As we mentioned above, we have $H_{1}(X,\mZ)\simeq\pi_{1}(X)\simeq\mZ_{2}$
and $H_{1}(Y,\mZ)\simeq0$. By \cite[\S 2.2]{AT}, $K_{\mathrm{top}}^{1}(X)\simeq K_{\mathrm{top}}^{1}(Y)$
since $X$ and $Y$ are derived equivalent. Therefore, by (\ref{eq:AT}),
we have the following relation between the Brauer groups of $X$ and
$Y$: \begin{equation}
0\to\mZ_{2}\to\mathrm{Br}\,(Y)\to\mathrm{Br}\,(X)\to0.\label{eq:BrBr}\end{equation}
 We have shown $\mathrm{Br}\,(Y)$ contains a nonzero $2$-torsion
element in Proposition \ref{prop:Brauer}. If $\mathrm{Br}\,(Y)\simeq\mZ_{2}$,
we will have $\mathrm{Br}\,(X)\simeq0$ by (\ref{eq:BrBr}), which
indicates that the Brauer groups are not invariant under the derived
equivalence.

\vspace{2cm}

\vspace{0.5cm}

{\footnotesize \noindent Graduate School of Mathematical Sciences,
University of Tokyo, Meguro-ku,~Tokyo 153-8914,~Japan }{\footnotesize \par}

{\footnotesize \noindent e-mail addresses: hosono@ms.u-tokyo.ac.jp,
takagi@ms.u-tokyo.ac.jp}{\footnotesize \par}

\end{document}